\ifdefined\pdfoutput \pdfoutput=1 \fi
\documentclass[11pt]{amsart}

\usepackage[margin=1in]{geometry}

\usepackage{amsmath,amssymb,amsfonts,amsthm,mathtools}
\usepackage{bm}

\usepackage{graphicx}
\usepackage{subfigure}
\usepackage{booktabs}
\usepackage{multirow}
\usepackage{tikz}
\usetikzlibrary{arrows.meta, decorations.pathmorphing, calc}

\usepackage[hidelinks]{hyperref}

\usepackage{microtype}

\theoremstyle{plain}

\newtheorem{theorem}{Theorem}
\newtheorem{problem}{Problem}
\newtheorem{lemma}{Lemma}
\newtheorem{remark}{Remark}

\tikzset{
	wave/.style = {decorate, decoration={snake, amplitude=1.5pt, segment length=6pt}},
	>={Latex[length=2.5mm]},
}

\title[Time-domain iterative method for 3D inverse obstacle scattering]{A novel time-domain iterative method for a three-dimensional inverse acoustic obstacle scattering problem}

\author{Lu Zhao}
\address{Civil Aviation University of China, 2898 Jinbei Road, Tianjin 300300, China}
\email{zhaol@cauc.edu.cn}

\author{Heping Dong}
\address{Jilin University, Qianjin Street 2699, Changchun, Jilin 130012, China}
\email{dhp@jlu.edu.cn}
\thanks{Corresponding author: Heping Dong (dhp@jlu.edu.cn)}

\author{Zhiyong Cheng}
\address{Jilin University, Qianjin Street 2699, Changchun, Jilin 130012, China}
\email{chengzy22@mails.jlu.edu.cn}

\date{}

\subjclass[2020]{65M32, 78A46, 65M70}
\keywords{inverse acoustic scattering, time domain, homothetic surface, convolution quadrature, iterative method}

\begin{document}

\begin{abstract}
This paper concerns the three-dimensional forward and inverse acoustic obstacle scattering problem in the time domain. For the forward problem, a retarded potential formulation discretized by convolution quadrature and Galerkin methods is introduced. By introducing the retarded boundary integral defined on a homothetic surface, we propose a novel time-domain convolution quadrature based iterative method to reconstruct both the shape and location of a rigid obstacle. The retarded integral in the time domain is reformulated into a system of integrals in the $s$-domain. The resulting $s$-domain integrals are very fast to compute, as they only involve non-singular integrals over the homothetic surfaces. Moreover, the Fr\'echet derivative with respect to the boundary can be derived straightforwardly. We also prove that the scattered field generated by the homothetic surface converges to the exact field in the time domain. To improve the stability of the inversion algorithm, an incremental truncation technique is proposed, and numerical experiments confirm the effectiveness and robustness of our method.
\end{abstract}

\maketitle

\section{Introduction}
In this paper, we consider an inverse acoustic scattering problem of reconstructing a bounded obstacle from time-domain scattered field. Such inverse problems have attracted considerable attention due to their wide range of practical applications, including geophysical exploration~\cite{Borden}, biomedical imaging~\cite{Ammari1}, and nondestructive evaluation~\cite{Ammari2}.

We begin by formulating the mathematical model for the time-dependent acoustic obstacle scattering problem. Let \( D \subset \mathbb{R}^3 \) be a bounded open domain with a smooth boundary \( \Gamma_D \). The exterior region \( \mathbb{R}^3 \setminus \overline{D} \) is filled with homogeneous medium of unit mass density. We assume that the incident wave $u^{\rm inc}(\pmb x, t)=\varrho(t-|\pmb{x}-\pmb{x}_0|)/(4\pi|\pmb{x}-\pmb{x}_0|)$ emitted from a point $\pmb{x}_0\in \mathbb{R}^3\setminus \overline{D}$ with a causal and smooth signal $\varrho$. The forward scattering problem consists in determining the scattered field $u^{\rm sc}(\pmb{x},t)$ that satisfies the following initial-boundary problem
\begin{equation}\label{scattered field}
	\left\{
	\begin{aligned}
		&\partial_{t}^2u^{\rm sc}-\Delta u^{\rm sc}=0 &&{\rm in}~\left(\mathbb{R}^3\setminus \overline{D}\right)\times(0,\infty),\\
		&u^{\rm sc}=-u^{\rm inc}&&{\rm on}~\Gamma_D\times(0,\infty),\\
		&u^{\rm sc}(\cdot,0)=\partial_t u^{\rm sc}(\cdot,0)=0 &&{\rm in}~\mathbb{R}^3\setminus \overline{D}.\\
	\end{aligned}
	\right.
\end{equation}

Let \(R>0\) be a fixed constant and define the ball
\(
B := \left\{ \pmb{x} \in \mathbb{R}^3 : |\pmb{x}| < R \right\}
\)
such that \(\overline{D} \subset B\).
The boundary \(\Gamma_B\) of \(B\) is chosen as the observation surface, and 
let \(T>0\) be the truncated final time.
The near-field measurements are given by
\[
\Lambda := \left\{u^{\rm sc}(\pmb{x},t):(\pmb{x},t)\in \Gamma_B\times[0,T]\right\}.
\]
Figure \ref{illustration} provides a schematic illustration of the time-domain scattering problem in $\mathbb{R}^3$. With these settings, we formulate the inverse scattering problem as follows.
\begin{problem}[Inverse acoustic obstacle scattering]\label{prob:inverse}
Given the near-field measurements \(\Lambda\) on \(\Gamma_B\times[0,T]\), 
determine the shape and location of the obstacle \(D\subset\mathbb{R}^3\).
\end{problem}

\begin{figure}[h]
	\centering
	\begin{tikzpicture}[scale=0.9]
	
	\def\R{3.0} 
	\draw[line width=0.6pt] (0,0) circle (\R);
	\draw[dashed] (-\R,0) arc[start angle=180, end angle=360, x radius=\R, y radius=0.55*\R];
	\draw[dashed, opacity=.45] ( \R,0) arc[start angle=0,   end angle=180, x radius=\R, y radius=0.55*\R];
	\draw[dashed, rotate=35] (0,0) ellipse [x radius=0.55*\R, y radius=\R];
	\draw[dashed, rotate=-35] (0,0) ellipse [x radius=0.55*\R, y radius=\R];
	
	\node[above] at (0,1.12*\R) {observation surface $\Gamma_B$};
	
	
	\begin{scope}[shift={(0.25,0.35)}, rotate=12]
	\def\ax{1.05} 
	\def\ay{0.72} 
	\shade[ball color=blue!35, draw=black, line width=0.4pt, opacity=0.95]
	(0,0) ellipse [x radius=\ax, y radius=\ay];
	\draw[black, opacity=.8, line width=0.4pt]
	(0,0) ellipse [x radius=\ax, y radius=\ay];
	\draw[dashed, opacity=.35, line width=0.4pt, rotate=25]
	(0,0) ellipse [x radius=0.85*\ax, y radius=0.55*\ay];
	\draw[white, opacity=.35, line width=1.2pt]
	(-0.45*\ax, 0.35*\ay) .. controls (-0.15*\ax,0.55*\ay) and (0.15*\ax,0.45*\ay) ..
	(0.35*\ax, 0.25*\ay);
	\end{scope}
	
	\node at (0.4, -0.7) {\small obstacle $D$};

	\coordinate (S) at (-5.2,0.2);  

	\fill[red] ($(S)+(0.6,0)$) circle (2pt);
	\node[below left] at ($(S)+(0.65,0.5)$) {\small $\pmb{x}_0$};
	\draw[wave, line width=0.8pt] ($(S)+(0.6,0)$) -- ++(0.8,0);
	\draw[->, line width=0.8pt]    ($(S)+(1.3,0)$) -- (-\R +0.4,0.1);
	\node[below] at ($(S)+(-0,-0.35)$) {incident wave $u^{\rm inc}$};
	
	\coordinate (P) at ({0.9*\R},{0.6}); 
	\draw[wave, line width=0.8pt] ($(P)+(-0.7,0.3)$) -- ($(P)+(-0.2,0.1)$);
	\draw[->,  line width=0.8pt]  ($(P)+(-0.2,0.1)$) -- ($(P)+(0.4,0.3)$);
	\node[right] at ($(P)+(0.45,0.35)$) {scattered field $u^{\rm sc}$};
	
	
	\end{tikzpicture}
	\caption{An illustration of acoustic obstacle scattering.}\label{illustration}
\end{figure}

Time-domain broadband signals typically contain richer information and are often easier to capture in practice than frequency-domain data. In recent years, significant mathematical and computational progress has been achieved on time-domain acoustic obstacle scattering and the associated inverse scattering problems. Concerning the well-posedness analysis of the direct scattering problem, the theoretical foundations have been established in \cite{Cakoni2017,Guo2013}. 
Numerically, a common starting point is to formulate a boundary integral equation via retarded potential theory, which is subsequently discretized. 
Convolution quadrature is a classical and efficient technique for time discretization. In particular, the combination of convolution quadrature and boundary element methods provides an effective numerical framework for solving time-dependent acoustic scattering problems \cite{Lubich1994}. Moreover, by combining convolution quadrature with the scaled discrete Fourier transform, the original time-domain problem can be reformulated as a sequence of decoupled boundary integral equations in the \(s\)-domain, which can be efficiently solved using Galerkin spatial discretization \cite{CQ}. Furthermore, Runge-Kutta convolution quadrature combined with Galerkin spatial discretization leads to sparse convolution weights and hence improved computational efficiency \cite{Banjai2014}. More recently, a study has proposed an algorithm combining Runge-Kutta convolution quadrature in time with spectrally accurate spatial approximation \cite{Ganesh2023}. 
An alternative temporal discretization strategy is based on convolution splines. In particular, \cite{Davies2013} developed convolution-spline schemes for time discretization and combined them with Galerkin and collocation boundary element approximations for boundary integral formulations of time-dependent scattering problems. 
Building on this idea, high-order frameworks for three-dimensional acoustic scattering have been developed that couple convolution-spline time discretization with high-order quadrature rules in space \cite{Barnett2020,Epstein2016}. 
Moreover, a related combination of convolution splines in time and a Nystr\"om-type Galerkin method in space has been employed to solve time-domain acoustic scattering problems involving three-dimensional sound-soft and impedance obstacles \cite{Hou2024,Hou2025}. 
Beyond convolution-based approaches, frequency-time hybrid solvers have been developed that decompose the incident signal into wave packets and exploit their frequency-domain solutions to mitigate spatial dispersion \cite{Anderson2020,Bruno2024}. Other discretization strategies include coupling discontinuous Galerkin methods in space with explicit finite-difference in time \cite{Abboud2011}, space--time Galerkin methods \cite{Sauter2013}, and a space-time boundary element method for the wave equation in $3 + 1$ dimensions \cite{Polz2019}.

For time-domain inverse acoustic scattering problems, numerical algorithms are generally classified into two categories: qualitative and quantitative methods. On the qualitative side, the point source method \cite{Luke2006} uses the Fourier transform of time-dependent fields for pointwise reconstruction of the scatterer. Subsequently, the probe method \cite{Bukard2009} extends this idea to recover three-dimensional rough surfaces. For the direct inversion of obstacles from time-domain data, the linear sampling method has been applied to different boundary conditions \cite{Chen2,Haddar}, penetrable obstacles \cite{Guo2013}, and limited-aperture measurements \cite{Guo2016}. Detailed theoretical analyses can be found in \cite{Cakoni2021,Prunty2019}. The factorization method provides an explicit characterization of Dirichlet obstacles from causal far-field measurements \cite{Cakoni2019} and has been extended to impedance boundary conditions in \cite{Haddar2020}. Further developments include time-domain direct sampling methods, such as the total focusing method \cite{Guo20162}, which derives a sampling formula based on travel-time information to reconstruct both regular and extended obstacles with multiple components, as well as a novel method \cite{Guo2024} that establishes a connection between the time and frequency domains for reconstructing penetrable obstacles. The topological gradient method \cite{Kahlaoui2025} employs the leading term of the Kohn--Vogelius shape functional to enable efficient reconstruction for obstacles with Neumann boundary conditions. All of these methods are qualitative and require no a priori knowledge of the obstacle geometry or physical properties; instead, they rely on suitable indicator functions to determine whether a sampling point lies inside or outside the obstacle. However, they typically require scattered-field data generated by a large number of incident waves.

Quantitative methods are of particular interest in this work, as they typically provide more accurate and detailed reconstructions than qualitative approaches and, being based on linearization and optimization strategies, often require only scattered-field data from one or a few incident waves. An inverse scattering problem based on the domain derivative of the time-dependent wave equation is investigated in \cite{Knoller2025}. The resulting method linearizes the nonlinear operator via the Fr\'echet derivative, employs convolution quadrature for stable time discretization, and culminates in a Gauss--Newton scheme for boundary reconstruction. In \cite{Zhao2021}, a convolution quadrature-based nonlinear boundary integral equation method is proposed, which transforms time-domain boundary integrals into a system of decoupled equations in the \(s\)-domain, where the complex wavenumber depends on the temporal discretization, and an iterative strategy is developed for the \(s\)-domain field and data equations. Adjoint-based optimization has been used to recover the shape and location of a scatterer from transient wave data by minimizing a time-dependent misfit functional \cite{Sayag2022}, while a time-domain full-waveform inversion method based on a single level-set function has been proposed to represent multiple material interfaces \cite{Castro2024}. Related time-domain reconstruction methods for elastic and electromagnetic inverse scattering problems can be found in \cite{Ammari2013,Geng2025,Liu2023,Wang2021,Zhao2022}.

However, it is noteworthy that time-domain research has mainly focused on two-dimensional problems. Although some qualitative methods have been proposed for three-dimensional settings, they are rarely supported by extensive numerical experiments. More importantly, for the quantitative methods, classical nonlinear boundary integral equation methods alternate between solving a field equation for a boundary density and updating the boundary via a linearized data equation. The repeated treatment of weakly singular boundary integrals makes this approach efficient in two dimensions \cite{Zhao2021} but the high computational cost of fully three-dimensional simulations remains a major challenge. 
In this paper, we address the three-dimensional time-domain forward acoustic obstacle scattering problem using a convolution quadrature technique combined with a Galerkin scheme. Furthermore, we propose a novel iterative reconstruction algorithm for the corresponding inverse scattering problem, which completely avoids dealing with any singularity.
To the best of our knowledge, this is the first quantitative method of three-dimensional time-domain inverse scattering.

In this work, we first address the forward three-dimensional time-domain acoustic obstacle scattering problem. A retarded potential boundary integral equation is derived, which is then discretized in time using the convolution quadrature technique and in space via a Galerkin scheme based on spherical harmonics. Building upon this foundation, we focus on the inverse problem of reconstructing the unknown obstacle from time-dependent scattered field data. Motivated by the time-domain iterative framework for nonlinear integral equations in \cite{Zhao2021} and decomposition methods \cite{Colton}, we develop a novel and efficient iterative reconstruction algorithm. In particular, by employing analytic continuation, we establish a retarded boundary integral equation on a homothetic surface, which contains the complete geometric information of the obstacle boundary through a fixed contraction factor. By combining the convolution quadrature technique with a scaled discrete Fourier transform for time discretization, the retarded time-domain integral is reformulated as a system of decoupled equations in the \(s\)-domain. The inverse scheme then proceeds iteratively by alternately recovering the density function from an equivalent field equation and updating the boundary shape from an equivalent linearized data equation in the \(s\)-domain, until a stopping criterion is fulfilled.

%

The paper is organized as follows. Section 2 begins by deriving the time-domain boundary integral equations for the forward acoustic scattering problem and proposing an efficient Galerkin--Convolution quadrature (Galerkin-CQ) scheme for the numerical implementation. In Section 3, we propose a novel iterative approach based on the parametrization of the equivalent field and data equations, and establish the approximation property of the scattered field generated by the homothetic surface, together with the numerical implementation of the proposed iterative scheme. Section 4 presents a series of numerical experiments that demonstrate the effectiveness and robustness of the proposed time-domain forward and inverse approach. Finally, the paper is concluded in Section 5.

\section{The forward scattering problem}\label{direct scattering}
In this section, we study the forward time-domain acoustic scattering problem and develop a suitable functional framework for the analysis of numerical methods in subsequent inverse problems. We first introduce a suitable Laplace transform-based functional setting and derive a retarded potential boundary integral equation, then discretize it in time by convolution quadrature technique and in space by a Galerkin scheme based on spherical harmonics, yielding an efficient method for the forward problem.

\subsection{Laplace transform and theoretical analysis} 
In this subsection, we introduce appropriate Sobolev spaces involving both spatial and temporal variables. These spaces are chosen to ensure the well-posedness of the forward problem and to provide an appropriate functional space for the analysis of numerical methods in the context of inverse problems. The well-posedness theory for the direct scattering problem has been systematically developed in \cite{Chen2,Haddar,Lubich1994}, based on the following Sobolev spaces and the Laplace transform.

For a Hilbert space $X$, the spaces $\mathcal{D}'(\mathbb{R};X)$ and $\mathcal{S}'(\mathbb{R};X)$ denote the $X$-valued distributions and tempered distributions on the real line, respectively.
Define
\[
\mathcal{L}_{\sigma}'(\mathbb{R};X)=\left\lbrace f\in\mathcal{D}'(\mathbb{R};X),e^{-\sigma t}f(t)\in\mathcal{S}'(\mathbb{R};X) ~{\rm for}~ \sigma\in\mathbb{R}\right\rbrace ,
\]
the Fourier transform of $f\in\mathcal{L}_{\sigma}'(\mathbb{R};X)$ is defined by
\[
\mathcal{F}\left[ f\right] (\omega)=\int_{-\infty}^{\infty}e^{-{\rm i}\omega t}f(t)\mathrm{d}t,\quad \omega\in\mathbb{R},
\]
and the Laplace transform of $f$ is given by
\[
\mathcal{L}\left[ f\right] (s)=\int_{-\infty}^{\infty}e^{-st}f(t)\mathrm{d}t,\quad \Re(s)=\sigma>\sigma_0>0.
\]
This definition implies that for any $f \in \mathcal{L}_{\sigma}'(\mathbb{R};X)$, the Fourier transform of $e^{-\sigma t}f(t)$ is well-defined. Consequently, the Laplace transform of \( f \) exists and satisfies
\[
\mathcal{L}\left[ f\right] (s)=\mathcal{F}\left[ e^{-\sigma t}f\right] (\omega), \quad  s = \sigma +{\rm i} \omega.
\]%
The weighted Sobolev space $H_{\sigma}^p(\mathbb{R};X)$ is then defined by
{\small\[
H_{\sigma}^p(\mathbb{R};X):=\left\lbrace f\in\mathcal{L}_{\sigma}'(\mathbb{R};X):\int_{\sigma-\mathrm{i}\infty}^{\sigma+\mathrm{i}\infty}|s|^{2p}\left\| \mathcal{L}\left[ f\right] (s)\right\| _{X}^2\mathrm{d}s<\infty \right\rbrace ,\quad p\in\mathbb{R},~\sigma>0
\]}
with corresponding norm
\[
\left\| f\right\| _{H_{\sigma}^p(\mathbb{R};X)}=\left( \int_{\sigma-\mathrm{i}\infty}^{\sigma+\mathrm{i}\infty}|s|^{2p}\left\| \mathcal{L}\left[ f\right] (s)\right\| _{X}^2\mathrm{d}s\right) ^{1/2}.
\]
We denote by $H_{\sigma}^p(\mathbb{R}_{>a};X)$ the subspace of $H_{\sigma}^p(\mathbb{R};X)$ consisting of functions that vanish for $t < a$.

Applying the Laplace transform to the scattering problem \eqref{scattered field}, we obtain that $\breve{u}(s)=\mathcal{L}\left[ u\right] $ satisfies the following Helmholtz equation with complex wavenumber
\begin{equation*}\label{scattered field s-domain}
	\left\{
	\begin{aligned}
		&\Delta \breve{u}^{{\rm sc}}(\pmb{x},s)-s^2\breve{u}^{\rm sc}(\pmb{x},s)=0 &&{\rm in}~\mathbb{R}^3\setminus \overline{D},\\
		&\breve{u}^{\rm sc}(\pmb{x},s)=-\breve{u}^{\rm inc}(\pmb{x},s)&&{\rm on}~\Gamma_D.\\
	\end{aligned}
	\right.
\end{equation*}
Let $\Omega\subset\mathbb{R}^3$ be a bounded domain with boundary $\Gamma_{\Omega}$. We introduce the following frequency-dependent norm on the standard Sobolev space $H^1$($\Omega$)
\[
\left\| u\right\| _{1,s,\Omega}:=\left( \int_{\Omega}|\nabla u(\pmb{x},s)|^2+|su(\pmb{x},s)|^2\mathrm{d}\pmb{x}\right) ^{1/2},
\]
which is equivalent to the standard $H^1(\Omega)$ norm when $s\neq0$. 
We denote by $H^q(\Gamma_\Omega)$ the standard trace spaces on the boundary. It is well known that the dual space of $H^{1/2}(\Gamma_\Omega)$ is $H^{-1/2}(\Gamma_\Omega)$, under the $L^2$ inner product
\[
\left\langle u,v\right\rangle _{\Gamma_\Omega}=\int_{\Gamma_\Omega}u\overline{v}\mathrm{d}\gamma.
\]

\subsection{Retarded potential boundary integral equation method}
We provide some mathematical preparations for time-domain obstacle scattering problem in this subsection. For wave propagation and scattering, the fundamental solution to the three-dimensional wave equation is given by
\begin{equation*}
G(|\pmb{x}-\pmb{y}|,t):=\frac{\delta(t-|\pmb{x}-\pmb{y}|)}{4\pi|\pmb{x}-\pmb{y}|},\quad \pmb{x}\neq\pmb{y},
\end{equation*}
where $\delta$ denotes the Dirac delta distribution. The retarded single layer potential on $\Gamma_D$ is expressed as
{\small \begin{equation}\label{1}
	(S_D\psi)(\pmb{x},t):=\int_{0}^{t}\int_{\Gamma_D} G(|\pmb{x}-\pmb{y}|,t-\tau)\psi(\pmb{y},\tau)\mathrm{d}s_{\pmb{y} }\mathrm{d}\tau,~~ \pmb{x}\in\mathbb{R}^3\setminus\Gamma_D,~t\in(0,\infty),
\end{equation}}
where \(\psi\) denotes the density. Furthermore, the associated single layer boundary operator is defined as
{\small \begin{equation}\label{2}
	(\mathcal{S}_{D}\psi)(\pmb{x},t):=\int_{0}^{t}\int_{\Gamma_D}G(|\pmb{x}-\pmb{y}|,t-\tau)\psi(\pmb{y},\tau)\mathrm{d}s_{\pmb{y} }\mathrm{d}\tau,\quad \pmb{x}\in \Gamma_D,~t\in(0,\infty).
\end{equation}}

The following lemma presents the well-posedness result for the forward problem, which can be found in \cite{Chen2,Lubich1994}. 
\begin{lemma}\label{lemma31}
Let \( u^{\rm inc}(\pmb{x}, t) \in H_{\sigma}^p(\mathbb{R}_{>0}; H^{1/2}(\Gamma_D)) \) for arbitrary real $p$. Then there exists a unique solution \( u^{\rm sc}(\pmb{x}, t) \in H_{\sigma}^{p-3/2}(\mathbb{R}_{>0}; H^1(\mathbb{R}^3 \setminus \overline{D})) \) to the following exterior boundary value problem
\begin{equation*}
\left\lbrace 
\begin{aligned}
    &\partial_t^2 u^{\rm sc} - \Delta u^{\rm sc} = 0 && {\rm{in }}~ \left(\mathbb{R}^3 \setminus \overline{D}\right) \times \mathbb{R}, \\
    &u^{\rm sc} = 0 &&{\rm{in }}~  \left(\mathbb{R}^3 \setminus \overline{D}\right) \times (-\infty,0], \\
    &u^{\rm sc} = -u^{\rm inc} && {\rm{on }}~ \Gamma_D \times \mathbb{R}.
\end{aligned}
\right.
\end{equation*}
Furthermore, \( u^{\rm sc} \) satisfies the estimate
\[
\| u^{\rm sc} \|_{H_{\sigma}^{p-3/2}(\mathbb{R}_{>0}; H^1(\mathbb{R}^3 \setminus \overline{D}))}
\leq C(\sigma) \| u^{\rm inc} \|_{H_{\sigma}^p(\mathbb{R}_{>0}; H^{1/2}(\Gamma_D))},
\]
and the solution \( u^{\rm sc} \) can be represented as
\(
u^{\rm sc} = S_{D} \mathcal{S}_{D}^{-1} ( -u^{\rm inc} ),
\)
where \( S_{D}  \) and \( \mathcal{S}_{D} \) are defined in \eqref{1}-\eqref{2}.
\end{lemma}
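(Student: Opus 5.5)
The plan is the standard Laplace-domain argument of Bamberger--Ha Duong and Lubich. Transform the time-domain problem to the $s$-domain, prove bounds there that are explicit in $s$ for $\Re s=\sigma>0$, and return to the time domain with the Paley--Wiener theorem and Plancherel's identity in the weighted spaces $H^p_\sigma$.

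After Laplace transform the problem becomes the exterior Dirichlet problem for $\Delta\breve u-s^2\breve u=0$ with data $\breve g=-\breve u^{\rm inc}(\cdot,s)$. We seek the solution as $\breve u=S_D(s)\breve\psi$, where $S_D(s)$ is the single-layer potential with kernel $e^{-s|\pmb x-\pmb y|}/(4\pi|\pmb x-\pmb y|)$, which is the Laplace transform of $G$. This gives the boundary equation $\mathcal S_D(s)\breve\psi=\breve g$.

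The key step is coercivity of $\mathcal S_D(s)$ on $H^{-1/2}(\Gamma_D)$. Set $u=S_D(s)\psi$ in $\mathbb R^3\setminus\Gamma_D$. The jump relations give $[\partial_\nu u]=-\psi$, and $u$ is continuous across $\Gamma_D$. Green's formula on the interior domain and on the exterior domain (using exponential decay of $u$ since $\Re s>0$) gives
\[
\left\langle \mathcal S_D(s)\psi,\psi\right\rangle_{\Gamma_D}=\int_{\mathbb R^3\setminus\Gamma_D}\left(|\nabla u|^2+\overline{s}|s|^{-2}\ldots\right),
\]
more precisely
\[
\Re\left( \bar s\left\langle \mathcal S_D(s)\psi,\psi\right\rangle_{\Gamma_D}\right)=\sigma\,\|u\|^2_{1,s,\mathbb R^3\setminus\Gamma_D}.
\]
Combine this with the trace/jump bound $\|\psi\|_{H^{-1/2}}\lesssim \max(1,|s|)\,\|u\|_{1,s}/\min(1,|s|)$, obtained by duality with lifted test functions and the $s$-weighted norm. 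The result is $\Re(\bar s\langle\mathcal S_D\psi,\psi\rangle)\ge c\,\sigma\min(1,\sigma^2)|s|^{-2}\|\psi\|^2_{H^{-1/2}}$. Lax--Milgram then gives $\|\mathcal S_D(s)^{-1}\|_{H^{1/2}\to H^{-1/2}}\le C(\sigma)|s|^2$.

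Next we bound the potential. Test the exterior problem directly with $\bar s\breve u$ after subtracting a lifting $E\breve g$ of the boundary data (an $H^{1/2}\to H^1$ right inverse of the trace). This gives $\|\breve u\|_{1,s,\mathbb R^3\setminus\overline D}\le C(\sigma)|s|^{3/2}\|\breve g\|_{H^{1/2}(\Gamma_D)}$. The sharper exponent $3/2$, rather than the $|s|^{2}\cdot|s|^{1/2}$ from composing crude bounds, comes from the lifting: choose $E$ depending on $|s|$, with support in a layer of width $|s|^{-1}$ near $\Gamma_D$. Then $\|E\breve g\|_{1,s}\lesssim|s|^{1/2}\|\breve g\|_{H^{1/2}}$, and the energy identity adds one more factor $|s|/\sigma$.

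Uniqueness in the $s$-domain follows from the same energy identity with zero data.

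To return to the time domain, note that $s\mapsto\breve u(s)$ is analytic in $\Re s>\sigma_0$ with polynomial bounds $|s|^{3/2}$ in the $H^1$ norm, which is equivalent to the $\|\cdot\|_{1,s}$ norm up to $\sigma$-dependent constants. By the Paley--Wiener theorem for Hilbert-space-valued functions, $u^{\rm sc}=\mathcal L^{-1}\breve u$ is causal and lies in $\mathcal L'_\sigma$. Plancherel's identity along $\Re s=\sigma$ then gives
\[
\|u^{\rm sc}\|^2_{H^{p-3/2}_\sigma}=\int|s|^{2p-3}\|\breve u\|^2_{H^1}\,\mathrm ds\le C(\sigma)^2\int|s|^{2p}\|\breve u^{\rm inc}\|^2_{H^{1/2}}\,\mathrm ds,
\]
which is the claimed estimate. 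Uniqueness in the time domain follows from uniqueness for each $s$ together with injectivity of the Laplace transform.

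The representation $u^{\rm sc}=S_D\mathcal S_D^{-1}(-u^{\rm inc})$ holds because it is the inverse transform of $S_D(s)\mathcal S_D(s)^{-1}\breve g$. Here $S_D(s)$ and $\mathcal S_D(s)$ are the Laplace transforms of the retarded operators \eqref{1}--\eqref{2}, and the time-domain convolutions in \eqref{1}--\eqref{2} correspond to products in $s$.

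The main obstacle is getting the $s$-explicit exponents right. The coercivity bound for $\mathcal S_D(s)$ and the lifting estimate must combine to exactly $|s|^{3/2}$ in the $H^1$ norm. The delicate points are the $|s|$-dependent trace and lifting constants; the rest is routine.
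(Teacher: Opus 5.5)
Your proposal is correct and follows the paper's route. The paper simply quotes Lubich's Laplace-domain bound $\|\breve u^{\rm sc}\|_{1,s,\mathbb{R}^3\setminus\overline{D}}\le C(\sigma_0)\sigma^{-1}|s|^{3/2}\|\breve u^{\rm inc}\|_{H^{1/2}(\Gamma_D)}$, which your $|s|$-adapted lifting plus energy identity reproduces, and transfers it to the time domain by Plancherel along $\Re s=\sigma$, exactly as you do.

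Two harmless slips are worth fixing:
\begin{itemize}
\item With the paper's sesquilinear pairing, the coercivity identity should read $\Re\bigl(s\langle\mathcal{S}_D(s)\psi,\psi\rangle_{\Gamma_D}\bigr)=\sigma\|u\|^2_{1,s}$.
\item With your crude jump bound, Lax--Milgram gives only $\|\mathcal{S}_D(s)^{-1}\|\le C(\sigma)|s|^3$. Getting $|s|^2$ requires the adapted lifting in the jump estimate as well, i.e.\ $\|\psi\|_{H^{-1/2}(\Gamma_D)}\lesssim|s|^{1/2}\|u\|_{1,s}$. This does not affect the lemma, since your final estimate comes from the direct lifting argument and the representation formula needs only some polynomial bound.
\end{itemize}
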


The proof is completed by applying the following estimate~\cite[eqn.~(2.27)]{Lubich1994}
\[
\left\| \breve{u}^{\rm sc} \right\|_{1, s, \mathbb{R}^3 \setminus \overline{D}} 
\leq \frac{C(\sigma_0)}{\sigma}|s|^{3/2} \left\| \breve{u}^{\rm inc} \right\|_{H^{1/2}(\Gamma_D)},
\]
where \( \Re(s) = \sigma > \sigma_0 > 0 \).

To compute the scattered field $u^{\rm sc}$ described in \eqref{scattered field}, we choose an appropriate time interval $[0,T]$ such that the energy of the scattered field within the region of interest becomes negligible for $t > T$. Within this interval, the scattered field can be represented by the retarded single layer potential
\begin{equation}\label{single layer}
	u^{\rm sc}(\pmb{x},t)=\int_{0}^{t}\int_{\Gamma_D}G(|\pmb{x}-\pmb{y}|,t-\tau)\psi(\pmb{y},\tau)\,\mathrm{d}s_{\pmb{y}}\mathrm{d}\tau,~ (\pmb{x},t)\in(\mathbb{R}^3\setminus\overline{D})\times[0,T].
\end{equation}
A straightforward calculation shows that the representation \eqref{single layer} satisfies both the wave equation and the initial conditions given in \eqref{scattered field}. Furthermore, by applying the jump relation of the single layer potential as \( \pmb{x} \to \Gamma_D \), we obtain the following boundary integral equation 
\begin{equation}\label{boundary integral equation}
	\int_{0}^{t}\int_{\Gamma_D}G(|\pmb{x}-\pmb{y}|,t-\tau)\psi(\pmb{y},\tau)\,\mathrm{d}s_{\pmb{y}}\mathrm{d}\tau=-u^{\rm inc}(\pmb{x},t),\quad(\pmb{x},t)\in\Gamma_D\times[0,T].
\end{equation}

\subsection{Convolution quadrature}\label{3.2}
This subsection details the time discretization of the boundary integral equation via the convolution quadrature method. We first restate \eqref{boundary integral equation} in simplified form
\begin{equation}\label{CQfunction}
	(w*\psi)(t):=\int_{0}^{t}w(t-\tau)\psi(\tau)\mathrm{d}\tau,
\end{equation}
where the integral operator $w$ is defined as
\[
(w(t-\tau)\psi(\tau))(\pmb{x})=\int_{\Gamma_D}G(|\pmb{x}-\pmb{y}|,t-\tau)\psi(\pmb{y},\tau)\mathrm{d}s_{\pmb{y}},\quad \pmb{x}\in\Gamma_D.
\]
The Laplace transform of $w$ is given by
\[
\mathcal{W}(s)=\int_{0}^{+\infty}w(t)e^{-st}\mathrm{d}t,
\]
since $w = 0$ for $t < 0$. The explicit form of $W(s)$ is
\[
(\mathcal{W}(s)g)(\pmb{x})=\int_{\Gamma_D} G_s(|\pmb{x}-\pmb{y}|,s)g(\pmb{y})\mathrm{d}s_{\pmb{y}},\quad \pmb{x}\in\Gamma_D,
\]
where \(G_s(|\pmb{x} - \pmb{y}|, s) = \frac{e^{-s|\pmb{x} - \pmb{y}|}}{4\pi |\pmb{x} - \pmb{y}|}\) is the fundamental solution of the Helmholtz equation \(\Delta u + \kappa^2 u = 0\) with complex wave number \(\kappa = \mathrm{i}s\). By the inverse Laplace transform, \eqref{CQfunction} can be represented as
\begin{equation*}\label{convolution}
	\begin{aligned}
		(w*\psi)(t)=&\int_{0}^{t}\left( \frac{1}{2\pi \mathrm{i}}\int_{\sigma-\mathrm{i}\infty}^{\sigma+\mathrm{i}\infty}e^{s\tau}\mathcal{W}(s)\mathrm{d}s\right) \psi(t-\tau)\mathrm{d}\tau\\
		=&\frac{1}{2\pi \mathrm{i}}\int_{\sigma-\mathrm{i}\infty}^{\sigma+\mathrm{i}\infty}\mathcal{W}(s)\left( \int_{0}^{t}e^{s\tau}\psi(t-\tau)\mathrm{d}\tau\right) \mathrm{d}s.
	\end{aligned}
\end{equation*}
Note that the function \( y(t) := \int_0^t e^{s\tau} \psi(t - \tau)\,\mathrm{d}\tau \) is the unique solution to the differential equation
\begin{equation*}
	\left\lbrace \begin{aligned}
		&y'(t)=sy(t)+\psi(t),\quad~t\in[0,\infty),\\
		&y(0)=0.\end{aligned}\right. 
\end{equation*}

Many numerical schemes can be applied to the above differential equations, and here, we adopt the unconditionally stable third-order backward difference formula (BDF3) \cite{sayas}. The time interval $[0,T]$ is divided into $N+1$ equidistant time instants, where
\begin{equation*}
	t_n=n\Delta t,\quad n=0,1,\cdots,N,~\Delta t=T/N.
\end{equation*}
The discretization of equation \eqref{CQfunction} then becomes
\begin{equation}\label{equationend}
	(w* \psi)(t_n):=\sum_{j=0}^{N}\omega_{n-j}^{\Delta t,\lambda}(\mathcal{W})\psi_j=-u^{\rm inc}(\pmb{x},t_n)=-u_n^{\rm inc},\quad n=0,\cdots,N,
\end{equation}
where the convolution weights are given by
\begin{equation}\label{omega}
	\omega_j^{\Delta t,\lambda}(\mathcal{W}):=\frac{\lambda^{-j}}{N+1}\sum_{l=0}^{N}\mathcal{W}(s_l)\zeta_{N+1}^{lj}.
\end{equation}
Here $\lambda < 1$, $\zeta_{N+1} = e^{2\pi{\rm{i}}/(N+1)}$, $s_l =  \gamma(\lambda {\zeta_{N+1}^{-l}})/{\Delta t}$, $\gamma(\zeta) =\sum\limits_{k=1}^{3}\frac{1}{k}(1-\zeta)^k$ and $\psi_j = \psi^{\Delta t}(t_j)$ is the discrete approximation of $\psi(t_j)$.	
	
Substituting \eqref{omega} into \eqref{equationend}, yields for $l=0,\cdots,N$ that
\begin{equation}\label{field equation}
	(\mathcal{W}(s_l)\hat{\psi}_l)(\pmb{x})= \int_{\Gamma_D} G_{s}(|\pmb{x}-\pmb{y}|,s_l)\hat{\psi}_l(\pmb{y})\mathrm{d}s_{\pmb{y}} =-\hat{u}_l^{\rm inc}(\pmb{x}),~\pmb{x}\in\Gamma_D,
\end{equation}
where $\hat{\psi}_l$ and $\hat{u}_l^{\rm inc}$ are the scaled discrete Fourier transforms of $\psi_n$ and $u_n^{\rm inc}$ with respect to $n$, i.e.,
\[
\hat{\psi}_l:=\sum_{n=0}^{N}\lambda^n \psi_n\zeta_{N+1}^{-ln},\quad \hat{u}_l^{\rm inc}:=\sum_{n=0}^{N}\lambda^n u_n^{\rm inc}\zeta_{N+1}^{-ln}.
\]
Thus, we can obtain the density $\psi$ by the inverse transform
\[
\psi_n=\frac{\lambda^{-n}}{N+1}\sum_{j=0}^{N}\hat{\psi}_j\zeta_{N+1}^{nj}.
\]
The time discretization method described above for equation \eqref{CQfunction} is called the convolution quadrature method. For further details of the convolution quadrature method, we refer to \cite{CQ, sayas}.
	
Given the values of $\hat{\psi}_l$, the scaled discrete Fourier transform of $u_n^{\rm sc}(\cdot) = u^{\rm sc}(\cdot, t_n)$ with respect to $n$ can be computed by
\begin{equation}\label{data equation}
    \hat{u}_l^{\rm sc}(\pmb{x}) = \int_{\Gamma_D} G_s(|\pmb{x} - \pmb{y}|, s_l)\, \hat{\psi}_l(\pmb{y})\mathrm{d}s_{\pmb{y}}, \quad \pmb{x} \in \mathbb{R}^3 \setminus \overline{D},~l = 0, \cdots, N.
\end{equation}
Consequently, the scattered field $u^{\rm sc}$ can be derived via the inverse transform
\[
u_n^{\rm sc} = \frac{\lambda^{-n}}{N+1} \sum_{j=0}^{N} \hat{u}_j^{\rm sc} \zeta_{N+1}^{nj}.
\]

\subsection{Galerkin discretization}
The convolution quadrature method in time generates a system of boundary integral equations in the $s$-domain \eqref{field equation}-\eqref{data equation}. We now address the spatial discretization by adopting a Galerkin-type scheme based on spherical harmonics to approximate $\hat{\psi}_l$ in \eqref{field equation}. The scattered field $u^{\rm sc}$ is subsequently recovered via the inverse scaled discrete Fourier transform.

The boundary integral operator given in \eqref{field equation} can be divided into 
\begin{equation}\label{singular integral}
	(\mathcal{W}(s_l)\hat{\psi}_l)(\pmb{x})=\int_{\Gamma_D}\left( \frac{1}{|\pmb{x}-\pmb{y}|}w_1^l(\pmb{x},\pmb{y})+w_2^l(\pmb{x},\pmb{y})	\right) \hat{\psi}_l(\pmb{y})\mathrm{d}s_{\pmb{y}},
\end{equation}
where 
\begin{align*}
	w_1^l(\pmb{x},\pmb{y}) &= \frac{e^{-k_2^l|\pmb{x}-\pmb{y}|}\cos(\kappa_1^l|\pmb{x}-\pmb{y}|)}{4\pi}, \\
	w_2^l(\pmb{x},\pmb{y}) &=
	\begin{cases}
		\frac{e^{-k_2^l|\pmb{x}-\pmb{y}|}\mathrm{i}\sin(\kappa_1^l|\pmb{x}-\pmb{y}|)}{4\pi|\pmb{x}-\pmb{y}|}, & \pmb{x}\neq\pmb{y},\\[1ex]
		\frac{\mathrm{i}\kappa_1^l}{4\pi}, & \pmb{x}=\pmb{y}.
	\end{cases}
\end{align*}
with $\mathrm{i}s_l=\kappa_1^l+\mathrm{i}\kappa_2^l$. The operator \( \mathcal{W} \) involves a weakly singular kernel \( \frac{1}{|\pmb{x}-\pmb{y}|} \). 

We further assume that the boundary $\Gamma_D$ has a star-shaped representation
\[
	\Gamma_D=\left\lbrace \pmb p_D(\hat{\pmb{x}})=\pmb{c}+r(\hat{\pmb{x}})\hat{\pmb{x}}:\pmb{c}=(c_1,c_2,c_3)^\top,\hat{\pmb{x}}\in \mathbb{S}^2\right\rbrace,
\]
with $\mathbb{S}^2=\left\lbrace \hat{\pmb{x}}(\theta,\phi)=(\sin\theta\cos\phi,\sin\theta\sin\phi,\cos\theta)^\top:\theta\in[0,\pi],\phi\in[0,2\pi)\right\rbrace $ \\and $r(\hat{\pmb{x}})>0$ for all $\hat{\pmb{x}}\in\mathbb{S}^2$. Then the point on $\Gamma_D$ can be described as
\[
	\pmb p_D(\hat{\pmb{x}}):=(c_1,c_2,c_3)^\top+r(\hat{\pmb{x}})\hat{\pmb{x}}.
\] 
The observation surface $\Gamma_B$ is parametrized by $\pmb p_B(\hat{\pmb{x}})=R\hat{\pmb{x}}$.

Based on the parametrization map $\pmb{p}_D:\mathbb{S}^2\to\Gamma_D$, the boundary integral equations \eqref{field equation}-\eqref{data equation} defined on $\Gamma_D$ can be transformed on the unit sphere $\mathbb{S}^2$, which means
\[
	\int_{\Gamma_D}g(\pmb{x})\mathrm{d}s_{\pmb{x}}=\int_{\mathbb{S}^2}g(\pmb{p}_D(\hat{\pmb{x}}))J_{D}(\hat{\pmb{x}})\mathrm{d}s_{\hat{\pmb{x}}},
\]
where $J_D(\hat{\pmb{x}})$ is the Jacobian of the transformation $\pmb{p}_D$. Therefore, the singular integral equation \eqref{singular integral} can be transformed to 
\begin{equation}\label{parametrization singular}
	(\mathcal{W}(s_l)\hat{\varPsi}_l)(\hat{\pmb{x}})=\int_{\mathbb{S}^2}\left( \frac{1}{|\hat{\pmb{x}}-\hat{\pmb{y}}|}\tilde{w}_1^l(\hat{\pmb{x}},\hat{\pmb{y}})+\tilde{w}_2^l(\hat{\pmb{x}},\hat{\pmb{y}})	\right) \hat{\varPsi}_l(\hat{\pmb{y}})\mathrm{d}s_{\hat{\pmb{y}}},
\end{equation}
where $\hat{\varPsi}_l(\hat{\pmb{x}})=\hat{\psi}_l(\pmb{p}_D(\hat{\pmb{x}}))$ and 
\[
	\tilde{w}_1^l(\hat{\pmb{x}},\hat{\pmb{y}})=R(\hat{\pmb{x}},\hat{\pmb{y}})w_1^l(\pmb{p}_D(\hat{\pmb{x}}),\pmb{p}_D(\hat{\pmb{y}}))J_D(\hat{\pmb{y}}),\quad \tilde{w}_2^l(\hat{\pmb{x}},\hat{\pmb{y}})=w_2^l(\pmb{p}_D(\hat{\pmb{x}}),\pmb{p}_D(\hat{\pmb{y}}))J_D(\hat{\pmb{y}}),
\]
with $R(\hat{\pmb{x}},\hat{\pmb{y}})=\frac{|\hat{\pmb{x}}-\hat{\pmb{y}}|}{|\pmb{p}_D(\hat{\pmb{x}})-\pmb{p}_D(\hat{\pmb{y}})|}$.
\vspace{1ex}
	
From \cite{Ganesh,Ivanyshyn1}, the spherical harmonics
\begin{align}\label{spherical harmonics}
		Y_{k,j}(\hat{\pmb{x}})=Y_{k,j}(\hat{\pmb{x}}(\theta,\phi))=c_k^jP_k^{|j|}(\cos\theta)e^{\mathrm{i}j\phi},\quad k\in\mathbb{N}, ~|j|\leq k,
\end{align}
form a complete orthonormal system in $L^2(\mathbb{S}^2)$, where the coefficients are given by
\[ 
	c_k^j=(-1)^{(|j|-j)/2}\sqrt{\frac{2k+1}{4\pi}\frac{(k-|j|)!}{(k+|j|)!}}
\]
and \( P_k^{|j|} \) denotes the associated Legendre function with degree \( k \) and order \( |j| \). Thus, we choose the following $(\breve{n}+1)^2$-dimensional space to approximate the density $\hat{\varPsi}_l$ defined on the unit sphere
\begin{equation*}
	X_{\breve{n}}=\mathrm{span}\left\lbrace Y_{k,j}(\hat{\pmb{x}}):0\leq k\leq \breve{n},|j|\leq k\right\rbrace ,
\end{equation*}
that means 
\[
	\hat{\varPsi}_l(\hat{\pmb{x}})=\sum_{k=0}^{\breve{n}}\sum_{j=-k}^{k}a_{kj}^lY_{k,j}(\hat{\pmb{x}}),\quad a_{kj}^l\in\mathbb{C}.
\]
Accordingly, the determination of \(\hat{\varPsi}_l\) in~\eqref{parametrization singular} is reduced to the computation of the spherical harmonic coefficients \(a_{kj}^l\).

The standard Galerkin method is to seek an approximate solution $\hat{\varPsi}_l\in X_{\breve{n}}$ satisfying
\begin{equation*}
\left(\mathcal{W}	(s_l)\hat{\varPsi}_l, \chi_{\breve{n}}\right)=\left(-\hat{u}_l^{\rm inc},\chi_{\breve{n}}\right),\quad {\rm for~all}~ \chi_{\breve{n}}\in X_{\breve{n}}.
\end{equation*}
For $l=0,1,\cdots,N$, we compute $\hat{\varPsi}_l$ and $\hat{u}_l^{\rm sc}$ by using the Galerkin method proposed in \cite{Ganesh,Ivanyshyn1}.

\section{The inverse scheme}
 
The classical nonlinear integral equation method involves two steps: solving for the density function via the field equation at the current approximation, and linearizing the data equation to obtain an update. The treatment of singular integrals is unavoidable in the first step. Although the two-dimensional case has been effectively addressed in \cite{Zhao2021}, the three-dimensional case entails substantially higher computational and memory costs due to the need to solve $N+1$ singular integral equations. In the following subsection, we introduce a novel and more efficient computational approach for the time-domain inverse scattering problem.

\subsection{The Novel Iterative Approach}
Inspired by the iterative framework of the nonlinear integral equation method and the auxiliary surface idea of the decomposition method \cite{Colton}, we propose a novel iterative algorithm to reconstruct both the shape and location of the obstacle simultaneously. The key point technique is analytic continuation which avoids singular integrals in the field equation \eqref{field equation}, and thereby significantly simplifying the numerical implementation. In particular, we construct the single layer potential on a homothetic surface andprove that the scattered field generated by the homothetic surface can arbitrarily approximate the exact scattered field.

\subsubsection{Homothetic surface}
Based on the representation on $\Gamma_D$, we introduce a homothetic surface which captures the geometric information of the obstacle boundary.
Specifically, we fix a geometric contraction factor $0<\varsigma<1$ and define an interior domain $D'\subset D$ with the homothetic boundary
\[
\Gamma_{D'}=\left\{\pmb p_{D'}(\hat{\pmb{x}})=\pmb{c}+\varsigma\, r(\hat{\pmb{x}})\hat{\pmb{x}}:\ \hat{\pmb{x}}\in\mathbb{S}^2\right\}.
\]
The scattered field $u_{D',\mathfrak{g}}^{\rm sc}$ can be represented as 
\begin{equation}\label{data equation new}
	u_{D',\mathfrak{g}}^{\rm sc}(\pmb{x},t)=\int_{0}^{t}\int_{\Gamma_{D'}}G(|\pmb{x}-\pmb{y}|,t-\tau)\mathfrak{g}(\pmb{y},\tau)\mathrm{d}s_{\pmb{y}}\mathrm{d}\tau.
\end{equation}
It is easy to verify that the above single layer potential in \eqref{data equation new} satisfies the wave equation in $\mathbb{R}^3 \setminus \overline{D}$. According to the Dirichlet boundary condition, the unknown density $\mathfrak{g}$ should satisfy
\begin{equation}\label{field equation new}
	\int_{0}^{t}\int_{\Gamma_{D'}}G(|\pmb{x}-\pmb{y}|,t-\tau)\mathfrak{g}(\pmb{y},\tau)\mathrm{d}s_{\pmb{y}}\mathrm{d}\tau=-u^{\rm inc}(\pmb{x},t),\quad (\pmb{x},t)\in\Gamma_D\times[0,T].
\end{equation}

The following two theorems show that the scattered field generated by the homothetic surface can approximate the exact scattered field arbitrarily.
\begin{theorem}\label{4.1}
	The single layer operator $$\mathcal{S}_{D'}:H_{\sigma}^p(\mathbb{R}; H^{-1/2}(\Gamma_{D'})) \to H_{\sigma}^{p-1}(\mathbb{R}; H^{1/2}(\Gamma_D)),$$
	defined by 
	\[
	\left( \mathcal{S}_{D'}\mathfrak{g}\right) (\pmb{x},t)=\int_{0}^{t}\int_{\Gamma_{D'}}G(|\pmb{x}-\pmb{y}|,t-\tau)\mathfrak{g}(\pmb{y},\tau)\mathrm{d}s_{\pmb{y}}\mathrm{d}\tau,\quad \pmb{x}\in\Gamma_D,
	\]
is injective and has dense range.
\end{theorem}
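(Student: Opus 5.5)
The plan is to pass to the Laplace domain and reduce both claims to statements about the $s$-domain single layer operator
\[
(\mathcal{W}_{D'}(s)g)(\pmb{x})=\int_{\Gamma_{D'}}G_s(|\pmb{x}-\pmb{y}|,s)g(\pmb{y})\,\mathrm{d}s_{\pmb{y}},\quad \pmb{x}\in\Gamma_D,
\]
for each fixed $s$ with $\Re(s)=\sigma>0$. Because $\Gamma_{D'}$ and $\Gamma_D$ are disjoint (here $0<\varsigma<1$ and $D$ is star-shaped with respect to $\pmb c$), the kernel is smooth. So $\mathcal{W}_{D'}(s)$ is a bounded (indeed compact) map $H^{-1/2}(\Gamma_{D'})\to H^{1/2}(\Gamma_D)$. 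Its norm grows at most polynomially in $|s|$, which I will bound crudely by $C(\sigma_0)|s|$ using standard Laplace-domain single layer estimates in the spirit of Lemma~\ref{lemma31}. Since $\mathcal{L}[\mathcal{S}_{D'}\mathfrak g](s)=\mathcal{W}_{D'}(s)\mathcal{L}[\mathfrak g](s)$, this gives boundedness $H^p_\sigma\to H^{p-1}_\sigma$ by integrating along $\Re s=\sigma$.

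\textbf{Injectivity.} Suppose $\mathcal{S}_{D'}\mathfrak g=0$. Then $\mathcal{W}_{D'}(s)\breve g(s)=0$ for a.e.\ $s$ on the line. For such $s$, set $v=\int_{\Gamma_{D'}}G_s\breve g\,\mathrm{d}s_{\pmb y}$. This $v$ solves $\Delta v-s^2v=0$ in $\mathbb{R}^3\setminus\Gamma_{D'}$, decays exponentially at infinity, and vanishes on $\Gamma_D$. Since $\Re s>0$, the exterior Dirichlet problem outside $D$ has only the trivial solution (integrate $\overline{v}$ times the equation, i.e.\ the coercive form behind $\|\cdot\|_{1,s}$). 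Hence $v=0$ in $\mathbb{R}^3\setminus\overline D$. By analyticity, $v=0$ in $\mathbb{R}^3\setminus\overline{D'}$, so $v^+=0$ on $\Gamma_{D'}$. Continuity of the single layer gives $v^-=0$ on $\Gamma_{D'}$ as well. The interior Dirichlet problem in $D'$ for $\Delta-s^2$ with $\Re s>0$ is uniquely solvable: $s^2$ is not a negative real Dirichlet eigenvalue, and if $s$ is purely... in fact $\Re s>0$ excludes $s^2\in(-\infty,0]$. So $v=0$ in $D'$ too. The jump relation $\breve g=\partial_\nu v^- -\partial_\nu v^+$ then gives $\breve g(s)=0$ a.e., and therefore $\mathfrak g=0$ by injectivity of the Laplace transform on $\mathcal L'_\sigma$.

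\textbf{Dense range.} I will work in the Hilbert space $H^{p-1}_\sigma(\mathbb{R};H^{1/2}(\Gamma_D))$. Its dual, under the pairing $\int|s|^{2(p-1)}\langle\cdot,\cdot\rangle\,\mathrm ds$ in the Laplace variable, is identified with $H^{-(p-1)}_\sigma(\mathbb R;H^{-1/2}(\Gamma_D))$. It suffices to show that any $h$ annihilating the range vanishes. Testing against $\mathfrak g$ whose Laplace transforms are $\chi(s)\varphi$, with $\chi$ ranging over a dense family of scalar functions on the line and $\varphi\in H^{-1/2}(\Gamma_{D'})$, reduces this to the pointwise condition $\mathcal{W}_{D'}(s)^*\breve h(s)=0$ for a.e.\ $s$. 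The adjoint is
\[
\mathcal{W}_{D'}(s)^*\eta(\pmb{y})=\int_{\Gamma_D}\overline{G_s(|\pmb{x}-\pmb{y}|,s)}\eta(\pmb x)\,\mathrm ds_{\pmb x}=\Big(\int_{\Gamma_D}G_{\bar s}(|\pmb x-\pmb y|,\bar s)\eta(\pmb x)\,\mathrm ds_{\pmb x}\Big),\quad \pmb y\in\Gamma_{D'},
\]
which is a single layer potential on $\Gamma_D$ with parameter $\bar s$, where $\Re\bar s>0$, evaluated on $\Gamma_{D'}$. Call it $w$. It solves $\Delta w-\bar s^2w=0$ in $D$ and vanishes on $\Gamma_{D'}$, so by interior uniqueness in $D'$ we get $w=0$ in $D'$. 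Unique continuation then gives $w=0$ in $D$, hence $w^-=0$ on $\Gamma_D$. Continuity of the trace yields $w^+=0$ on $\Gamma_D$. Exterior uniqueness (again $\Re\bar s>0$) gives $w=0$ outside $D$, and the jump relation gives $\eta=0$. Thus $\breve h(s)=0$ a.e., so $h=0$. The Hahn--Banach theorem then gives density.

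\textbf{Main obstacle.} The delicate step is the rigorous passage from the time-domain statements to the pointwise-in-$s$ ones: identifying the dual space with its weights, and justifying that annihilation of the full range forces $\mathcal W_{D'}(s)^*\breve h(s)=0$ a.e. I would handle this by choosing $\chi$ with compact support on the line, e.g.\ Laplace transforms of smooth causal functions suitably localised, together with a countable dense set of $\varphi$. The elliptic uniqueness steps are standard once $\Re s>0$.
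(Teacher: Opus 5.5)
Your proposal is correct and is essentially the paper's proof. You pass to the Laplace domain and prove injectivity by the same chain: exterior uniqueness for $\Re s>0$, unique continuation to $\mathbb{R}^3\setminus\overline{D'}$, continuity of the single layer, interior uniqueness in $D'$, and the jump relation. For dense range you use injectivity of the adjoint via the mirrored chain, and your pointwise adjoint $\mathcal{W}_{D'}(s)^*$ with parameter $\bar s$ has exactly the kernel $e^{s|\pmb{x}-\pmb{y}|}/(4\pi|\pmb{x}-\pmb{y}|)$, $\Re s=-\sigma$, that the paper obtains by Fourier transforming its time-domain (advanced) adjoint on $H_{-\sigma}^{1-p}$.

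One caution about your final paragraph. A nonzero $\chi$ with compact support on the line cannot be the Laplace transform of a causal function, since such transforms are boundary values of functions analytic in $\Re s>\sigma$. So the reduction to $\mathcal{W}_{D'}(s)^*\breve h(s)=0$ a.e.\ must use non-causal $\mathfrak g$. This is admissible only because the domain is $H_\sigma^p(\mathbb{R};\cdot)$ and not $H_\sigma^p(\mathbb{R}_{>0};\cdot)$, which is precisely the point of the remark after Theorem~\ref{4.2}.
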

\begin{proof}
	To prove injectivity of $\mathcal{S}_{D'}:H_{\sigma}^p(\mathbb{R}; H^{-1/2}(\Gamma_{D'}))\to H_{\sigma}^{p-1}(\mathbb{R}; H^{1/2}(\Gamma_D))$, assume  $\mathcal{S}_{D'}\mathfrak{g}=0$ in $H_{\sigma}^{p-1}(\mathbb{R}; H^{1/2}(\Gamma_D))$ for $\mathfrak{g}\in H_{\sigma}^p(\mathbb{R}; H^{-1/2}(\Gamma_{D'}))$. Applying the Laplace transform to $\mathcal{S}_{D'}\mathfrak{g}$ yields
	\begin{equation*}
		\begin{aligned}
			\mathcal{L}\left[ \mathcal{S}_{D'}\mathfrak{g}\right] (\pmb{x},s)=&\int_{-\infty}^{\infty}e^{-st}\int_{\Gamma_{D'}}\frac{\mathfrak{g}(\pmb{y},t-|\pmb{x}-\pmb{y}|)}{4\pi|\pmb{x}-\pmb{y}|}\mathrm{d}s_{\pmb{y}}\mathrm{d}t\\
			=&\int_{\Gamma_{D'}}\frac{1}{4\pi|\pmb{x}-\pmb{y}|}\int_{-\infty}^{\infty}e^{-st}\mathfrak{g}(\pmb{y},t-|\pmb{x}-\pmb{y}|)\mathrm{d}t\mathrm{d}s_{\pmb{y}}\\
			=&\int_{\Gamma_{D'}}\frac{e^{-s|\pmb{x}-\pmb{y}|}}{4\pi|\pmb{x}-\pmb{y}|}\left( \int_{-\infty}^{\infty}e^{-s\tau}\mathfrak{g}(\pmb{y},\tau)\mathrm{d}\tau\right) \mathrm{d}s_{\pmb{y}}\\
			=&\int_{\Gamma_{D'}}\frac{e^{-s|\pmb{x}-\pmb{y}|}}{4\pi|\pmb{x}-\pmb{y}|}\breve{\mathfrak{g}} (\pmb{y},s) \mathrm{d}s_{\pmb{y}},
		\end{aligned}
	\end{equation*}
	where $\breve{\mathfrak{g}}=\mathcal{L}\left[ \mathfrak{g}\right] $ denotes the Laplace transform of $\mathfrak{g}$ and $s\in\sigma+{\rm i}\mathbb{R}$ with $\sigma>\sigma_0>0$.
	
	Define 
	\[
	 v (\pmb{x},s):=\breve{\mathcal{S}}_{D'}\breve{\mathfrak{g}}(\pmb{x},s)=\int_{\Gamma_{D'}}\frac{e^{-s|\pmb{x}-\pmb{y}|}}{4\pi|\pmb{x}-\pmb{y}|}\breve{\mathfrak{g}} (\pmb{y},s) \mathrm{d}s_{\pmb{y}},\quad \pmb{x}\in  \mathbb{R}^3\setminus\Gamma_{D'}.
	\]
	Note that the kernel $\frac{e^{-s|\pmb{x}-\pmb{y}|}}{4\pi|\pmb{x}-\pmb{y}|}$ decays exponentially as $|\pmb{x}|\to\infty$.
	Since $ v (\pmb{x},s)$ satisfies $\Delta v +(\mathrm{i}s)^2 v =0$ in $\mathbb{R}^3\setminus\overline{D}$ and vanishes on $\Gamma_D$, the uniqueness result for the exterior problem of the Helmholtz equation with complex wavenumber \cite{Lubich1994} implies $ v =0$ in $\mathbb{R}^3\setminus\overline{D}$ for all $s\in\sigma+{\rm i}\mathbb{R}$. By the unique continuation property, $ v =0$ extends to $\mathbb{R}^3\setminus\overline{D'}$. Furthermore, the continuity of single layer potential gives $ v =0$ on $\Gamma_{D'}$. As $ v $ also satisfies $\Delta v +(\mathrm{i}s)^2 v =0$ in $D'$, the uniqueness of the interior Helmholtz equation problem yields $ v =0$ in $D'$. Thus $ v =0$ in $\mathbb{R}^3$. In this case, the jump relationship for single layer potential implies that $\breve{\mathfrak{g}}(\pmb{y},s)\in H^{-1/2}(\Gamma_{D'})$ vanishes for all $s\in\sigma+{\rm i}\mathbb{R}$. The injectivity of $\mathcal{S}_{D'}$ then follows from the inverse Laplace transform.
	
	To prove the denseness of the range of \[\mathcal{S}_{D'}:H_{\sigma}^p(\mathbb{R}; H^{-1/2}(\Gamma_{D'}))\to H_{\sigma}^{p-1}(\mathbb{R}; H^{1/2}(\Gamma_D)),\] it suffices to show that its $L^2$-adjoint 
	\[
	\mathcal{S}_{D'}^{*}:H_{-\sigma}^{1-p}(\mathbb{R}; H^{-1/2}(\Gamma_D))\to H_{-\sigma}^{-p}(\mathbb{R}; H^{1/2}(\Gamma_{D'})),
	\]
	given explicitly by
	\[
	\left( \mathcal{S}_{D'}^{*}\mathfrak{h}\right) (\pmb{y},t)=\int_{\Gamma_D}\frac{\mathfrak{h}(\pmb{x},t+|\pmb{x}-\pmb{y}|)}{4\pi|\pmb{x}-\pmb{y}|}\mathrm{d}s_{\pmb{x}},
	\]
	is injective.
	
	Suppose $\mathfrak{h}\in H_{-\sigma}^{1-p}(\mathbb{R}; H^{-1/2}(\Gamma_D))$ satisfies $\mathcal{S}_{D'}^{*}\mathfrak{h}=0$ in $H_{-\sigma}^{-p}(\mathbb{R}; H^{1/2}(\Gamma_{D'}))$, that means we can take Fourier transform to $e^{\sigma t}\mathcal{S}_{D'}^{*}\mathfrak{h}$, i.e.,
	\begin{equation*}
		\begin{aligned}
			\mathcal{F }\left[e^{\sigma t}\mathcal{S}_{D'}^{*}\mathfrak{h}\right]=&\int_{-\infty}^{\infty}e^{-\mathrm{i}\omega t}e^{\sigma t}\int_{\Gamma_D}\frac{\mathfrak{h}(\pmb{x},t+|\pmb{x}-\pmb{y}|)}{4\pi|\pmb{x}-\pmb{y}|}\mathrm{d}s_{\pmb{x}}\mathrm{d}t\\
			=&\int_{\Gamma_D}\frac{1}{4\pi|\pmb{x}-\pmb{y}|}\int_{-\infty}^{\infty}e^{-(-\sigma+\mathrm{i}\omega)t}\mathfrak{h}(\pmb{x},t+|\pmb{x}-\pmb{y}|)\mathrm{d}t\mathrm{d}s_{\pmb{x}}\\
			=&\int_{\Gamma_D}\frac{e^{s|\pmb{x}-\pmb{y}|}}{4\pi|\pmb{x}-\pmb{y}|}\left( \int_{-\infty}^{\infty}e^{-s\tau}\mathfrak{h}(\pmb{x},\tau)\mathrm{d}\tau\right) \mathrm{d}s_{\pmb{x}},
		\end{aligned}
	\end{equation*}
	where $s=-\sigma+\mathrm{i}\omega$. Define
	\[
	\breve{\mathfrak{h}}(\pmb{x},s):=\int_{-\infty}^{\infty}e^{-s\tau}\mathfrak{h}(\pmb{x},\tau)\mathrm{d}\tau=\mathcal{F}\left[ e^{\sigma t}\mathfrak{h}\right] ,
	\]
	and the potential
	\[
	 v ^*(\pmb{y},s):=\breve{\mathcal{S}}_{D'}^*\breve{\mathfrak{h}}(\pmb{y},s)=\int_{\Gamma_D}\frac{e^{s|\pmb{x}-\pmb{y}|}}{4\pi|\pmb{x}-\pmb{y}|}\breve{\mathfrak{h}}(\pmb{x},s)\mathrm{d}s_{\pmb{x}},\quad \pmb{y}\in\mathbb{R}^3\setminus \Gamma_D,
	\]
	for $s\in -\sigma+\mathrm{i}\mathbb{R}$ with $\sigma>\sigma_0>0$. 

	Since $ v ^*$ satisfies $\Delta v ^*+(-\mathrm{i}s)^2 v ^*=0$ in $D'$ and vanishes on $\Gamma_{D'}$, the uniqueness of the interior problem implies $ v ^*=0$ in $D'$ for all $s\in-\sigma+\mathrm{i}\mathbb{R}$. By the unique continuation property, $ v ^*=0$ extends to $D$. Furthermore, the continuity of single layer potential gives $ v ^*=0$ on $\Gamma_D$. Similar to $\breve{\mathcal{S}}_{D'}$, the kernel $\frac{e^{s|\pmb{x}-\pmb{y}|}}{4\pi|\pmb{x}-\pmb{y|}}$ of $\breve{\mathcal{S}}_{D'}^{*}$ also decays exponentially as $|\pmb{y}|\to\infty$ since $\Re(s)=-\sigma<0$. As $ v ^*$ satisfies Helmholtz equation $\Delta v ^*+(-\mathrm{i}s)^2 v ^*=0$ in $\mathbb{R}^3\setminus\overline{D}$,  the uniqueness of the exterior problem yields $ v ^* = 0$ in $\mathbb{R}^3$. Consequently, the jump relationship implies $\breve{\mathfrak{h}} = 0$ in $H^{-1/2}(\Gamma_D)$ for all $s \in -\sigma + \mathrm{i}\mathbb{R}$.
	Finally, applying the Parseval identity shows that the density $\mathfrak{h}$ vanishes on $\Gamma_D$ in the time domain, thus establishing the injectivity.
\end{proof}

\begin{theorem}\label{4.2}
	Under the assumptions of Lemma~\ref{lemma31}, let $u_D^{\rm sc}$ be the exact scattered field in $\mathbb{R}^3 \setminus \overline{D}$. 
	Then, for any $\varepsilon>0$, there exists $\mathfrak{g}\in H_{\sigma}^p(\mathbb{R}; H^{-1/2}(\Gamma_{D'}))$ such that
	\begin{equation*}
		\left\|u_{D',\mathfrak{g}}^{\rm sc}(\pmb{x},t)-u_D^{\rm sc}(\pmb{x},t)\right\|_{H_{\sigma}^{p-5/2}(\mathbb{R};H^1(\mathbb{R}^3\setminus\overline{D}))}
		\le \varepsilon.
	\end{equation*}
\end{theorem}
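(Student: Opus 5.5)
The plan is to combine the dense-range part of Theorem~\ref{4.1} with the stability estimate of Lemma~\ref{lemma31}, applied to the difference of the two fields. Both $u_{D',\mathfrak{g}}^{\rm sc}$ and $u_D^{\rm sc}$ solve the wave equation in $\left(\mathbb{R}^3\setminus\overline{D}\right)\times\mathbb{R}$ and vanish for $t\le 0$. This holds for $u_{D',\mathfrak{g}}^{\rm sc}$ because $\Gamma_{D'}\subset D$ and, after restricting to causal densities, the retarded potential is causal. Hence $w:=u_{D',\mathfrak{g}}^{\rm sc}-u_D^{\rm sc}$ solves the exterior Dirichlet problem of Lemma~\ref{lemma31} with boundary data
\[
w|_{\Gamma_D}=\mathcal{S}_{D'}\mathfrak{g}-(-u^{\rm inc})=\mathcal{S}_{D'}\mathfrak{g}+u^{\rm inc}.
\]
By uniqueness in Lemma~\ref{lemma31}, $w$ is the unique solution with this boundary data.

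Next I apply the estimate of Lemma~\ref{lemma31} with the index $p$ replaced by $p-1$. The lemma is stated for arbitrary real $p$, so this gives
\[
\|w\|_{H_{\sigma}^{p-5/2}(\mathbb{R};H^1(\mathbb{R}^3\setminus\overline{D}))}\le C(\sigma)\,\|\mathcal{S}_{D'}\mathfrak{g}+u^{\rm inc}\|_{H_{\sigma}^{p-1}(\mathbb{R};H^{1/2}(\Gamma_D))}.
\]
This accounts for the exponent $p-5/2=(p-1)-3/2$ in the statement. Concretely, one can verify the estimate through the Laplace-domain bound quoted after Lemma~\ref{lemma31}: $\|\breve w\|_{1,s}\le C|s|^{3/2}\|\breve w|_{\Gamma_D}\|_{H^{1/2}}/\sigma$. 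One then integrates $|s|^{2(p-5/2)}$ times its square along $\Re s=\sigma$ and uses the equivalence of $\|\cdot\|_{1,s}$ and the $H^1$ norm for fixed $\sigma$.

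It remains to make the right-hand side small. The data $-u^{\rm inc}$ lies in $H_{\sigma}^p(\mathbb{R}_{>0};H^{1/2}(\Gamma_D))\subset H_{\sigma}^{p-1}(\mathbb{R};H^{1/2}(\Gamma_D))$. For fixed $\sigma>0$ we have $|s|\ge\sigma$, so $|s|^{2(p-1)}\le\sigma^{-2}|s|^{2p}$, which gives this embedding. By Theorem~\ref{4.1}, the range of $\mathcal{S}_{D'}$ is dense in $H_{\sigma}^{p-1}(\mathbb{R};H^{1/2}(\Gamma_D))$. Therefore there is $\mathfrak{g}\in H_{\sigma}^p(\mathbb{R};H^{-1/2}(\Gamma_{D'}))$ with
\[
\|\mathcal{S}_{D'}\mathfrak{g}+u^{\rm inc}\|_{H_{\sigma}^{p-1}}\le \varepsilon/C(\sigma),
\]
and the estimate above yields the claim.

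The main obstacle is the causality step. Lemma~\ref{lemma31} is formulated on $\mathbb{R}_{>0}$, whereas Theorem~\ref{4.1} gives density in $H_\sigma^{p-1}(\mathbb{R};\cdot)$, and the approximating $\mathfrak{g}$ need not vanish for $t<0$. I would handle this by working directly in the Laplace domain. The bound $\|\breve w(s)\|_{1,s}\lesssim|s|^{3/2}\|\breve w(s)|_{\Gamma_D}\|_{H^{1/2}}/\sigma$ holds pointwise for each $s$ with $\Re s=\sigma$, and it only uses that $\breve w(s)$ is the unique exponentially decaying solution of the exterior problem for $\Delta-s^2$ with Dirichlet data $\breve w(s)|_{\Gamma_D}$. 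That this holds for $\breve{\mathcal{S}}_{D'}\breve{\mathfrak g}+\breve u^{\rm inc}$ follows as in the proof of Theorem~\ref{4.1}. Integrating along the line $\Re s=\sigma$ then gives the estimate without any causality assumption, since the $H_\sigma$ norms are defined purely through Laplace transforms on that line.
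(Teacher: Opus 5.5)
Your proposal is correct and follows the paper's proof. Theorem~\ref{4.1} supplies $\mathfrak{g}$ with $\|\mathcal{S}_{D'}\mathfrak{g}+u^{\rm inc}\|_{H_\sigma^{p-1}(\mathbb{R};H^{1/2}(\Gamma_D))}$ arbitrarily small, and the stability estimate of Lemma~\ref{lemma31} with $p$ replaced by $p-1$, applied to the difference field with Dirichlet data $\mathcal{S}_{D'}\mathfrak{g}+u^{\rm inc}$, gives the bound in $H_\sigma^{p-5/2}$; your extra checks (the embedding $H_\sigma^p\subset H_\sigma^{p-1}$ via $|s|\ge\sigma$, and using the Laplace-domain bound pointwise on $\Re s=\sigma$ because Lemma~\ref{lemma31} is stated on $\mathbb{R}_{>0}$ while the approximating density need not be causal) make explicit a step the paper passes over silently.
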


\begin{proof}
	By Theorem \ref{4.1}, given any $\varepsilon > 0$, there exists $\mathfrak{g} \in H_{\sigma}^p(\mathbb{R}; H^{-1/2}(\Gamma_{D'}))$ such that
	\[
	\left\| \left( \mathcal{S}_{D'}\mathfrak{g}\right) (\pmb{x},t)+u^{\rm inc}(\pmb{x},t)\right\| _{H_{\sigma}^{p-1}(\mathbb{R};H^{1/2}(\Gamma_D))}<\varepsilon.
	\]
	Since the scattered field depends continuously on the boundary data by Lemma \ref{lemma31}, it follows that
	\begin{align*}
		&\left\| u_{D',\mathfrak{g}}^{\rm sc}\left( \pmb{x},t\right)-u_D^{\rm sc}(\pmb{x},t)\right\| _{H_{\sigma}^{p-5/2}(\mathbb{R};H^1(\mathbb{R}^3\setminus\overline{D}))}\\
		\leq &~C\left\|u_{D',\mathfrak{g}}^{\rm sc}(\pmb x,t)|_{\Gamma_D}-u_D^{\rm sc}(\pmb x,t)|_{\Gamma_D}\right\|_{H_{\sigma}^{p-1}(\mathbb{R};H^{1/2}(\Gamma_D))}\\
		= &~C\left\| \left( \mathcal{S}_{D'}\mathfrak{g}\right) (\pmb{x},t)-u_D^{\rm sc}(\pmb{x},t)|_{\Gamma_D}\right\| _{H_{\sigma}^{p-1}(\mathbb{R};H^{1/2}(\Gamma_D))}.
	\end{align*}
	The proof is completed by noting that $u_D^{\rm sc} = - u^{\rm inc}$ on $\Gamma_D$.
\end{proof}
\begin{remark}
	Unfortunately, the proofs of Theorems \ref{4.1} and \ref{4.2} can be established only in the space $H_{\sigma}^{p}(\mathbb{R}; X)$, rather than in $H_{\sigma}^{p}(\mathbb{R}_{>0}; X)$, to ensure the density of the range of $\mathcal{S}_{D'}$. As shown in\cite{Chen2}, counterexamples can be readily constructed to demonstrate that the operator $\mathcal{S}_{D'}$ does not possess a dense range in the space $H_{\sigma}^{p}(\mathbb{R}_{>0}; X)$.
\end{remark}

\subsubsection{Iterative framework}
We have demonstrated the approximation property of the scattered field generated by the homothetic surface in Theorem \ref{4.2}. This result provides the mathematical foundation for the iterative approach developed below.

The time discretization of the time-domain equivalent field equation \eqref{field equation new} is performed using the convolution quadrature method described in Subsection \ref{3.2}. This discretization leads to a system of equivalent field equations in the $s$-domain
\begin{equation}\label{field equation new sdomain}
	(\mathcal{W}_{D'}(s_l)\hat{\mathfrak{g}}_l)(\pmb{x})=-\hat{u}_l^{\mathrm{inc}}(\pmb{x}),\quad l=0,\cdots,N,~\pmb{x}\in\Gamma_D,
\end{equation}
where the operator $\mathcal{W}_{D'}: H^{-1/2}(\Gamma_{D'}) \to H^{1/2}(\Gamma_D)$ is defined by
\[
(\mathcal{W}_{D'}(s_l)\hat{\mathfrak{g}}_l)(\pmb{x})=\int_{\Gamma_{D'}}G_s(|\pmb{x}-\pmb{y}|,s_l)\hat{\mathfrak{g}}_l(\pmb{y})\mathrm{d}s_{\pmb{y}}.
\]
Notably, the equivalent field equations \eqref{field equation new sdomain} in the $s$-domain still retain the boundary information of the unknown obstacle $D$. However, these equations are ill-posed. To address this issue, we employ Tikhonov regularization by solving the following regularized equivalent field equations for $l=0,\cdots,N$, i.e., 
\begin{equation}\label{regularization}
	 \left( \alpha \mathbb{I}+\mathcal{W}^*_{D'}(s_l)\mathcal{W}_{D'}(s_l)\right)\hat{\mathfrak{g}}_l^\alpha(\pmb{x})=-\left( \mathcal{W}^*_{D'} (s_l)\hat{u}_l^{\rm inc}\right) (\pmb{x}),\quad \pmb{x}\in\Gamma_D,
\end{equation}
where $\alpha>0$ is a regularization parameter, \(\mathbb{I}\) is an identity operator, and the $L^2$-adjoint operator $\mathcal{W}_{D'}^*: H^{1/2}(\Gamma_D) \to H^{-1/2}(\Gamma_{D'})$ is defined by
\[
(\mathcal{W}^*_{D'}(s_l)\psi)(\pmb{x})=\int_{\Gamma_D}\overline{G_s(|\pmb{x}-\pmb{y}|,s_l)}\psi(\pmb{y})\mathrm{d}s_{\pmb{y}}.
\]

The corresponding equivalent data equations in the $s$-domain are given by
\begin{equation}\label{data equation new s domain}
	\mathcal{W}_{D'}^{B}(s_l)\hat{\mathfrak{g}}_l^\alpha(\pmb{x}):=\int_{\Gamma_{D'}}G_s(|\pmb{x}-\pmb{y}|,s_l)\hat{\mathfrak{g}}_l^\alpha(\pmb{y})\mathrm{d}s_{\pmb{y}}=\hat{u}_{D,l}^{\rm sc}(\pmb{x}),\quad \pmb{x}\in\Gamma_B,
\end{equation}
for $l=0,\cdots,N$. Therefore, based on the regularized equivalent field equations \eqref{regularization} and the equivalent data equations \eqref{data equation new s domain} in the $s$-domain, we propose the iterative method as follows:
\begin{itemize}
	\item  Given an initial guess for $\Gamma_D$, solve the regularized equivalent field equation \eqref{regularization} to obtain the  regularized density $\hat{\mathfrak{g}}_l^\alpha$;
	\item For the fixed $\hat{\mathfrak{g}}_l^\alpha$, linearize the equivalent data equation \eqref{data equation new s domain} with respect to $\Gamma_D$, since the homothetic surface $\Gamma_{D'}$ carries the geometric information of $\Gamma_D$. The linearized equation yields a boundary update $\Delta \Gamma$, and the boundary is updated by \( \Gamma^{(k+1)}_D = \Gamma^{(k)}_D + \Delta \Gamma^{(k)}\).
\end{itemize}
These two steps are repeated alternately until the relative error of the scattered field meets a predetermined tolerance. A complete description of the algorithmic procedure is given in Table~\ref{algorithm} in the next subsection.

\subsection{Numerical  implementation}
In this section, we provide a detailed presentation of the iterative algorithm. In particular, we derive its parametrized and discretized formulations.
\subsubsection{Parametrization} 
For $l=0,\cdots,N$, we use $W_{D'}$ and $W_{D'}^{B}$ to represent the parametrized integral operators $\mathcal{W}_{D'}$ and $\mathcal{W}_{D'}^{B}$, respectively,
\begin{align*}
	\left(W_{D'}(s_l)\hat{\mathfrak{h}}_l^\alpha\right)(\hat{\pmb{x}})=&\int_{\mathbb{S}^2}\frac{e^{-s_l|\pmb p_D(\hat{\pmb{x}})-\pmb p_{D'}(\hat{\pmb{y}})|}}{4\pi|\pmb p_D(\hat{\pmb{x}})-\pmb p_{D'}(\hat{\pmb{y}})|}\hat{\mathfrak{h}}_l^\alpha(\hat{\pmb{y}})J_{D'}(\hat{\pmb{y}})\mathrm{d}s_{\hat{\pmb{y}}},\\
	\left({W}_{D'}^{B}(s_l)[\pmb{p}_D]\hat{\mathfrak{h}}_l^\alpha\right)(\hat{\pmb{x}})=&\int_{\mathbb{S}^2}\frac{e^{-s_l|\pmb p_B(\hat{\pmb{x}})-\pmb p_{D'}(\hat{\pmb{y}})|}}{4\pi|\pmb p_B(\hat{\pmb{x}})-\pmb p_{D'}(\hat{\pmb{y}})|}\hat{\mathfrak{h}}_l^\alpha(\hat{\pmb{y}})J_{D'}(\hat{\pmb{y}})\mathrm{d}s_{\hat{\pmb{y}}},
\end{align*}
where $\hat{\mathfrak{h}}_l^\alpha(\hat{\pmb{y}})=\hat{\mathfrak{g}}_l^\alpha(\pmb{p}_{D'}(\hat{\pmb{y}}))$ and $J_{D'}$ is the Jacobian of the parameterization.

Consequently, the regularized equivalent field equations \eqref{regularization} can be transformed as parameterized integral equations
\begin{equation}\label{field equation parama}
	(\alpha \mathbb{I}+{W}^*_{D'}(s_l){W}_{D'}(s_l))\hat{\mathfrak{h}}_l^\alpha={W}^*_{D'}(s_l)\hat{f}_l,\quad l=0,\cdots,N,
\end{equation}
with $\hat{f}_l=-\hat{u}_l^{\rm inc}(\pmb{p}_{D}(\hat{\pmb{x}}))$. The parametrized adjoint operator $\mathcal{W}^*_{D'}$ is given by
\[
\left({W}^*_{D'}(s_l)\varphi\right)(\hat{\pmb{y}})=\int_{\mathbb{S}^2}\frac{e^{-\overline{s_l}|\pmb p_D(\hat{\pmb{x}})-\pmb p_{D'}(\hat{\pmb{y}})|}}{4\pi|\pmb p_D(\hat{\pmb{x}})-\pmb p_{D'}(\hat{\pmb{y}})|}\varphi(\hat{\pmb{x}})J_{D}(\hat{\pmb{x}})\mathrm{d}s_{\hat{\pmb{x}}},
\]
where $\overline{s_l}$ denotes the complex conjugate of $s_l$ and $J_D$ is the Jacobian of the parametrization. Similarly, the equivalent data equations \eqref{data equation new s domain} in parametrized form are expressed as
\begin{equation}\label{data equation parama}
	W_{D'}^{B}(s_l)[\pmb{p}_D]\hat{\mathfrak{h}}_l^\alpha
	= \hat{w}_{D,l}^{\rm sc},
	\quad l=0,\cdots,N,
\end{equation}
with $\hat{w}_{D,l}^{\rm sc}(\hat{\pmb{x}})
	= \hat{u}_{D,l}^{\rm sc}(\pmb{p}_B(\hat{\pmb{x}}))$.

The linearization of \eqref{data equation parama} with respect to $\pmb p_D$ involves the Fr\'echet derivative of the integral operators $W_{D'}^{B}(s_l)[\pmb{p}_D]$. This derivative can be expressed explicitly as
\begin{equation*}
\left(\left({W_{D'}^{B}}(s_l)\right)'[\pmb{p}_D]\hat{\mathfrak{h}}_l^\alpha\right)\pmb{q}(\hat{\pmb{x}})=\int_{\mathbb{S}^2}\widetilde{S}_l(\hat{\pmb{x}},\hat{\pmb{y}})\hat{\mathfrak{h}}_l^\alpha(\hat{\pmb{y}})J_{D'}(\hat{\pmb{y}})\mathrm{d}s_{\hat{\pmb{y}}},
\end{equation*}
where
\[
\widetilde{S}_l(\hat{\pmb{x}},\hat{\pmb{y}})=(\pmb{p}_{D'}(\hat{\pmb{y}})-\pmb{p}_B(\hat{\pmb{x}}))\cdot\tilde{\pmb{q}}(\hat{\pmb{y}})\left(-s_l-\frac{1}{|\pmb{p}_B(\hat{\pmb{x}})-\pmb{p}_{D'}(\hat{\pmb{y}})|}\right)\frac{e^{-s_l|\pmb p_B(\hat{\pmb x})-\pmb p_{D'}(\hat{\pmb y})|}}{4\pi|\pmb p_B(\hat{\pmb{x}})-\pmb p_{D'}(\hat{\pmb{y}})|^2}.
\]
Here, $\pmb{q}(\hat{\pmb{x}})=\Delta \pmb{c}+\Delta r(\hat{\pmb{x}})\hat{\pmb{x}}$ and $\tilde{\pmb{q}}(\hat{\pmb{x}})=\Delta \pmb{c}+\varsigma\Delta r(\hat{\pmb{x}})\hat{\pmb{x}}$ represent the updates to $\pmb{p}_D$ and $\pmb{p}_{D'}$, respectively.

Let $\pmb p_{D}^{(ll)}$ denote the $ll$-th iteration of the parametrized boundary $\pmb p_{D}$.  The update $\pmb{q}$ is obtained by solving
\begin{equation}\label{update equation}
	\left(\left({W_{D'}^{B}}(s_l)\right)'[\pmb{p}_D^{(ll)}]\hat{\mathfrak{h}}_l^\alpha\right)\pmb{q}=\hat{w}_{D,l}^{\rm sc}-{W}_{D'}^{B}(s_l)[\pmb{p}_D^{(ll)}]\hat{\mathfrak{h}}_l^\alpha.
\end{equation}
In addition, we employ the following relative error estimator to terminate the iterative procedure
\begin{equation}\label{error}
	E_{ll}=\frac{\left\|\hat{w}_{D,l}^{\rm sc}-{W}_{D'}^{B}(s_l)[\pmb{p}_D^{(ll)}]\hat{\mathfrak{h}}_l^\alpha\right\| _{L^2 }}{\left\|\hat{w}_{D,l}^{\rm sc}\right\| _{L^2 }}\leq\epsilon,
\end{equation}
where $\epsilon$ is a small positive constant chosen based on the noise level.

For the boundary surface $\Gamma_D$, since the radial function $r$ is real-valued,
we employ the following space of spherical harmonics to approximate  its update $\Delta r$
\begin{equation*}
	\Delta r(\hat{\pmb x})=\sum_{{k}=1}^{M}\sum_{j=1}^{k}\alpha_{kj}\Im (Y_{k,j}(\hat{\pmb x}))+\sum_{k=0}^{M}\sum_{j=0}^{k}\beta_{kj}\Re (Y_{k,j}(\hat{\pmb x})),
\end{equation*}
where the integer $M\geq 0$ denotes the truncation number and the spherical harmonics $Y_{k,j}$ are defined in \eqref{spherical harmonics}. For the special case of $M=0$, the expression reduces to $\Delta r(\hat{\pmb x})=\beta_{00}\Re(Y_{0,0}(\hat{\pmb x}))$, corresponding to a constant radial perturbation.  
\begin{remark}[Incremental truncation technique]
It is observed in the numerical examples that the reconstructions are sensitive to the initial guess, since the proposed approach is intrinsically of Newton type. To mitigate this dependence, we propose an \emph{incremental truncation technique}: starting from $M=0$ to obtain an approximation of the obstacle's position and scale, we then gradually increase $M$ to $1,2,\cdots,M_{\max}$ to further refine the detailed shape until the stopping criterion is satisfied. This strategy allows the measured data to be repeatedly exploited during the iterations and leads to satisfactory reconstruction results.	
\end{remark}

Next, we present the iterative algorithm in Table \ref{algorithm} for the inverse problem.

\begin{table}[h]
\centering\caption{}\label{algorithm}
\begin{tabular}{cp{.8\textwidth}}
	\toprule
	\multicolumn{2}{l}{{\bf Algorithm:}\quad Iterative procedure for inverse obstacle scattering problem} \\
	\midrule
	
	{\bf Step 1} & Collect the scattered field data $u_D^{\rm sc}(\pmb{x},t)$ on $\Gamma_B$ over $[0,T]$ corresponding to the incident wave $u^{\rm inc}(\pmb{x},t)$ emitted from the source point $\pmb{x}_0$, and then take the scaled discrete Fourier transform of both $u^{\rm inc}$ and $u_D^{\rm sc}$;\\
	{\bf Step 2} & Select a suitable geometric contraction factor $\varsigma$, an initial star-shaped surface $\pmb p_{D}^{(0)}$ for the boundary $\Gamma_D$, the error tolerance $\epsilon$, a fixed constant $loop$, and the maximal truncation number $M_{\max}$. 
	Set $M=0$ and initialize $ll=0$; \\
	{\bf Step 3} & For the $ll$-th iteration $\pmb p_{D}^{(ll)}$, solve the regularized density $\hat{\mathfrak{h}}_l^\alpha$ on $\pmb p_{D'}^{(ll)}$ by \eqref{field equation parama} at $l=[ll/loop]$;\\
	{\bf Step 4} & Calculate the scattered field data ${W}_{D'}[s_l, \pmb{p}_D^{(ll)}]\hat{\mathfrak{h}}_l^\alpha$ for the $ll$-th iteration and evaluate the error $E_{ll}$ defined in \eqref{error};\\
	{\bf Step 5} & If $E_{ll}\geq\epsilon$, solve \eqref{update equation} to update the approximation $\pmb p_{D}^{(ll+1)}=\pmb p_{D}^{(ll)}+\pmb{q}$; otherwise, take $\pmb p_{D}^{(ll)}$ as the final reconstruction of $\Gamma_D$ and terminate the iteration. \\
	{\bf Step 6} & If $\mathrm{mod}\!\bigl(ll+1,(N+1)\,loop\bigr)=0$, and $M<M_{\max}$, then set $M=M+1$; otherwise keep $M$ unchanged. Then set $ll=ll+1$ and go to \textbf{Step 3}.\\

	\bottomrule
\end{tabular}
\end{table}

The notation $[x]$ in {\bf Step 3} represents the greatest integer less than or equal to the real number $x$. The purpose of {\bf Step 3} is to iterate over each fixed complex wavenumber $s_l$, meaning that both the inner iteration for each fixed $s_l$ and the outer loop over all $s_l$ values are included. In addition, $\mathrm{mod}(a,b)$ denotes the remainder of $a$ upon division by $b$; in particular, $\mathrm{mod}(a,b)=0$ means that $a$ is a multiple of $b$.

\begin{remark}[Multiple-incident extension]
	The iterative approach in Table \ref{algorithm} can be naturally extended to cases with multiple incident waves. For incident waves $u_1^{\rm inc}, \cdots, u_k^{\rm inc}$, we collect the scattered fields $u_{D,1}^{\rm sc},\cdots, u_{D,k}^{\rm sc}$  on $\Gamma_B$ over $[0,T]$. The iterative approach is
	modified as follows:
		
		\noindent{\bf (i). }Solve the parametrized densities $\hat{\mathfrak{h}}_{l,1}^\alpha, \cdots , \hat{\mathfrak{h}}_{l,k}^\alpha$ by equivalent field equation; 
		
		\noindent{\bf (ii). }Replace the calculation of $E_{ll}$ in equation \eqref{error} with the following vector form
			\begin{equation*}
				E_{ll}=\frac{\left\|\left(\hat{w}_{D,l,1}^{\rm sc},\cdots,\hat{w}_{D,l,k}^{\rm sc}\right)^\top -\left({W}_{D'}^{B}(s_l)[ \pmb{p}_D^{(ll)}]\hat{\mathfrak{h}}_{l,1}^\alpha,\cdots,{W}_{D'}^{B}(s_l)[\pmb{p}_D^{(ll)}]\hat{\mathfrak{h}}_{l,k}^\alpha\right)^\top\right\| _{L^2 }}{\left\|\left(\hat{w}_{D,l,1}^{\rm sc},\cdots,\hat{w}_{D,l,k}^{\rm sc}\right)^\top\right\| _{L^2 }}\leq\epsilon,
			\end{equation*}
	
		\noindent{\bf (iii). }Transform the linearized equivalent data equation \eqref{update equation} as the following vectorized system in Step 5:
	\begin{equation*}
				\left(\left({W_{D'}^{B}}(s_l)\right)'[\pmb{p}_D^{(ll)}]\hat{\mathfrak{h}}_{l,1}^\alpha,\cdots,\left({W_{D'}^{B}}(s_l)\right)'[ \pmb{p}_D^{(ll)}]\hat{\mathfrak{h}}_{l,k}^\alpha\right)^\top\pmb{q}=\left(\hat{f}_{D,l,1}^{\rm sc},\cdots,\hat{f}_{D,l,k}^{\rm sc}\right)^\top,
		\end{equation*}
where $\hat{f}_{D,l,j}^{\rm sc} = \hat{w}_{D,l,j}^{\rm sc}-{W}_{D'}^{B}(s_l)[\pmb{p}_D^{(ll)}]\hat{\mathfrak{h}}_{l,j}^\alpha$, $j=1,\cdots,k$.

	As demonstrated in Section~\ref{num ex}, employing multiple incident waves significantly improves the accuracy and robustness of the reconstruction.
\end{remark}

\begin{remark}[High computational efficiency]
    It is worth mentioning that the proposed method generally exhibits high computational efficiency. All numerical tests were carried out in MATLAB 2025b on a personal laptop (48 GB RAM; 4.50 GHz Apple M4 Pro) using Apple's BLAS library. For Examples~1--4 in subsection \ref{inve ex} with a single incident field, the reconstruction takes about $3.106762$ seconds (around $215$ iterations). In the multi-incident setting, the total computation time remains within $5$ seconds. 
    For more complex geometries (e.g., Example~5 in subsection \ref{inve ex}), the computational cost may increase due to a substantial growth in the number of computational nodes and iterative steps.
\end{remark}

\subsubsection{Discretization}
In this subsection, we describe the full discretization of the equivalent field equations \eqref{field equation parama} and linearized equivalent data equations \eqref{update equation}. For the continuous function $f$, the Gauss trapezoidal product rule is given by
\[
\int_{\mathbb{S}^2}f(\hat{\pmb{x}})\mathrm{d}s_{\hat{\pmb{x}}}\approx\frac{\pi}{n+1}\sum_{j=0}^{2n+1}\sum_{i=1}^{n+1}\varpi_i f(\hat{\pmb{x}}(\theta_i,\phi_j))
\]
with quadrature points
\(
\phi_j = {j\pi}/{(n+1)},~\theta_i = \arccos z_i
\) and weights 
\[
\varpi_i = \frac{2(1-z_i^2)}{(n+1)^2 [P_n(z_i)]^2},
\] where $z_i$ denotes the zeros of the Legendre polynomial $P_{n+1}$. For clarity and brevity, we simply denote $f(\hat{\pmb{x}}(\theta_i,\phi_j))$ as $f(\theta_i,\phi_j)$ in the following.

Notably, the equivalent field equations \eqref{field equation parama} do not contain singularities, which makes it possible to obtain approximate solutions efficiently through the Gauss quadrature rule.
Next, a discretization scheme for the linearized equation \eqref{update equation} is presented, and the corresponding update is derived via Tikhonov regularization \cite{Tikhonov}.

Define
\begin{align*}
	L_{1,l}^{k,q}(\zeta,\xi,\theta,\phi):=&\widetilde{K}_l(\zeta,\xi,\theta,\phi) \Im(Y_{k,q}(\theta,\phi)),\\
	L_{2,l}^{k,q}(\zeta,\xi,\theta,\phi):=&\widetilde{K}_l(\zeta,\xi,\theta,\phi) \Re(Y_{k,q}(\theta,\phi)),
\end{align*}
where 
\begin{align*}
	\widetilde{K}_l(\zeta,\xi,\theta,\phi):=&\mathcal{A}_{1,l}(\zeta,\xi,\theta,\phi)\sin\theta\cos\phi+\mathcal{A}_{2,l}(\zeta,\xi,\theta,\phi)\sin\theta\sin\phi\\
	&+\mathcal{A}_{3,l}(\zeta,\xi,\theta,\phi)\cos\theta,\\
	\mathcal{A}_{1,l}(\zeta,\xi,\theta,\phi):=&K_l(\zeta,\xi,\theta,\phi)(c_1+\varsigma r(\theta,\phi)\sin\theta\cos\phi-R\sin\zeta\cos\xi),\\
	\mathcal{A}_{2,l}(\zeta,\xi,\theta,\phi):=&K_l(\zeta,\xi,\theta,\phi)(c_2+\varsigma r(\theta,\phi)\sin\theta\sin\phi-R\sin\zeta\sin\xi),\\
	\mathcal{A}_{3,l}(\zeta,\xi,\theta,\phi):=&K_l(\zeta,\xi,\theta,\phi)(c_3+\varsigma r(\theta,\phi)\cos\theta-R\cos\zeta),
\end{align*}
and
{\small\[
K_l(\zeta,\xi,\theta,\phi)=\left(-s_l-\frac{1}{|\pmb p_B(\zeta,\xi)-\pmb p_{D'}(\theta,\phi)|}\right)\frac{e^{-s_l|\pmb p_B(\zeta,\xi)-\pmb p_{D'}(\theta,\phi)|}}{4\pi|\pmb p_B(\zeta,\xi)-\pmb p_{D'}(\theta,\phi)|^2}J_{D'}(\theta,\phi).
\]}

Let $\zeta_r:=\pi r/\tilde{n}$, $ r=0,\cdots,\tilde{n}-1$, $\xi_s:=\pi s/\tilde{n}$, $s=0,\cdots,2\tilde{n}-1$ be equidistant sets of quadrature knots, then 
$$
R(\sin\zeta_r\cos\xi_s,\sin\zeta_r\sin\xi_s,\cos\zeta_r)^\top,
$$
are equidistantly distributed nodes on $\Gamma_B$.

Substituting the above definitions into the equation \eqref{update equation} and using the Gauss trapezoidal product rule, we get the fully discrete linear system
{\small
\begin{equation}\label{discretization update equation}
		\sum_{{p}=1}^{3}\Delta c_{p} B_{p,l}(\zeta_r,\xi_s)+\sum_{k=1}^{M}\sum_{q=1}^{k}\alpha_{kq}B_{1,l}^{k,q}(\zeta_r,\xi_s)+\sum_{k=0}^{M}\sum_{q=0}^{k}\beta_{kq}B_{2,l}^{k,q}(\zeta_r,\xi_s)=\hat{f}_{D,l}^{\rm sc}(\zeta_r,\xi_s),
\end{equation}
}for $r=0,\cdots,\tilde{n}-1$, $s=0,\cdots,2\tilde{n}-1$, where the system matrices are defined by
\begin{align*}
	B_{p,l}(\zeta_r,\xi_s)=&\frac{\pi}{n+1}\sum_{j=0}^{2n+1}\sum_{i=1}^{n+1}\varpi_i \mathcal{A}_{p,l}(\zeta_r,\xi_s,\theta_i,\phi_j)\hat{\mathfrak{h}}_l^\alpha(\theta_i,\phi_j),\quad p=1,2,3,\\
	B_{p,l}^{k,q}(\zeta_r,\xi_s)=&\varsigma\frac{\pi}{n+1}\sum_{j=0}^{2n+1}\sum_{i=1}^{n+1}\varpi_i L_{p,l}^{k,q}(\zeta_r,\xi_s,\theta_i,\phi_j)\hat{\mathfrak{h}}_l^\alpha(\theta_i,\phi_j),\quad p=1,2.
\end{align*}

Obviously, the number of unknown coefficients $(M+1)^2+3\ll 2\tilde{n}^2$ . Due to the severely ill-posed nature of the problem, we solve the overdetermined system \eqref{discretization update equation} via Tikhonov regularization. Accordingly, the linear system \eqref{discretization update equation} is expressed as an optimization problem aimed at minimizing the Tikhonov functional \cite{Tikhonov}

{\small
\begin{equation}\label{minimizer}
	\begin{aligned}
		\sum_{s=0}^{2\tilde{n}-1}&\sum_{r=0}^{\tilde{n}-1}\Big|\sum_{p=1}^{3} \Delta c_p B_{p,l}(\zeta_r,\xi_s)+\sum_{k=1}^{M}\sum_{q=1}^{k}\alpha_{kq}B_{1,l}^{k,q}(\zeta_r,\xi_s)\\
		&+\sum_{k=0}^{M}\sum_{q=0}^{k}\beta_{kq}B_{2,l}^{k,q}(\zeta_r,\xi_s)-\hat{f}_{D,l}^{\rm sc}(\zeta_r,\xi_s)\Big| ^2\\
		&+\lambda \Bigg(|\Delta c_1|^2+|\Delta c_2|^2+|\Delta c_3|^2
		+\frac{1}{2}\sum_{k=1}^{M}\sum_{q=1}^{k}(1+l(l+1))^{\gamma}\alpha_{kq}^2+\beta_{00}^2\\&+\sum_{k=1}^{M}(1+l(l+1))^{\gamma}\beta_{k0}^2+\frac{1}{2}\sum_{k=1}^{M}\sum_{q=1}^{k}(1+l(l+1))^{\gamma}\beta_{kq}^2\Bigg),
	\end{aligned}
\end{equation}
}
where $\lambda$ is a positive regularization parameter and the last item is $H^{\gamma}$ penalty term. Through the least squares principle, the minimizer of \eqref{minimizer} is the solution of the following system 
\begin{equation*}
	\left( {\lambda}\tilde{\pmb{I}}+\Re\left( \pmb{B}_l^*\pmb{B}_l\right) \right) \pmb{\varUpsilon}=\Re\left( \pmb{B}_l^*\hat{\pmb{f}}_{D,l}^{\rm sc}\right) ,
\end{equation*}
where the system matrix $\pmb{B}_l$ is defined by
\begin{equation*}
	\pmb{B}_l=\left(\pmb{B}_{1,l},\pmb{B}_{2,l},\pmb{B}_{3,l},\pmb{B}_{1,l}^{1,1},\cdots,\pmb{B}_{1,l}^{M,M},\pmb{B}_{2,l}^{0,0},\cdots,\pmb{B}_{2,l}^{M,M}\right).
\end{equation*}
The column blocks of $\pmb{B}_l$ are given by
\begin{align*}
	\pmb B_{p,l}&=\left(B_{p,l}(\pmb\zeta,\xi_0),\cdots,B_{p,l}(\pmb\zeta,\xi_{2\tilde{n}-1})\right)^\top, \quad {p}=1,2,3,\\
	\pmb B_{1,l}^{k,q}&=\left(B_{1,l}^{k,q}(\pmb\zeta,\xi_0),\cdots,B_{1,l}^{k,q}(\pmb\zeta,\xi_{2\tilde{n}-1})\right)^\top,\quad k=1,\cdots,M,~ q=1,\cdots,k,\\
	\pmb B_{2,l}^{k,q}&=\left(B_{2,l}^{k,q}(\pmb\zeta,\xi_0),\cdots,B_{2,l}^{k,q}(\pmb\zeta,\xi_{2\tilde{n}-1})\right)^\top,\quad k=0,\cdots,M,~ q=0,\cdots,k,
\end{align*}
with
\begin{align*}
	B_{p,l}(\pmb{\zeta},\xi_s)=&(B_{p,l}(\zeta_0,\xi_s),\cdots,B_{p,l}(\zeta_{\tilde{n}-1},\xi_s)),\quad p=1,2,3,\\
	B_{p,l}^{k,q}(\pmb{\zeta},\xi_s)=&(B_{p,l}^{k,q}(\zeta_0,\xi_s),\cdots,B_{p,l}^{k,q}(\zeta_{\tilde{n}-1},\xi_s)),\quad p=1,2.
\end{align*}
The solution vector and regularization matrix are shown by
\begin{align*}
	&\pmb{\varUpsilon}=(\Delta c_1,\Delta c_2,\Delta c_3,\alpha_{11},\cdots,\alpha_{M1},\cdots,\alpha_{MM},\beta_{00},\cdots,\beta_{M0},\cdots,\beta_{MM})^\top,\\
	&\tilde{\pmb{I}}
	=\mathrm{diag}\big(
	\underbrace{1,1,1,}_{\Delta c_j,\;j=1,2,3}
	\underbrace{\vartheta_1,}_{\alpha_{11}}\;
	\underbrace{\vartheta_2,\vartheta_2}_{\alpha_{2j},j=1,2},\;\cdots,\;
	\underbrace{\vartheta_M,\cdots,\vartheta_M}_{\alpha_{Mj},j=1,\cdots,M},\\
	&\qquad\qquad\underbrace{1}_{\beta_{00}},\;
	\underbrace{2\vartheta_1,\vartheta_1}_{\beta_{1j},j=0,1},\;\cdots,\;
	\underbrace{2\vartheta_M,\cdots,\vartheta_M}_{\beta_{Mj},j=0,\cdots,M}
	\big),\\
	&\hat{\pmb{f}}_{D,l}^{\rm sc}=\left(\hat{f}_{D,l}^{\rm sc}(\pmb\zeta,\xi_0),\cdots,\hat{f}_{D,l}^{\rm sc}(\pmb\zeta,\xi_{2\tilde{n}-1})\right)^\top,
\end{align*}
where 
\begin{equation*}
	\begin{aligned}
&\vartheta_m=(1+m(m+1))^{\gamma}/2,\quad m=1, \cdots, M,\\
&\hat{f}_{D,l}^{\rm sc}(\pmb\zeta,\xi_s)=\left( \hat{f}_{D,l}^{\rm sc}(\zeta_0,\xi_s),\cdots,\hat{f}_{D,l}^{\rm sc}(\zeta_{\tilde{n}-1},\xi_s)\right).
\end{aligned}
\end{equation*}
Thus, we obtain the new approximation
\[
\pmb p_D^{new}=(\pmb{c}+\Delta \pmb{c})+(r(\hat{\pmb{x}})+\Delta r(\hat{\pmb{x}}))\hat{\pmb{x}},
\]
with $\pmb{c}=(c_1,c_2,c_3)^\top$ and $\Delta \pmb{c}=(\Delta c_1,\Delta c_2,\Delta c_3)^\top$.

\section{Numerical experiments}\label{num ex}
In this section, we first verify the accuracy and effectiveness of the proposed forward scheme for the time-domain scattering problem. 
We then present a series of numerical experiments for the inverse problem to confirm the effectiveness and robustness of the proposed reconstruction approach.

\subsection{Forward scattering problem}\label{4.1111}

To test the accuracy of the forward scheme proposed in Section \ref{direct scattering}, we construct a model problem whose exact solution is available in explicit form. Let
\begin{equation*}
	u_{true}^{\rm sc}(\pmb{x},t;\pmb{x}_0)
	=\left\{
	\begin{aligned}
	&\frac{\sin(0.3(t-|\pmb{x}-\pmb{x}_0|))e^{-(t-|\pmb{x}-\pmb{x}_0|-2)^2}}{|\pmb{x}-\pmb{x}_0|},
	&& t>|\pmb{x}-\pmb{x}_0|,\\[1.2ex]
	&0, && t\le |\pmb{x}-\pmb{x}_0|,
	\end{aligned}
	\right.
\end{equation*}
be the exact solution of \eqref{scattered field} with $\pmb{x}_0=(0.1,0,0)^\top$ locating inside the obstacle.
This function represents a smooth spherical wave emitted from a point source. For the Dirichlet boundary condition, the numerical density $\psi$ on $\Gamma_D$ over $[0,T]$ is obtained by solving the boundary integral equation \eqref{boundary integral equation} with the right hand enforced
\[
u^{\rm inc}(\pmb{x},t)=-u_{true}^{\rm sc}(\pmb{x},t;\pmb{x}_0),\quad \left(\pmb{x},t\right)\in\Gamma_D\times[0,T],
\]
and then the numerical solution $u^{\rm sc}$ can be calculated directly by \eqref{single layer}.

The time discretization is carried out using the convolution quadrature method based on the BDF3 scheme, while the spatial discretization of the boundary integral equations is performed by the Galerkin method using spherical harmonics. We evaluate the forward scheme on several obstacle geometries, including sphere-shaped, cushion-shaped, pinched ball-shaped and bean-shaped obstacles, whose boundary parametrizations are given in Table \ref{obscatle1}.

\begin{table}[!htpb]
	\caption{}\label{obscatle1}\centering
	\resizebox{\textwidth}{!}{
	\begin{tabular}{lll}
		\toprule  
		Type           & Parametrization of the boundary surfaces \\
		\midrule
		sphere & $\pmb p_D(\theta, \phi) = 0.6 \, \hat{\pmb{x}}(\theta,\phi)$ \\
~\\
		pinched ball& $\pmb p_D(\theta, \phi)=\sqrt{0.3+0.12 \cos 2 \phi(\cos 2 \theta-1)} \, \hat{\pmb{x}}(\theta,\phi)$ \\
~\\
		cushion& $\pmb p_D(\theta, \phi)=\sqrt{0.3+0.1(\cos 2 \phi-1)(\cos 4 \theta-1)} \, \hat{\pmb{x}}(\theta,\phi)$ \\
~\\
		bean& 
		$\pmb{p}_D(\theta,\phi) =
		\begin{pmatrix}
			0.7\sqrt{1-0.1\cos(\pi\cos\theta)} \sin\theta\cos\phi \\[1.5ex]
			0.7\sqrt{1-0.4\cos(\pi\cos\theta)} \sin\theta\sin\phi + 0.21\cos(\pi\cos\theta)\\[1.5ex]
			0.7\cos\theta
		\end{pmatrix}$ \\
		\bottomrule 
	\end{tabular}
	}
\end{table}

The numerical scattered field $u^{\rm sc}$ is collected on a spherical surface $\Gamma_B$ centered at the origin with radius $R = 1.2$, using 200 observation points equally distributed in both the polar angle $\theta$ and the azimuthal angle $\phi$. The time interval is set to $[0, T]$ with $T = 6$, discretized into $N+1$ temporal nodes using a time step $\Delta t=T/N$. To demonstrate the convergence of the numerical algorithm, the relative $L^2$ error is defined as follows:
\begin{equation*}
	\epsilon_{\rm rel}=\frac{\left\| u_{true}^{\rm sc}-u^{\rm sc}\right\| _{L^2([0,T];L^2(\Gamma_B))}}{\left\| u_{true}^{\rm sc}\right\|_{L^2([0,T];L^2(\Gamma_B))} }\approx\frac{\left( \sum\limits_{i=0}^{N}\sum\limits_{j=1}^{200}\left| u_{true}^{\rm sc}(\pmb{x}_j,t_i)-u^{\rm sc}(\pmb{x}_j,t_i)\right| ^2\right)^{1/2} }{\left( \sum\limits_{i=0}^{N}\sum\limits_{j=1}^{200}\left| u_{true}^{\rm sc}(\pmb{x}_j,t_i)\right| ^2\right)^{1/2} }.
\end{equation*}

\medskip
{\noindent\bf Example: Accuracy and convergence of the forward scheme.}

\medskip
\noindent\emph{(1) Approximation of the scattered field.}
We first evaluate the ability of the proposed Galerkin-CQ scheme to approximate the exact scattered field.  
To this end, we consider the cushion-shaped obstacle and compare the numerical solution $u^{\rm sc}$ with the exact solution $u_{true}^{\rm sc}$ at the fixed observation point
\(
\pmb{x}_{\star} = (1.2,\,0,\,0)^\top,
\)
over the entire time interval $[0,T]$ with $T=6$.  
Figure~\ref{ex1.1} (a) shows $u^{\rm sc}(\pmb{x}_{\star},t)$ and $u_{true}^{\rm sc}(\pmb{x}_{\star},t)$ at different time nodes, which demonstrates the forward scheme is efficient for calculating the scattered field.

\medskip
\noindent\emph{(2) Influence of the time step.}
Next, we investigate the influence of time discretization while fixing the spatial discretization on $\Gamma_D$.  We consider the cushion-shaped obstacle and evaluate the scattered field at the observation point 
\(
\pmb{x}_{\star} = (1.2,\,0,\,0)^\top\), \(t_{\star} = 4.2.
\)
Figure~\ref{ex1.1} (b) displays the numerical values of $u^{\rm sc}(\pmb{x}_{\star},t_{\star})$ for decreasing time step $\Delta t=T/N$.  
The result shows that the numerical solution converges toward the exact scattered field as $\Delta t$ decreases ($N$ increases).
It verifies that smaller time steps yield better approximation of the exact solution.

\medskip
\noindent\emph{(3) Convergence in the space-time norm.}
Finally, we investigate the convergence behavior using the relative error in the $L^2([0,T];L^2(\Gamma_B))$ norm introduced earlier. The relative errors for the sphere-shaped, pinched ball-shaped, cushion-shaped and bean-shaped obstacle are summarized in Table~\ref{ex1.2} for varying time steps $\Delta t$ and spatial nodes $N_{\text{node}}$ on $\Gamma_D$. The error consistently decreases as the number of temporal and spatial nodes increases, confirming the stability and convergence of the Galerkin-CQ scheme.

The numerical results reported below demonstrate that the proposed Galerkin-CQ method produces stable and accurate solutions for the forward scattering problem, ensuring the reliability of the data used in the subsequent inverse reconstructions.

\begin{figure}[!htpb]
    \centering
    \subfigure[ Comparison between the numerical and exact scattered fields at $\pmb{x}_{\star}=(1.2,0,0)^{\top}$ over \texorpdfstring{$t\in[0,6]$}{t\in[0,6]}, $N=60$.]{
        \includegraphics[width=0.46\textwidth]{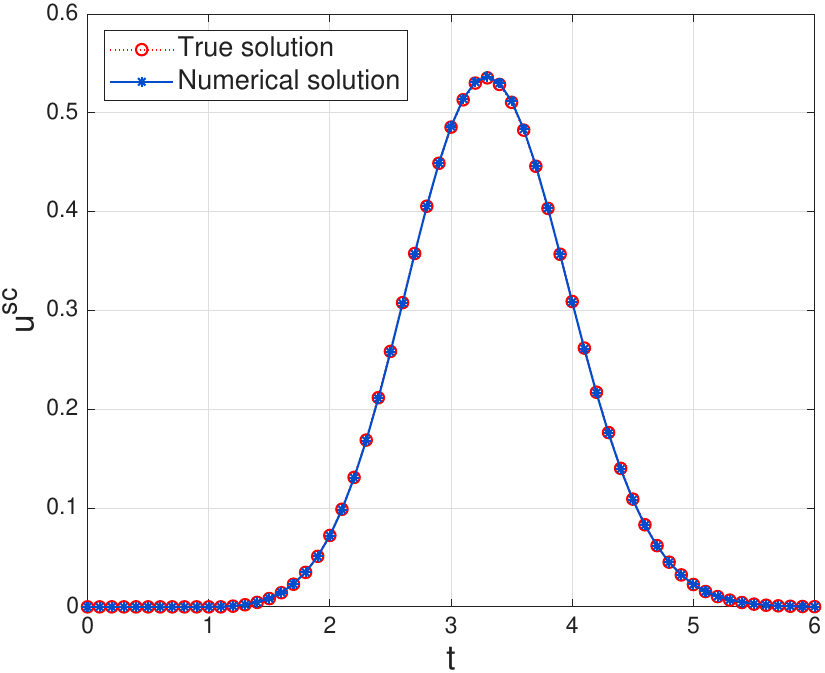}
    }
    \hfill
	\subfigure[ Convergence of $u^{\rm sc}(\pmb{x}_{\star},t_{\star})$ at $t_{\star}=4.2$ with respect to the time step $\Delta t=T/N$.]{
        \includegraphics[width=0.475\textwidth]{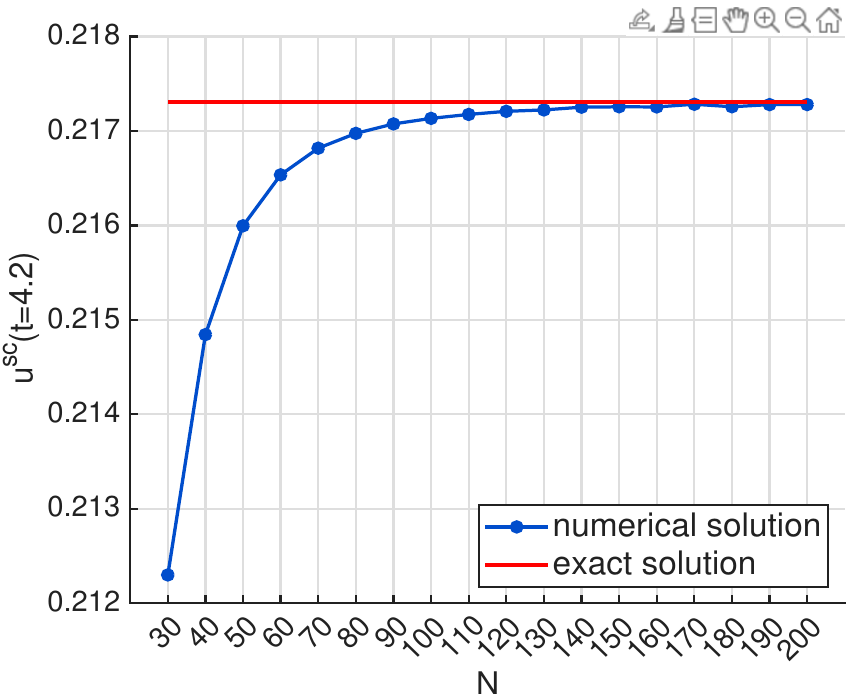}
    }
    \caption{Verification of the forward scheme using the cushion-shaped obstacle.}
    \label{ex1.1}
\end{figure}

\begin{table}[!htpb]
\centering
\caption{Relative error $\epsilon_{\rm rel}$ for different obstacles with various time steps.}\label{ex1.2}
\begin{tabular}{cccccc}
\toprule
time step & $N_{\text{node}}$ & Sphere & Pinched ball & Cushion & Bean \\
\midrule
\multirow{2}{*}{0.1} 
 & 200 & 2.12e-3 & 1.97e-3 & 1.80e-3 & 1.82e-3 \\
 & 800 & 1.91e-3 & 2.05e-3 & 1.69e-3 & 1.55e-3 \\
\midrule
\multirow{3}{*}{0.05} 
 & 200 & 1.45e-3 & 5.81e-4 & 1.13e-3 & 1.70e-3 \\
 & 800 & 4.18e-4 & 3.44e-4 & 3.66e-4 & 3.68e-4 \\
 & 1800 & 2.88e-4 & 2.86e-4 & 2.51e-4 & 2.29e-4 \\
\bottomrule
\end{tabular}
\end{table}

\begin{figure}[h]
	\centering 
	\subfigure[True shape with multiple views.]{
		\begin{tabular}{cccc}
			\includegraphics[width=0.21\textwidth]{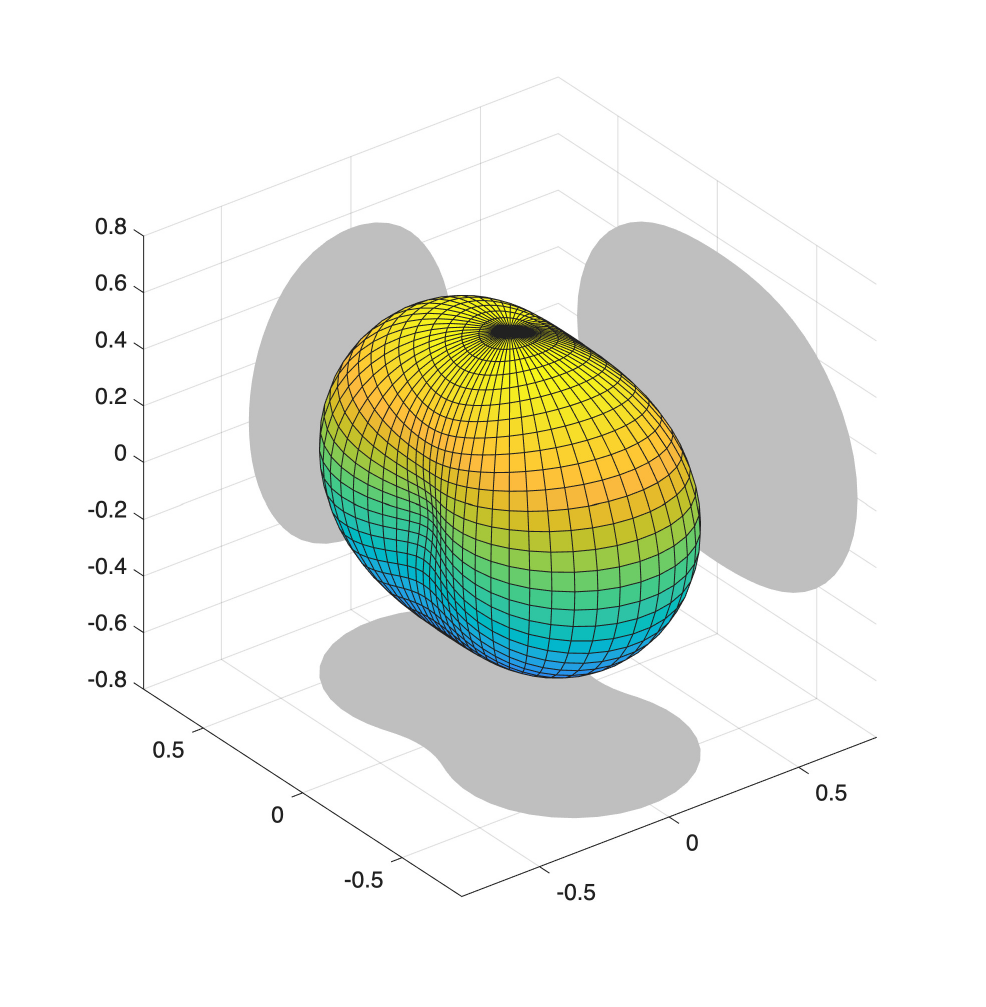} &
			\includegraphics[width=0.21\textwidth]{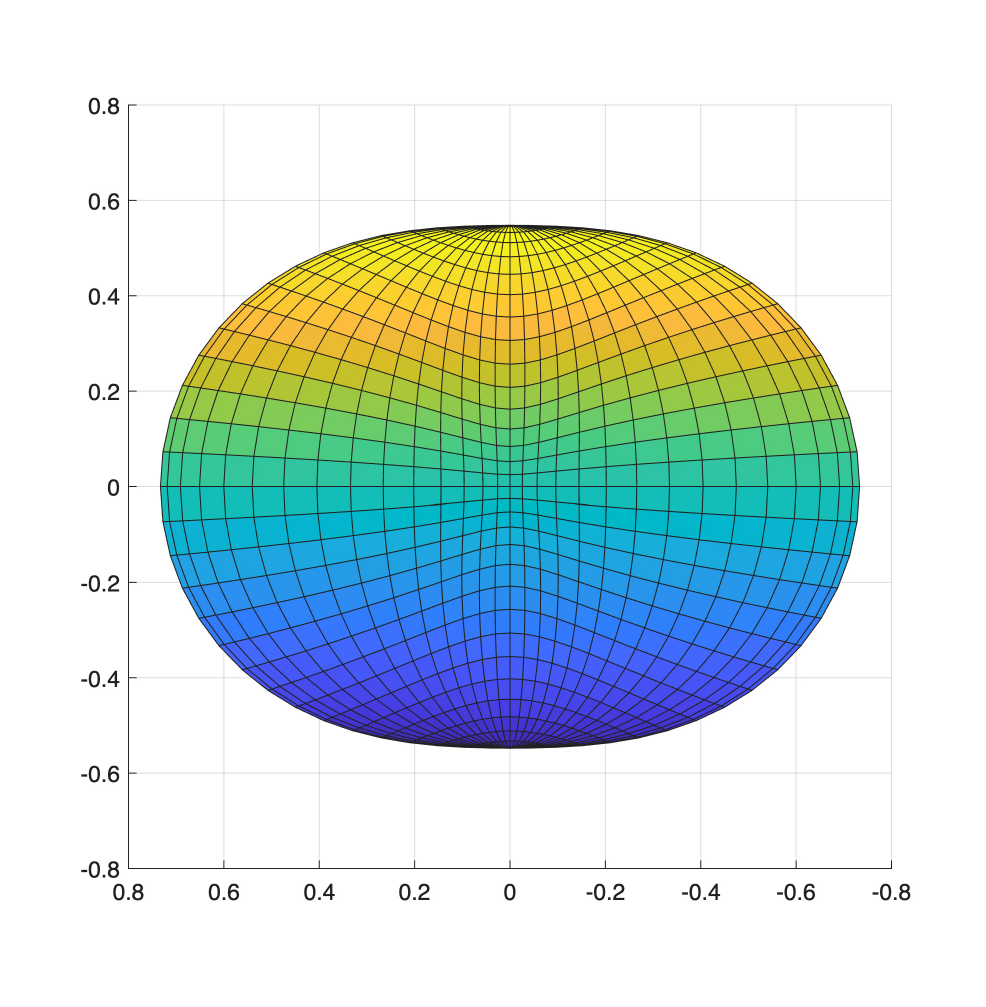} &
			\includegraphics[width=0.21\textwidth]{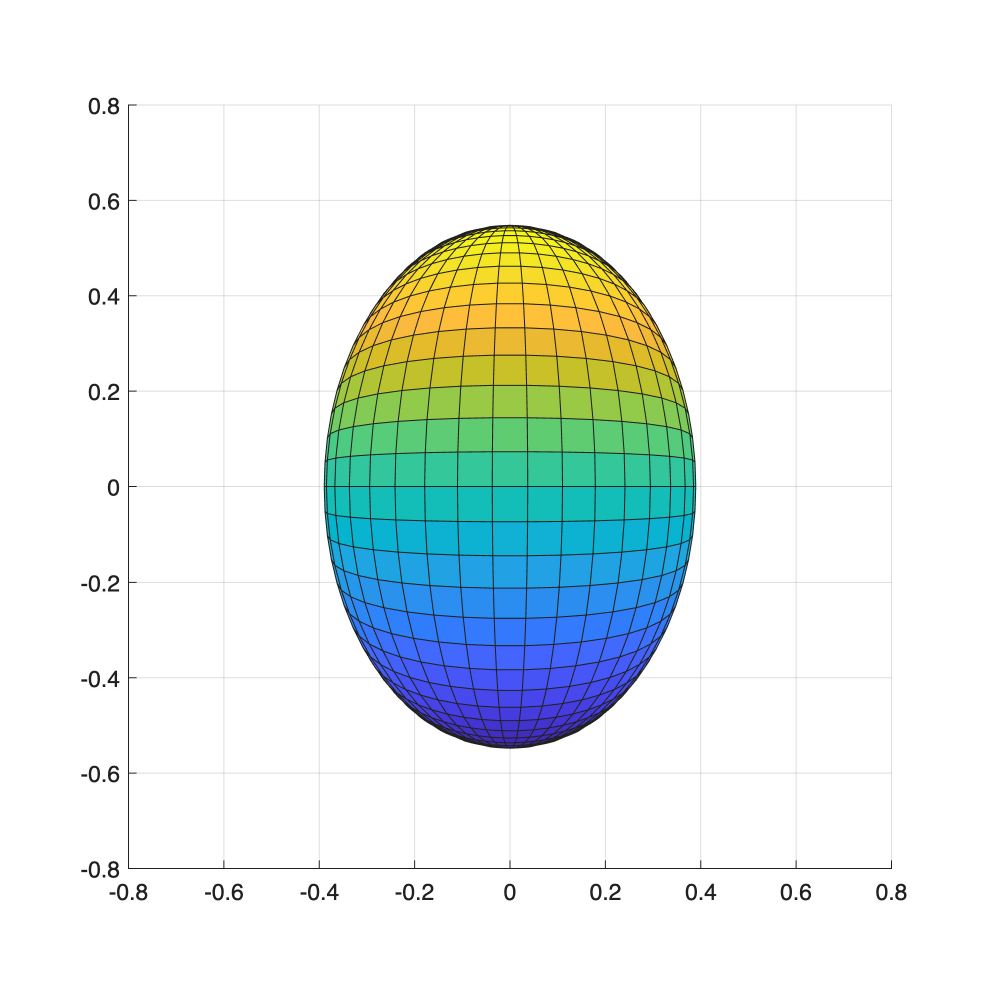} &
			\includegraphics[width=0.21\textwidth]{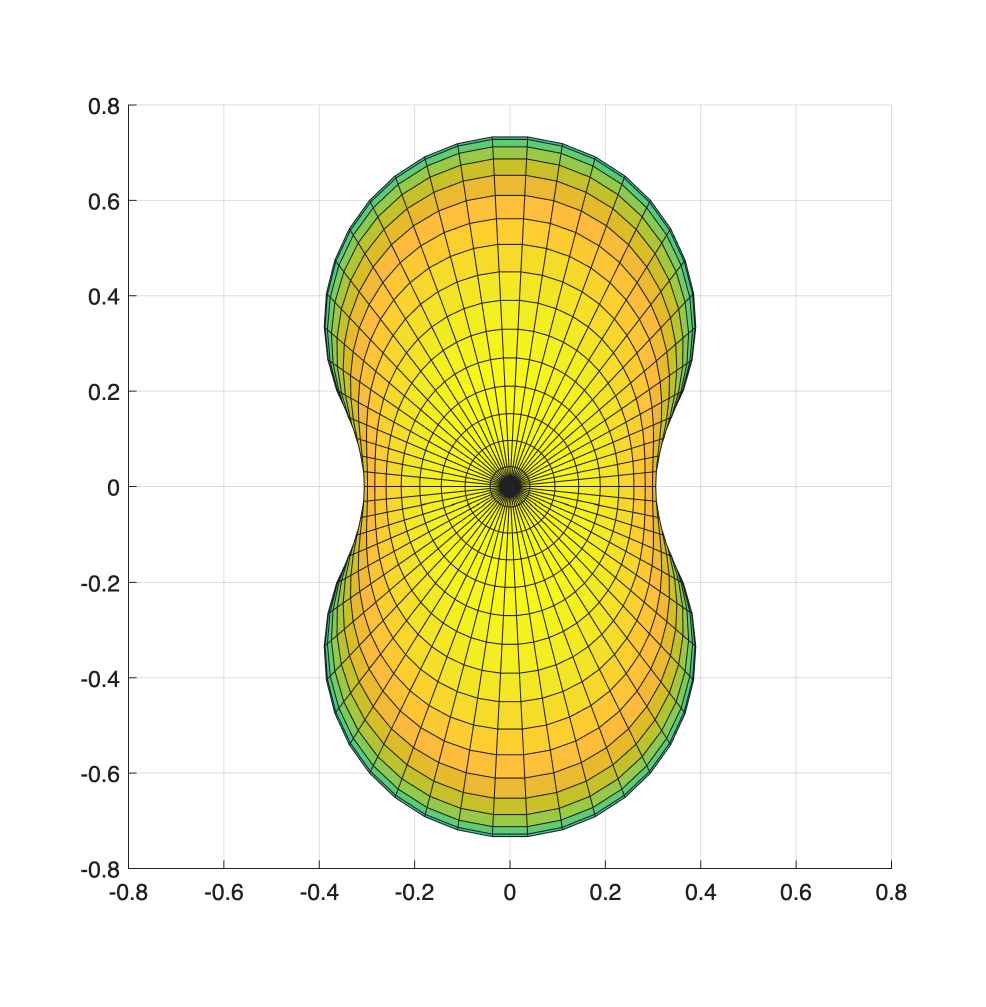}
		\end{tabular}
	} \\
	
	\subfigure[Reconstruction with 1$\%$ noise.]{
		\begin{tabular}{cccc}
			{\includegraphics[width=0.21\textwidth]{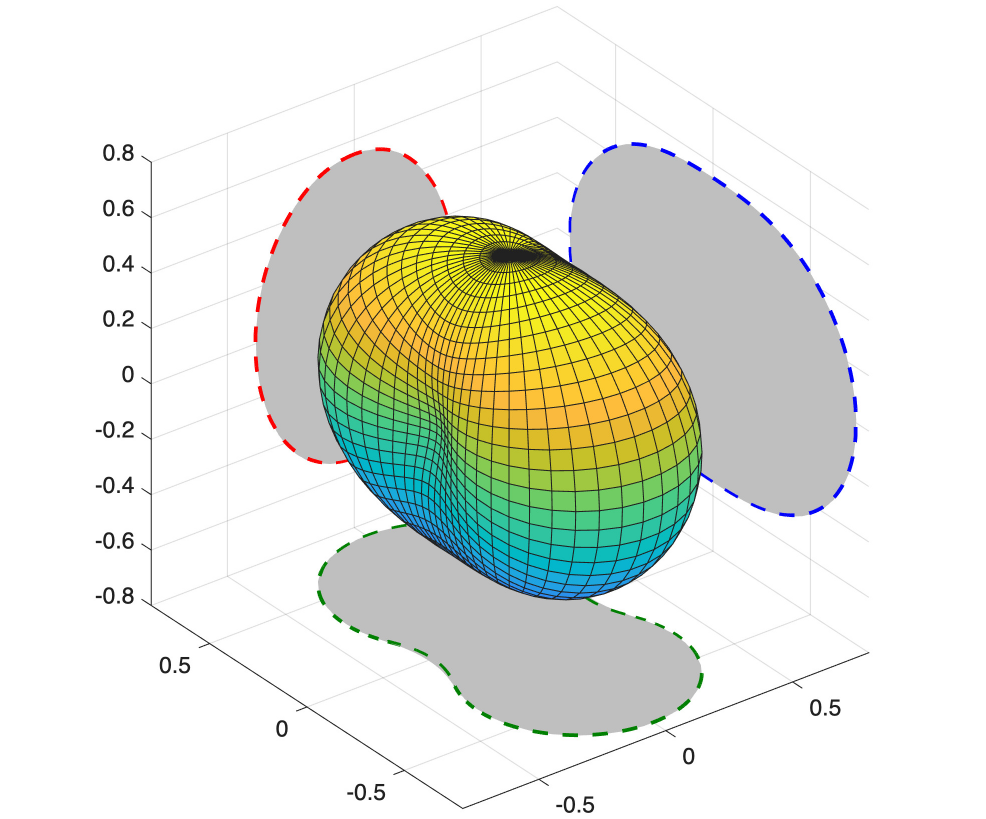}} &
			{\includegraphics[width=0.21\textwidth]{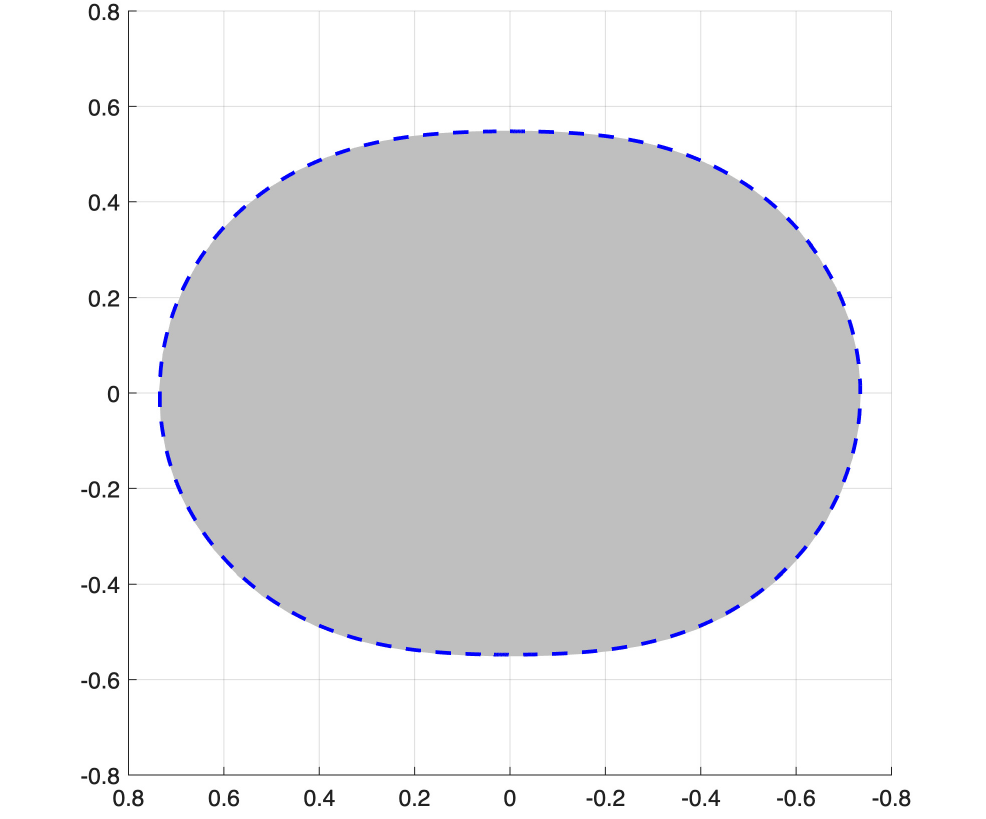}} &
			{\includegraphics[width=0.21\textwidth]{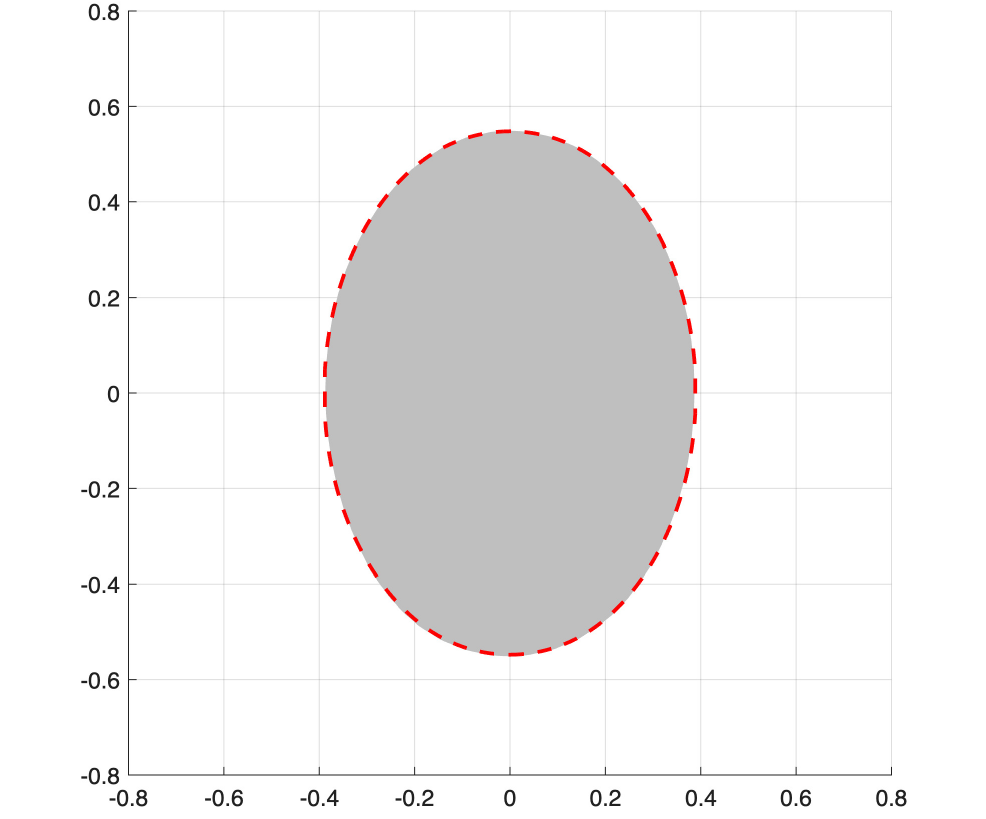}} &
			{\includegraphics[width=0.21\textwidth]{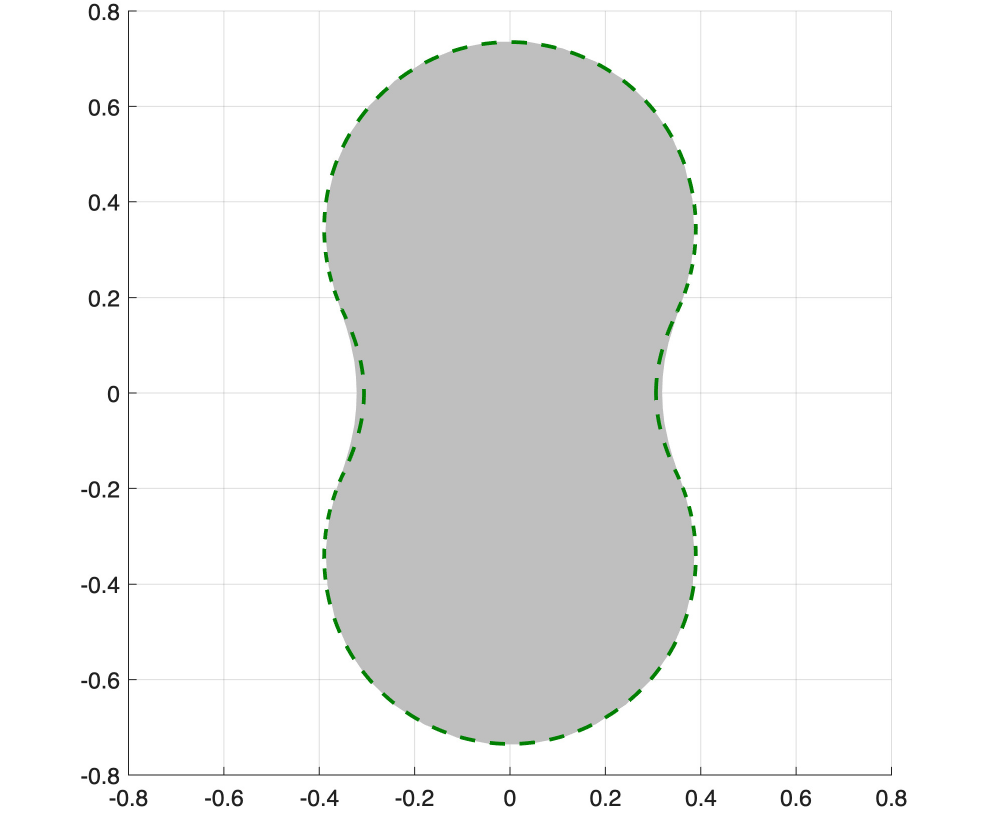}}
		\end{tabular} 
	}\\
	
	\subfigure[Reconstruction with 5$\%$ noise.]{
		\begin{tabular}{cccc}
			{\includegraphics[width=0.21\textwidth]{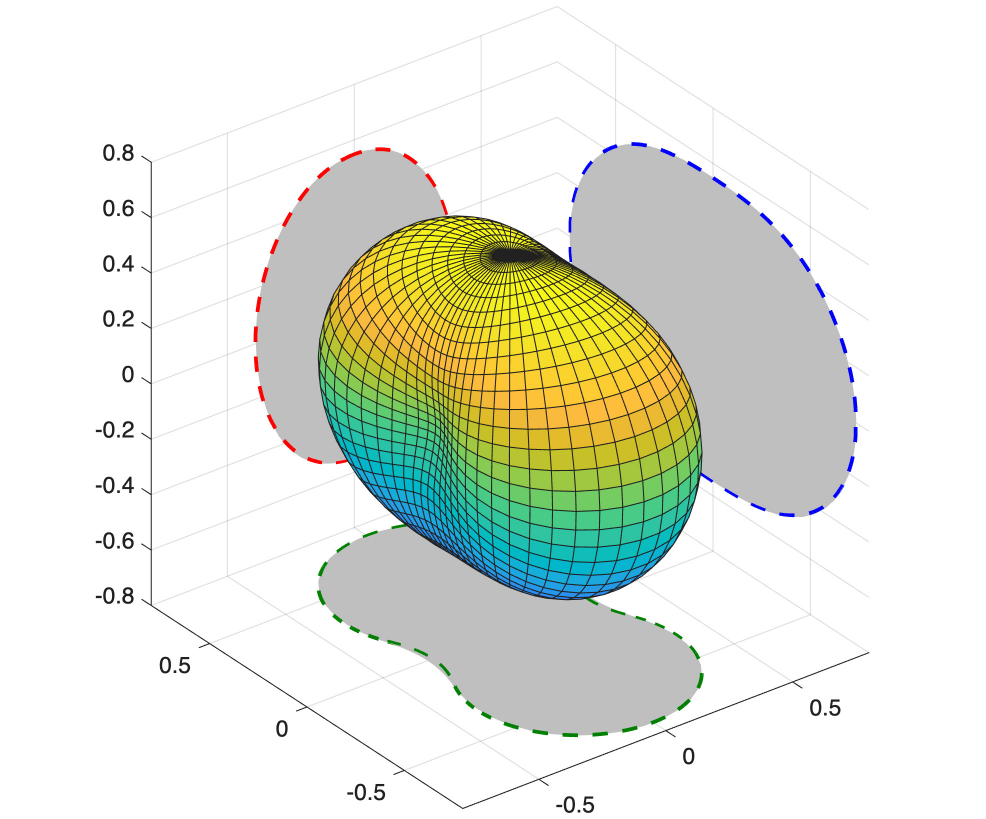}} &
			{\includegraphics[width=0.21\textwidth]{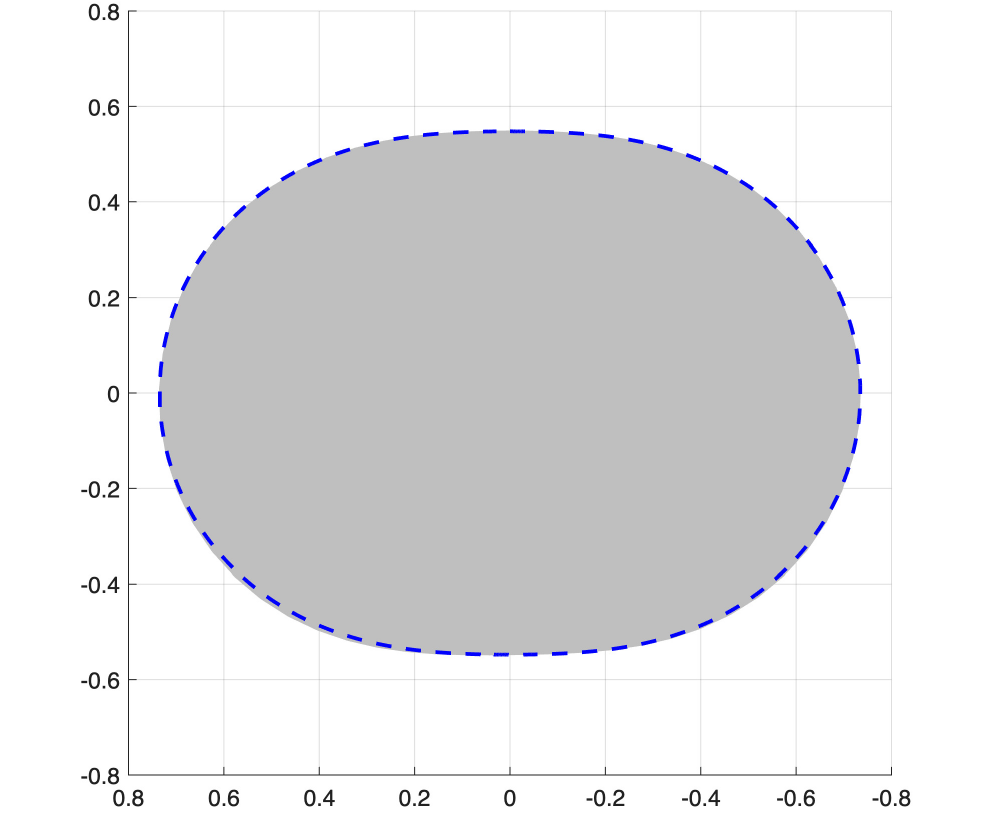}} &
			{\includegraphics[width=0.21\textwidth]{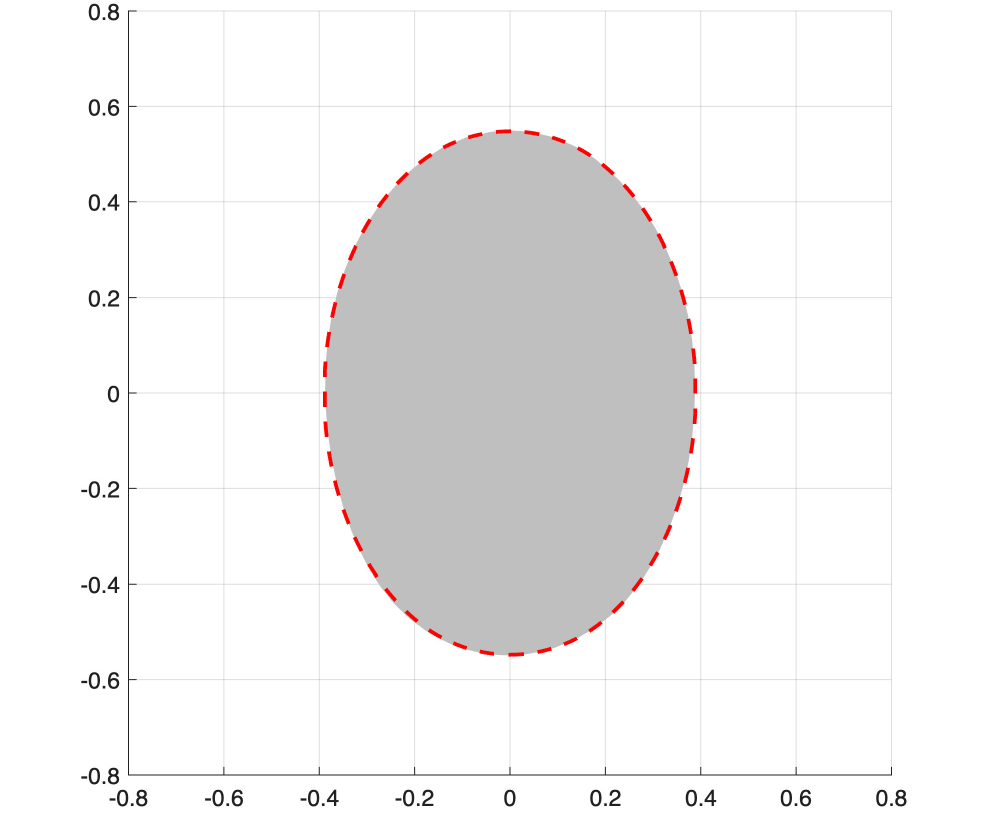}} &
			{\includegraphics[width=0.21\textwidth]{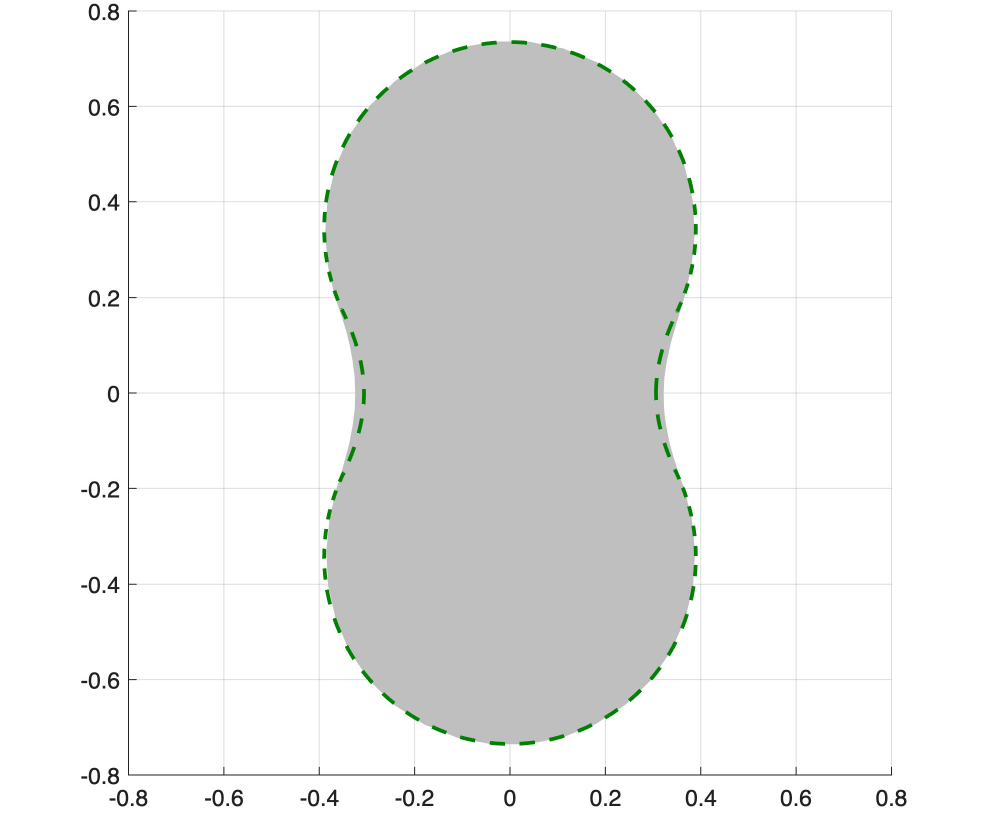}}
		\end{tabular} 
	}\\
	
	\subfigure[Reconstruction with 10$\%$ noise.]{
		\begin{tabular}{cccc}
			{\includegraphics[width=0.21\textwidth]{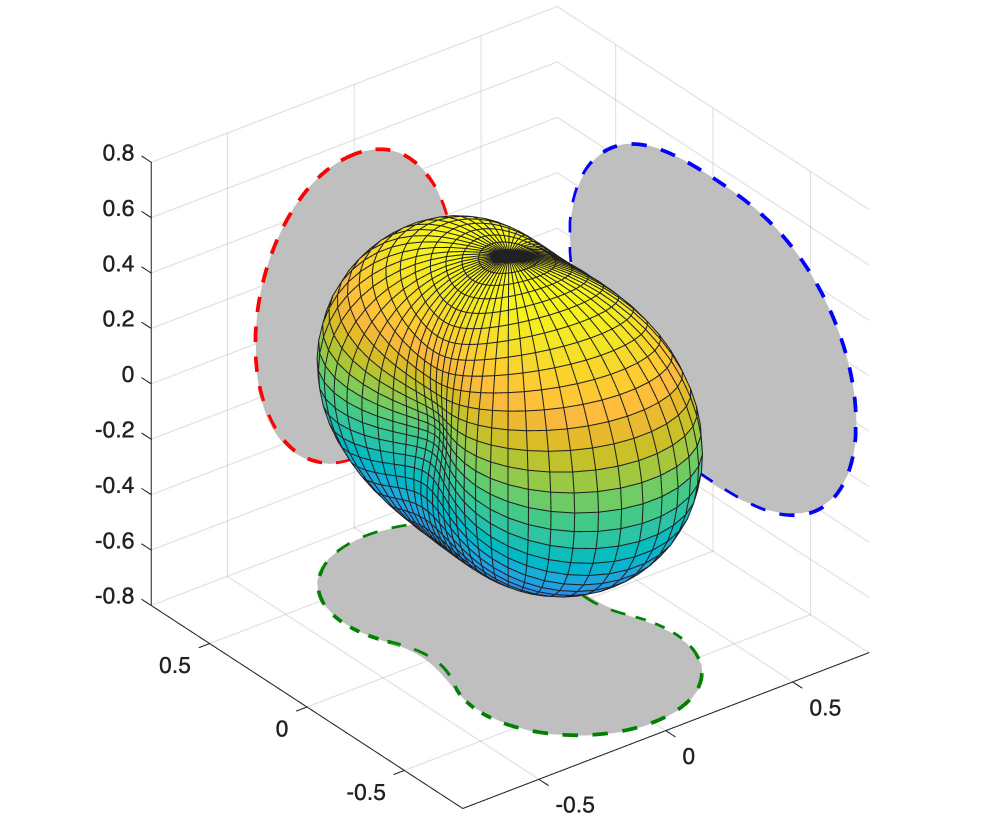}} &
			{\includegraphics[width=0.21\textwidth]{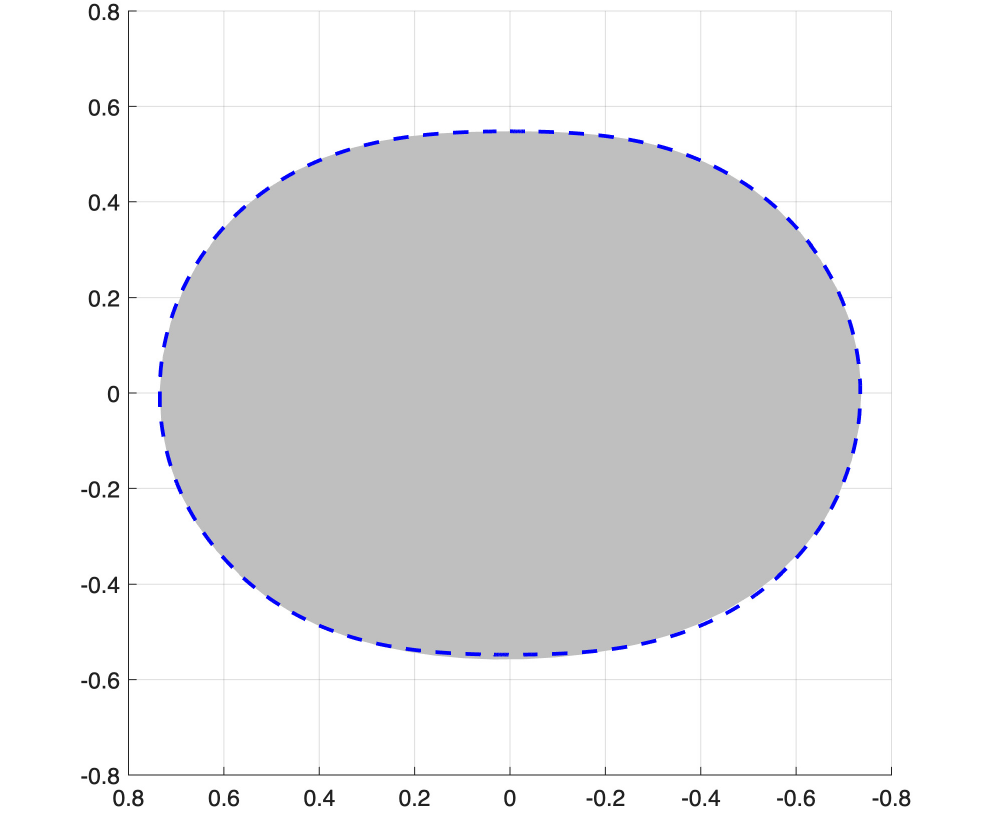}} &
			{\includegraphics[width=0.21\textwidth]{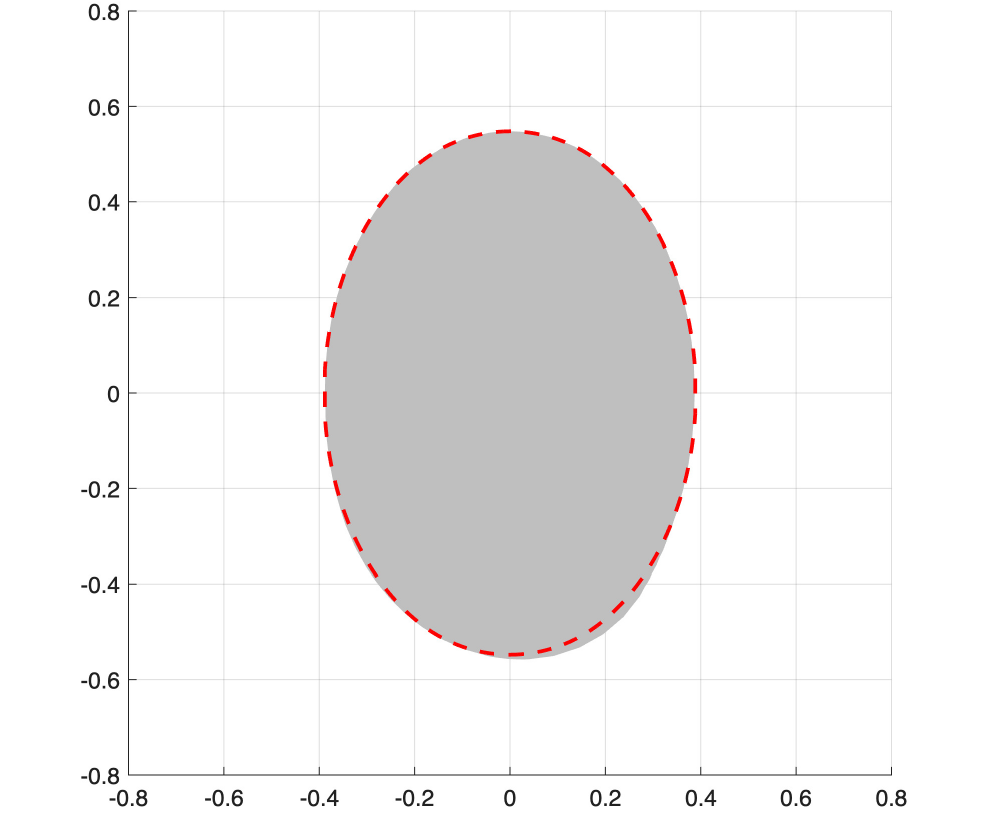}} &
			{\includegraphics[width=0.21\textwidth]{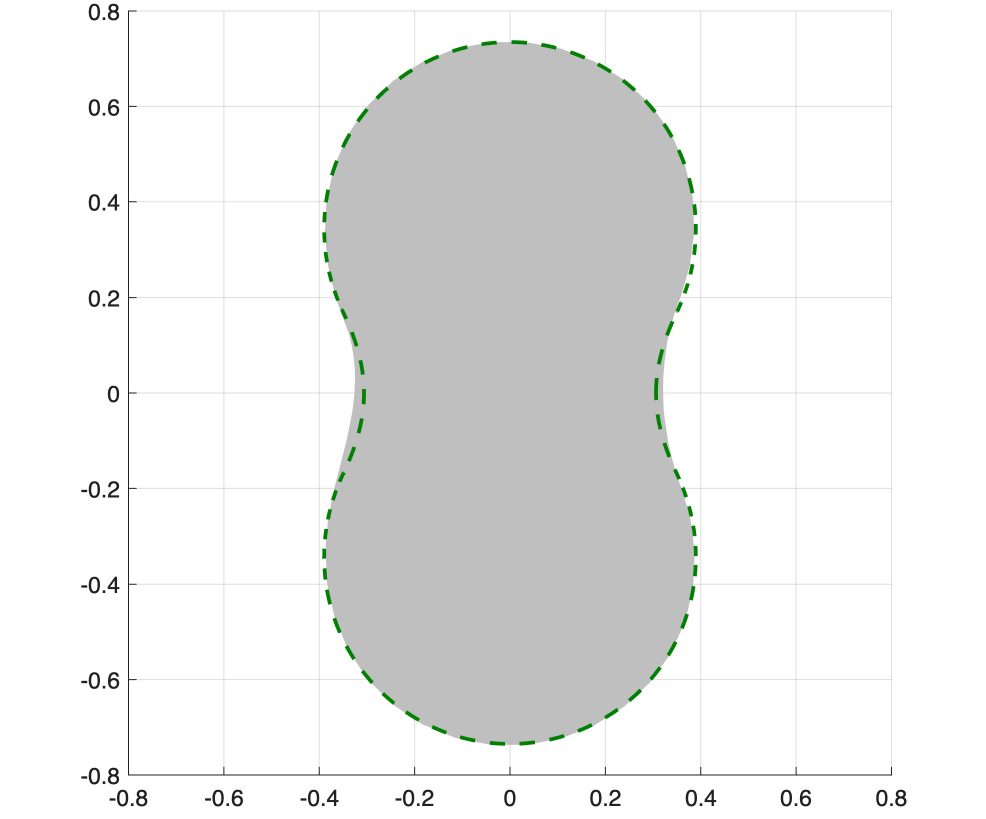}}
		\end{tabular} 
	}\\
	
	\caption{Reconstructions of a pinched ball-shaped obstacle with different levels of noise. 
		The initial guess is a sphere with  $\pmb c^{(0)} = (-0.5, 0.4, -0.3)^\top$ and  $r^{(0)} = 0.6$. The incident point source is located at $(0, 0, 5)^\top$.}\label{fig_pinchedball_ex1}
\end{figure}

\begin{figure}[h]
	\centering 
	
	\subfigure[True shape with multiple views.]{
		\begin{tabular}{cccc}
			\includegraphics[width=0.21\textwidth]{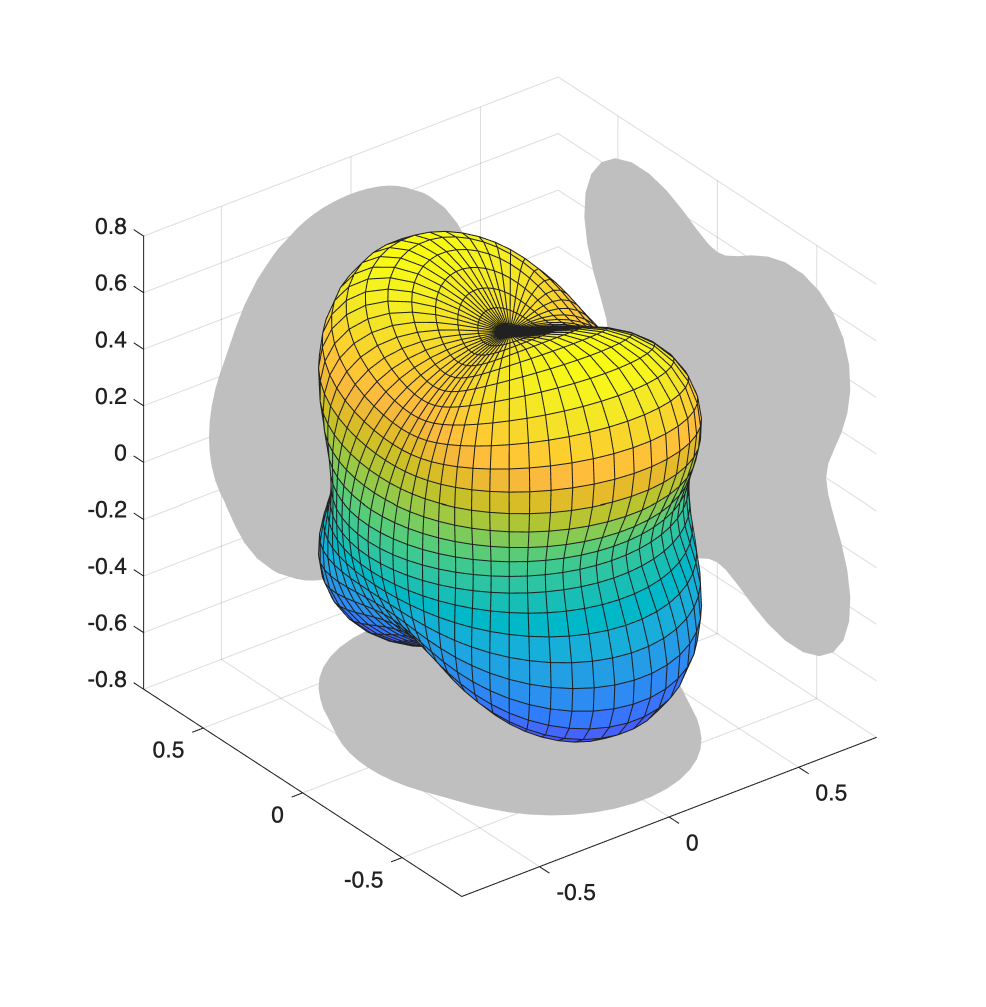} &
			\includegraphics[width=0.21\textwidth]{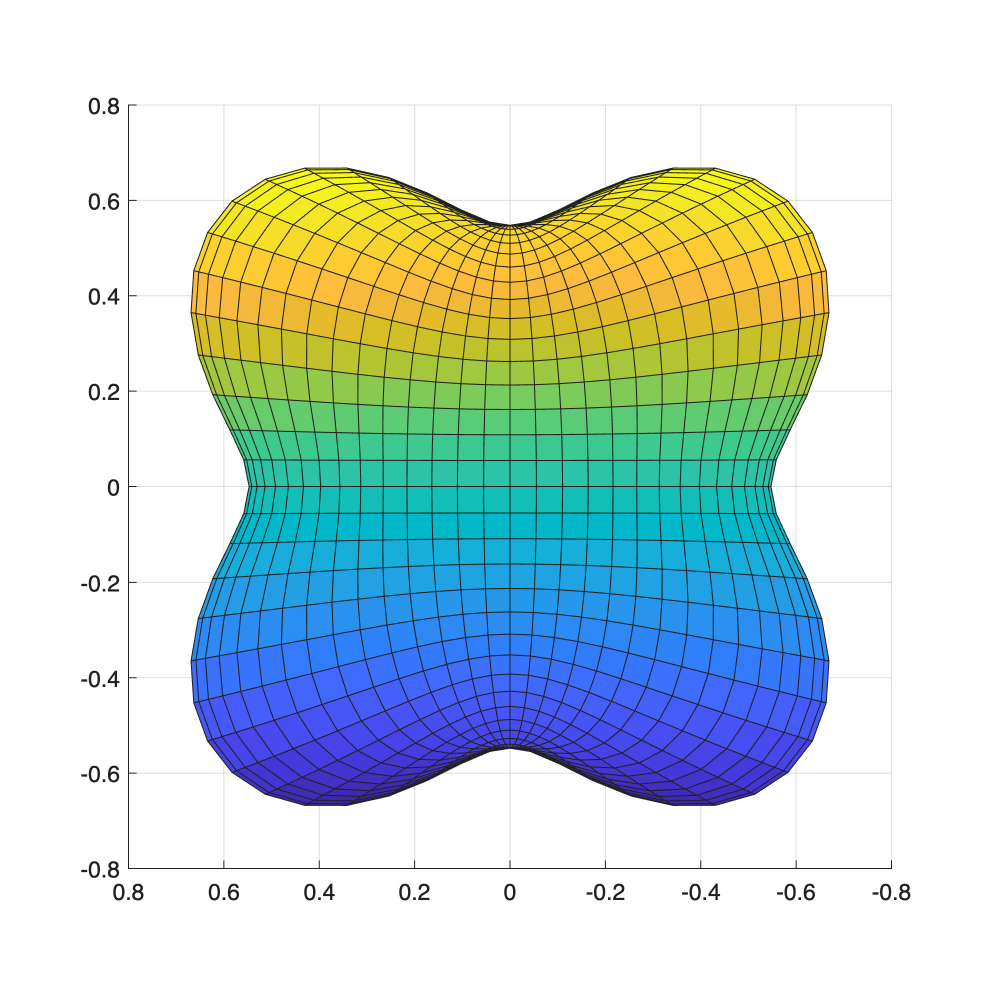} &
			\includegraphics[width=0.21\textwidth]{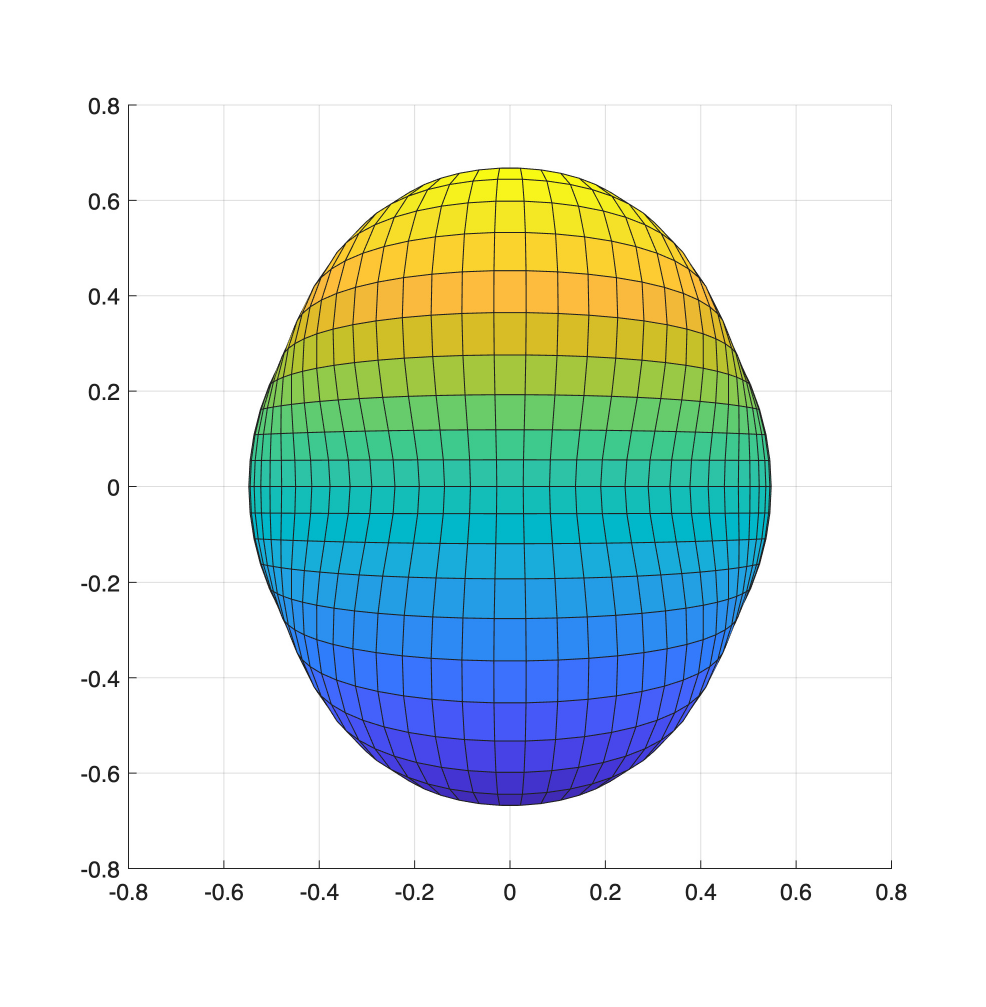} &
			\includegraphics[width=0.21\textwidth]{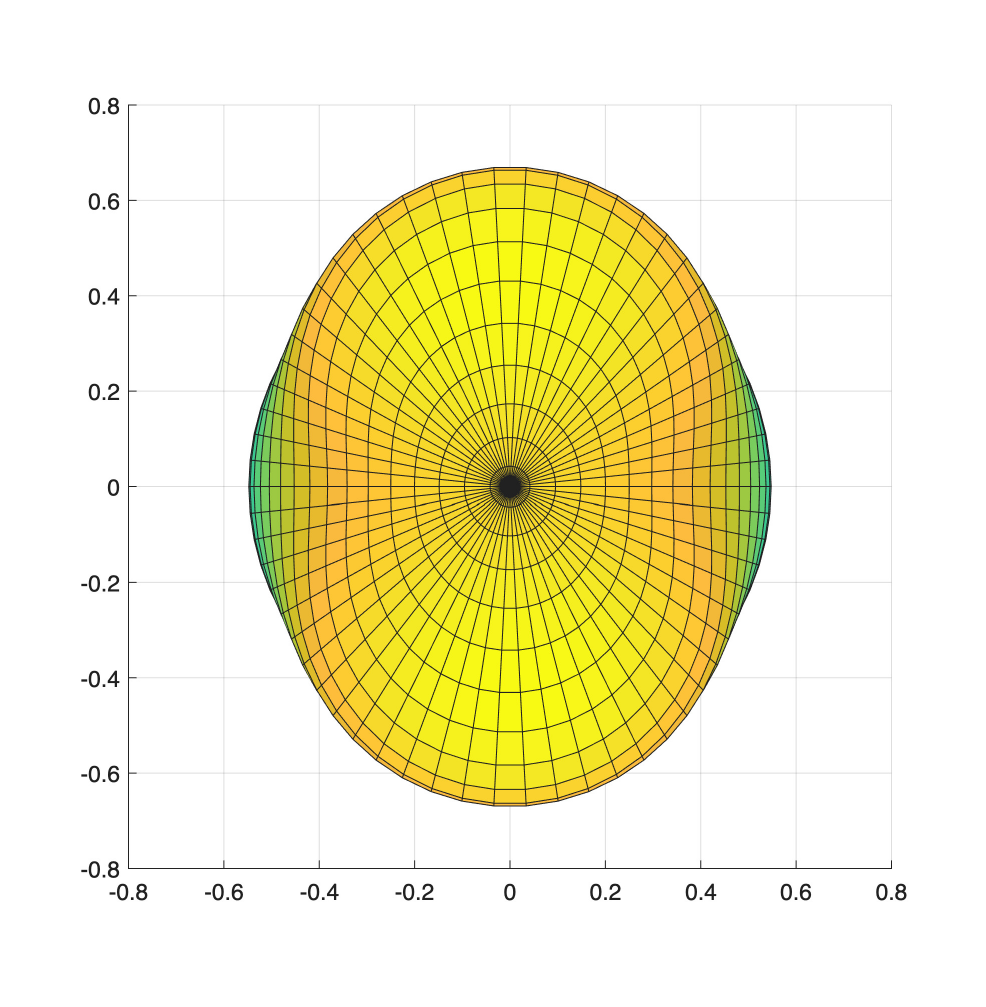}
		\end{tabular}
	} \\
	
	\subfigure[Reconstruction with 1$\%$ noise.]{
		\begin{tabular}{cccc}
			{\includegraphics[width=0.21\textwidth]{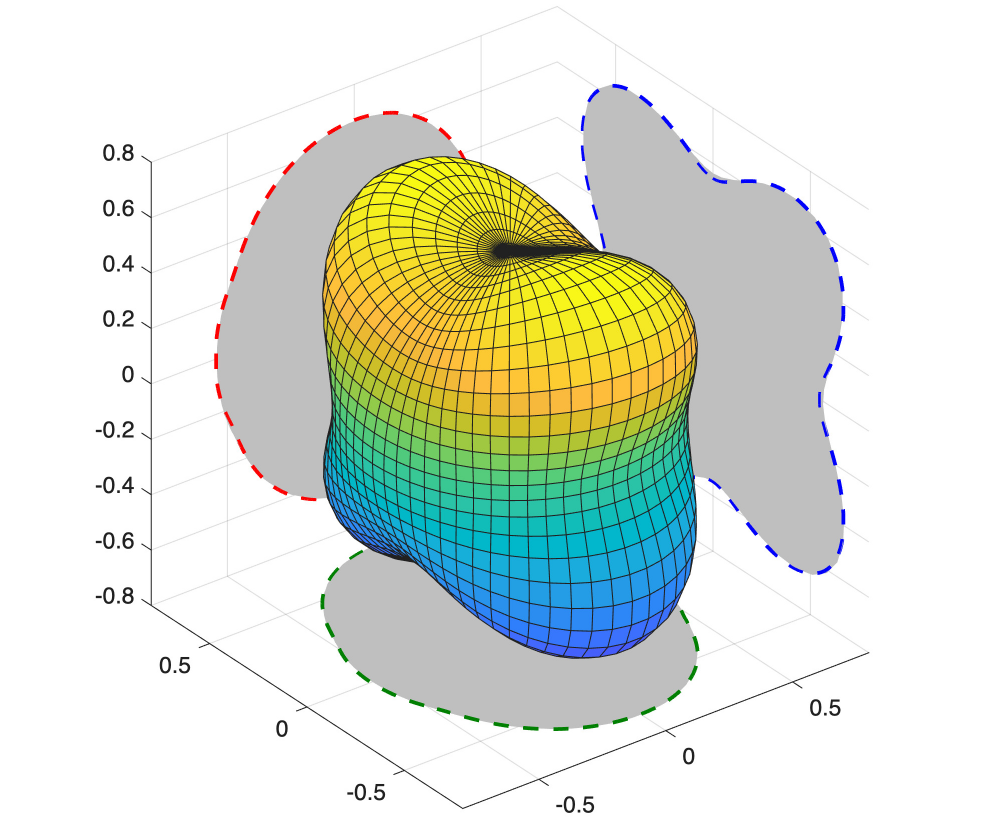}}  &
			{\includegraphics[width=0.21\textwidth]{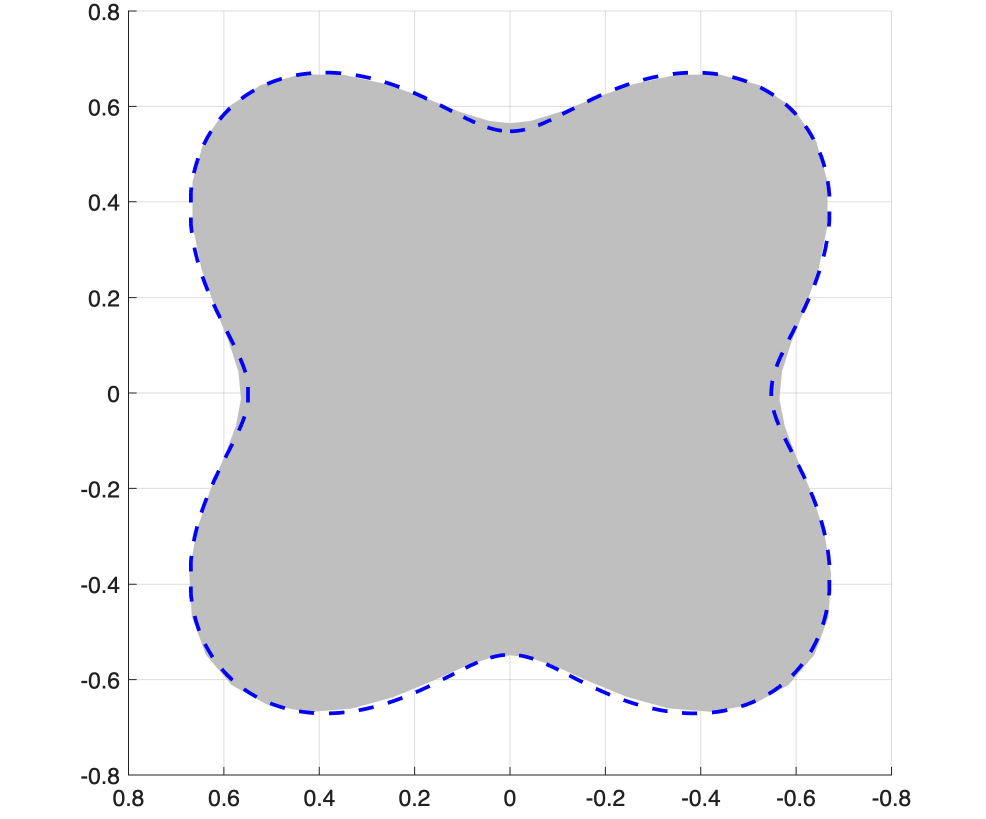}} &
			{\includegraphics[width=0.21\textwidth]{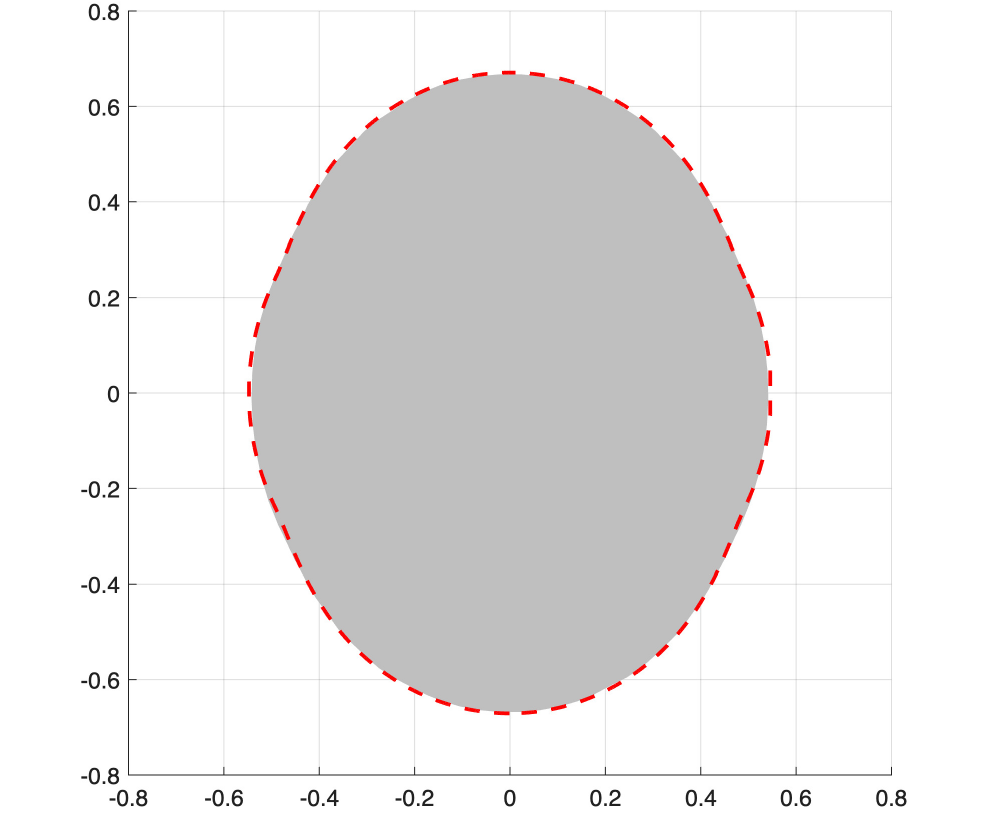}}  &
			{\includegraphics[width=0.21\textwidth]{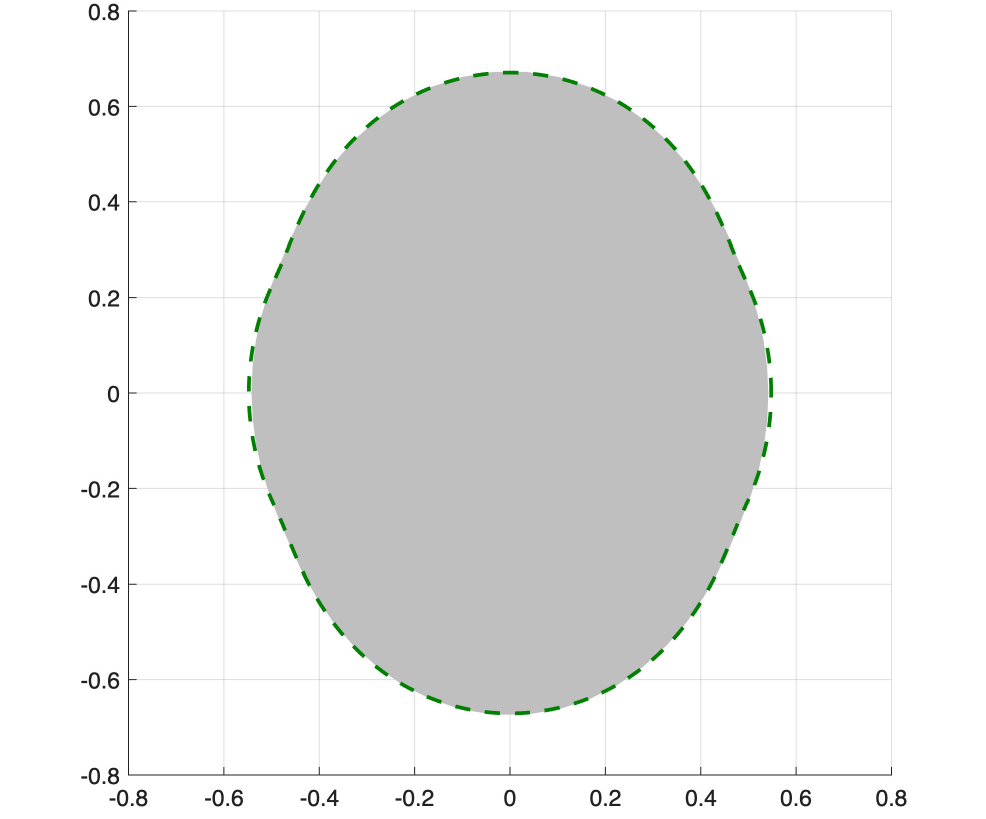}}
		\end{tabular} 
	}\\

	\subfigure[Reconstruction with 5$\%$ noise.]{
		\begin{tabular}{cccc}
			{\includegraphics[width=0.21\textwidth]{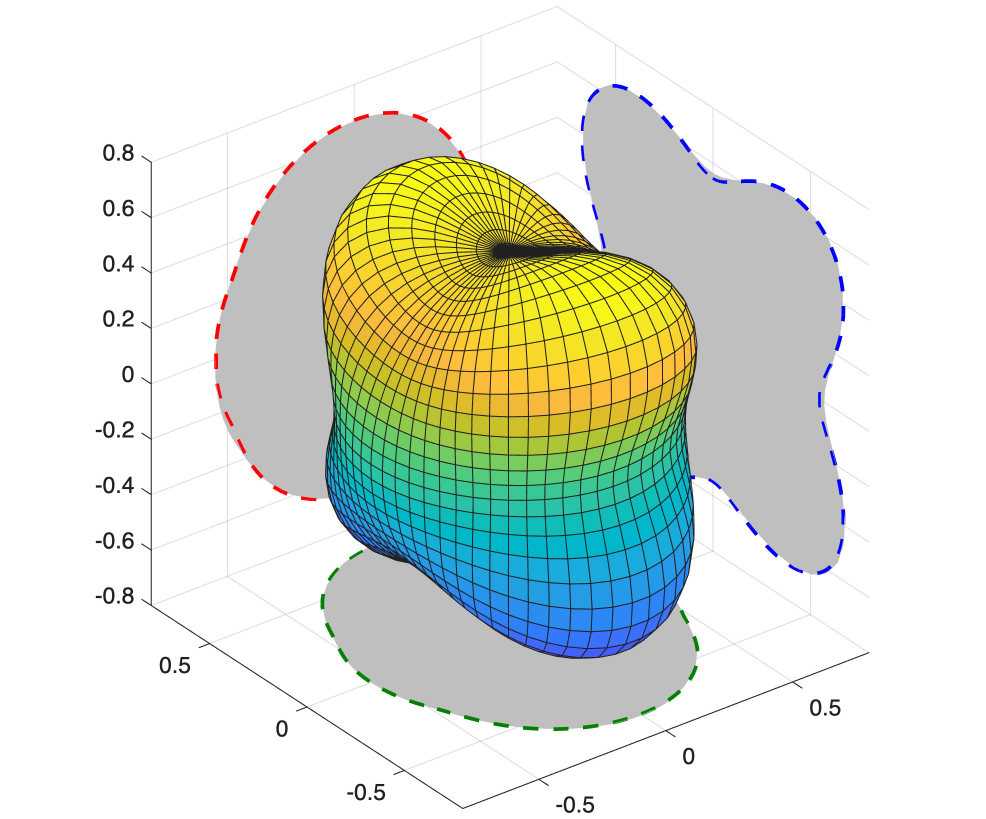}}  &
			{\includegraphics[width=0.21\textwidth]{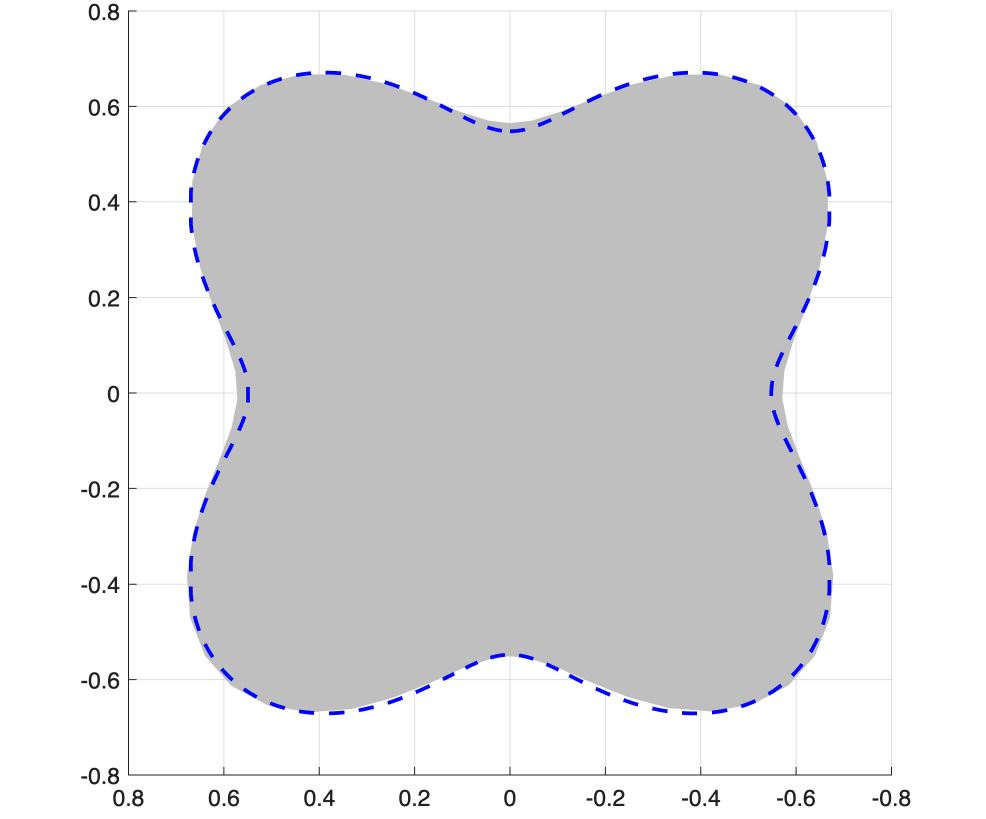}} &
			{\includegraphics[width=0.21\textwidth]{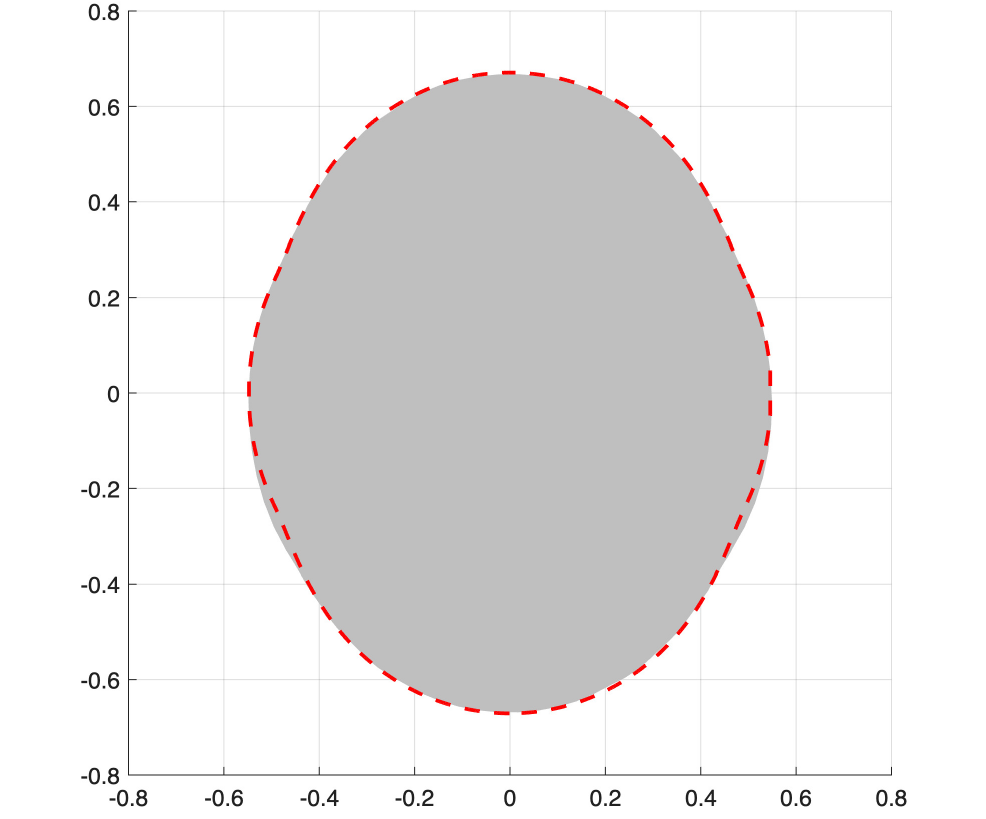}}  &
			{\includegraphics[width=0.21\textwidth]{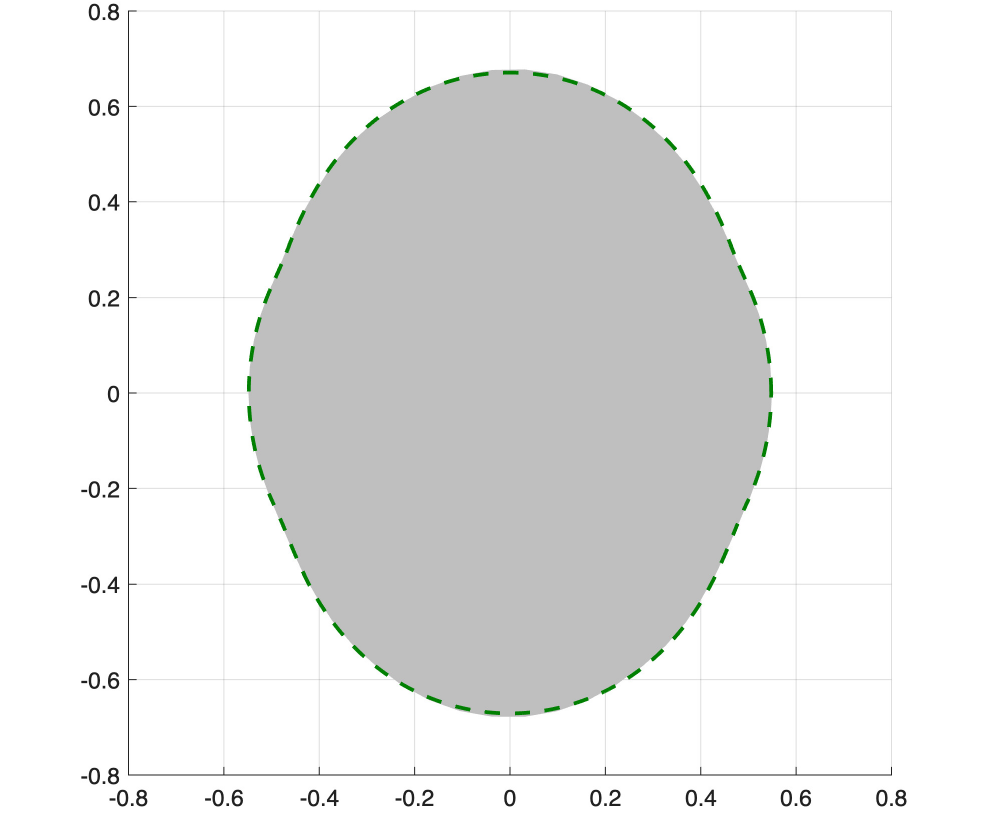}}
		\end{tabular} 
	}\\
	
		\subfigure[Reconstruction with 10$\%$ noise.]{
		\begin{tabular}{cccc}
			{\includegraphics[width=0.21\textwidth]{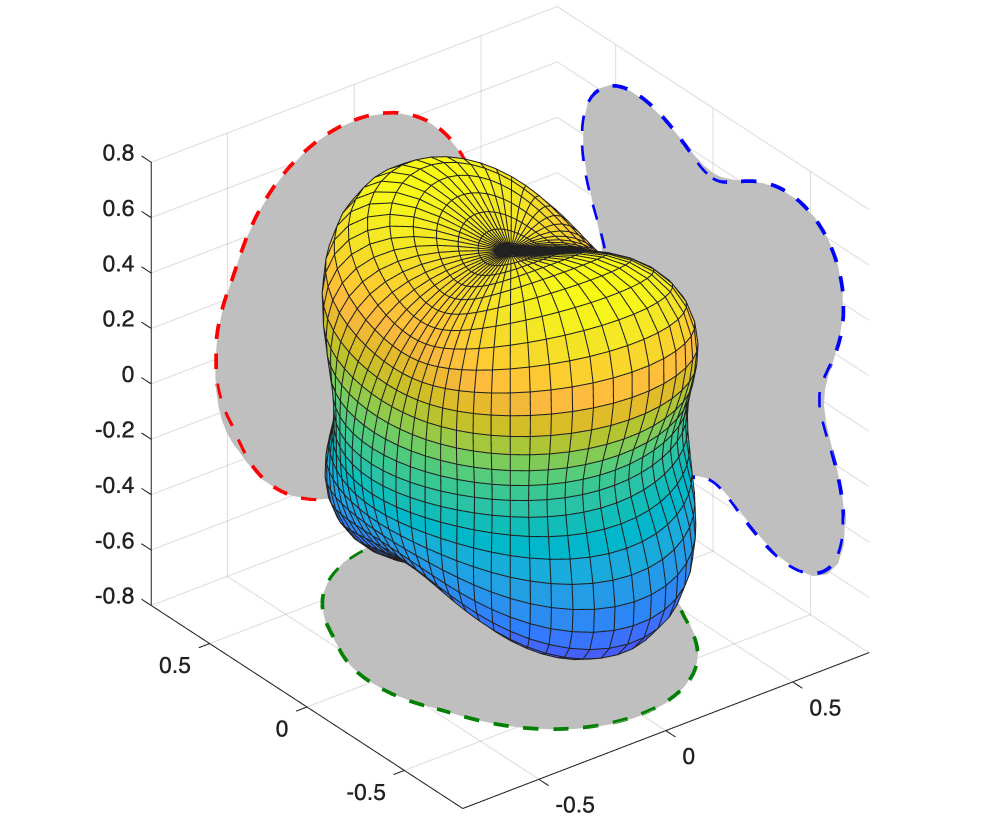}}  &
			{\includegraphics[width=0.21\textwidth]{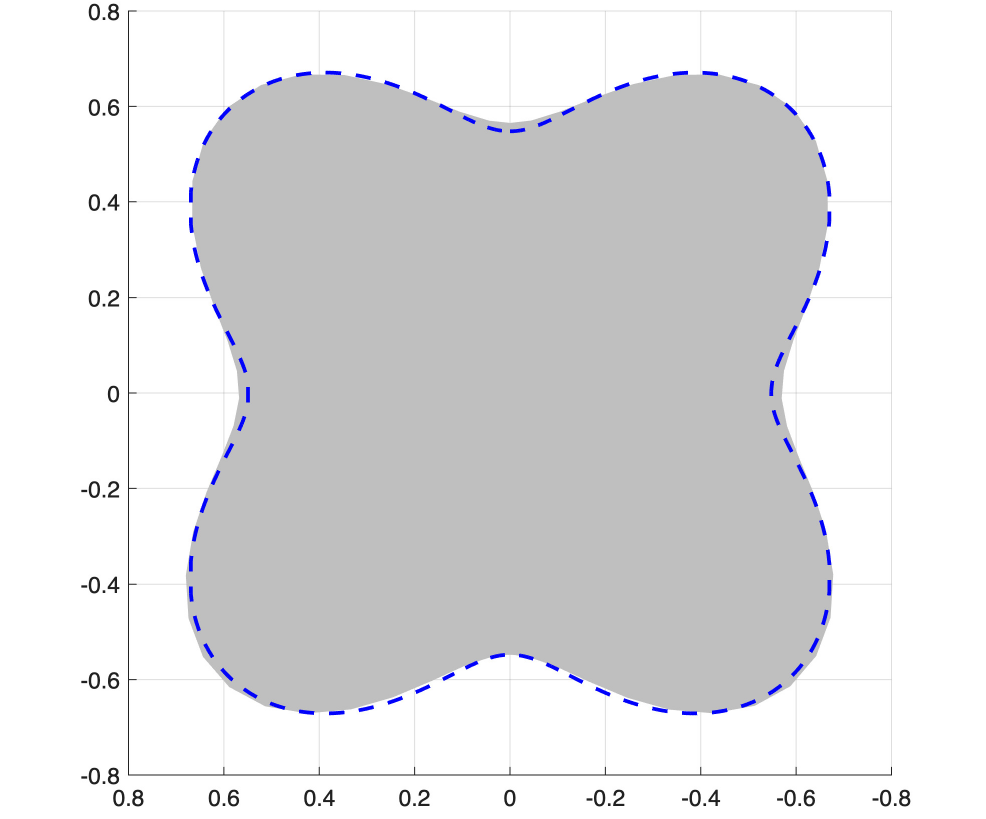}} &
			{\includegraphics[width=0.21\textwidth]{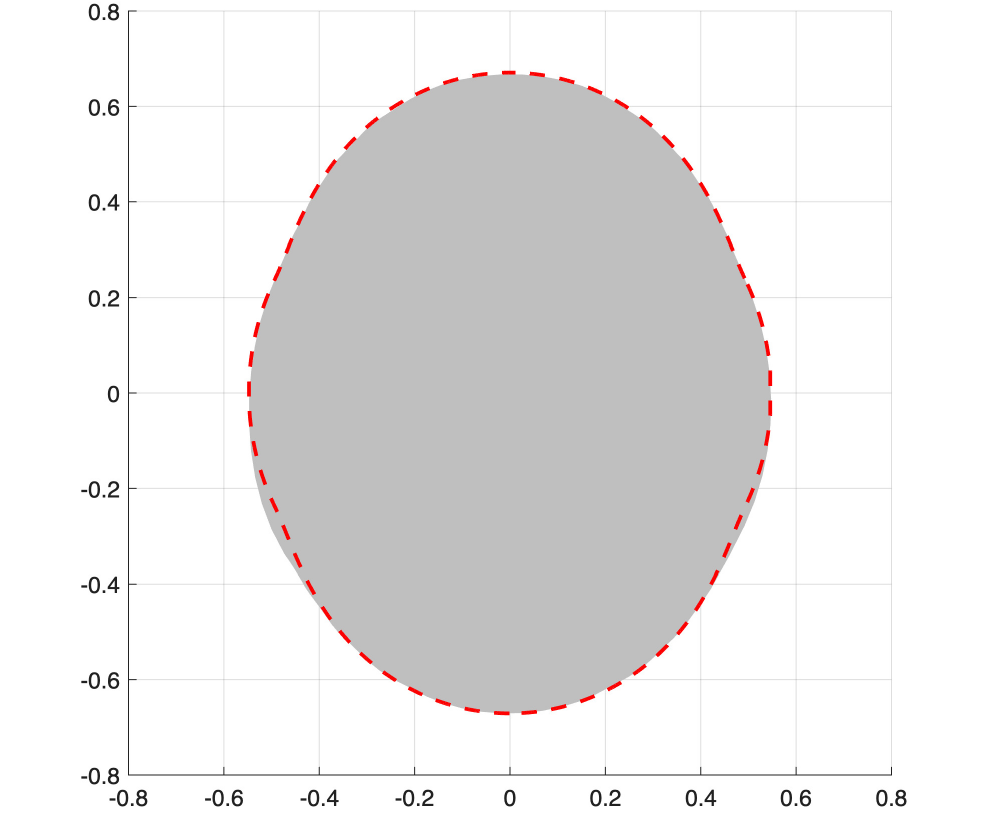}}  &
			{\includegraphics[width=0.21\textwidth]{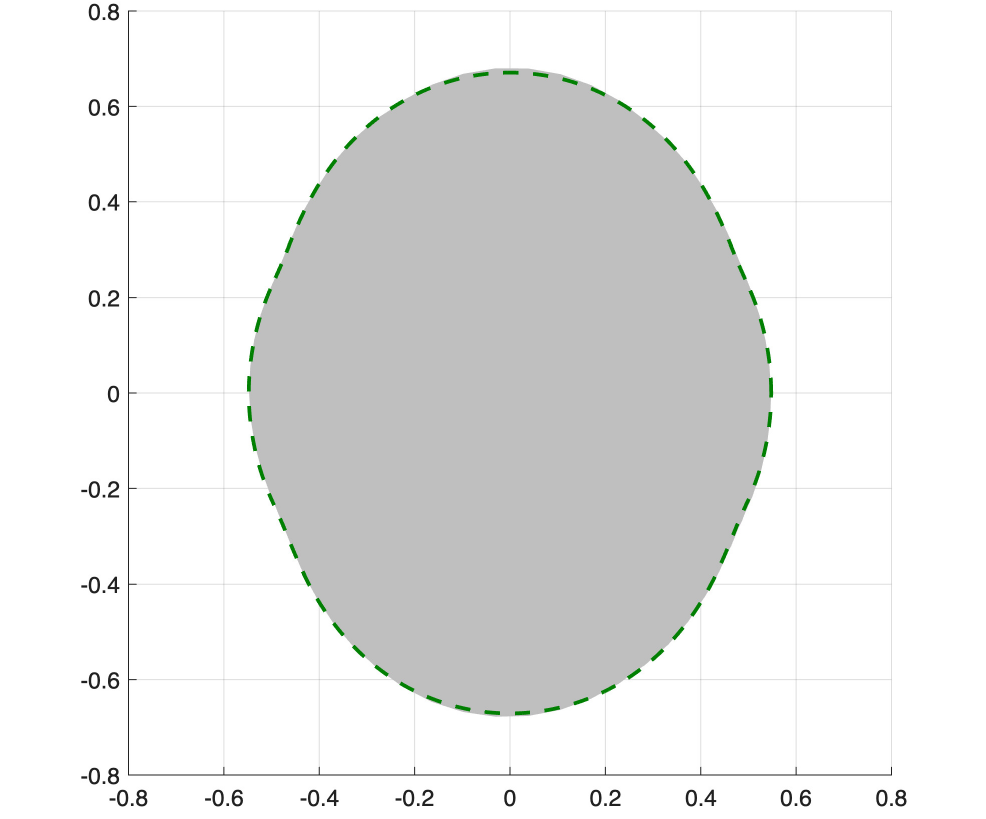}}
		\end{tabular} 
	}\\

	\caption{Reconstructions of a cushion-shaped obstacle with different levels of noise. The initial guess is a sphere with  $\pmb c^{(0)} = (-0.3, 0.2, -0.3)^\top$ and  $r^{(0)} = 0.5$. The incident point source is located at $(0, 0, 5)^\top$.
	}\label{fig_cushion_ex1}
\end{figure}

\begin{figure}[h]
	\centering 
	
	\subfigure[Reconstruction with 10$\%$ noise. The incident point sources are located at $(5, 0, 0)^\top$ and $(-5, 0, 0)^\top$.]{
		\begin{tabular}{cccc}
			{\includegraphics[width=0.21\textwidth]{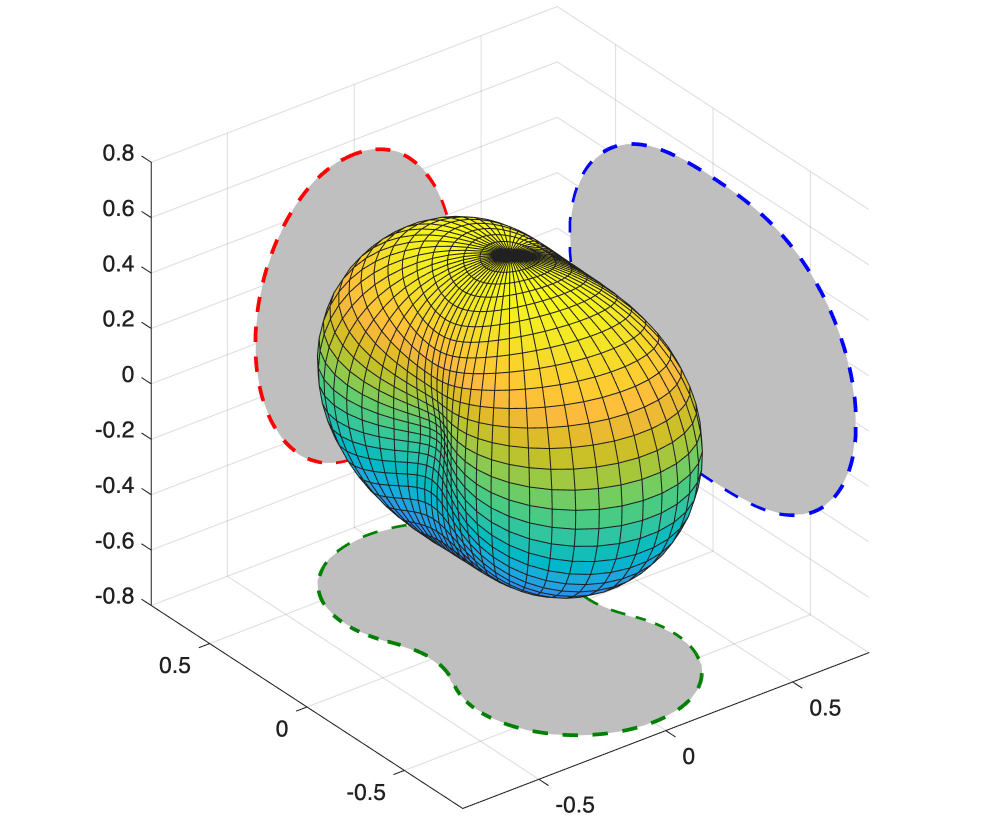}} &
			{\includegraphics[width=0.21\textwidth]{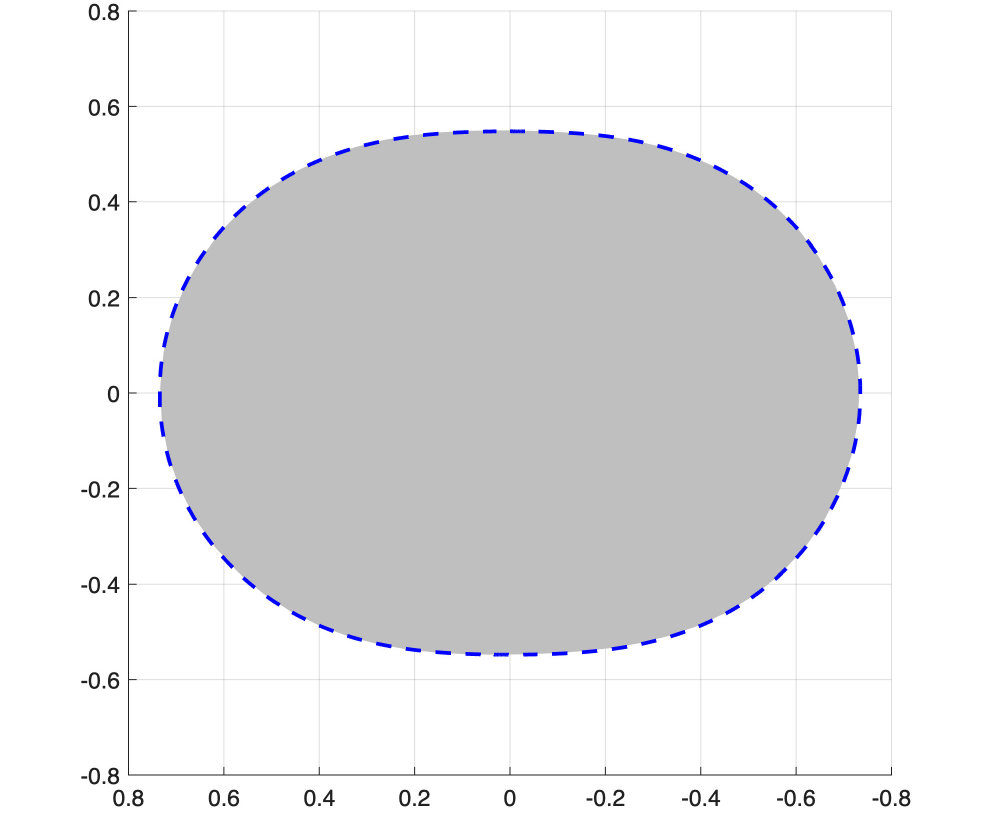}} &
			{\includegraphics[width=0.21\textwidth]{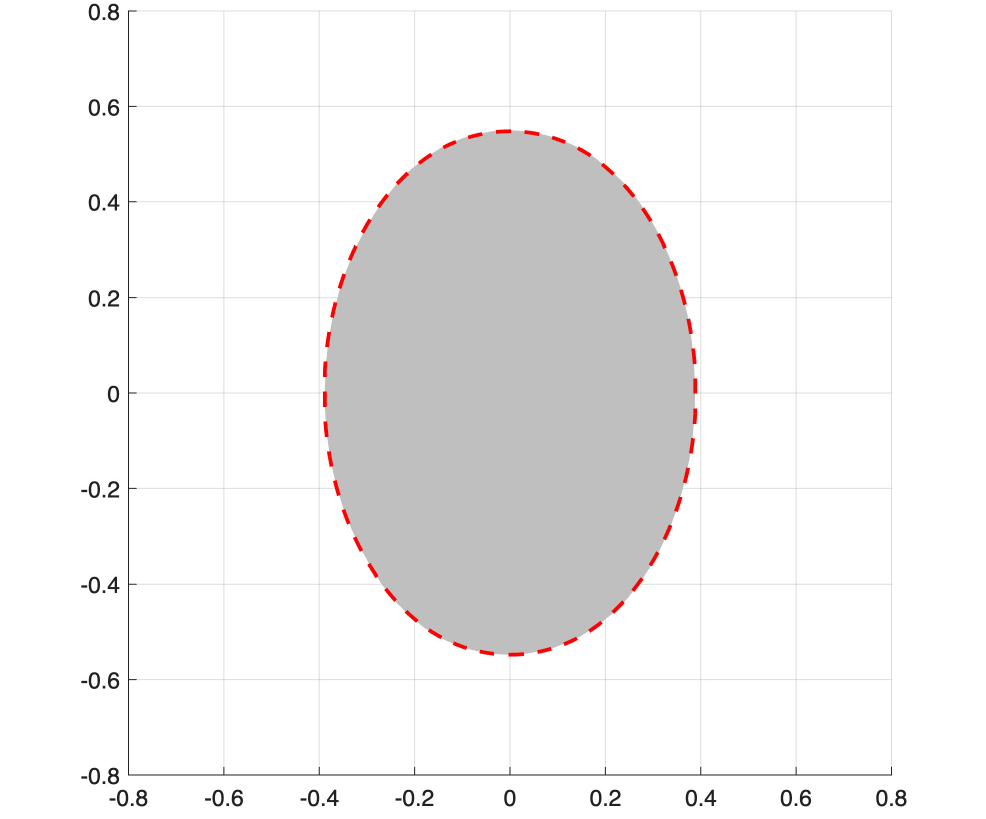}} &
			{\includegraphics[width=0.21\textwidth]{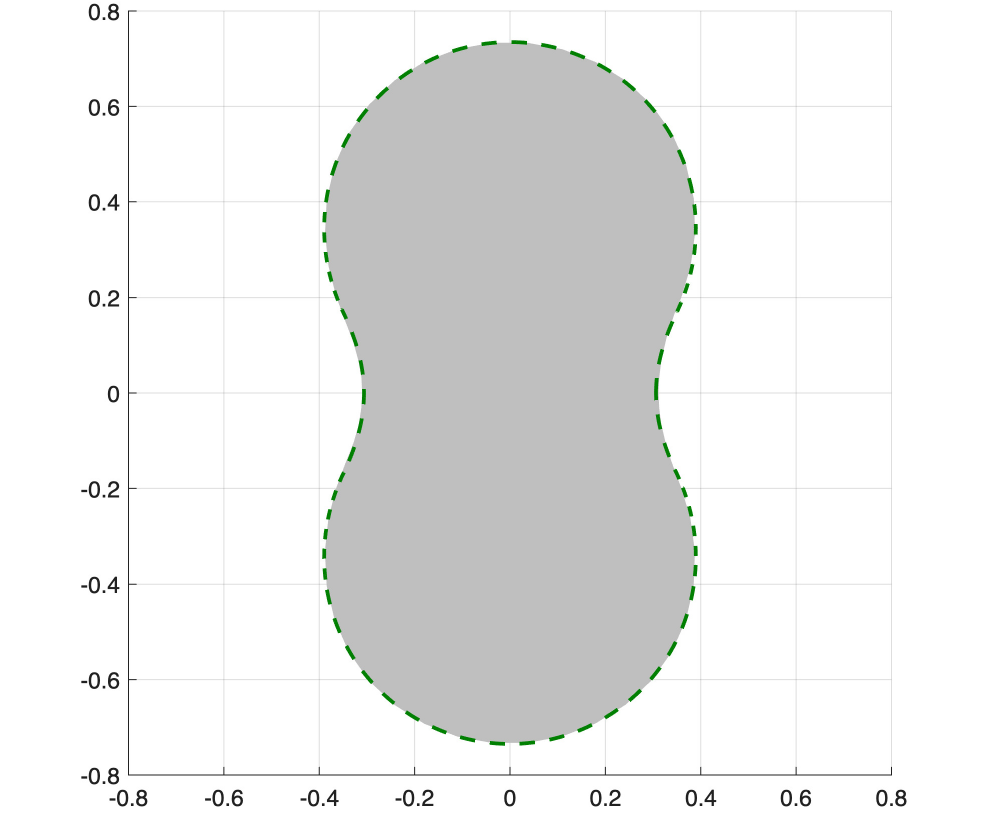}}
		\end{tabular} 
	}\\
	
	\subfigure[Reconstruction with 10$\%$ noise. The incident point sources are located at $(5, 0, 0)^\top$, $(-5, 0, 0)^\top$, $(0, 0, 5)^\top$ and $(0, 0, -5)^\top$.]{
		\begin{tabular}{cccc}
			{\includegraphics[width=0.21\textwidth]{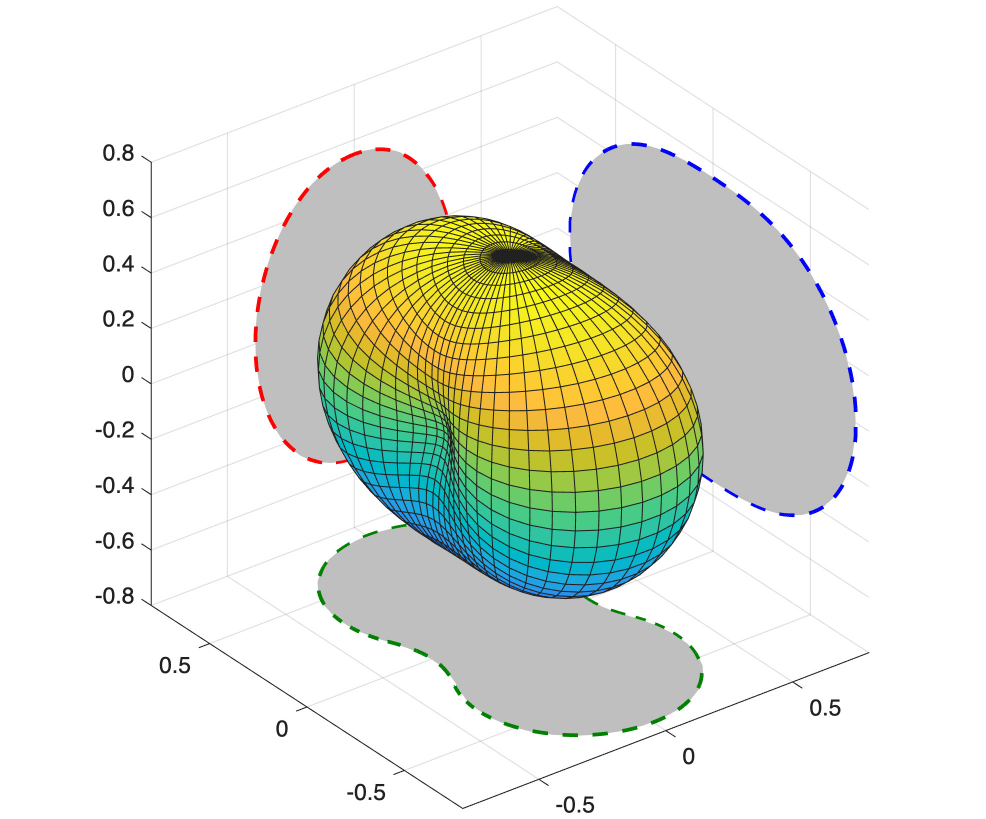}} &
			{\includegraphics[width=0.21\textwidth]{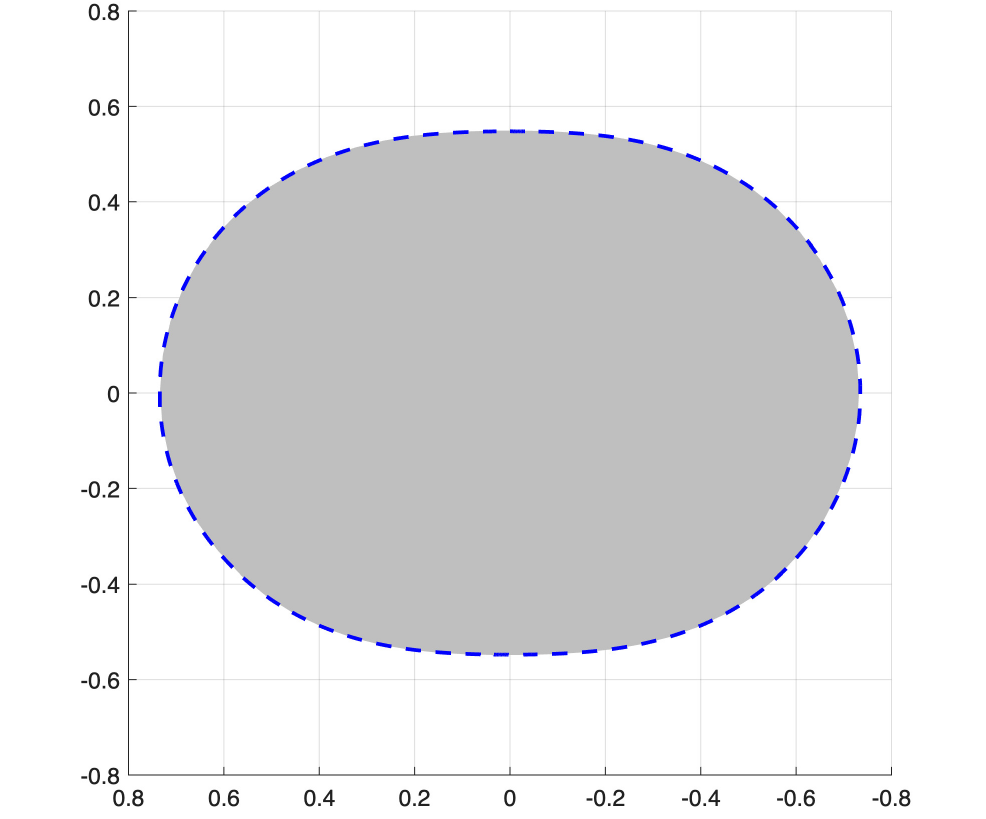}} &
			{\includegraphics[width=0.21\textwidth]{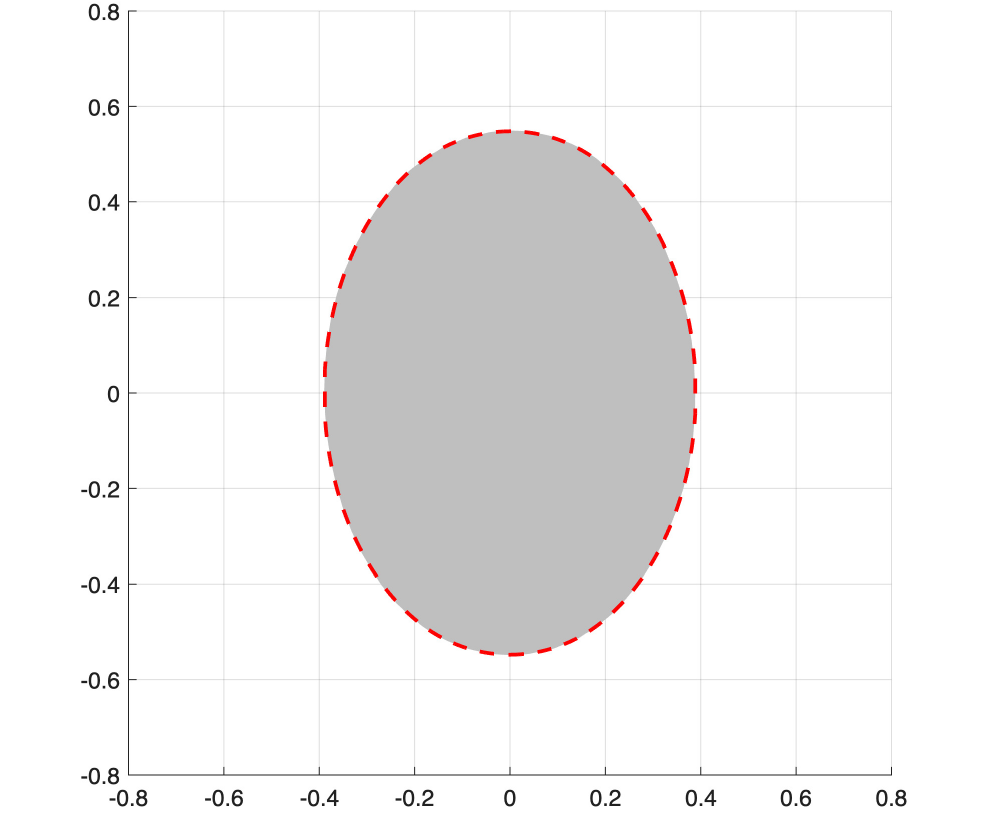}} &
			{\includegraphics[width=0.21\textwidth]{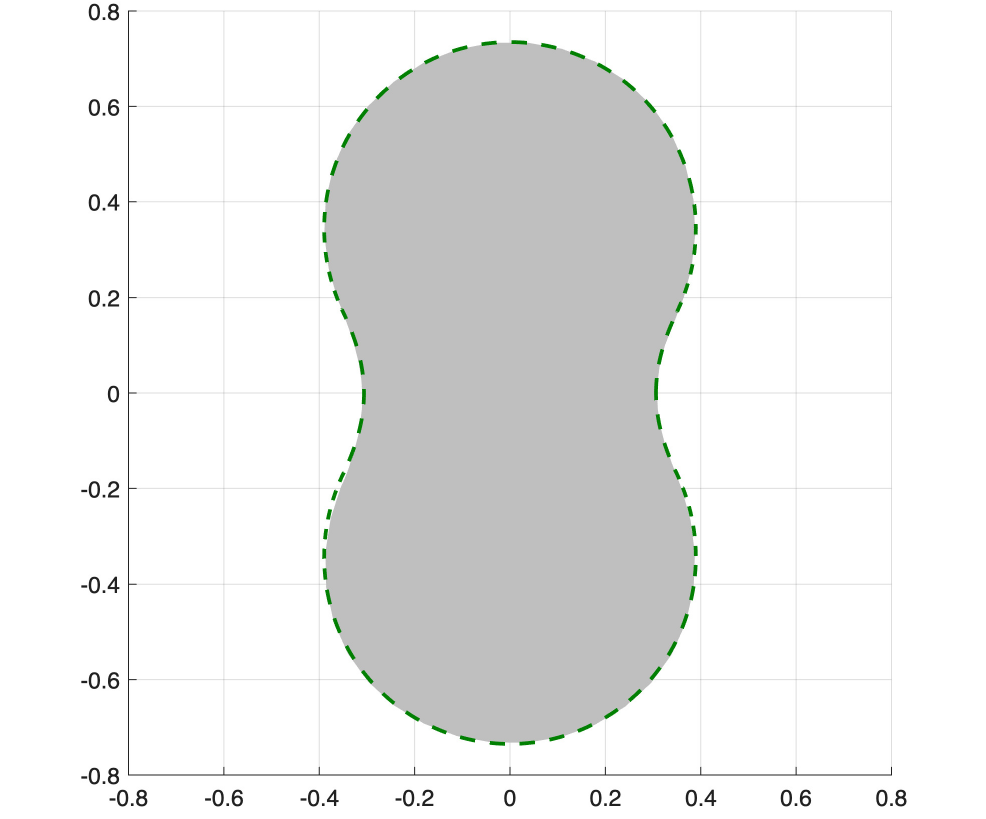}}
		\end{tabular} 
	}\\

	\caption{Reconstructions of a pinched ball-shaped obstacle with multiple launch positions. 
		The initial guess is a sphere with  $\pmb c^{(0)} = (-0.5, 0.4, -0.3)^\top$ and  $r^{(0)} = 0.6$.}\label{fig_pinchedball_ex2}
\end{figure}

\begin{figure}[h]
	\centering 
	\subfigure[Reconstruction with 10$\%$ noise. The incident point sources are located at $(0, 0, 5)^\top$ and $(0, 0, -5)^\top$.]{
		\begin{tabular}{cccc}
			{\includegraphics[width=0.21\textwidth]{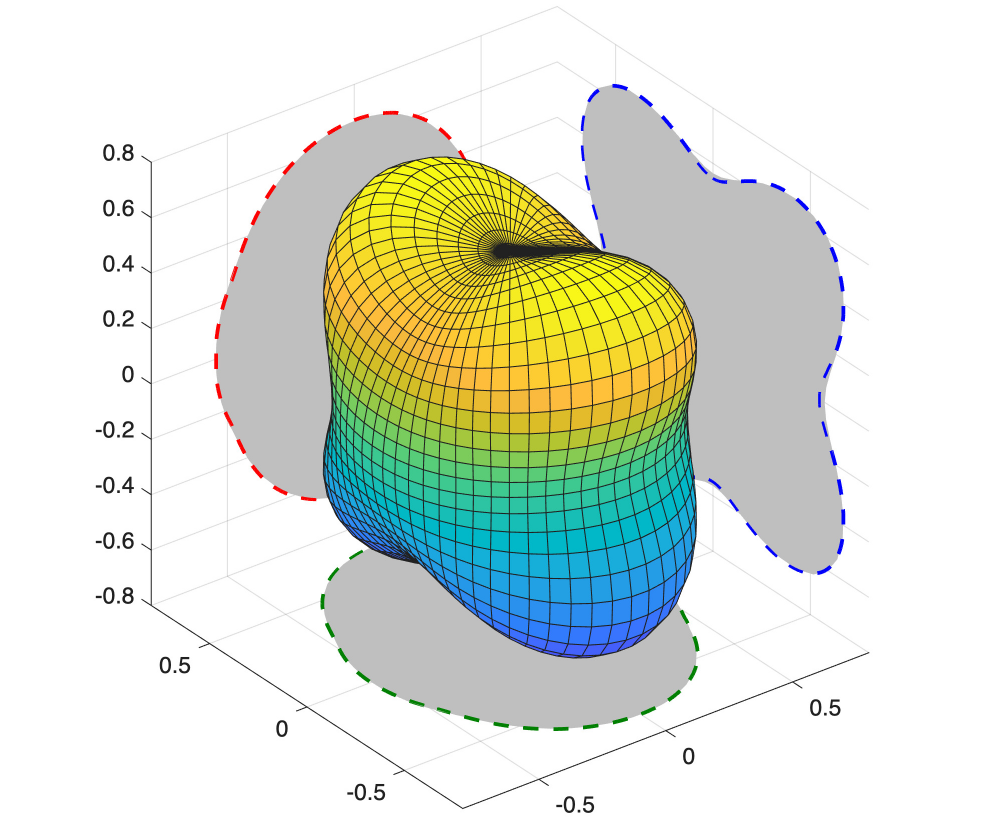}}  &
			{\includegraphics[width=0.21\textwidth]{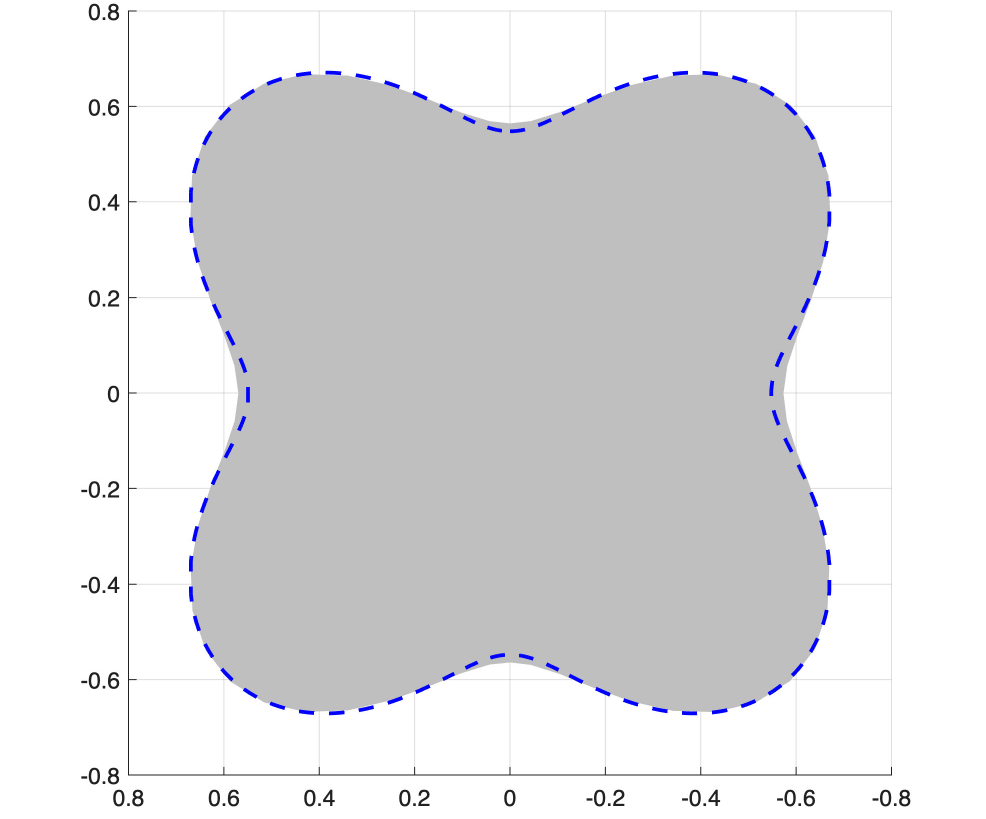}} &
			{\includegraphics[width=0.21\textwidth]{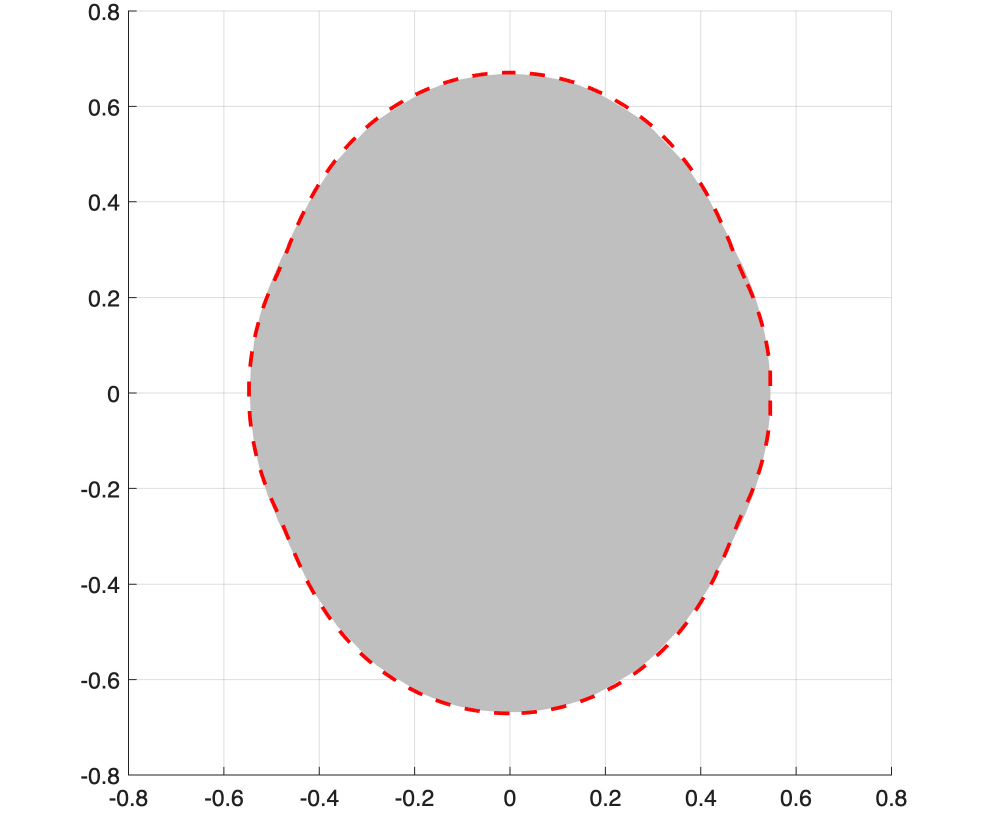}}  &
			{\includegraphics[width=0.21\textwidth]{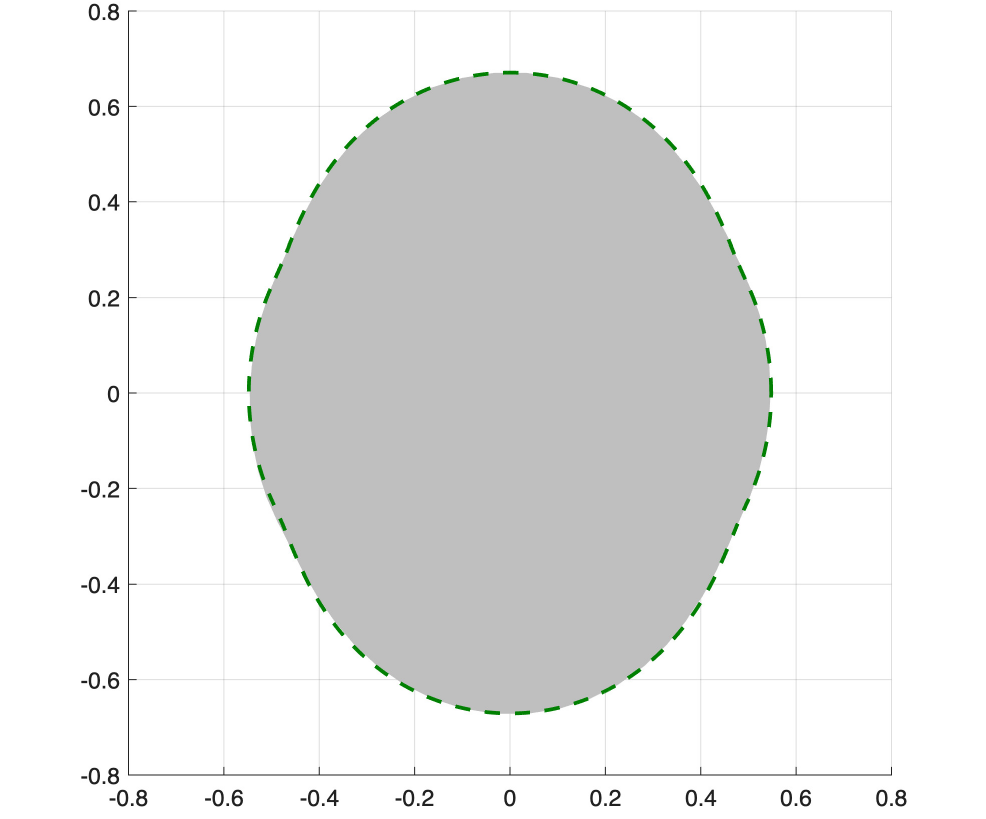}}
		\end{tabular} 
	}\\
	
	\subfigure[Reconstruction with 10$\%$ noise. The incident point sources are located at $(0, 5, 0)^\top$, $(0, -5, 0)^\top$, $(0, 0, 5)^\top$ and $(0, 0, -5)^\top$.]{
		\begin{tabular}{cccc}
			{\includegraphics[width=0.21\textwidth]{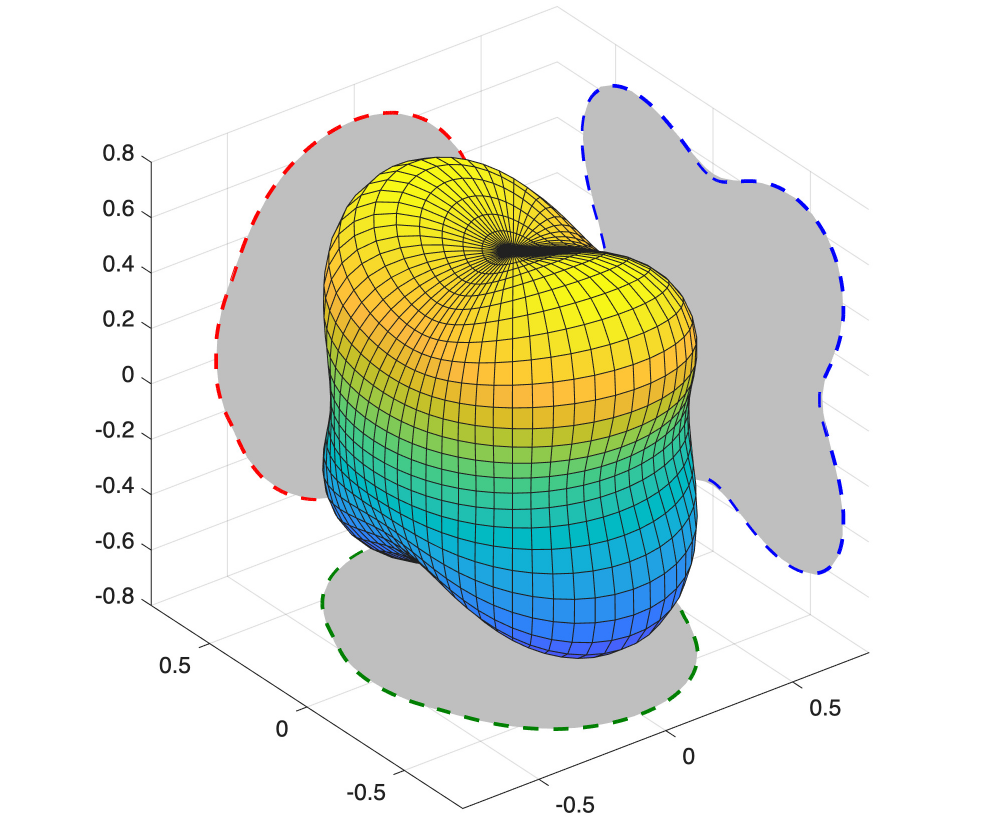}}  &
			{\includegraphics[width=0.21\textwidth]{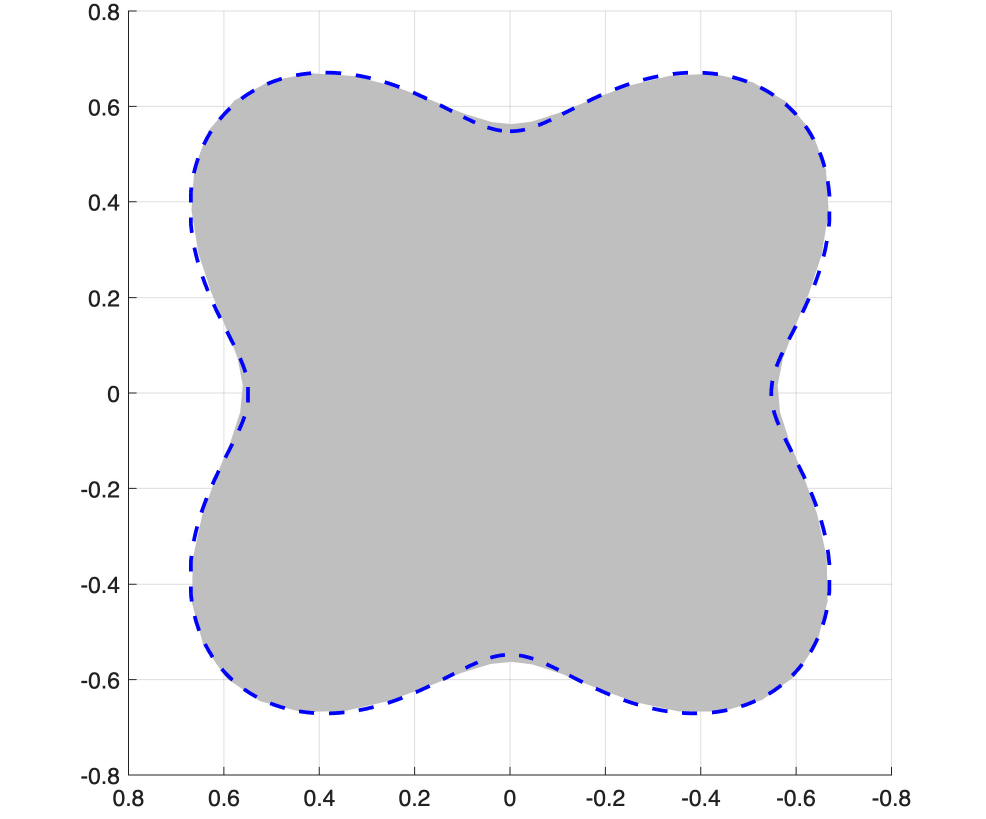}} &
			{\includegraphics[width=0.21\textwidth]{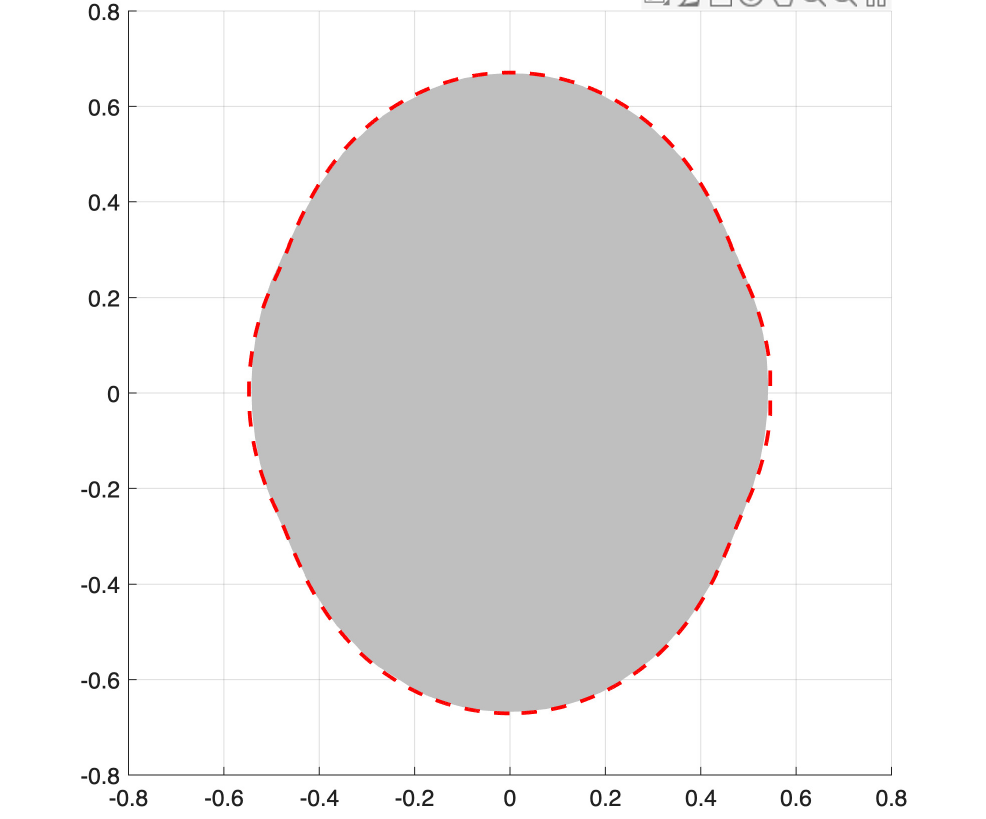}}  &
			{\includegraphics[width=0.21\textwidth]{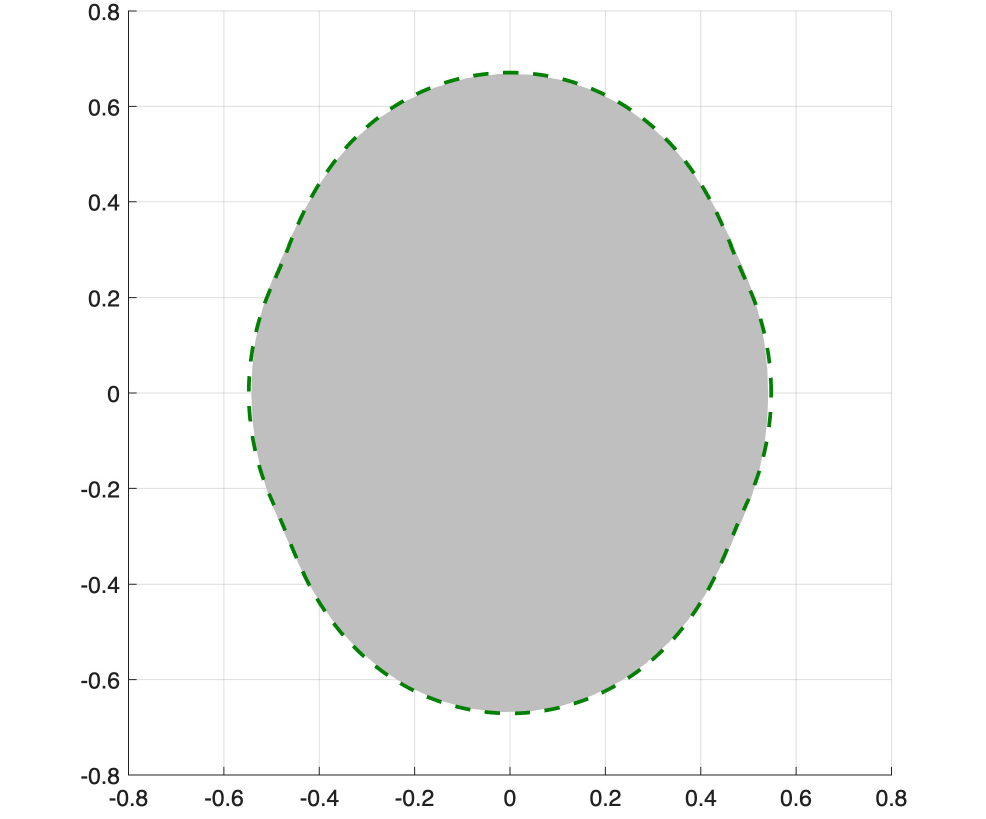}}
		\end{tabular} 
	}\\

	\caption{Reconstructions of a cushion-shaped obstacle with multiple launch positions. The initial guess is a sphere with  $\pmb c^{(0)} = (-0.3, 0.2, -0.3)^\top$ and  $r^{(0)} = 0.5$.}\label{fig_cushion_ex2}
\end{figure}

\begin{figure}[h]
	\centering 
	
	\subfigure[The geometric contraction factor is $\varsigma = 0.9$, the initial guess is a sphere with center $\pmb{c}^{(0)} = (-0.2, 0.3,-0.2)^\top$ and radius $r^{(0)} = 0.3$, and the observation radius $R = 1.5$.]{
		\begin{tabular}{cccc}
			{\includegraphics[width=0.21\textwidth]{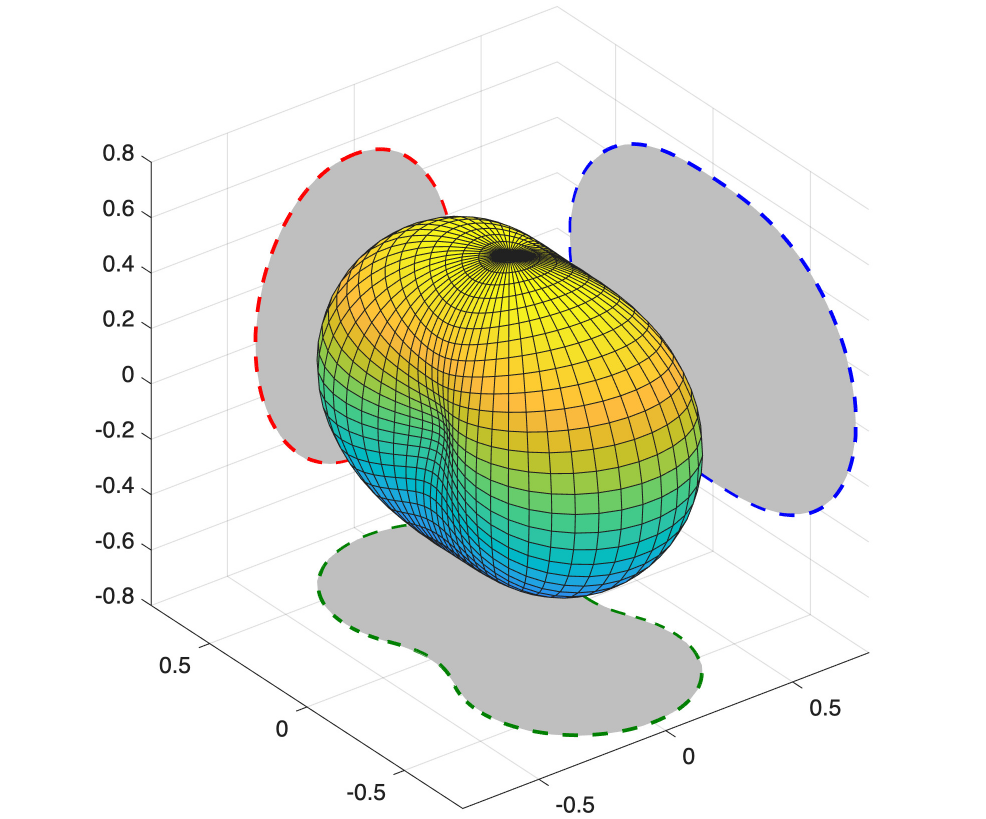}}  &
			{\includegraphics[width=0.21\textwidth]{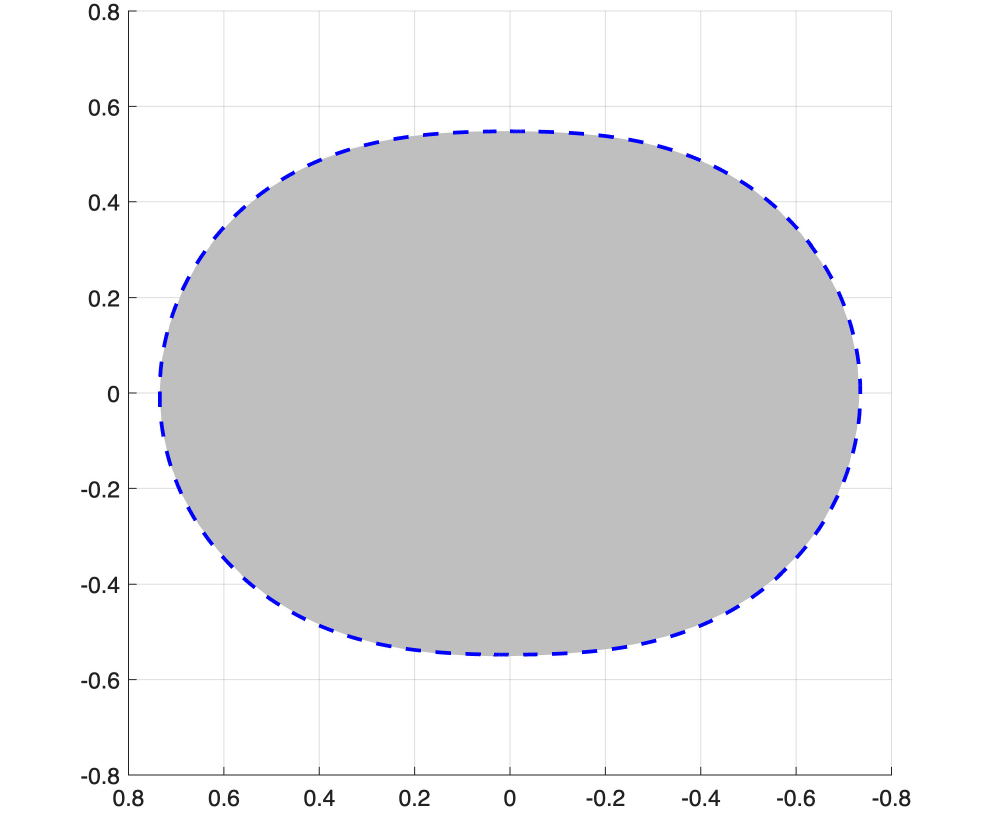}} &
			{\includegraphics[width=0.21\textwidth]{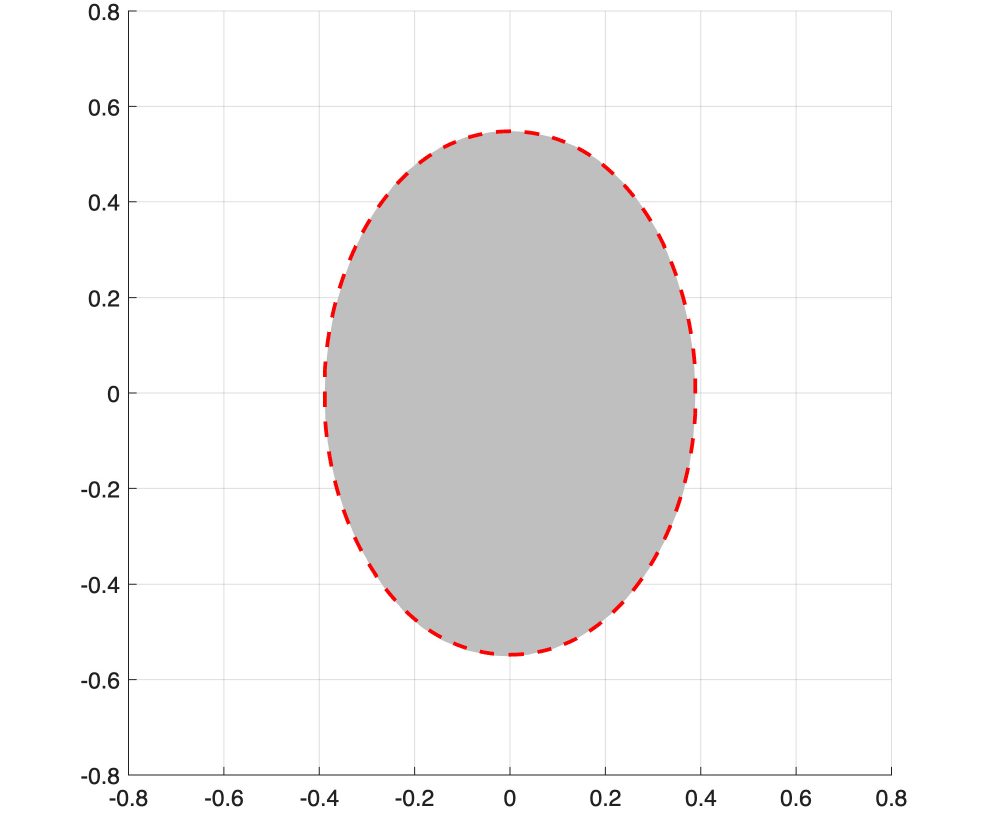}} &
			{\includegraphics[width=0.21\textwidth]{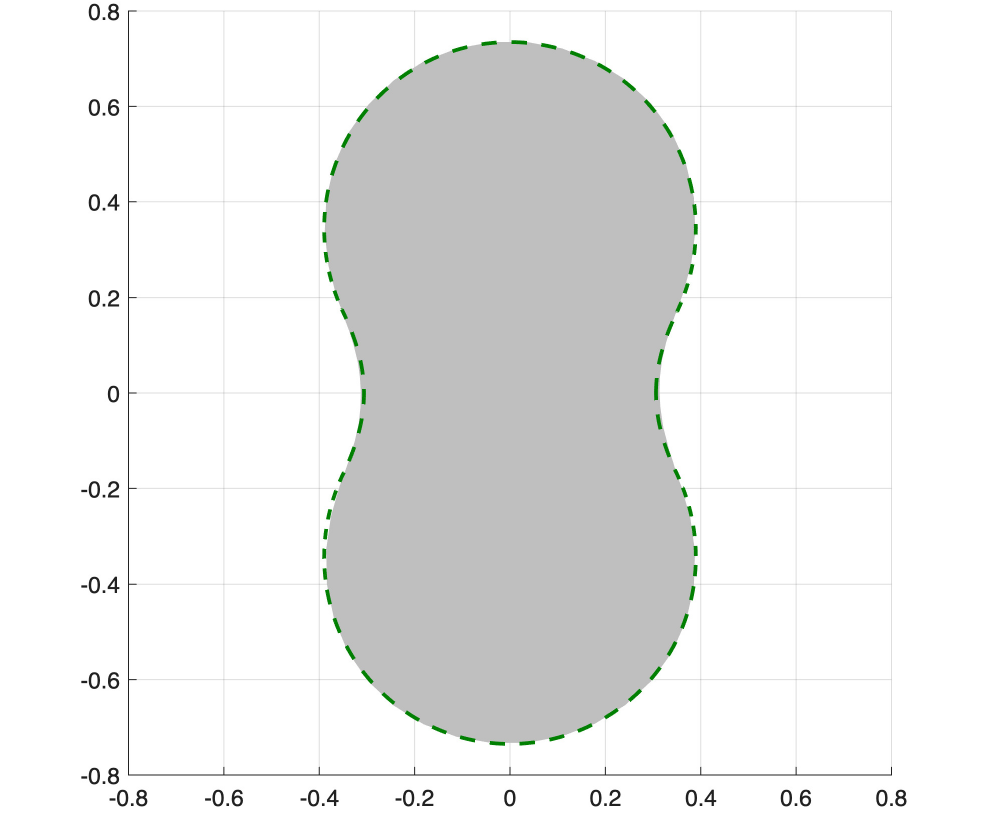}}
		\end{tabular} 
	}\\

	\subfigure[The geometric contraction factor is $\varsigma = 0.9$, the initial guess is a sphere with center $\pmb{c}^{(0)} = (0.3, 0.4, -0.3)^\top$ and radius $r^{(0)} = 0.7$, and the observation radius $R = 1.5$.]{
		\begin{tabular}{cccc}
			{\includegraphics[width=0.21\textwidth]{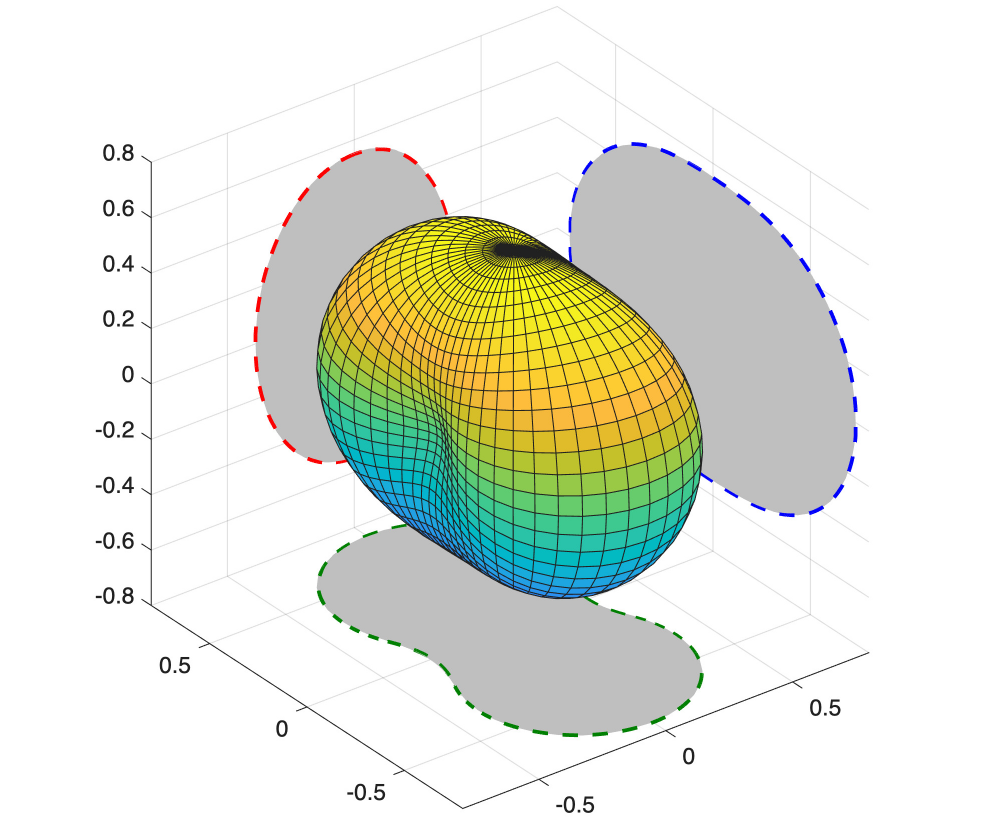}}  &
			{\includegraphics[width=0.21\textwidth]{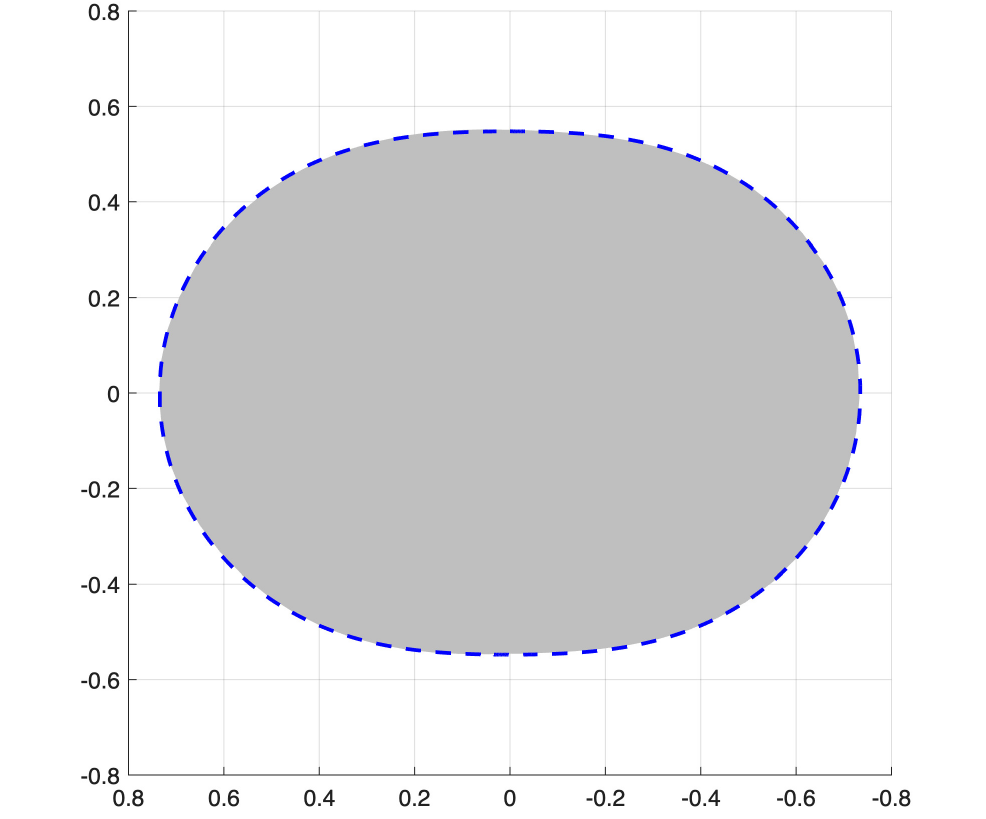}} &
			{\includegraphics[width=0.21\textwidth]{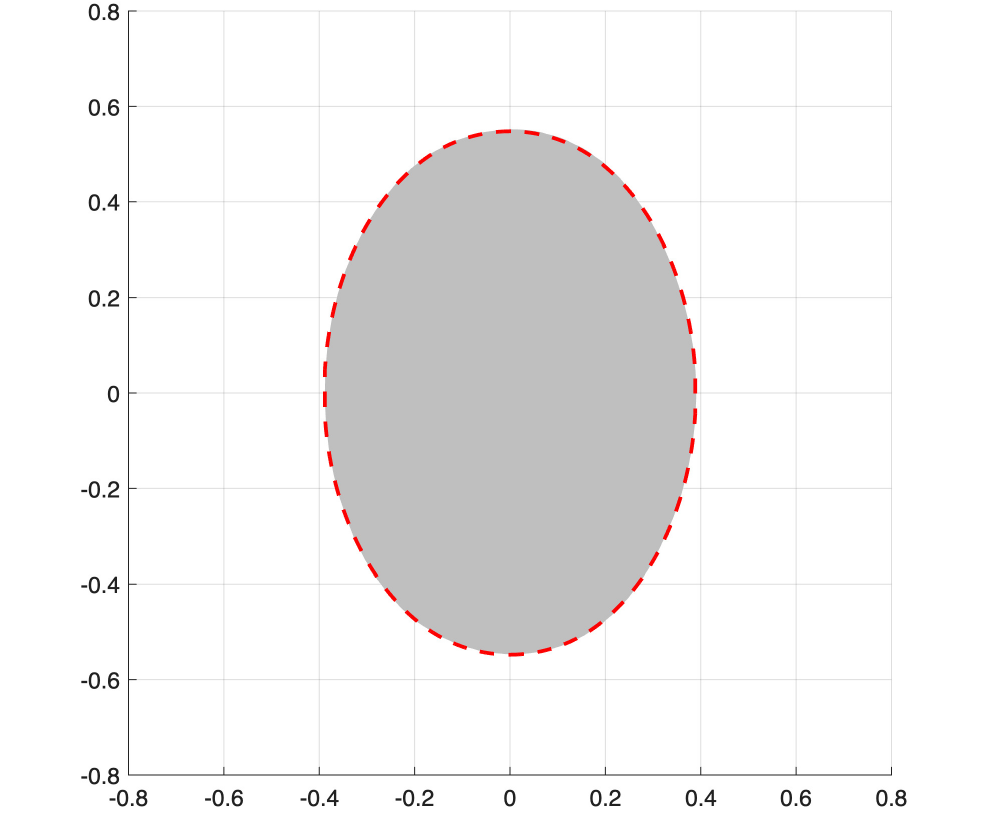}} &
			{\includegraphics[width=0.21\textwidth]{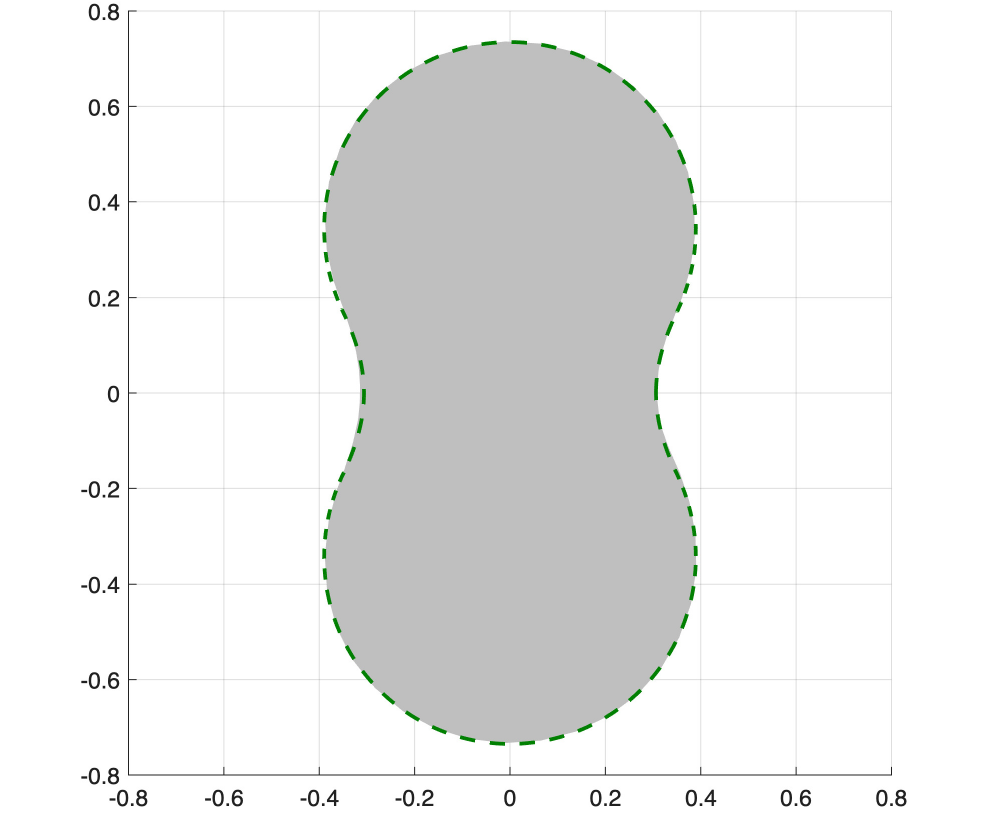}}
		\end{tabular} 
	}\\
	
	\subfigure[The geometric contraction factor is $\varsigma = 0.9$, the initial guess is a sphere with center $\pmb{c}^{(0)} = (0.3, 0.4, -0.3)^\top$ and radius $r^{(0)} = 0.7$, and the observation radius $R = 2.5$.]{
		\begin{tabular}{cccc}
			{\includegraphics[width=0.21\textwidth]{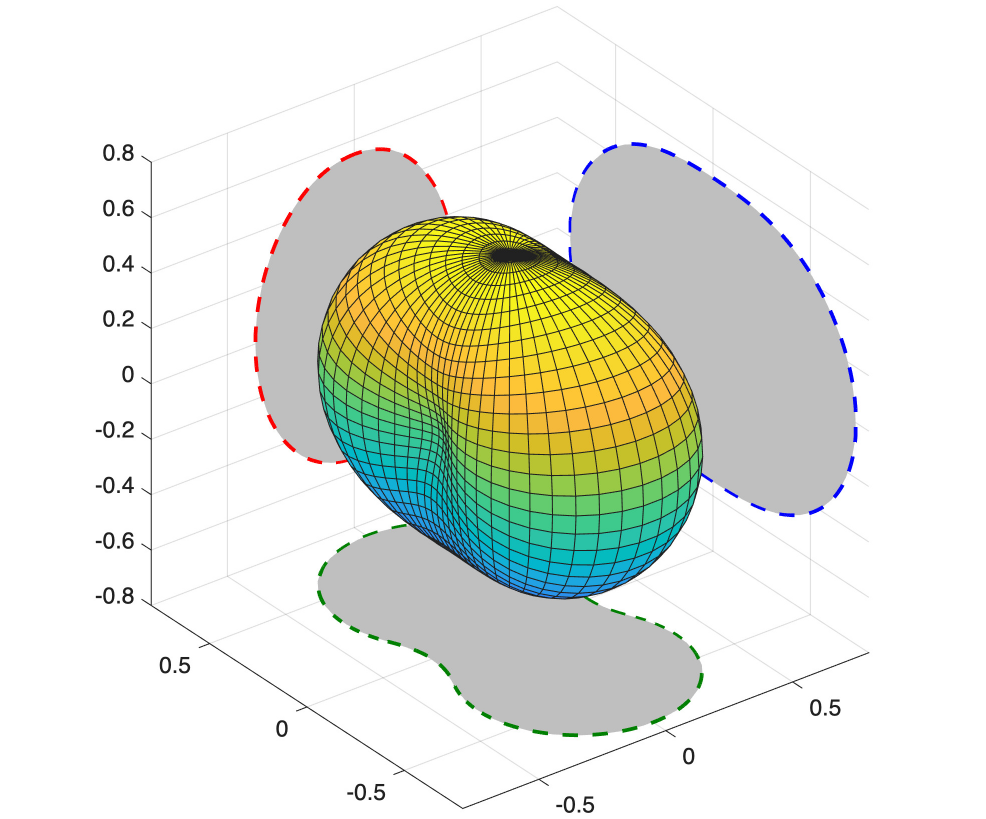}}  &
			{\includegraphics[width=0.21\textwidth]{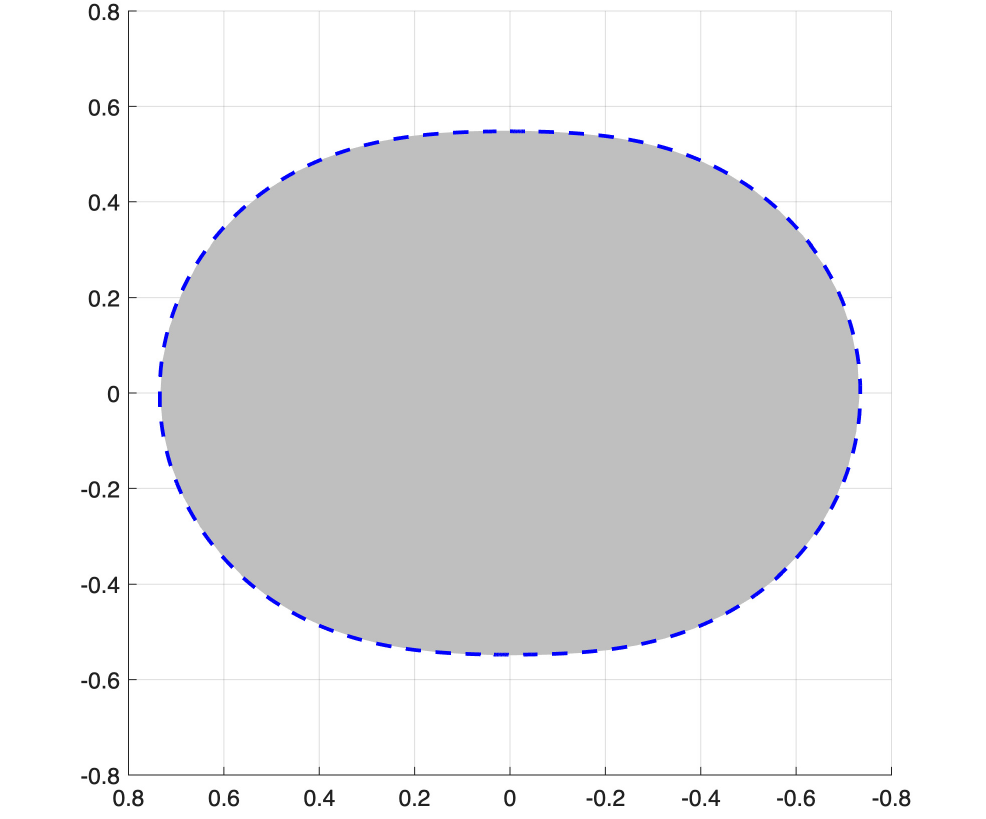}} &
			{\includegraphics[width=0.21\textwidth]{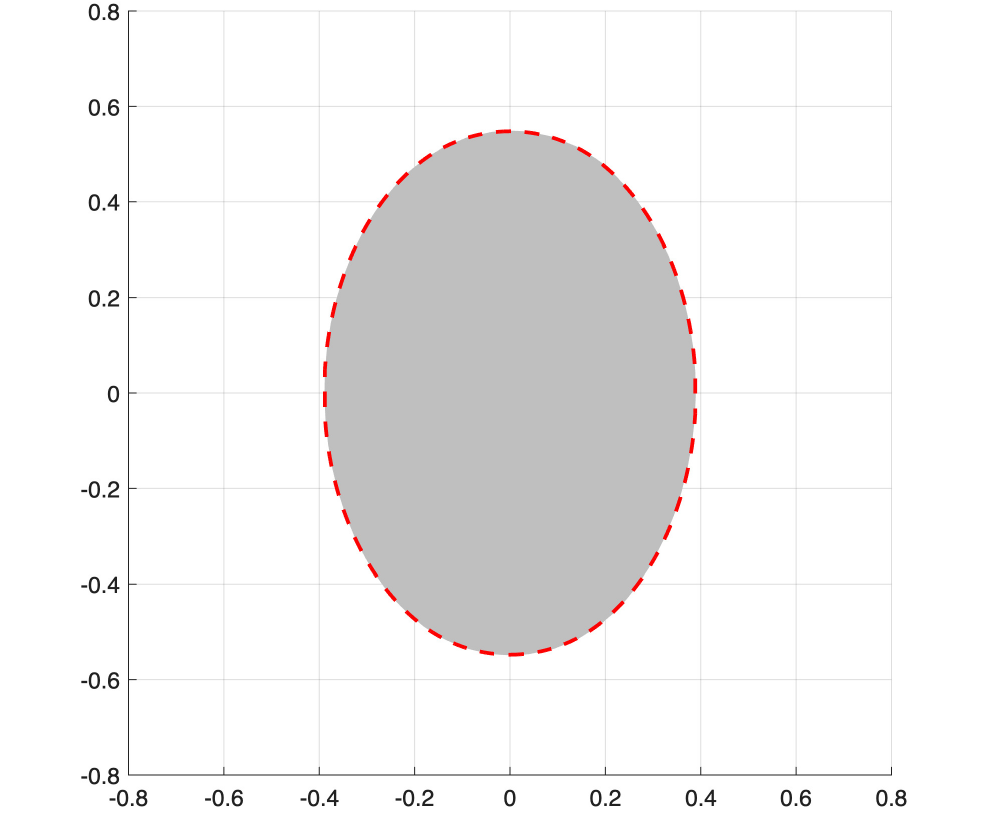}} &
			{\includegraphics[width=0.21\textwidth]{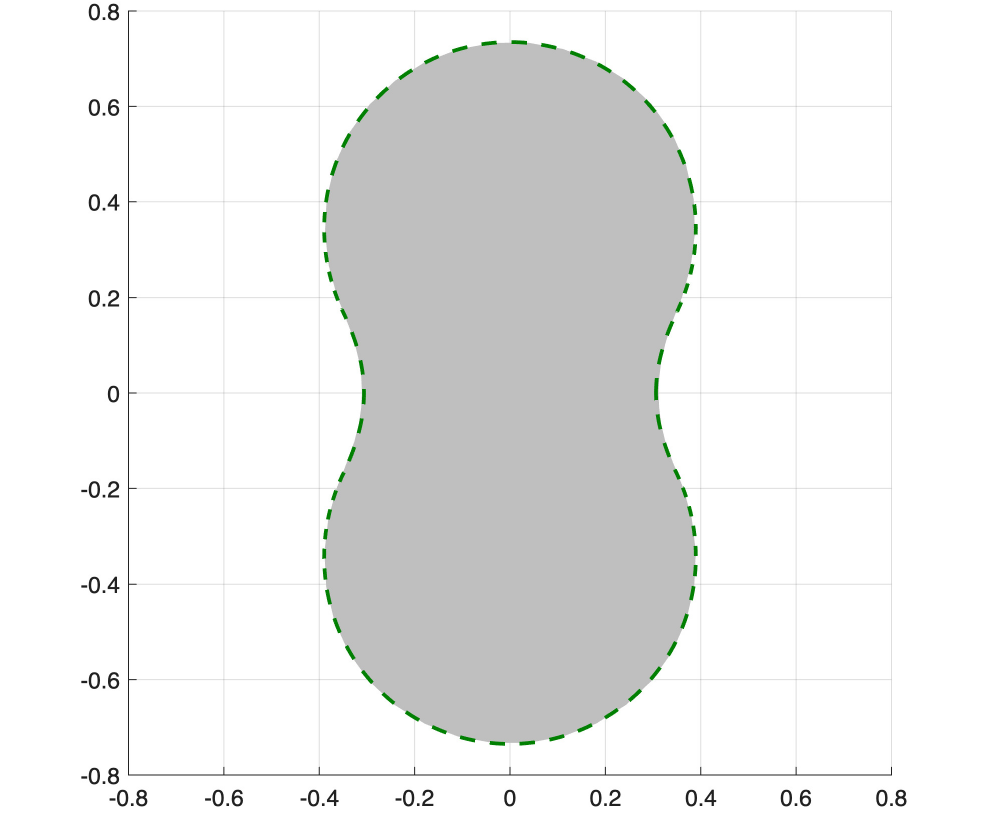}}
		\end{tabular} 
	}\\

	\caption{Reconstructions of a pinched ball-shaped obstacle with 10\% noise, where the incident point sources are located at $(0, 0, 5)^\top$ and $(0, 0, -5)^\top$. 
	}\label{fig_pinchedball_ex3}
\end{figure}

\begin{figure}[h]
	\centering 	
	
	\subfigure[The geometric contraction factor is $\varsigma = 0.9$, the initial guess is a sphere with center $\pmb{c}^{(0)} = (-0.1,0.3,-0.1)^\top$ and radius $r^{(0)} = 0.3$, and the observation radius $R = 1.5$.]{
		\begin{tabular}{cccc}
			{\includegraphics[width=0.21\textwidth]{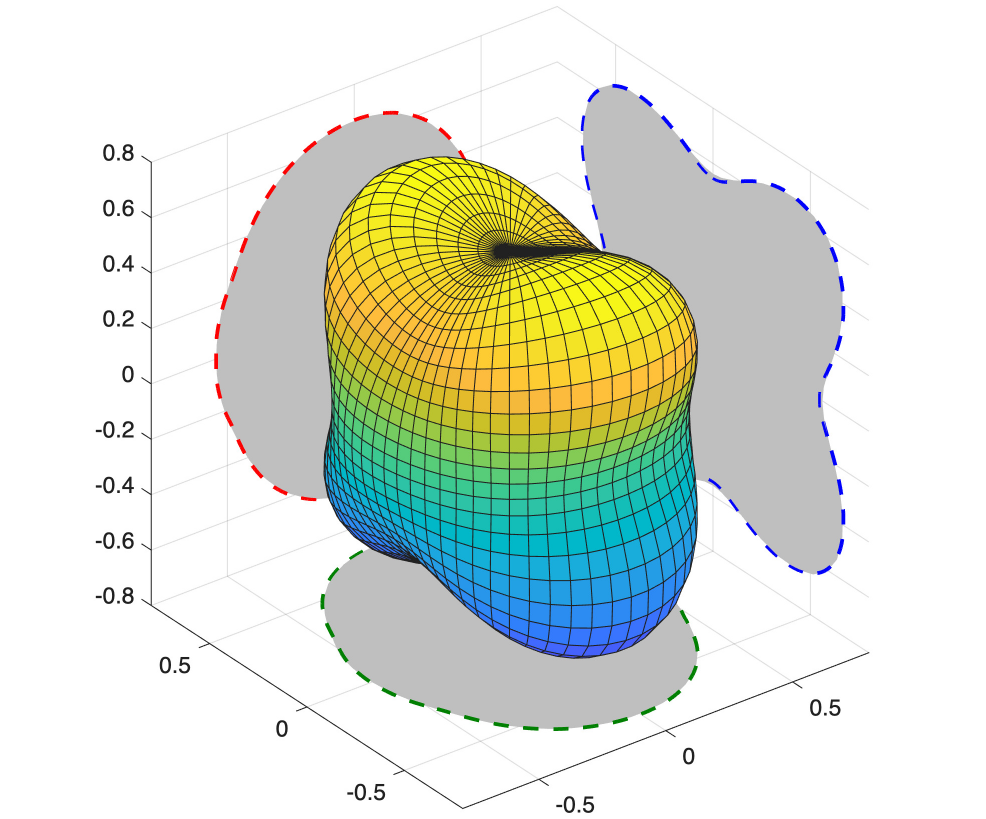}} &
			{\includegraphics[width=0.21\textwidth]{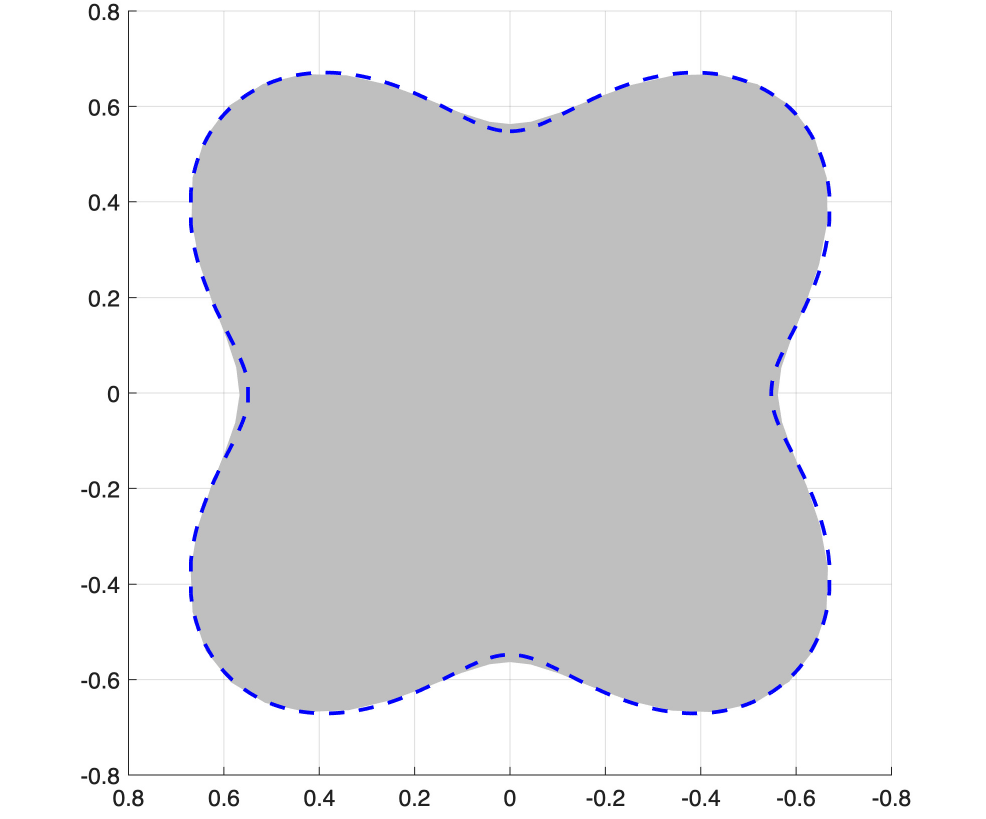}} &
			{\includegraphics[width=0.21\textwidth]{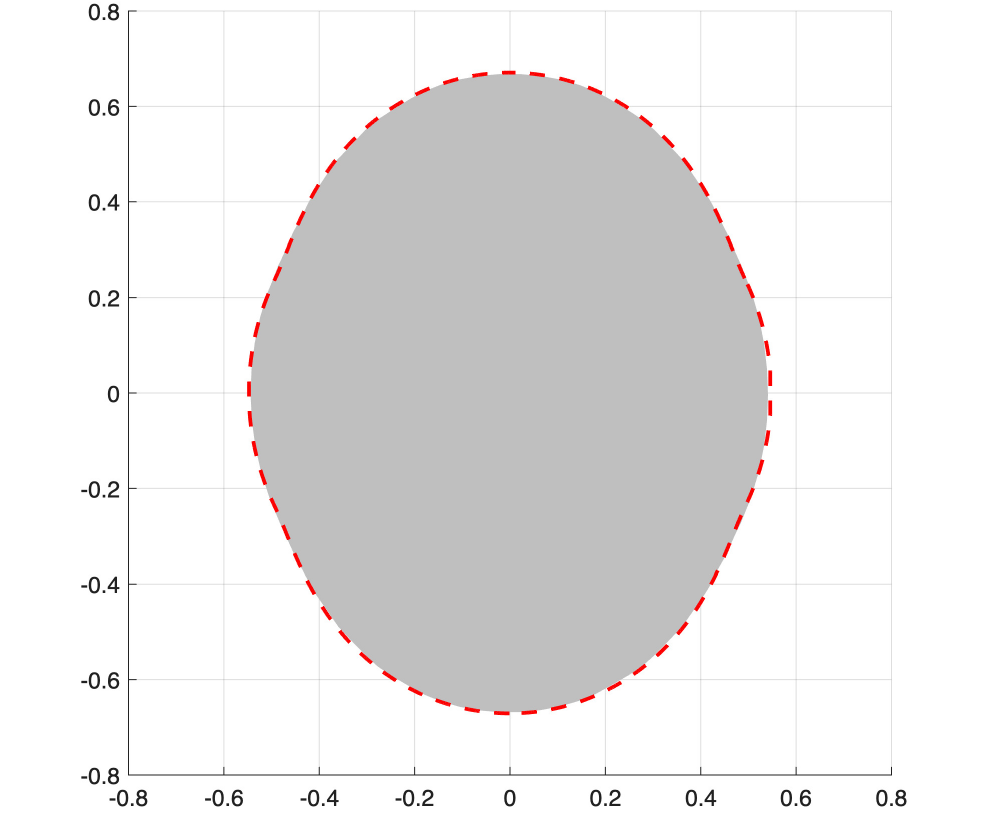}}  &
			{\includegraphics[width=0.21\textwidth]{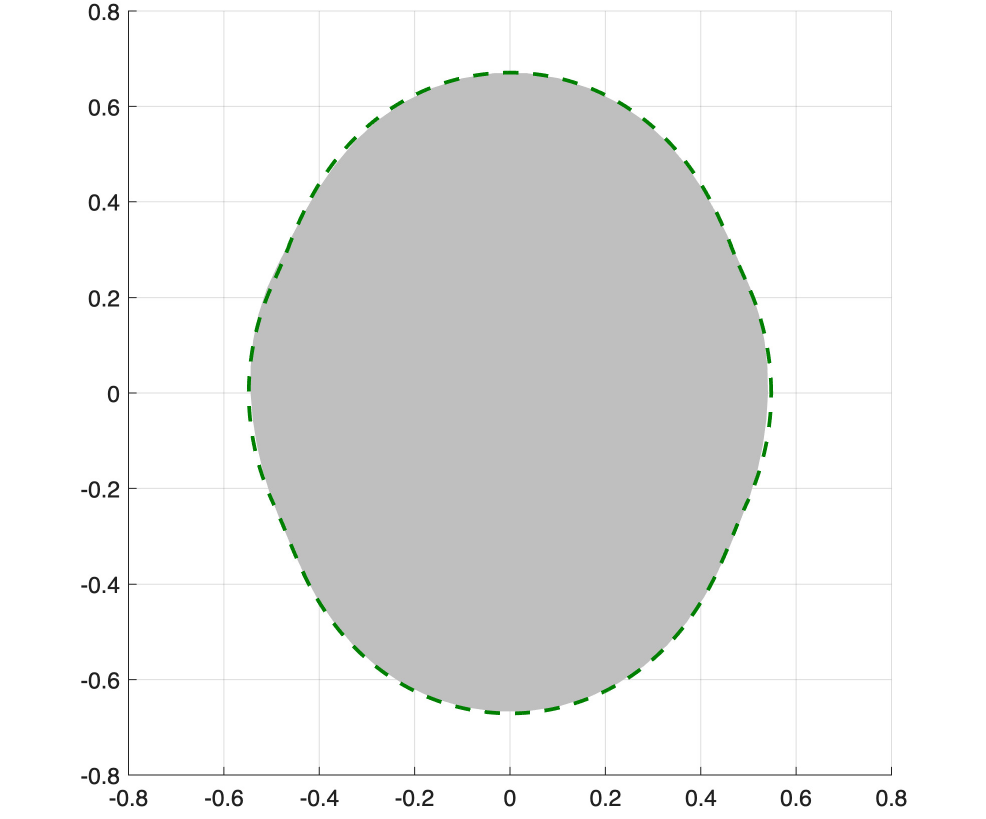}}
		\end{tabular} 
	}\\

	\subfigure[The geometric contraction factor is $\varsigma = 0.9$, the initial guess is a sphere with center $\pmb{c}^{(0)} = (0.4, 0.4,0.2)^\top$ and radius $r^{(0)} = 0.6$, and the observation radius $R = 1.5$.]{
		\begin{tabular}{cccc}
			{\includegraphics[width=0.21\textwidth]{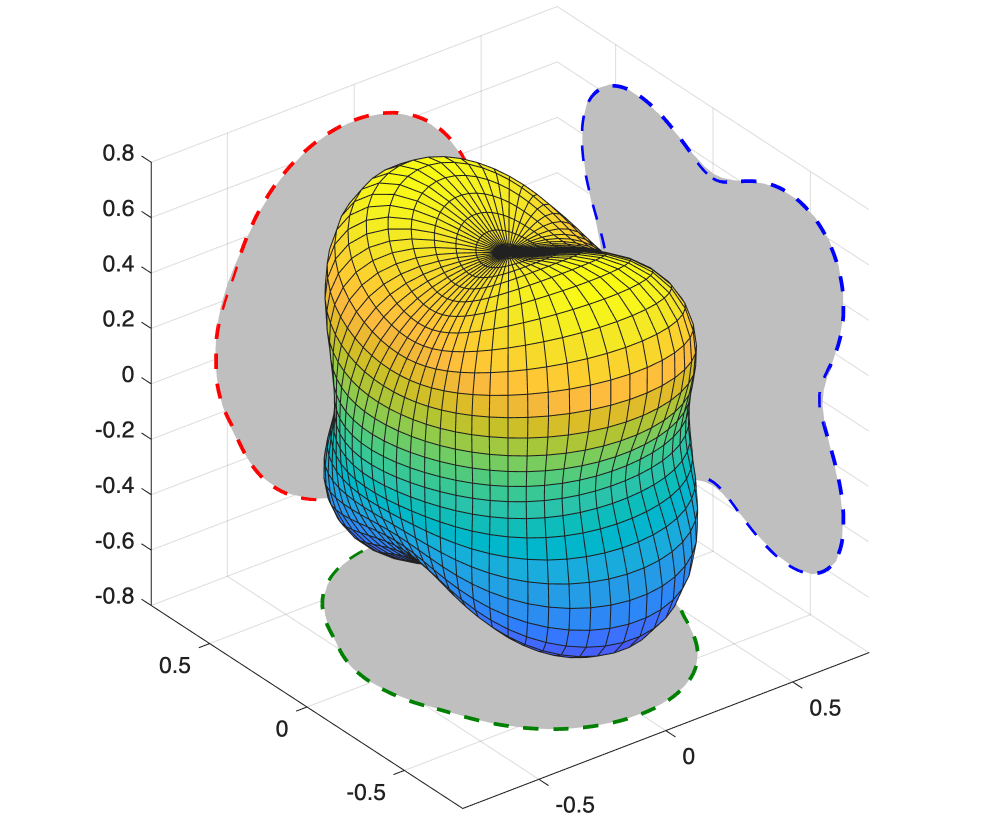}} &
			{\includegraphics[width=0.21\textwidth]{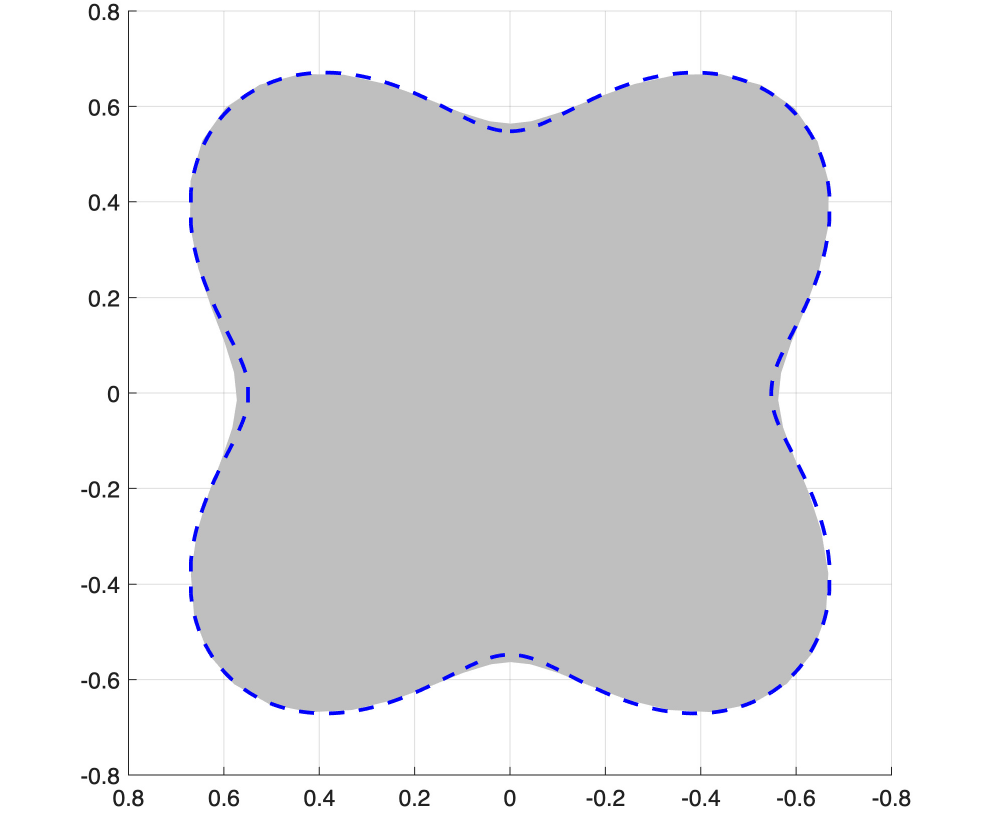}} &
			{\includegraphics[width=0.21\textwidth]{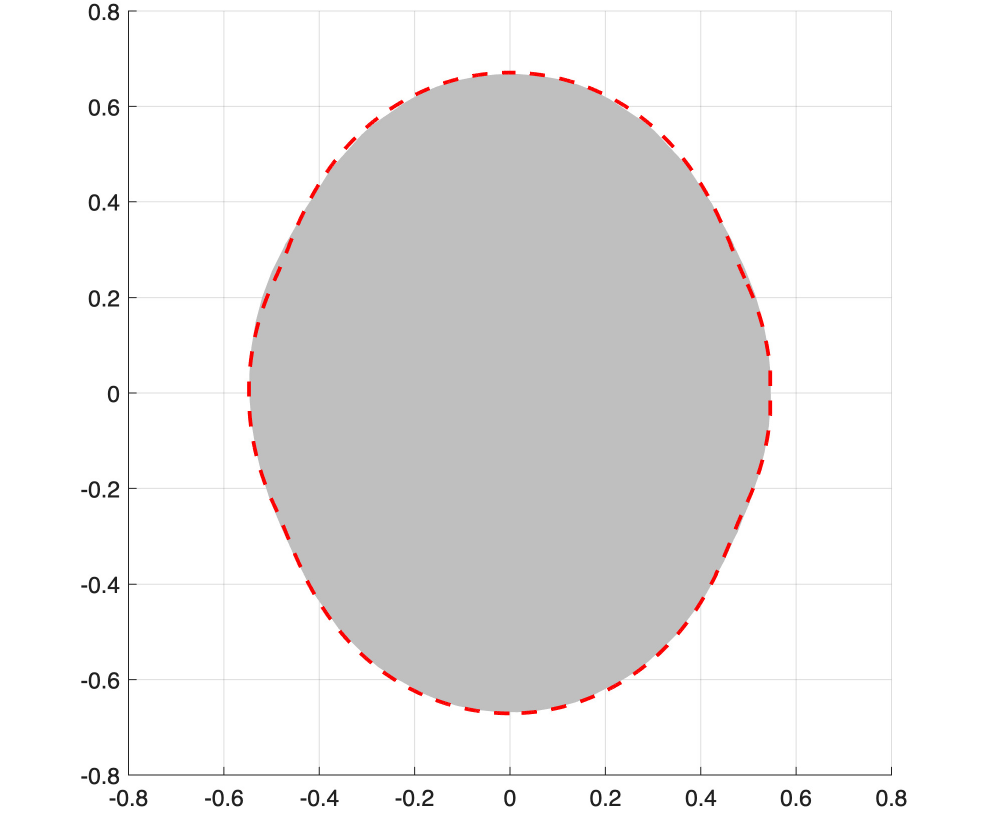}}  &
			{\includegraphics[width=0.21\textwidth]{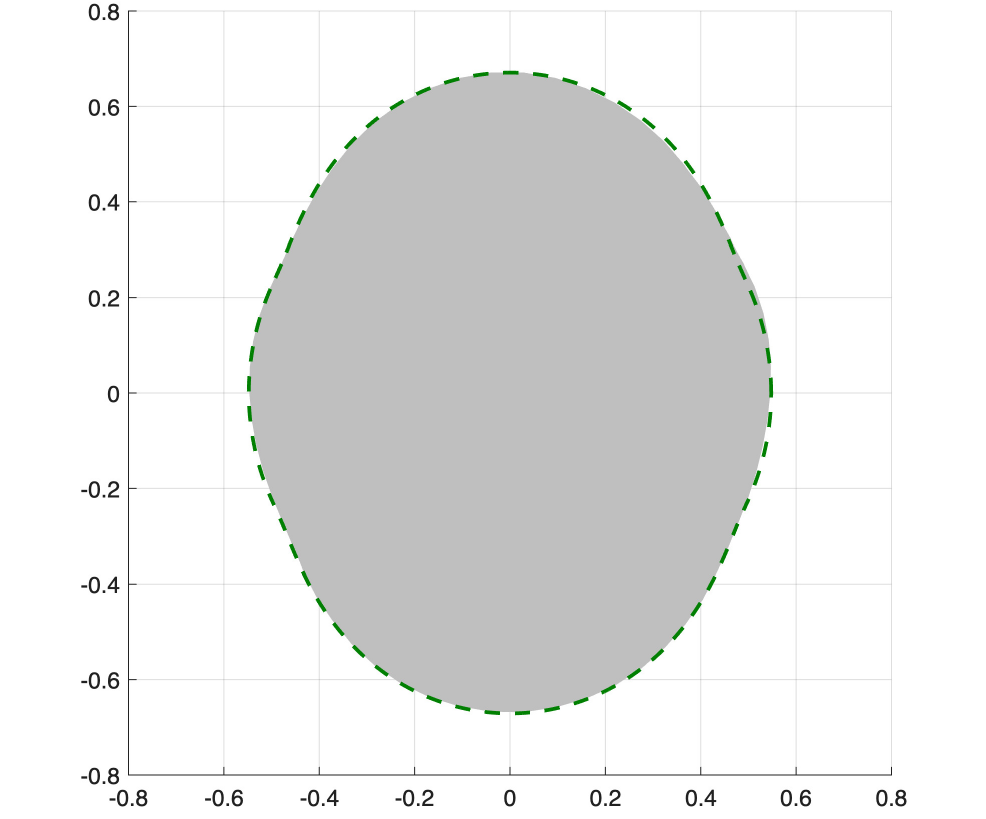}}
		\end{tabular} 
	}\\

	\subfigure[The geometric contraction factor is $\varsigma = 0.9$, the initial guess is a sphere with center $\pmb{c}^{(0)} = (0.4, 0.4,0.2)^\top$ and radius $r^{(0)} = 0.6$, and the observation radius $R = 3$.]{
		\begin{tabular}{cccc}
			{\includegraphics[width=0.21\textwidth]{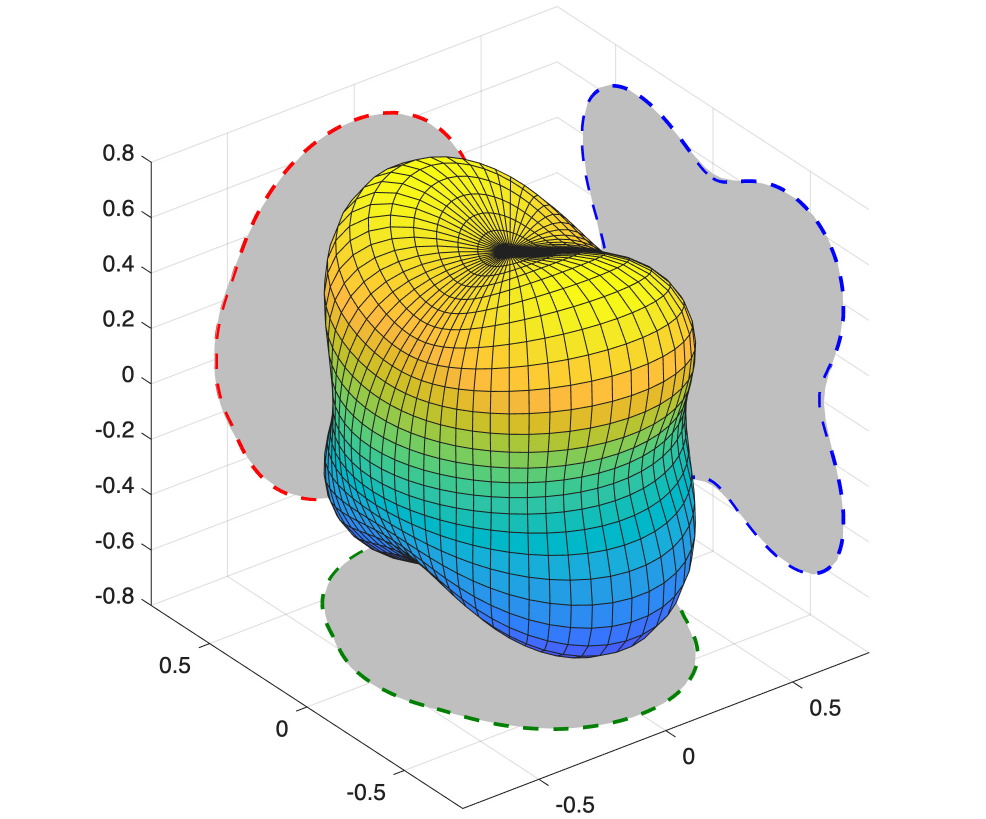}} &
			{\includegraphics[width=0.21\textwidth]{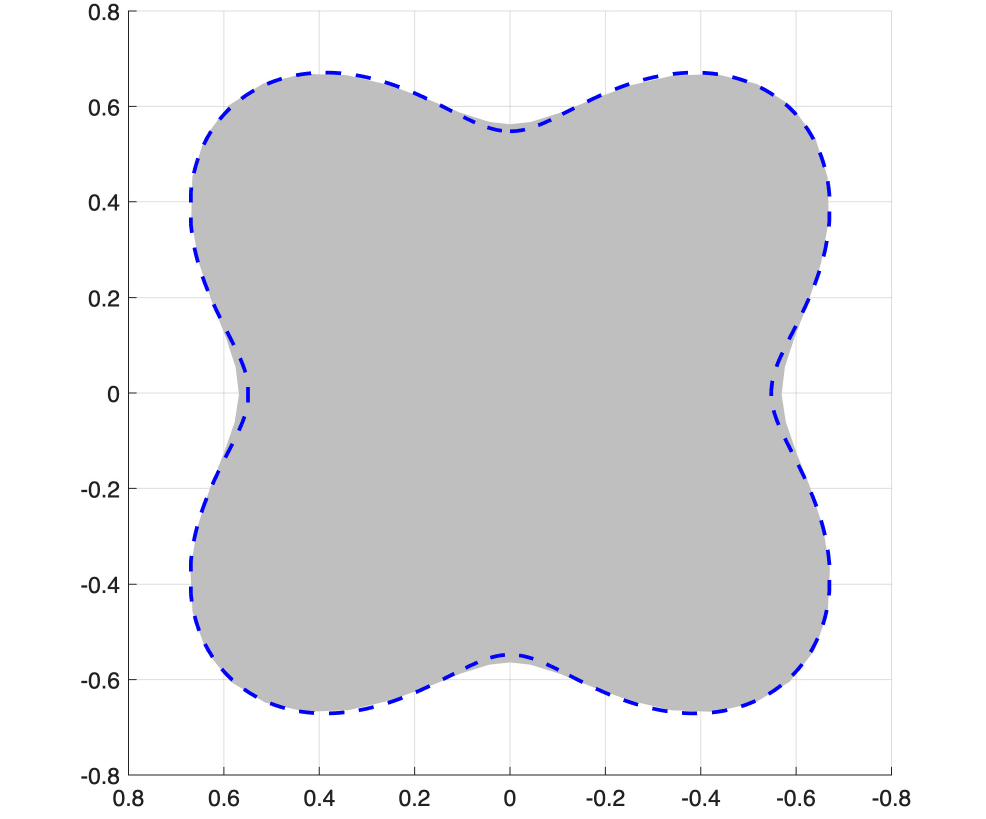}} &
			{\includegraphics[width=0.21\textwidth]{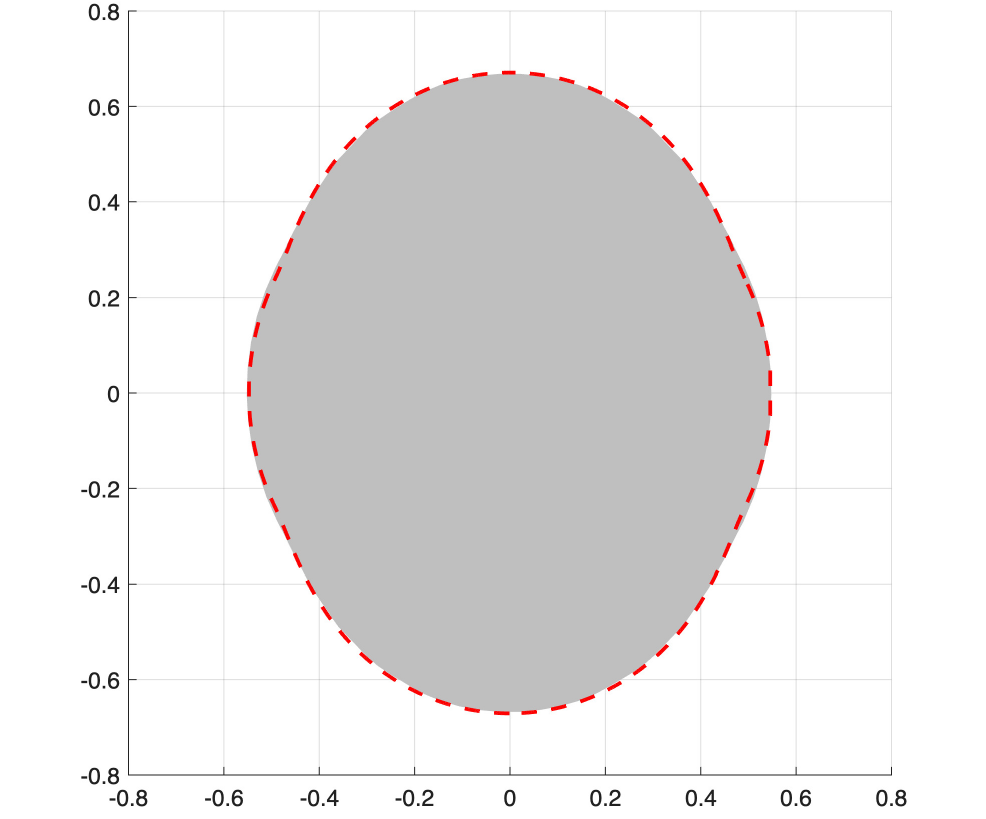}}  &
			{\includegraphics[width=0.21\textwidth]{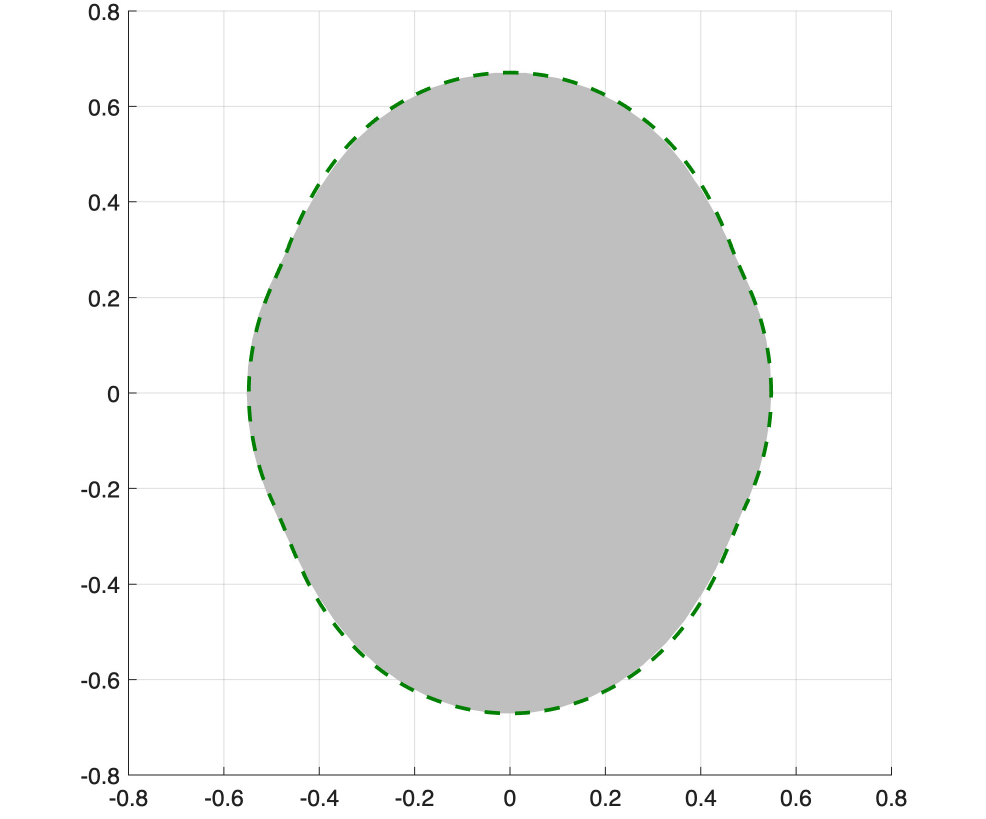}}
		\end{tabular} 
	}\\

	\caption{Reconstructions of a cushion-shaped obstacle with 10\% noise, where the incident point sources are located at $(0, 0, 5)^\top$ and $(0, 0, -5)^\top$.}\label{fig_cushion_ex3}
\end{figure}

\subsection{Inverse scattering problem}\label{inve ex}

We will present several numerical examples to verify the superiority and robustness of the proposed inverse approach.
In order to avoid the ``inverse crime", the scattered-field data $u_D^{\rm sc}(\pmb{x},t)$ on $\Gamma_B$ over $[0,T]$ are generated by solving the direct scattering problem for the incident wave $u^{\rm inc}(\pmb{x},t)$ emitted from the launch point $\pmb{x}_0$. Specifically, the forward problem is computed using the Galerkin-CQ method based on the single layer potential formulation described in Subsection~\ref{4.1111},
while the inverse problem is addressed by recovering the density via a regularization scheme based on homothetic surfaces.
To test stability, the noisy data $u^{\rm sc}_{\tilde{\delta}}(\pmb{x},t)$ is generated as follows
\[
u^{\rm sc}_{\tilde{\delta}}=(1+\tilde{\delta}\Theta)u_D^{\rm sc}
\]
with \(\Theta\) being normally distributed random variables supported in \([-1,1]\), and \({\tilde{\delta}}>0\) representing the relative noise level.

Similar to the iteration process in \cite{Zhao2022}, for fixed scaling factor $\rho\ge 0$, the update $\pmb{\varUpsilon}$ is computed by 
\[
\pmb \varUpsilon=\rho	\left( {\lambda}\tilde{\pmb{I}}+\Re\left( \pmb{B}_l^*\pmb{B}_l\right) \right) ^{-1}\Re\left( \pmb{B}_l^*\hat{\pmb{f}}_{D,l}^{\rm sc}\right) ,
\]
with $H^{\gamma}$ $(\gamma=1/2)$ penalty term, and the regularization parameters $\alpha$ in \eqref{field equation parama} and $\lambda$ are set to a constant value of $10^{-8}$ and $10^{-2}$.
In addition, we adopt the incident wave 
\begin{equation*}
	u^{\rm inc}(\pmb{x},t) =\left\{
	\begin{aligned}
		&\frac{1000\sin(4(t-|\pmb{x}-\pmb{x}_0|))e^{-1.2(t-|\pmb{x}-\pmb{x}_0|-2)^2}}{4\pi|\pmb{x}-\pmb{x}_0|}, &&t>|\pmb{x}-\pmb{x}_0|,\\
		&0,  &&t\le|\pmb{x}-\pmb{x}_0|,
	\end{aligned}
	\right.
\end{equation*}
where  $\pmb{x}_0$ denotes the point source location.

Throughout the numerical examples, unless otherwise specified, we set the final time $T=8$, the number of time steps $N=50$, the radius of the observation surface $R=1.5$, and the center $(0,0,0)^{\top}$. 
The scattered-field data are numerically generated at 800 observation points (i.e., $\tilde{n}=20$). 
The initial guess is taken as a sphere with center $\pmb{c}^{(0)}$ and radius $r^{(0)}$. 
We choose the scaling factor $\rho=0.5$, the geometric contraction factor $\varsigma=0.9$, and set the number of inner cycles per $l$ to be $loop=2$. 
The number of quadrature points on $\Gamma_{D'}$ is 512 (i.e., $n=15$), and the same number of discrete nodes is used on the boundary $\Gamma_D$. 
Finally, we set the maximum truncation number to be $M_{\rm max}=5$.

In addition, the complex wavenumbers ${\rm i}s_l$ may have a large imaginary part, which induces strong exponential damping of the corresponding fields. As a result, the measured data on $\Gamma_B$ become extremely small and contribute little to improving the reconstruction \cite{CQ}. Therefore, in our implementation, we skip those wavenumbers for which the data magnitude is below a prescribed threshold, and continue the iteration with the remaining complex wavenumbers; this provides a mild additional reduction in computational cost, whereas the substantial efficiency gain comes from the homothetic-surface technique.

In all figures, the shaded regions represent the projections of the reconstructions, whereas the colored dashed curves denote the true obstacle boundaries in the corresponding projection directions. This visualization allows a direct comparison between the exact geometry and the recovered shape.

\medskip
{\noindent\bf Example 1: Inverse obstacle scattering with a single launch position.}
\medskip

We investigate the inverse obstacle scattering problem of reconstructing both the shape and location of a rigid obstacle from time-domain scattered field data. 
In Figures~\ref{fig_pinchedball_ex1} and \ref{fig_cushion_ex1}, the true obstacles are shown in the subfigure~(a). 
The reconstruction results corresponding to noise levels of \(1\%\), \(5\%\), and \(10\%\) are presented in subfigures~(b)-(d) for the pinched ball-shaped and cushion-shaped obstacles. Even as the noise increases, the reconstruction remains stable and retains the main geometric features.

\medskip
{\noindent\bf Example 2: Inverse obstacle scattering problem with multiple launch positions.}
\medskip

To improve the reconstruction accuracy, we investigate multiple launch positions of the incident wave. In this example, the reconstructions are carried out using scattered field data contaminated with \(10\%\) noise for two and four incident point sources. 
For comparison, the reconstructions with 10\% noise for a single launch position are shown in Figures \ref{fig_pinchedball_ex1}(d) and \ref{fig_cushion_ex1}(d), with all other parameters remaining unchanged. Figures \ref{fig_pinchedball_ex2} and \ref{fig_cushion_ex2} present the reconstruction results for the pinched ball-shaped and cushion-shaped obstacles, respectively, using multiple launch positions of the incident wave. From these figures, we observe that the shape and location of the obstacle are more accurately reconstructed using two and four launch positions, which aligns with the expected physical behavior.

\medskip
{\noindent\bf Example 3: Inverse obstacle scattering problem with different parameters.}
\medskip

Under a fixed noise level of \(10\%\) and two incident point sources located at \((0,0,5)^\top\) and \((0,0,-5)^\top\), figures \ref{fig_pinchedball_ex3} and \ref{fig_cushion_ex3} show the reconstructions of the pinched ball-shaped obstacle and the cushion-shaped obstacle with different parameters. With all other parameters remaining the same, subfigures (a) and (b) demonstrate variations arising from distinct initial guesses, while subfigures (b) and (c) show the reconstruction results with different observation radii. The reconstruction results remain stable and effective across different initial guesses and observation radii, demonstrating the robustness of the algorithm under varied parameter settings.

\begin{figure}[!htpb]
		\centering 
		\subfigure[True shape with multiple views.]{
			\begin{tabular}{cccc}
				{\includegraphics[width=0.21\textwidth]{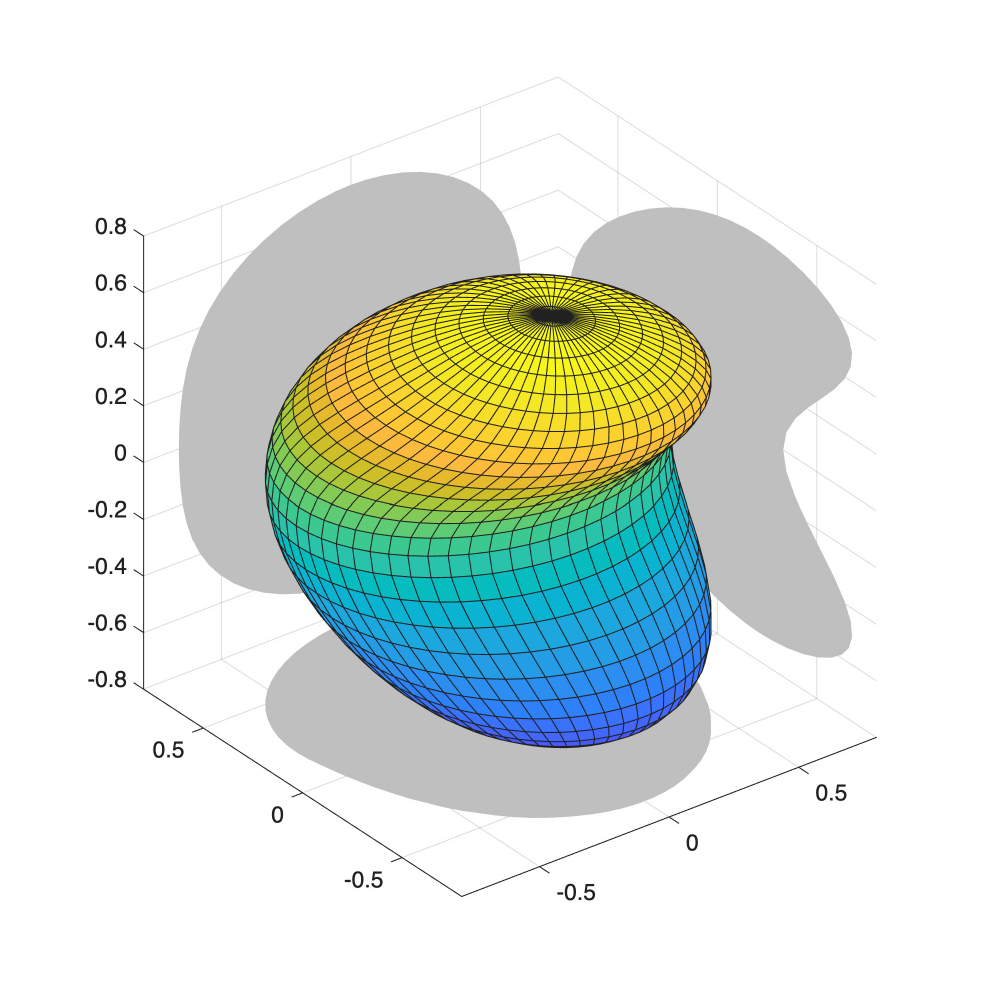}}  &
				{\includegraphics[width=0.21\textwidth]{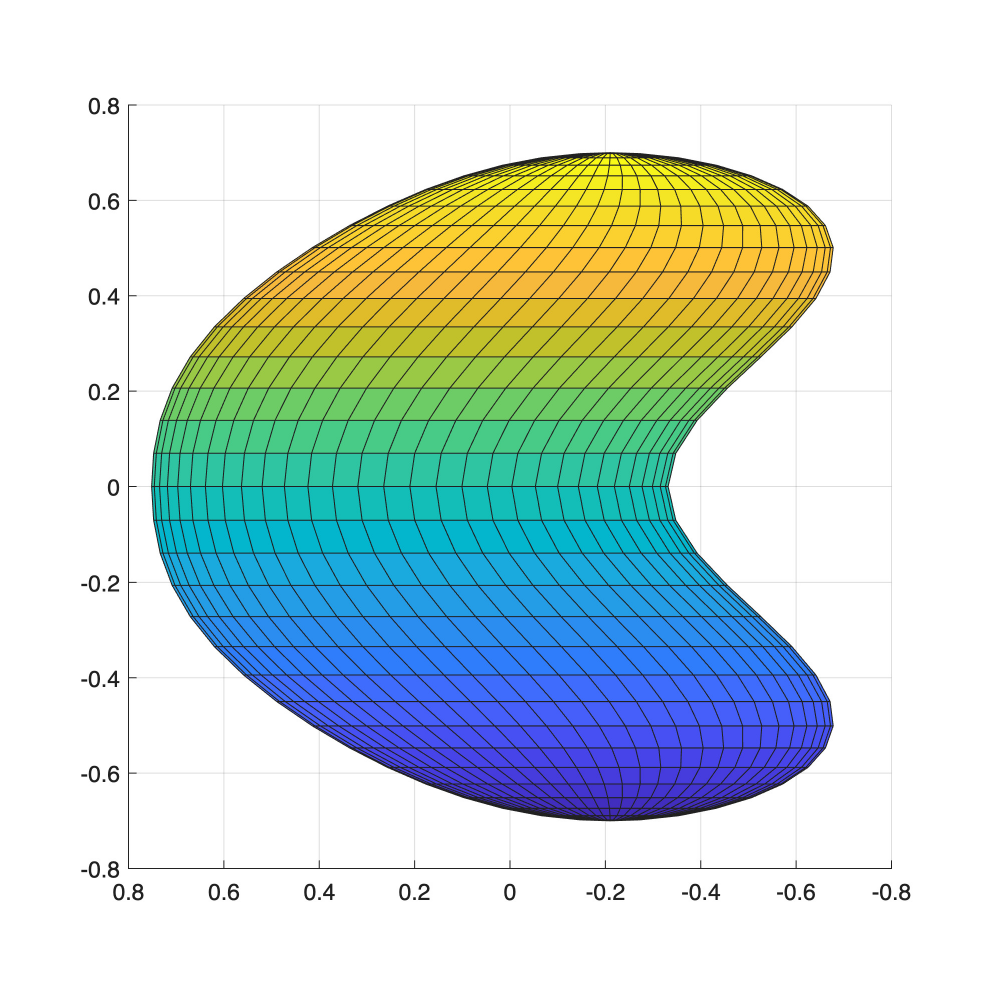}} &
				{\includegraphics[width=0.21\textwidth]{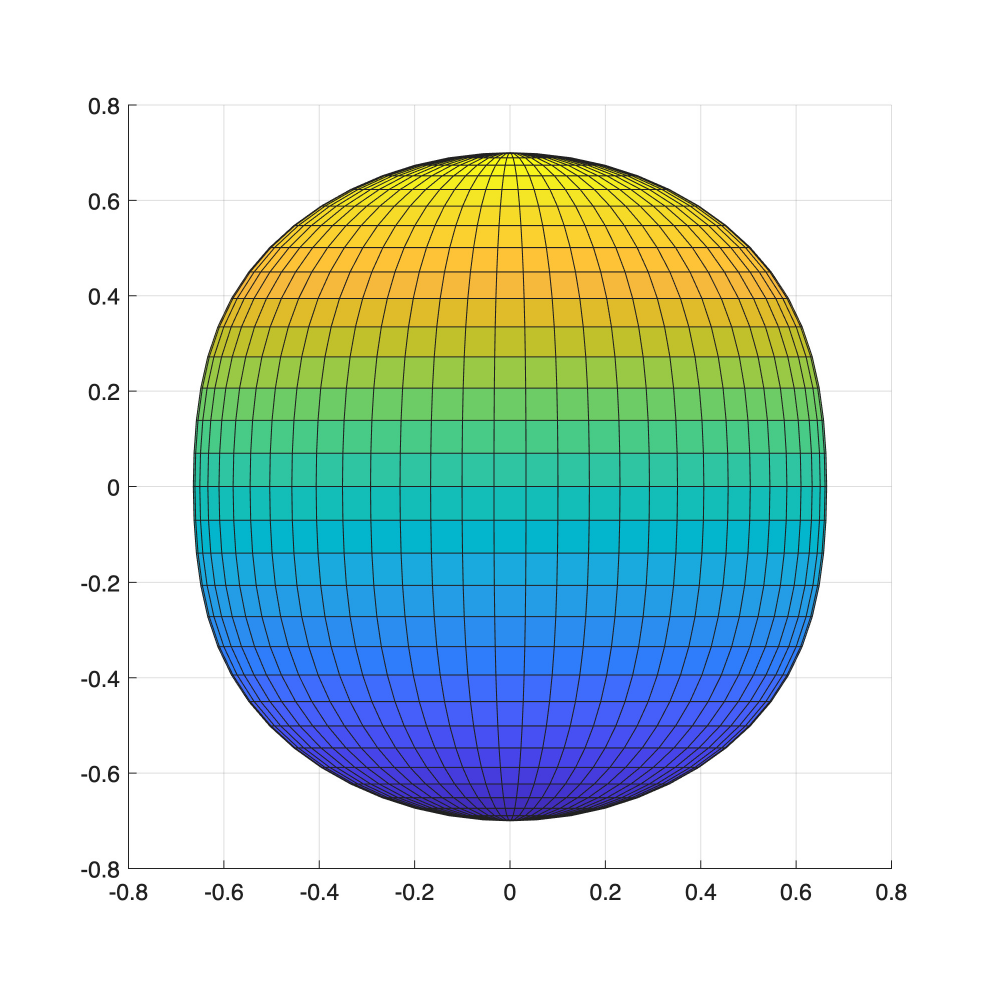}}  &
				{\includegraphics[width=0.21\textwidth]{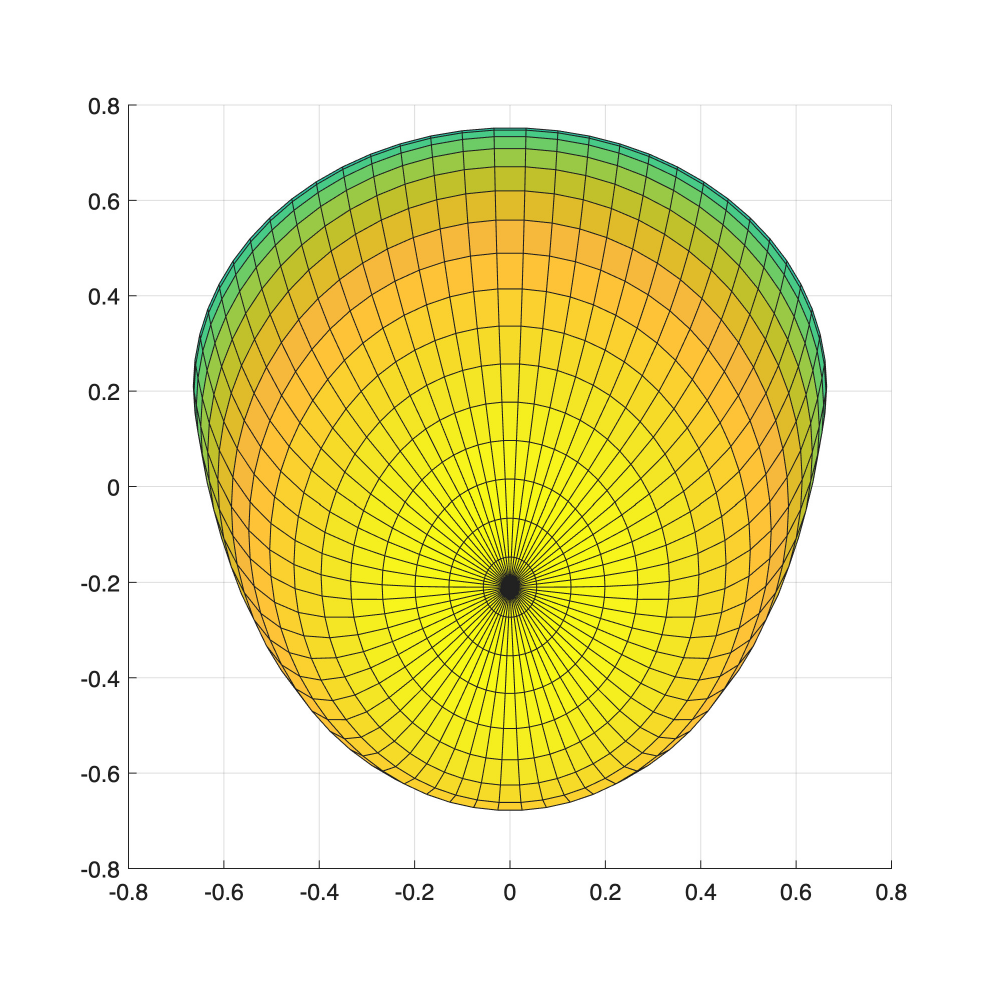}}
			\end{tabular} 
		}\\
		
		\subfigure[Reconstruction with 1$\%$ noise. The incident point sources are located at $(5, 0, 0)^\top$ and $(-5, 0, 0)^\top$.]{
			\begin{tabular}{cccc}
				{\includegraphics[width=0.21\textwidth]{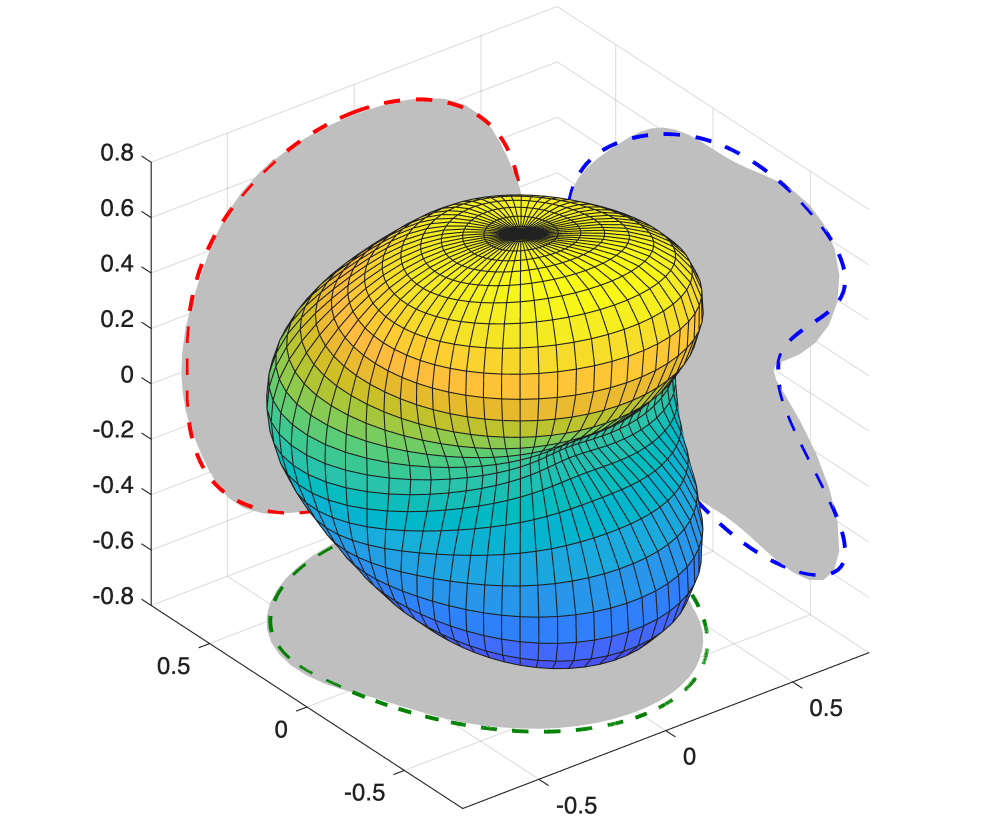}}  &
				{\includegraphics[width=0.21\textwidth]{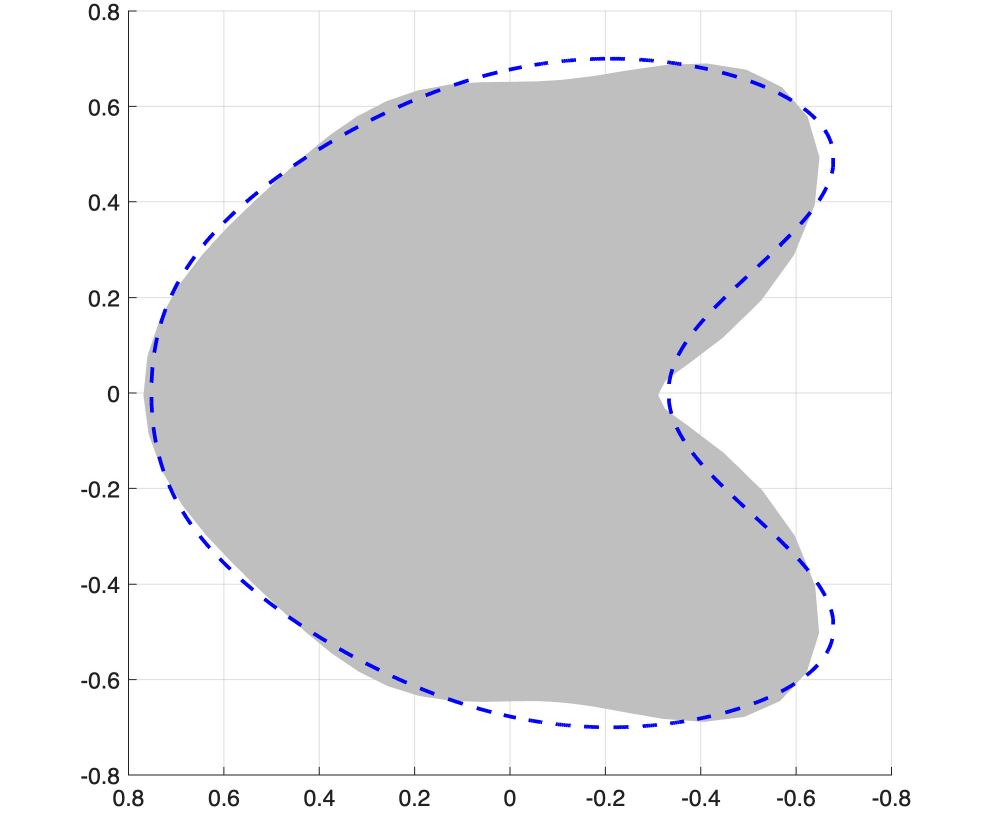}} &
				{\includegraphics[width=0.21\textwidth]{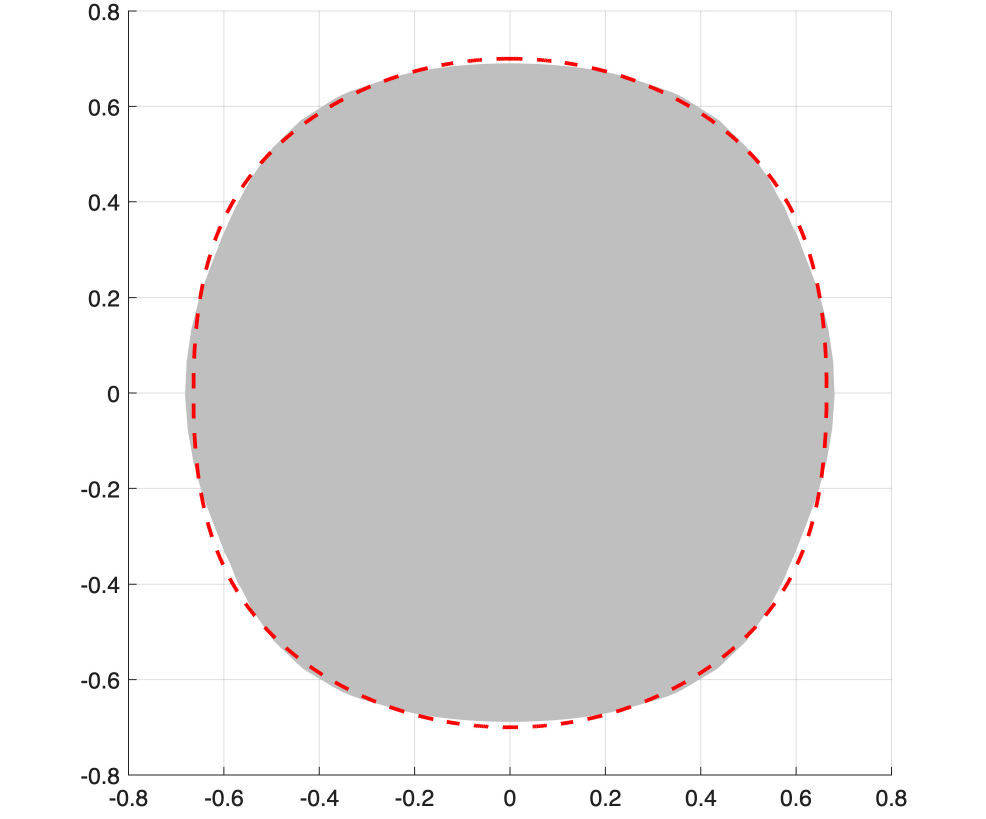}}  &
				{\includegraphics[width=0.21\textwidth]{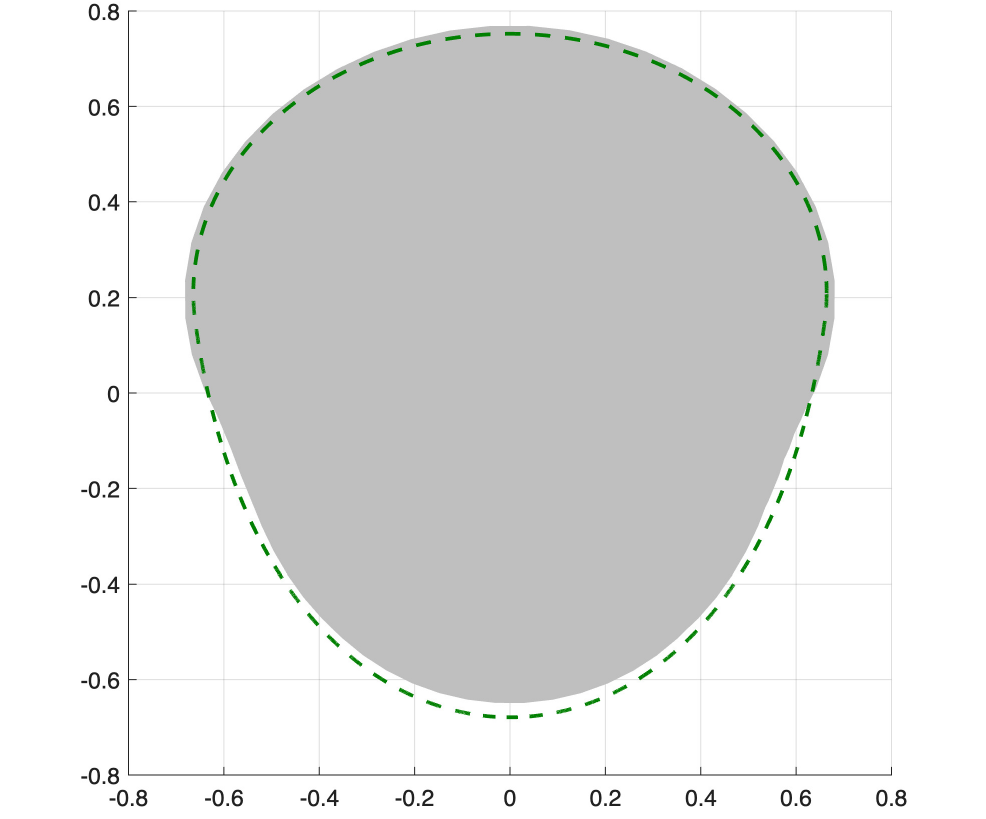}}
			\end{tabular} 
		}\\

		\caption{Reconstructions of a bean-shaped obstacle with 1\% noise. The initial guess is a sphere with  $\pmb c^{(0)} = (-0.2, 0.3, -0.4)^\top$ and  $r^{(0)} = 0.7$.}\label{bean}
	\end{figure}

\vspace{1.5ex}
{\noindent\bf Example 4: Reconstruction of non-star-shaped obstacle.}
\vspace{1.5ex}

Figure~\ref{bean} shows the reconstruction of a bean-shaped obstacle, which serves as a representative example of non-star-shaped geometries. 
The parametrization of the bean-shaped obstacle is given in Table \ref{obscatle1}.

Subfigure~(a) shows the true shape from multiple views, clearly highlighting the nonconvex features of the obstacle. 
Subfigure~(b) presents the reconstruction from time-domain scattered field data generated by two point sources located at $(5,0,0)$ and $(-5,0,0)$ with $1\%$ noise. 
The proposed approach successfully recovers both the shape and location of the bean-like surface, illustrating its effectiveness in handling more complex obstacle configurations.

\begin{figure}[!htbp]
\centering 

\subfigure[ True shape of complex obstacle.]{
	\begin{tabular}{cccc}
		{\includegraphics[width=0.21\textwidth]{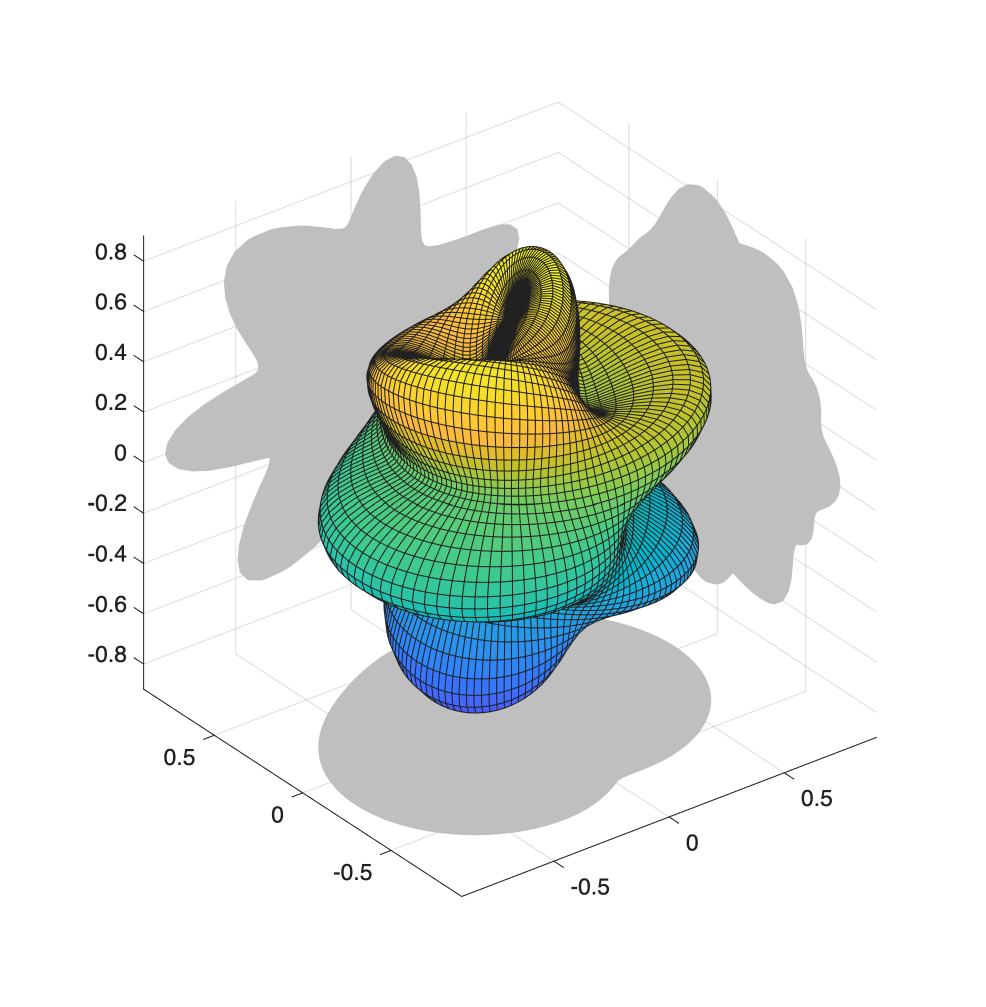}} &
		{\includegraphics[width=0.21\textwidth]{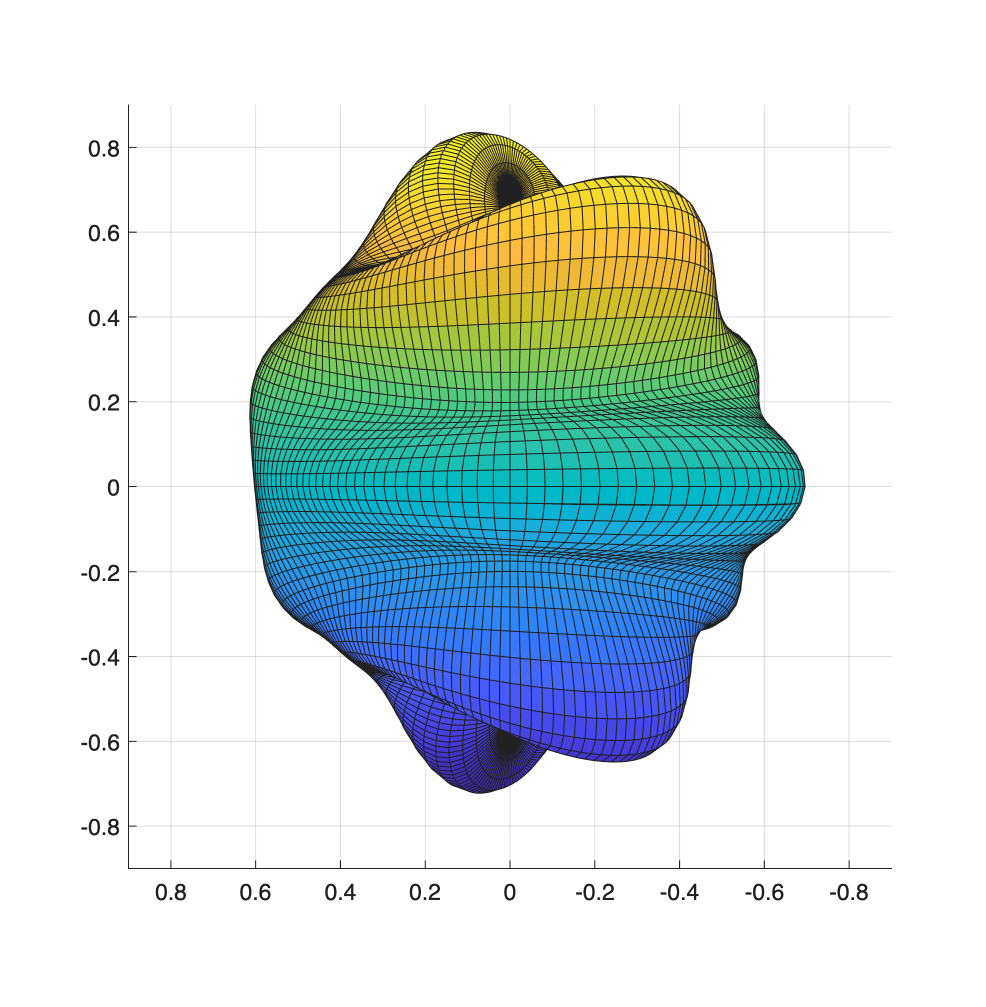}} &
		{\includegraphics[width=0.21\textwidth]{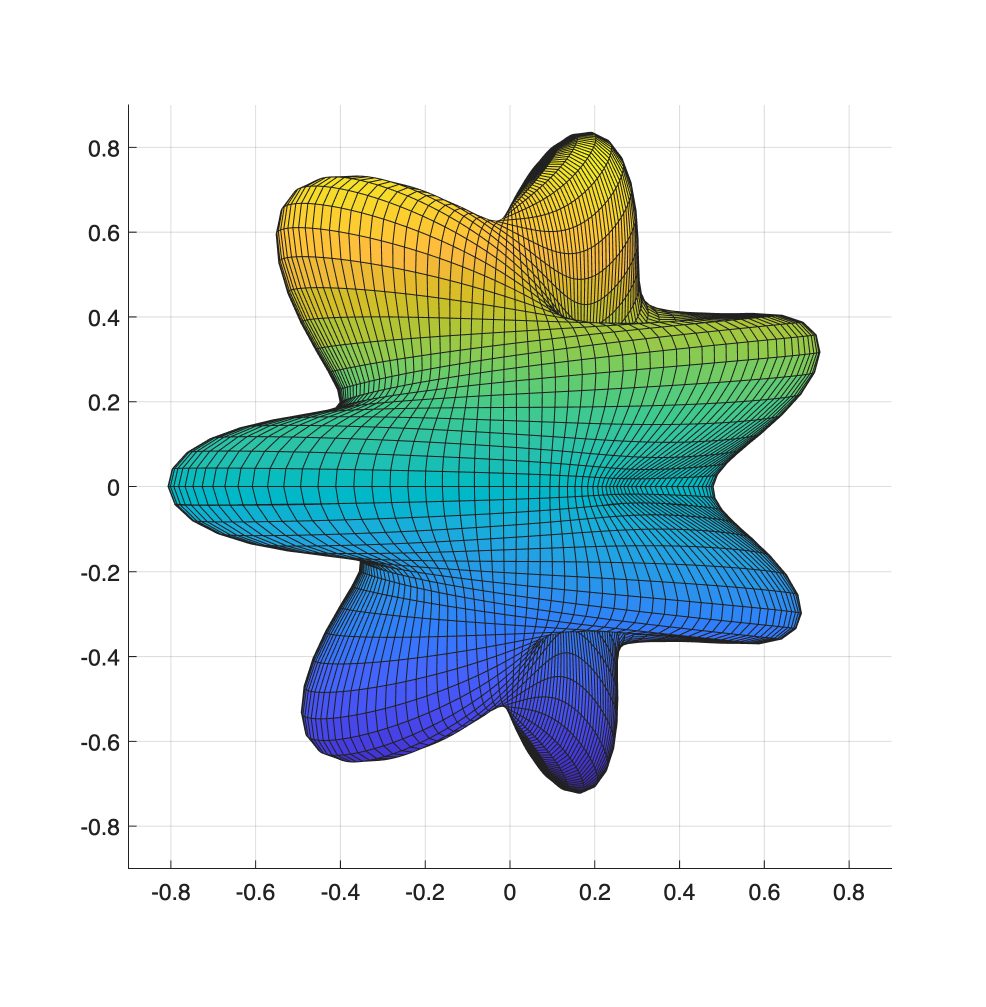}} &
		{\includegraphics[width=0.21\textwidth]{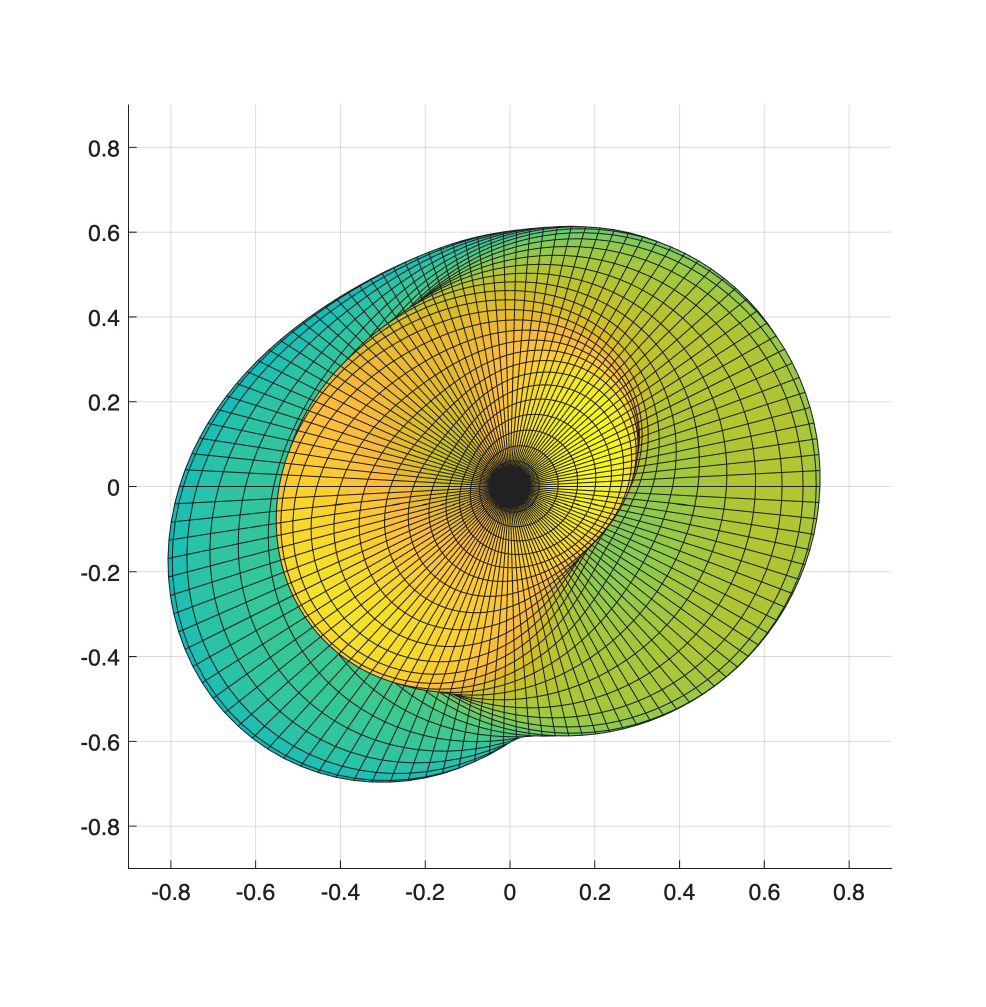}}
	\end{tabular}
}\\	[-1.2ex]

\subfigure[Reconstruction with 1\% noise.]{
	\begin{tabular}{cccc}
		{\includegraphics[width=0.21\textwidth]{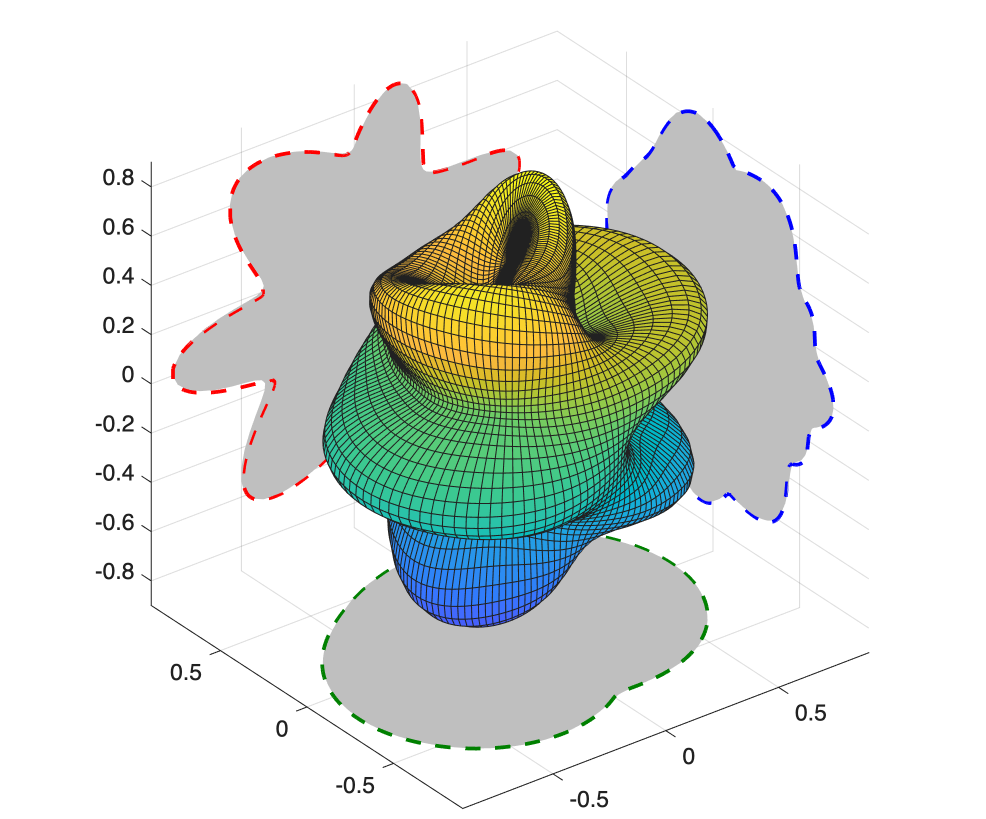}} &
		{\includegraphics[width=0.21\textwidth]{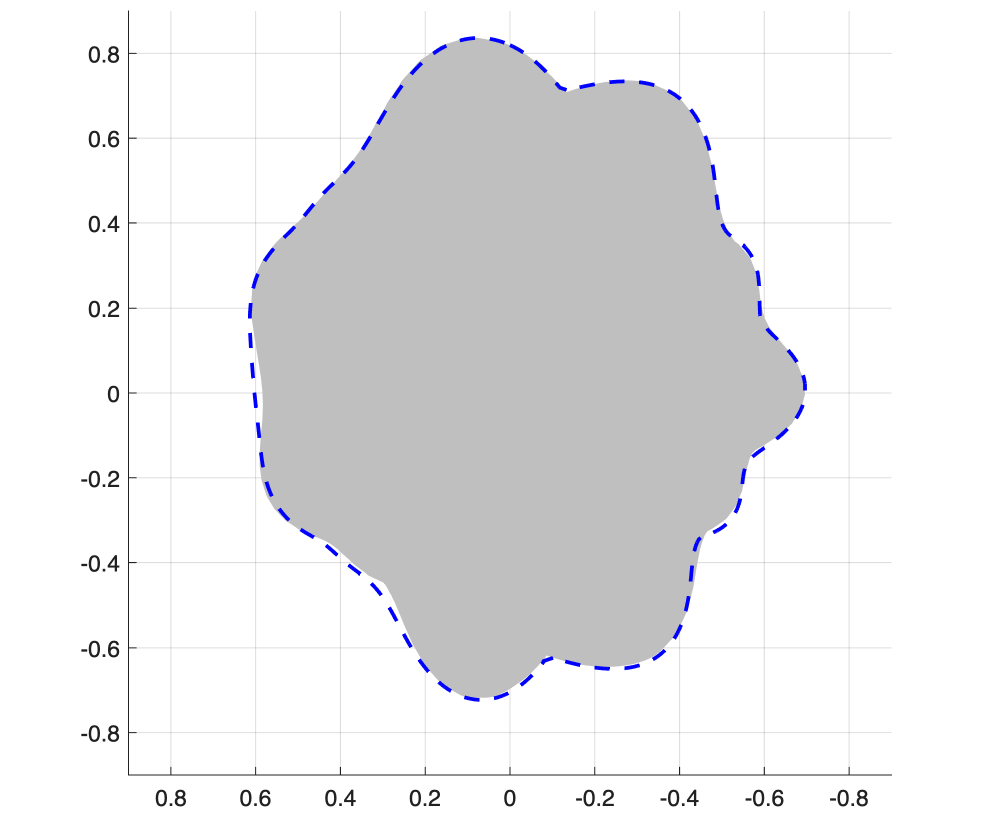}} &
		{\includegraphics[width=0.21\textwidth]{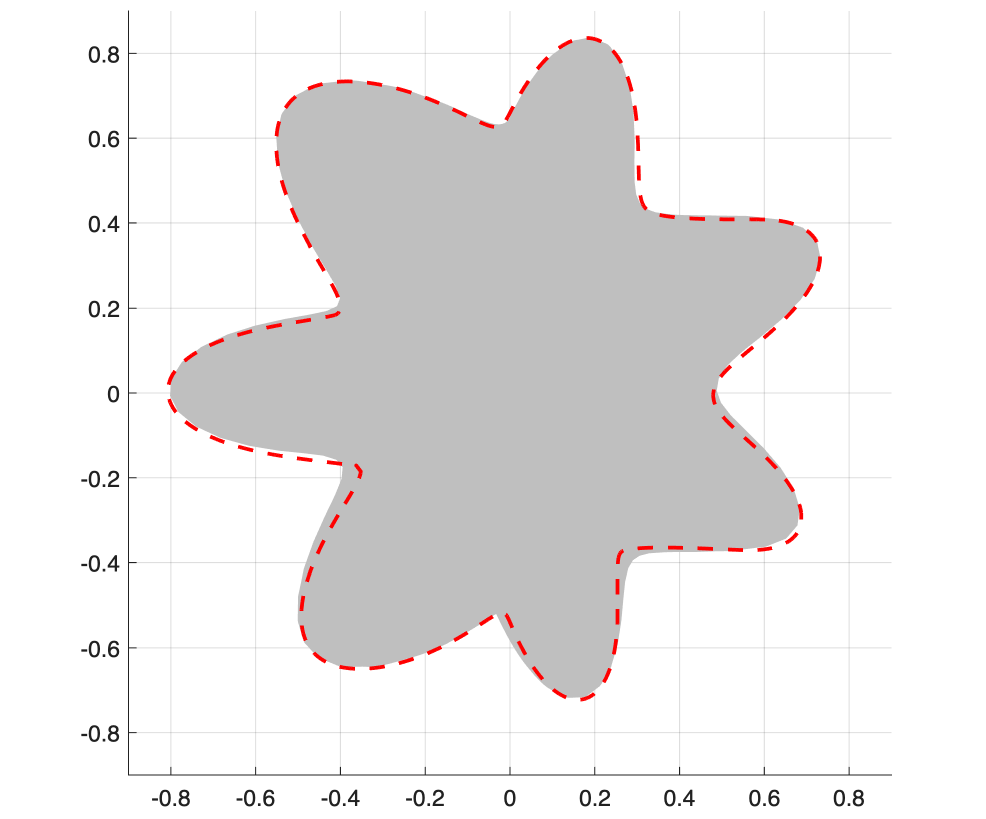}} &
		{\includegraphics[width=0.21\textwidth]{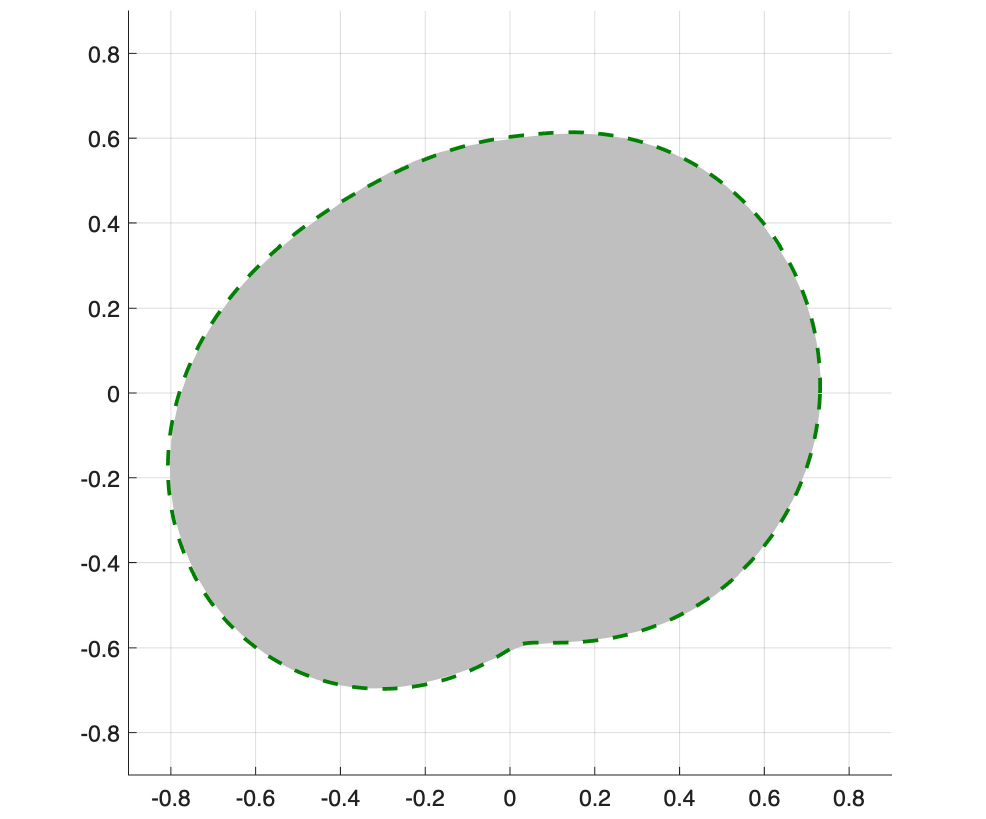}}
	\end{tabular} 
}\\	[-1.2ex]

\subfigure[Reconstruction with 5\% noise.]{
	\begin{tabular}{cccc}
		{\includegraphics[width=0.21\textwidth]{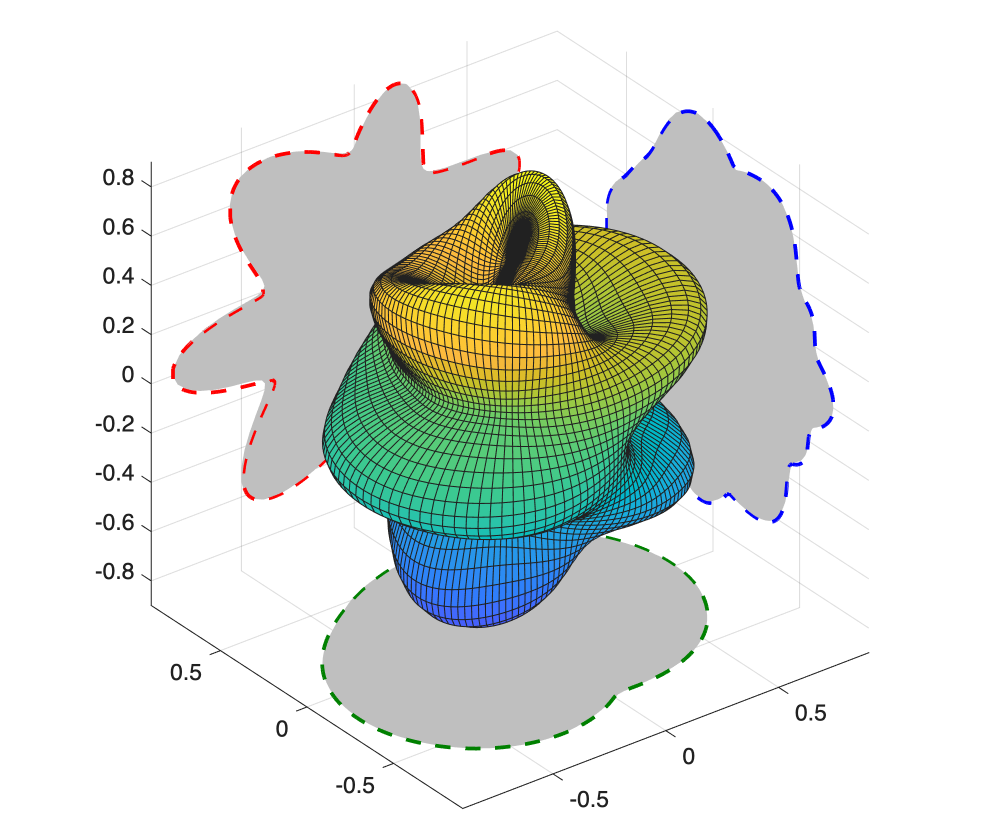}} &
		{\includegraphics[width=0.21\textwidth]{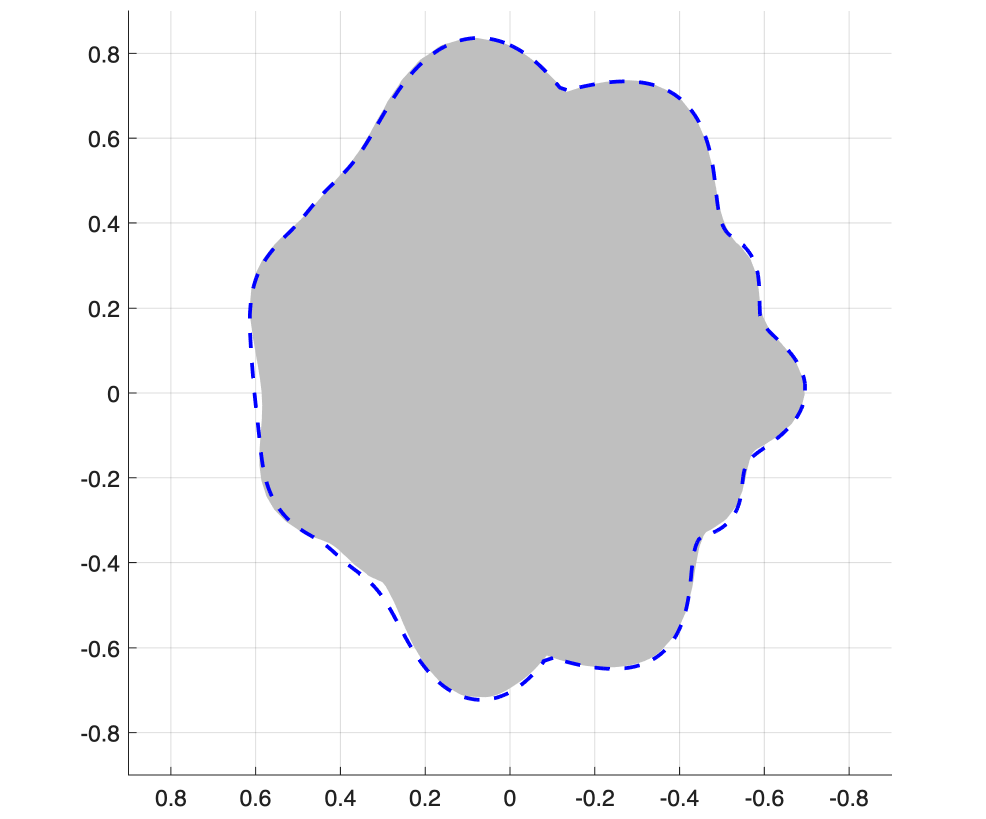}} &
		{\includegraphics[width=0.21\textwidth]{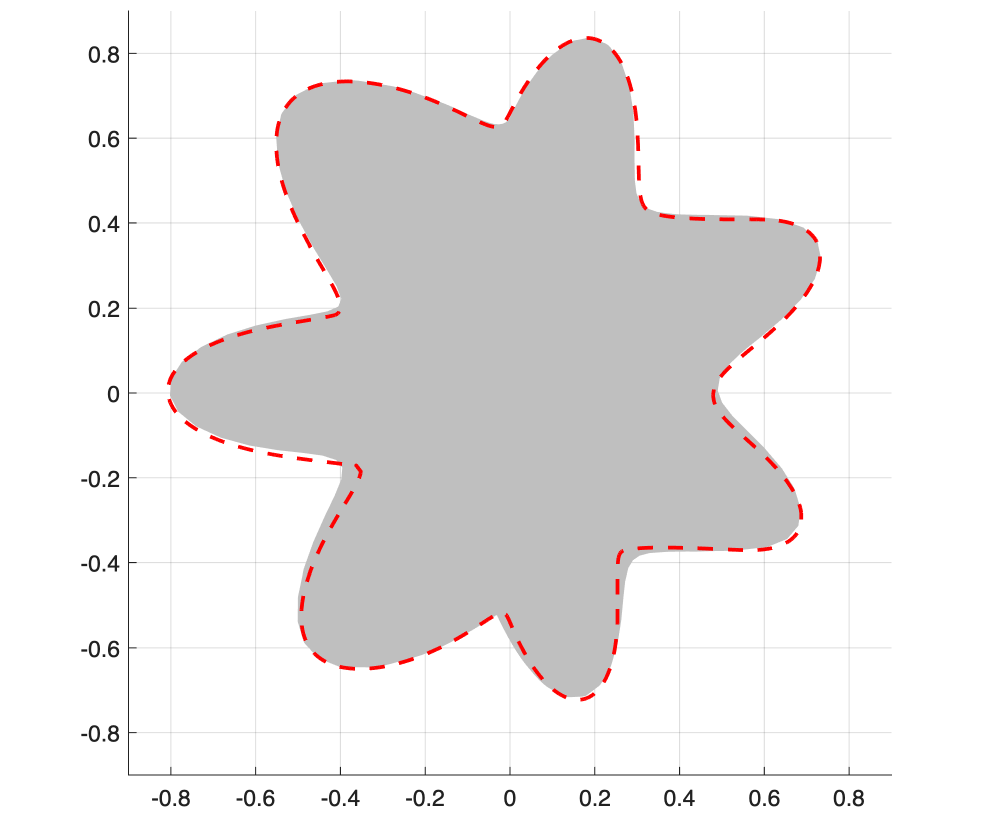}} &
		{\includegraphics[width=0.21\textwidth]{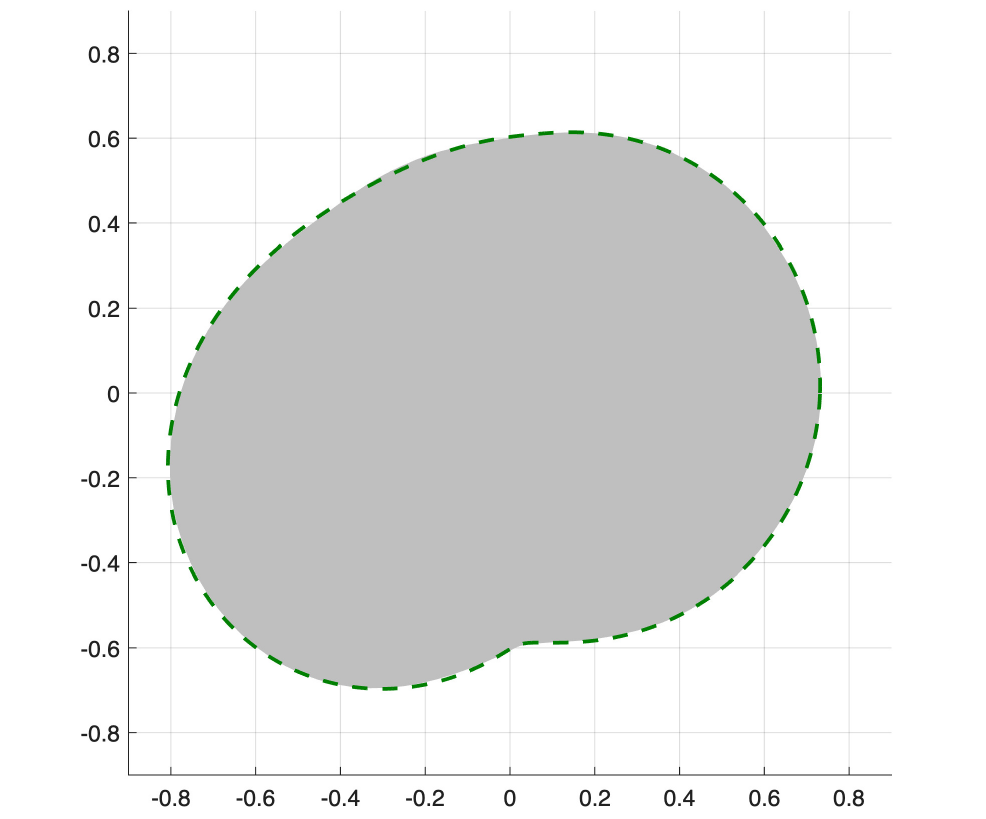}}
	\end{tabular} 
}\\	[-1.2ex]

\subfigure[Reconstruction with 10\% noise.]{
	\begin{tabular}{cccc}
		{\includegraphics[width=0.21\textwidth]{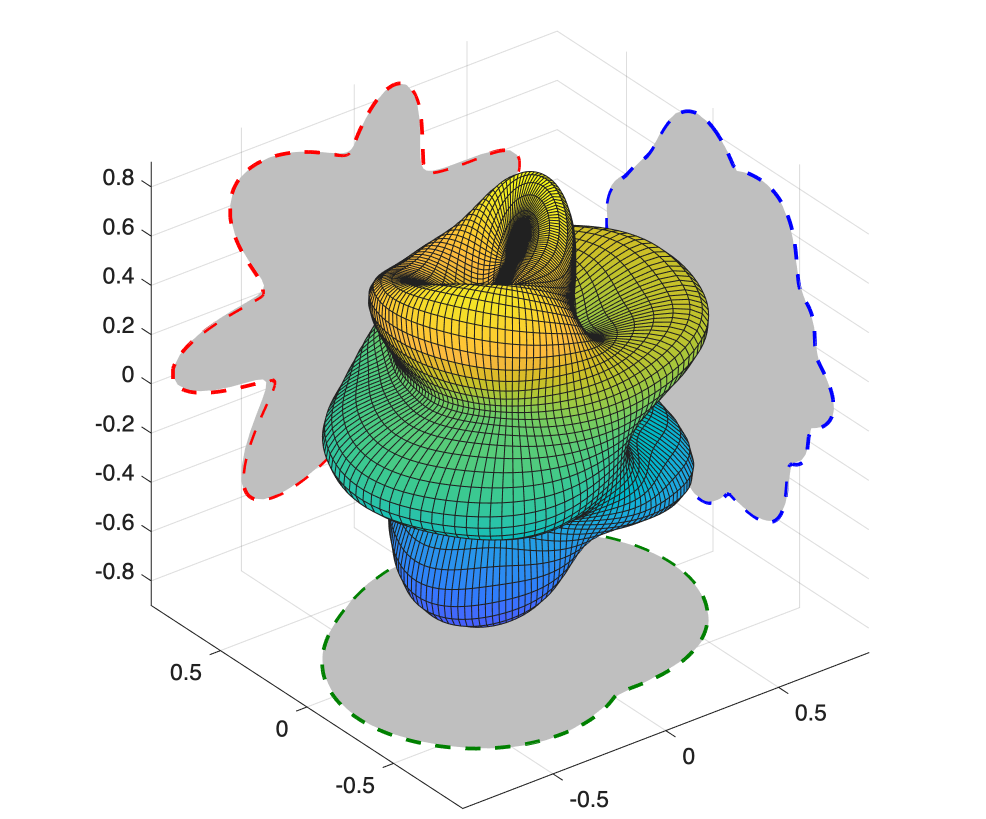}} &
		{\includegraphics[width=0.21\textwidth]{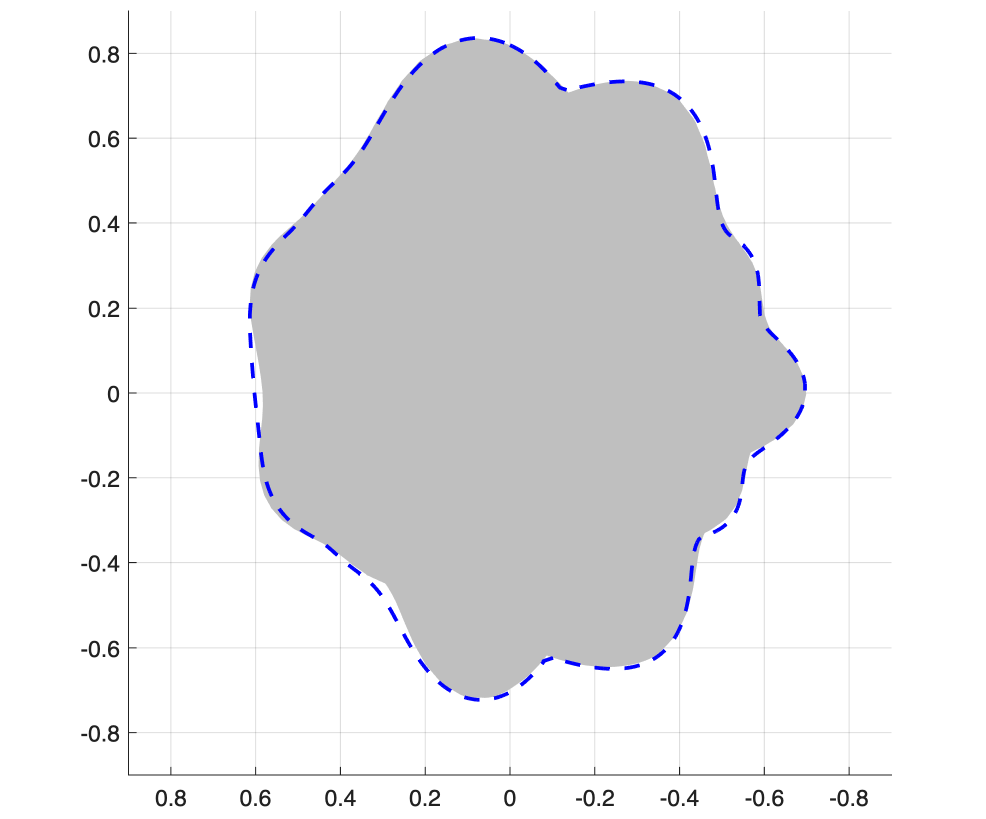}} &
		{\includegraphics[width=0.21\textwidth]{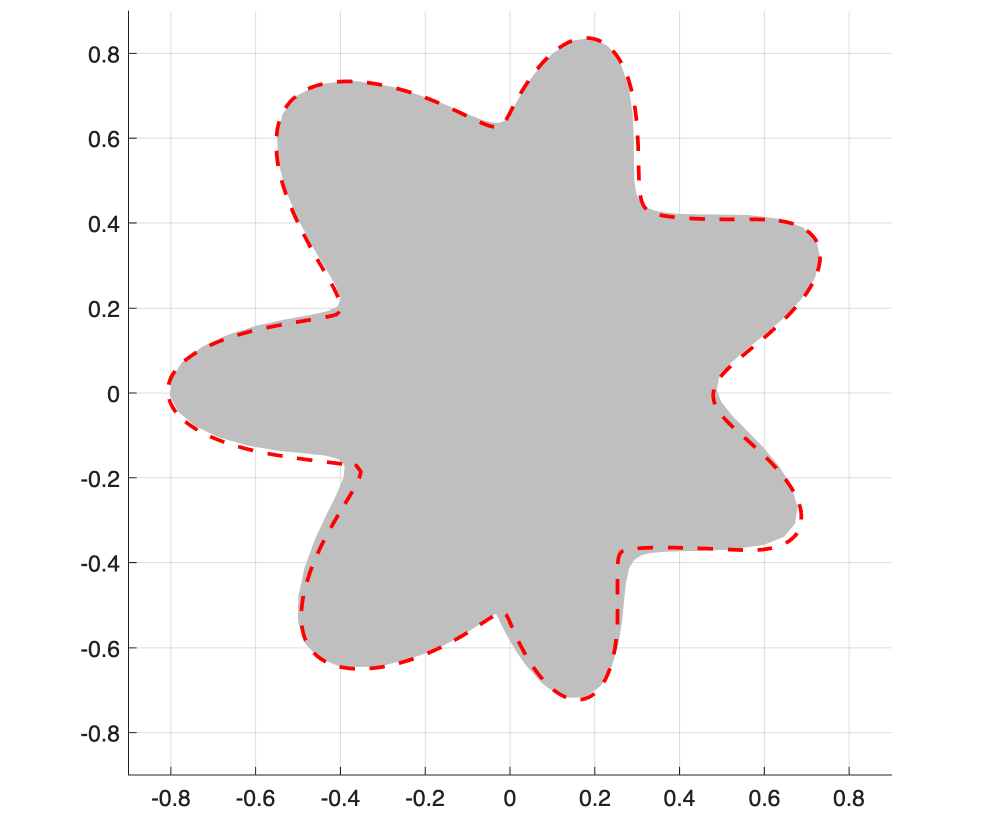}} &
		{\includegraphics[width=0.21\textwidth]{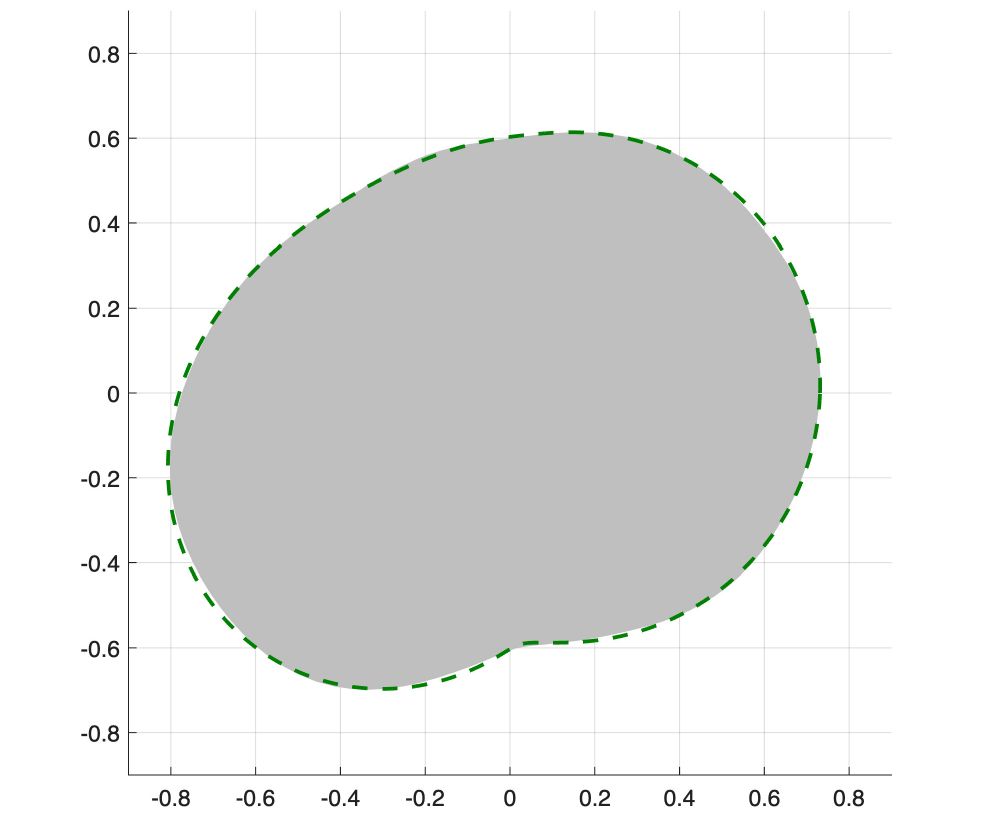}}
	\end{tabular} 
}\\	[-1.2ex]

\caption{Reconstruction of a complex obstacle from time-domain scattered field data generated by four point sources located at \((0,0,5)^\top\), \((0,0,-5)^\top\), \((5,0,0)^\top\), and \((-5,0,0)^\top\), under different noise levels. $\pmb c^{(0)} = (-0.3, 0.2, -0.5)^\top$, $r^{(0)} = 0.3$.}\label{fig_ex7}
\end{figure}

\begin{figure}[!htbp]
\centering 

\subfigure[Reconstruction with 1\% noise.]{
	\begin{tabular}{cccc}
		{\includegraphics[width=0.21\textwidth]{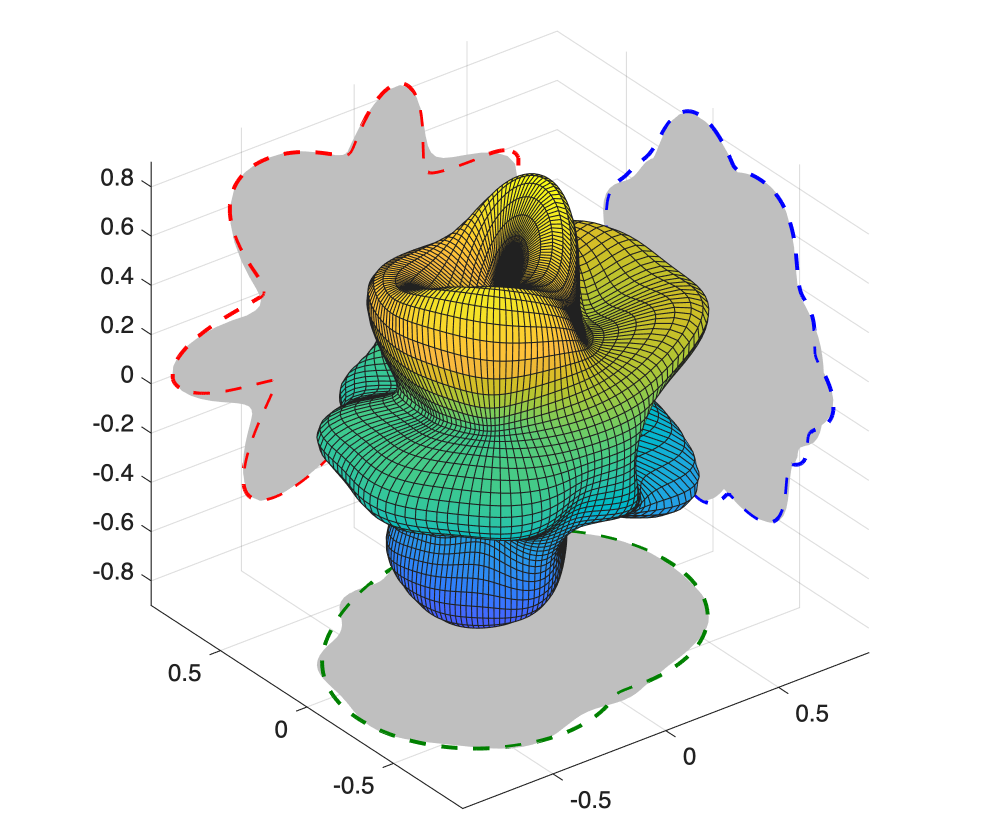}} &
		{\includegraphics[width=0.21\textwidth]{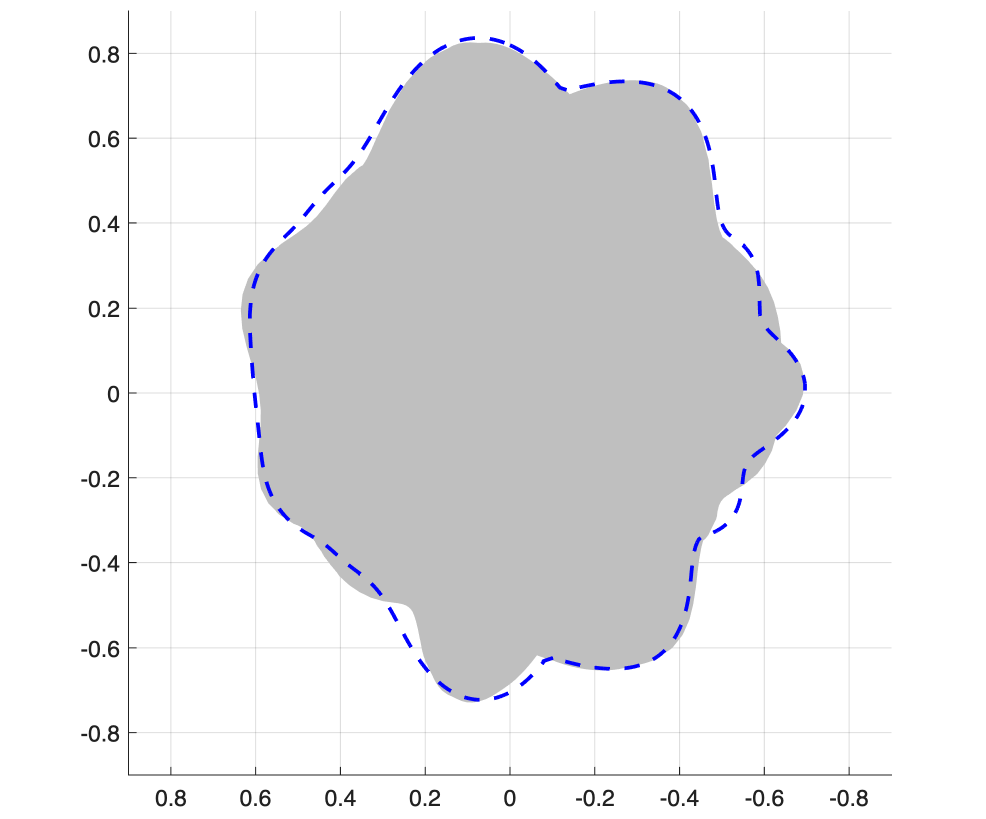}} &
		{\includegraphics[width=0.21\textwidth]{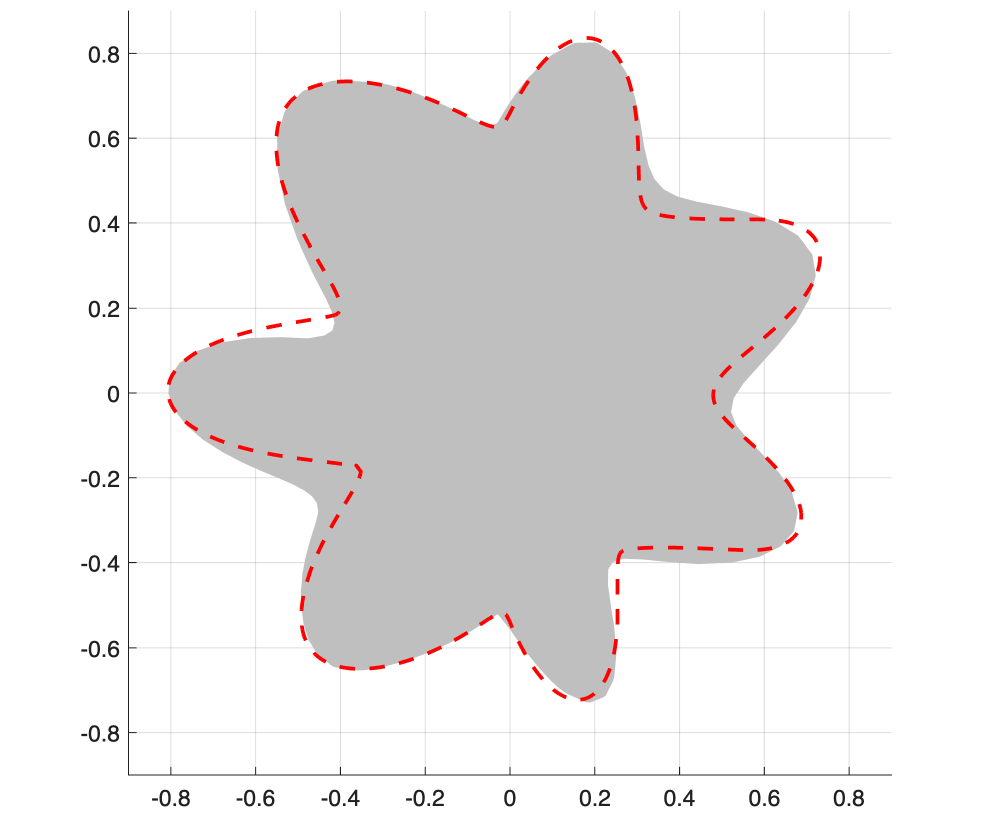}} &
		{\includegraphics[width=0.21\textwidth]{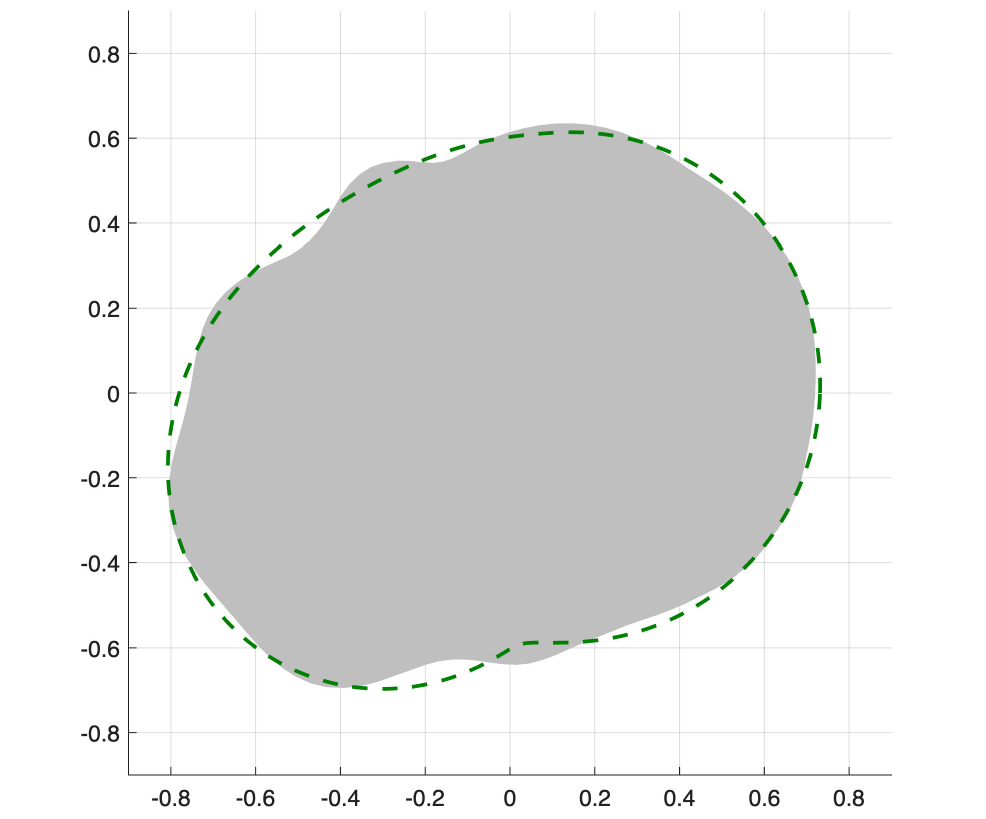}}
	\end{tabular} 
}\\	[-1.2ex]

\caption{Reconstruction of a complex obstacle obtained by directly increasing the truncation number from \(M=0\) to \(M=8\) without incremental truncation technique. The time-domain scattered field data are generated by four point sources located at \((0,0,5)^\top\), \((0,0,-5)^\top\), \((5,0,0)^\top\), and \((-5,0,0)^\top\). $\pmb c^{(0)} = (-0.3, 0.2, -0.5)^\top$, $r^{(0)} = 0.3$.}\label{fig_ex7M8}
\end{figure}

\begin{figure}[!htbp]
	\centering
	\begin{tabular}{ccc}
		\subfigure[Initial surface]{\includegraphics[width=0.3\textwidth]{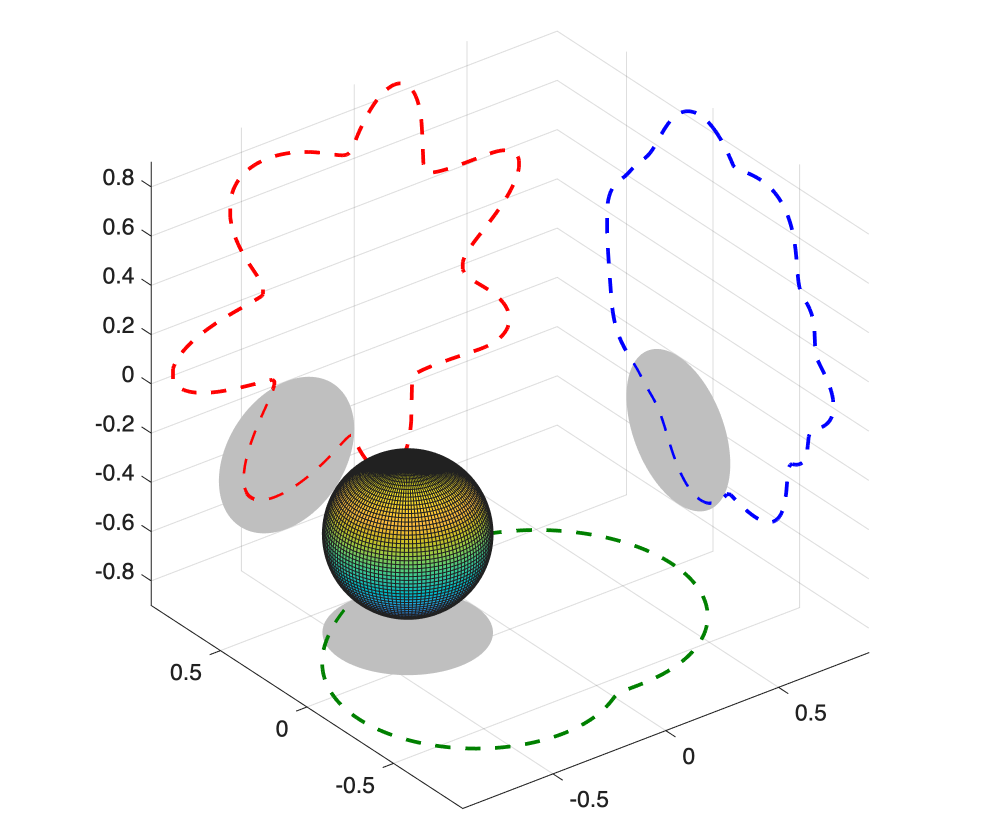}} &
		\subfigure[$M=0$]{\includegraphics[width=0.3\textwidth]{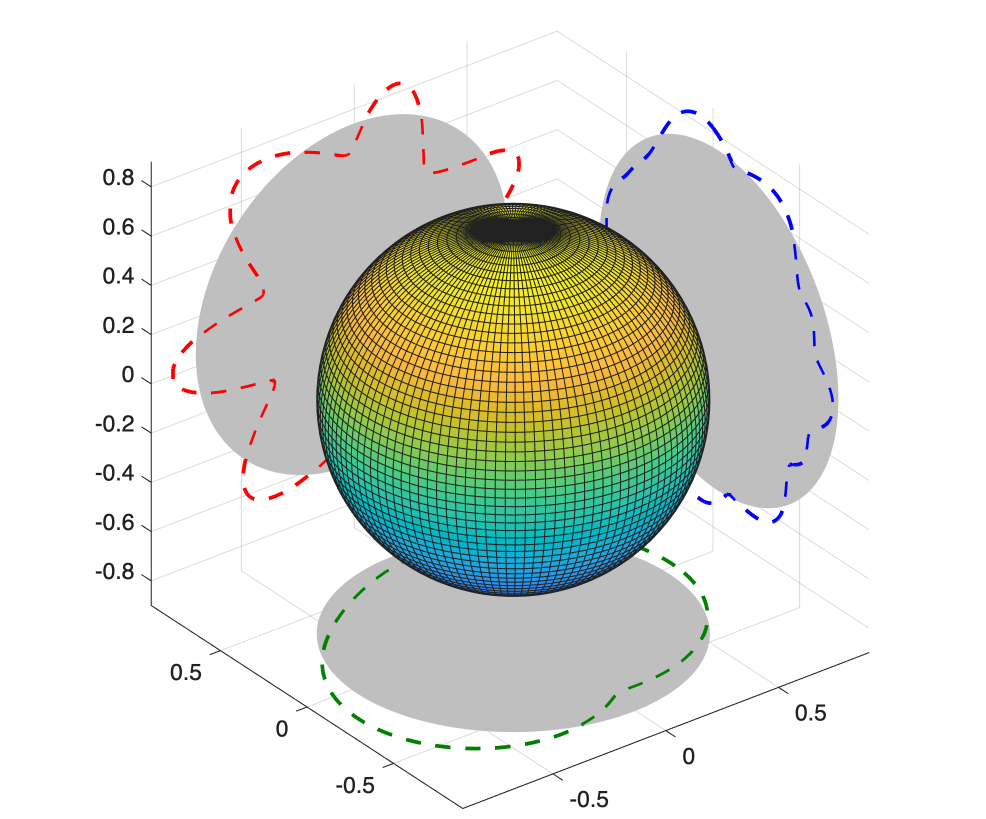}} &
		\subfigure[$M=2$]{\includegraphics[width=0.3\textwidth]{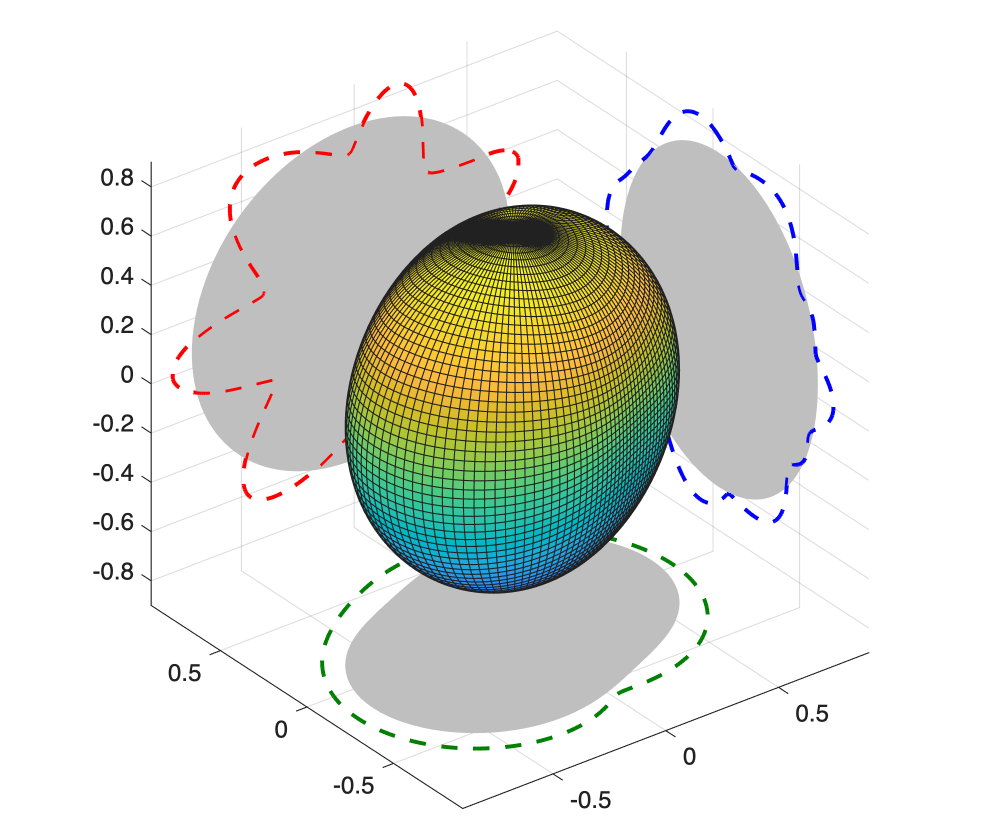}} 
	\end{tabular}
	
	\begin{tabular}{ccc}
		\subfigure[$M=4$]{\includegraphics[width=0.3\textwidth]{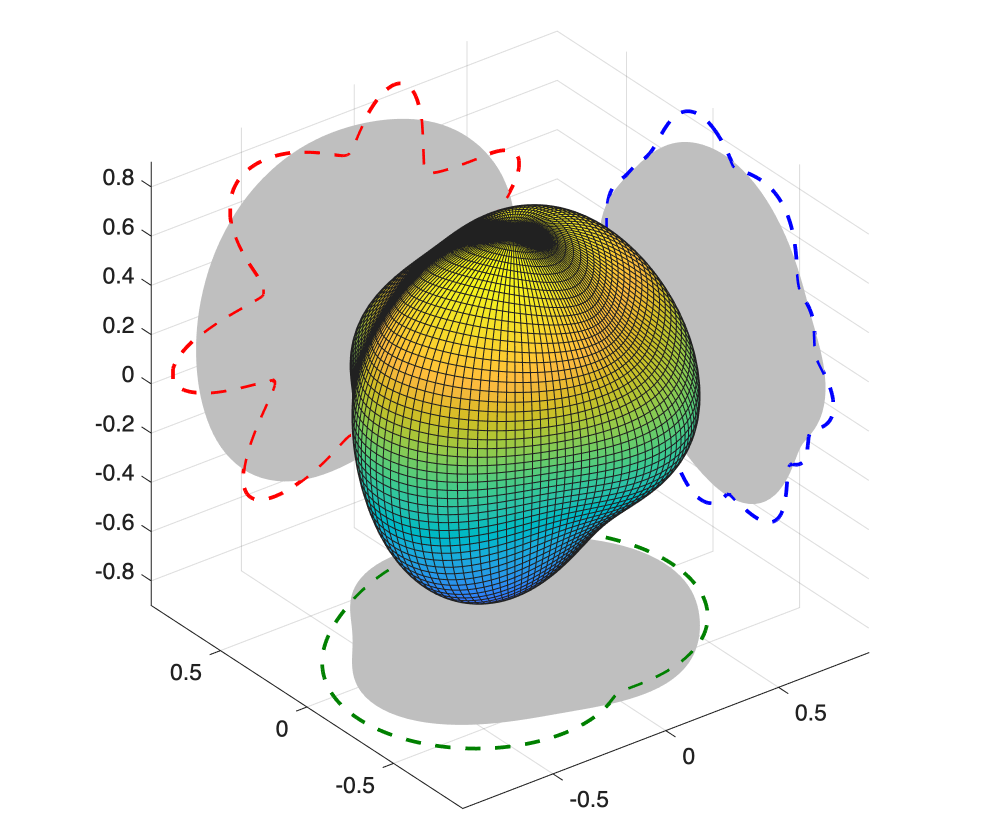}} &
		\subfigure[$M=6$]{\includegraphics[width=0.3\textwidth]{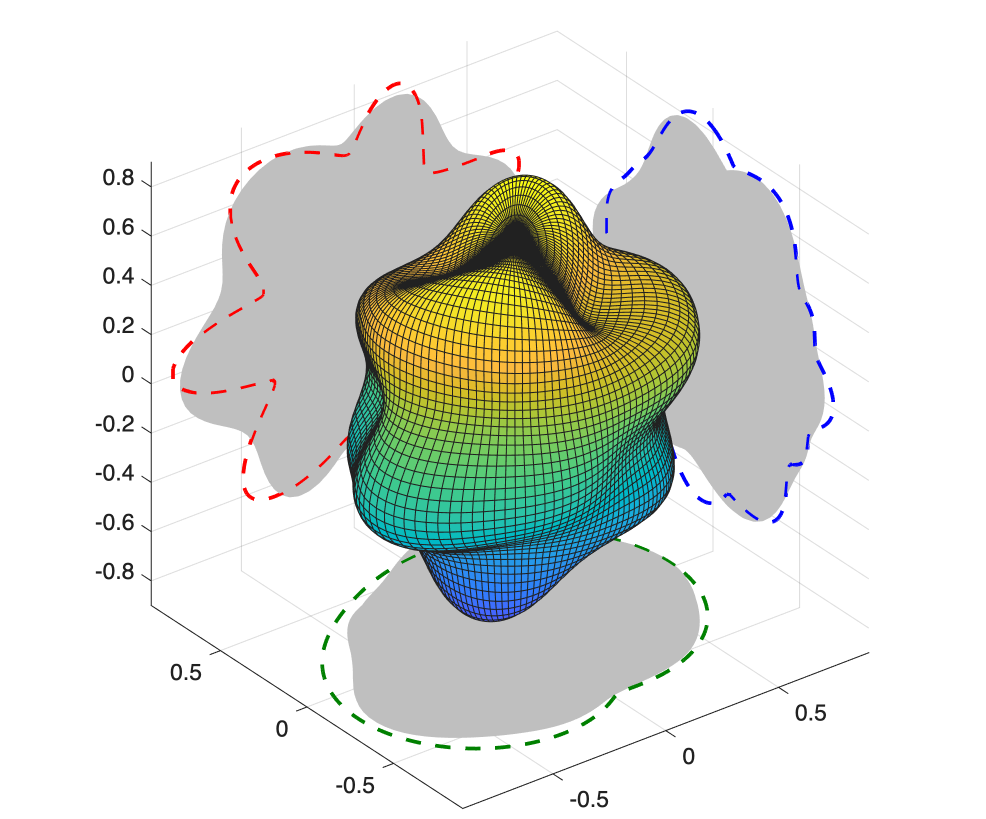}} &
		\subfigure[$M=8$]{\includegraphics[width=0.3\textwidth]{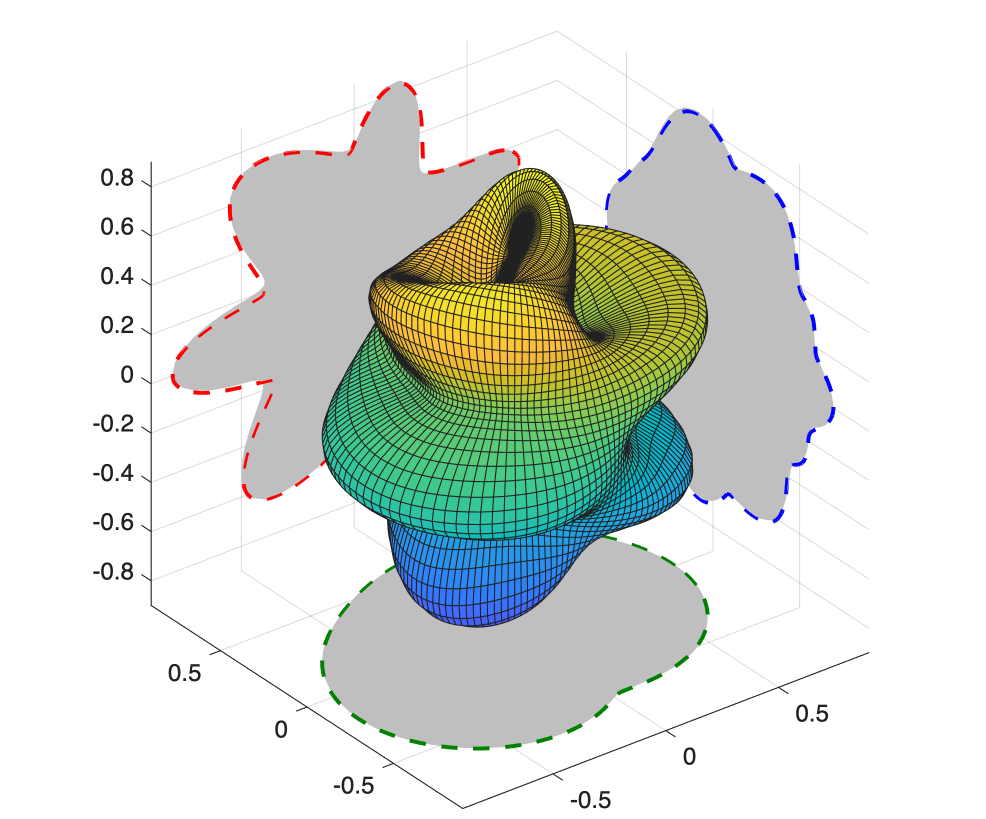}} 
	\end{tabular}

	\caption{Reconstruction of a complex obstacle from time-domain scattered field data generated by four point sources located at \((0,0,5)^\top\), \((0,0,-5)^\top\),\((5,0,0)^\top\) and \((-5,0,0)^\top\) with 1\% noise. $\pmb c^{(0)} = (-0.3, 0.2, -0.5)^\top$, $r^{(0)} = 0.3$. Subfigures (a)--(f) show the initial surface and the progressive reconstructions obtained as the truncation number $M$ increases.}
	\label{fig_ex72}
\end{figure}

\vspace{1.5ex}
{\noindent\bf Example 5: Reconstructions of a complex obstacle.}
\vspace{1ex}

To further assess the capability of the proposed approach for handling geometrically complicated three-dimensional obstacles, we consider a complex surface defined by 
\[
\pmb{p}_D(\theta,\phi)=0.6\Big(1
+0.3\sin(7\theta)\cos\phi
+0.2\sin^{2}(3\theta)\sin(2\phi)
+0.1\cos\theta\Big)\hat{\pmb{x}}(\theta,\phi).
\]
This surface contains multiple sharp undulations and nonconvex features, making the reconstruction problem considerably more challenging.

To ensure the effectiveness of the reconstruction for this complex geometry, the computational parameters are chosen as follows. 
The time-domain scattered field is collected at \(1800\) observation directions (i.e., \(\tilde{n}=30\)) using four incident waves. 
The boundary \(\Gamma_D\) is discretized with \(882\) nodes, and the boundary integrals on the homothetic surface \(\Gamma_{D'}\) are evaluated using the same number of quadrature points (i.e., \(n=20\)). 
Moreover, the maximum truncation order is set to \(M_{\max}=8\), and the scaling factor is chosen as \(\rho=0.1\). 

Figure~\ref{fig_ex7} presents the reconstruction results under different noise levels. 
The true obstacle is shown in subfigure~(a), while subfigures~(b)--(d) correspond to reconstructions with \(1\%\), \(5\%\), and \(10\%\) noise, respectively.
Even at \(10\%\) noise level, the main geometric features of the obstacle are still well recovered, demonstrating the robustness and noise-tolerance of the proposed approach.

To further illustrate the necessity of the incremental truncation technique, we present in Figure~\ref{fig_ex7M8} a reconstruction obtained by directly increasing the truncation number from $M=0$ to $M=8$. 
While the overall shape is reasonably captured, some fine-scale oscillatory features are not fully resolved, and slight deviations appear in several cross-sections compared with the progressive reconstruction.
This behavior suggests that a sudden jump to a high truncation order may reduce the robustness of the reconstruction and makes it more difficult to recover the geometric detail of the obstacle.

In addition, figure~\ref{fig_ex72} further illustrates the reconstruction process for the \(1\%\) noisy data. 
Starting from an initial spherical guess, the reconstructed surface is gradually refined as \(M\) increases, resulting in an increasingly accurate recovery of both the global shape and the highly oscillatory features of the obstacle. 

\section{Conclusions}\label{sec:conclusions}

This work addresses the three-dimensional time-domain forward and inverse acoustic obstacle scattering problem. By combining convolution quadrature technique for time discretization with a spherical harmonics Galerkin method for spatial discretization, we establish an efficient and stable numerical framework for solving the three-dimensional time-domain forward scattering problem. Moreover, we develop a novel time-domain iterative method for three-dimensional inverse acoustic obstacle scattering based on a retarded single-layer potential formulated on a homothetic surface. This strategy avoids the direct treatment of singular integrals on the unknown boundary and significantly improves computational efficiency. Meanwhile, the proposed approach is data-efficient and achieves accurate reconstructions using only one to four incident waves. To improve robustness for the inverse problem, we incorporate an incremental truncation technique that gradually increases the truncation level to reduce the sensitivity to the initial guess.
We also prove that the scattered field generated by the homothetic surface can approximate the exact scattered field arbitrarily, providing the theoretical foundation for the proposed iterative approach. 
Numerical experiments validate the accuracy of the forward scheme and demonstrate that the proposed inverse method can effectively reconstruct both the shape and location of three-dimensional obstacles from time-domain scattered field data.

Future work will focus on extending our method to electromagnetic and elastic wave scattering problems. We also plan to adapt the approach to more challenging geometries, including non-smooth obstacles with sharp corners and multi-scatterer scenarios.

\section*{Acknowledgements}
The work of H. Dong is supported in part by the NSFC Grant 12571454 and the National Key R\&D Program of China 2024YFA1012303. The work of L. Zhao is supported in part by the NSFC Grant 12401565.


\begin{thebibliography}{60}
	\bibitem{Abboud2011}
	T. Abboud, P. Joly, J. Rodr\'{\i}guez, I. Terrasse, Coupling discontinuous Galerkin methods and retarded potentials for transient wave propagation on unbounded domains, J. Comput. Phys. 230 (2011) 5877--5907.
	
	\bibitem{Ammari1}
	H. Ammari, An introduction to mathematics of emerging biomedical imaging, Math\'ematiques \& Applications, Springer, Berlin, 2008.

	\bibitem{Ammari2}
	H. Ammari, G. Bao, J.~L. Fleming, An inverse source problem for Maxwell's equations in magnetoencephalography, SIAM J. Appl. Math. {62} (2002) 1369--1382.
	
	\bibitem{Ammari2013}
	H. Ammari, E. Bretin, J. Garnier, A. Wahab, Time-reversal algorithms in viscoelastic media, European J. Appl. Math. {24} (2013) 565--600.
	
	\bibitem{Anderson2020}
	T. G. Anderson, O. P. Bruno, M. Lyon, High-order, dispersionless ``fast-hybrid'' wave equation solver. Part I: $\mathcal{O}$(1) sampling cost via incident-field windowing and recentering, SIAM J. Sci. Comput. {42} (2020) A1348--A1379.
	
	\bibitem{CQ}
	L. Banjai, S. Sauter, Rapid solution of the wave equation in unbounded domains, SIAM J. Numer. Anal. {47} (2008) 227--249.
	
	\bibitem{Banjai2014}
	L. Banjai, M. Kachanovska, Fast convolution quadrature for the wave equation in three dimensions, J. Comput. Phys. {279} (2014) 103--126.
	
	\bibitem{Barnett2020}
	A. Barnett, L. Greengard, T. Hagstrom. High-order discretization of a stable time-domain integral equation for 3D acoustic scattering, J. Comput. Phys. {402} (2020) 109047.
	
	\bibitem{Borden}
	B. Borden, Mathematical problems in radar inverse scattering, Inverse Problems {18} (2002) R1--R28.
	
	\bibitem{Bruno2024}
	O. P. Bruno, T. Yin, Multiple-scattering frequency-time hybrid solver for the wave equation in interior domains, Math. Comput. {93} (2024) 551-587.
	
	\bibitem{Bukard2009}
	C. Burkard, R. Potthast, A time-domain probe method for three-dimensional rough surface reconstructions, Inverse Probl. Imaging {3} (2009) 259--274.
	
	\bibitem{Cakoni2017}
	F. Cakoni, J. D. Rezac, Direct imaging of small scatterers using reduced time dependent data, J. Comput. Phys. {338} (2017) 371--387.
	
	\bibitem{Cakoni2019}
	F. Cakoni, H. Haddar, A. Lechleiter, On the factorization method for a far field inverse scattering problem in the time domain, SIAM J. Math. Anal. {51} (2019) 854--872.
	
	\bibitem{Cakoni2021}
	F. Cakoni, P. Monk, V. Selgas, Analysis of the linear sampling method for imaging penetrable obstacles in the time domain. Anal. PDE {14} (2021) 667--688.
	
	\bibitem{Castro2024}
	P. de~Castro, E. Silva, E. Fancello, A single level set function approach for multiple material-phases applied to full-waveform inversion in the time domain, Inverse Problems, {40} (2024) 055002.	
	
	\bibitem{Chen2}
	Q. Chen, H. Haddar, A. Lechleiter, A sampling method for inverse scattering in the time domain, Inverse Problems, {26} (2010) 085001.
	
	\bibitem{Colton}
	D. Colton, R. Kress, Inverse Acoustic and Electromagnetic Scattering Theory. 4nd edition, Springer, New York, 2019.
	
	\bibitem{Davies2013}
	P. J. Davies, D. B. Duncan, Convolution-in-time approximations of time domain boundary integral equations, SIAM J. Sci. Comput. {35} (2013) B43--B61.
	
	\bibitem{Epstein2016}
	C. Epstein, L. Greengard, T. Hagstrom, On the stability of time-domain integral equations for acoustic wave propagation, Discrete Contin. Dyn. Syst. {36} (2016) 4367--4382.
	
	\bibitem{Ganesh}
	M. Ganesh, I.G. Graham, A high-order algorithm for obstacle scattering in three dimensions, J. Comput. Phys. {198} (2004) 211--242.
	
	\bibitem{Ganesh2023}
	M. Ganesh, F. Le Lou\"er, A high-order algorithm for time-domain scattering in three dimensions, Adv. Comput. Math. {49} (2023) 46.
	
	\bibitem{Geng2025}
	C. Geng, M. Song, X. Wang, Y. Wang, Time-domain direct sampling method for inverse electromagnetic scattering with a single incident source, SIAM J. Imag. Sci. {18} (2025) 1208-1234.
	
	\bibitem{Guo2013}
	Y. Guo, P. Monk, D. Colton, Toward a time domain approach to the linear sampling method, Inverse Problems {29} (2013) 095016.
	
	\bibitem{Guo2016}
	Y. Guo, P. Monk, D. Colton, The linear sampling method for sparse small aperture data, Appl. Anal. {95} (2016) 1599--1615.
	
	\bibitem{Guo20162}
	Y. Guo, D. H$\ddot{\rm o}$mberg, G. Hu, J. Li, H. Liu, A time domain sampling method for inverse acoustic scattering problems, J. Comput. Phys. {314} (2016) 647--660.
	
	\bibitem{Guo2024}
	Y. Guo, H. Li, X. Wang, A novel time-domain direct sampling approach for inverse scattering problems in acoustics, SIAM J. Appl. Math. {84} (2024) 2152--2174.
	
	\bibitem{Haddar}
	H. Haddar, A. Lechleiter, S. Marmorat, An improved time domain linear sampling method for Robin and Neumann obstacles, Appl. Anal. {93} (2014) 369--390.
	
	\bibitem{Haddar2020}
	H. Haddar, X. Liu, A time domain factorization method for obstacles with impedance boundary conditions, Inverse Problems {36} (2020) 105011.
	
	\bibitem{Hou2024}
	S. Hou, H. Wang, An efficient algorithm for time-domain acoustic scattering in three dimensions by layer potentials, J. Comput. Phys. {514} (2024) 113258.
	
	\bibitem{Hou2025}
	S. Hou, J. Liu, H. Wang, On the computation of time-domain acoustic scattering by multiple obstacles using a decomposition method, SIAM J. Appl. Math. {85} (2025) 2683--2703.
	
	\bibitem{Ivanyshyn1}
	O. Ivanyshyn, R. Kress, Identification of sound-soft 3D obstacle from phaseless data, Inverse Probl. Imaging {4} (2010) 131--149.
	
	\bibitem{Kahlaoui2025}
	H. Kahlaoui, A non-iterative reconstruction method for the geometric inverse problem for the wave equation, J. Sci. Comput. {102} (2025) 61.
	
	\bibitem{Knoller2025}
	M. Kn$\ddot{\rm o}$ller, J. Nick, The temporal domain derivative in inverse acoustic obstacle scattering, Numer. Math. (2025), to appear. 
	
	\bibitem{Tikhonov}
	R. Kress, Newton's method for inverse obstacle scattering meets the method of least squares, Inverse Problems {19} (2003) S91--S104.
	
	\bibitem{Liu2023}
	X. Liu, J. Song, F. Pourahmadian, H. Haddar, Time-versus frequency-domain inverse elastic scattering: theory and experiment, SIAM J. Appl. Math. {83} (2023) 1296-1314.
	
	\bibitem{Lubich1994}
	C. Lubich, On the multistep time discretization of linear initial-boundary value problems and their boundary integral equations. Numer. Math. {67} (1994) 365--389.
	
	\bibitem{Luke2006}
	D. R. Luke, R. Potthast, The point source method for inverse scattering in the time domain, Math. Methods Appl. Sci. {29} (2006) 1501--1521.
	
	\bibitem{Polz2019}
	D. P\"olz, M. Schanz, Space-time discretized retarded potential boundary integral operators: quadrature for collocation methods, SIAM J. Sci. Comput. {41} (2019) A3860--A3886.
	
	\bibitem{Prunty2019}
	A. C. Prunty, R. K. Snieder, Theory of the linear sampling method for time-dependent fields, Inverse Problems {35} (2019) 055003.
	
	\bibitem{Sauter2013}
	S. Sauter, A. Veit, A Galerkin method for retarded boundary integral equations with smooth and compactly supported temporal basis functions, Numer. Math. {123} (2013) 145--176.
	
 	\bibitem{Sayag2022}
	A. Sayag, D. Givoli, Shape identification of scatterers using a time-dependent adjoint method, Comput. Methods Appl. Mech. Engrg. {394} (2022) 114923.
 
	\bibitem{sayas}
	F. Sayas, Retarded potentials and time domain boundary integral equations, Springer Series in Computational Mathematics, Switzerland, 2016.
	
	\bibitem{Wang2021}
	B. Wang, Z. Yang, L. Wang, S. Jiang, On time-domain NRBC for Maxwell's equations and its application in accurate simulation of electromagnetic invisibility cloaks, J. Sci. Comput. {86} (2021) 20.
	
	\bibitem{Zhao2021}
	L. Zhao, H. Dong, F. Ma, Inverse obstacle scattering for acoustic waves in the time domain, Inverse Probl. Imaging, {15} (2021) 1269--1286.
	
	\bibitem{Zhao2022}
	L. Zhao, H. Dong, F. Ma, Inverse obstacle scattering for elastic waves in the time domain, Inverse Problems {38} (2022) 045005.
	

	
\end{thebibliography}
\end{document}